\numberwithin{equation}{section}
\setlist{leftmargin=3\parindent,labelindent=3\parindent}
\setlist[enumerate]{%
  leftmargin=3\parindent,%
  align=left,%
  labelwidth=3\parindent,%
  labelsep=0pt%
}
\setlist[enumerate,1]{% 
  label={\normalfont (\thesection.\arabic{equation})}, ref={\normalfont \thesection.\arabic{equation}},
  resume%
}
\newcommand{\C}[1]{{\protect\mathcal{#1}}}
\newcommand{\I}[1]{{\mathbbm #1}}
\newcommand{\bSigma}{\boldsymbol{\Sigma}}
\newcommand{\bDelta}{\boldsymbol{\Delta}}
\newcommand{\bB}{\boldsymbol{\mathcal{B}}}
\newcommand{\e}{\varepsilon}
\renewcommand{\mid}{:}
\renewcommand{\ldots}{\hspace{0.9pt}.\hspace{0.3pt}.\hspace{0.3pt}.\hspace{1.5pt}}
\renewcommand{\ge}{\geqslant}
\renewcommand{\geq}{\geqslant}
\renewcommand{\le}{\leqslant}
\renewcommand{\leq}{\leqslant}
\newcommand{\actson}{{\curvearrowright}}
\newcommand{\range}{\mathrm{range}}
\newcommand{\hide}[1]{}
\newcommand{\beq}[1]{\begin{equation}\label{#1}}
\newcommand{\eeq}{\end{equation}}
\newtheorem{theorem}{Theorem}
\numberwithin{theorem}{section}
\newtheorem{lemma}[theorem]{Lemma}
\newtheorem{corollary}[theorem]{Corollary}
\newtheorem{obs}[theorem]{Observation}
\newtheorem{claim}{Claim}[theorem]
\theoremstyle{definition}
\newtheorem{defn}[theorem]{Definition}
\newtheorem{remark}[theorem]{Remark}
\renewcommand{\qed}{\nolinebreak\mbox{\hspace{5 true pt}%
  \rule[-0.85 true pt]{3.9 true pt}{8.1 true pt}}}
\newenvironment{manuallemma}[1]{%
  \manuallemmainner
}{\endmanuallemmainner}
\title{Circle Squaring with Pieces of Small Boundary and Low Borel Complexity}
\author{Andr\'{a}s M\'{a}th\'{e}$^{\ast}$}
\address{Mathematics Institute\\
University of Warwick\\
Coventry CV4 7AL, UK}
\thanks{$^{\ast}$\,Supported by the Hungarian National Research, Development and Innovation Office -- NKFIH, 124749.}
\author{Jonathan A. Noel$^{\dagger}$}
\address{Department of Mathematics and Statistics\\ University of Victoria\\ Victoria, B.C., Canada V8P 5C2}
\thanks{$^{\dagger}$\,Supported by Leverhulme Early Career Fellowship ECF-2018-534, NSERC Discovery Grant RGPIN-2021-02460 and NSERC Early Career Supplement DGECR-2021-00024.} 
\author{Oleg Pikhurko$^{\ddagger}$}
\address{Mathematics Institute and DIMAP\\
University of Warwick\\
Coventry CV4 7AL, UK}
\thanks{$^{\ddagger}$\,Supported by ERC Advanced Grant 101020255 and Leverhulme Research Project Grant RPG-2018-424.}
\date{\today}
\begin{document}

%text specific macros 

%\newcommand{\partialinfty}{\partial_{\mathrm{vis}(\infty)}}
\newcommand{\boxdim}{\operatorname{dim}_\square}
\newcommand{\dist}{\operatorname{dist}}
\newcommand{\diam}{\operatorname{diam}}
\renewcommand{\deg}{\operatorname{deg}}
\newcommand{\lex}{\prec_{\operatorname{lex}}}
\newcommand{\eps}{\varepsilon} 
\newcommand{\ind}{\mathbbm{1}}
\newcommand{\induced}{\upharpoonright}
\newcommand{\cl}{\operatorname{cl}}
\newcommand{\interior}{\operatorname{int}}
\newcommand{\vvec}{\boldsymbol}
\newcommand{\comp}{\operatorname{comp}}
\newcommand{\holes}{\operatorname{holes}}
\newcommand{\fout}[1]{#1^{\operatorname{out}}}
\newcommand{\fin}[1]{{#1}_{\operatorname{in}}}

%OP's defs
%\newcommand{\CO}{\mathbf O}
%\newcommand{\CR}{\C R}
%\newcommand{\CC}{\comp(H)}
%\newcommand{\CE}{\mathbf E}
\newcommand{\N}{M}
\newcommand{\FE}[2]{{\C E}^{#1}_{#2}}
\newcommand{\FO}[2]{{\C O}^{#1}_{#2}}
\renewcommand{\Tilde}[1]{\widetilde{#1}}
\renewcommand{\Hat}[1]{\widehat{#1}}

\tikzset{
  smallblack/.style={circle, draw=black!100,fill=black!100,thick, inner sep=0pt, minimum size=1.3mm},
    smallcirc/.style={circle, draw=black!100,thick, inner sep=1pt, minimum size=1.3mm},
    smallsq/.style={regular polygon, regular polygon sides=4, draw=black!100,thick, inner sep=-1pt}
}

\begin{abstract}
Tarski's Circle Squaring Problem from 1925 asks whether it is possible to partition a disk in the plane 
into finitely many pieces and reassemble them via isometries to yield a partition of a square of the same area. It was finally resolved by Laczkovich in 1990 in the affirmative.
%, in fact using translations only.
Recently, several new proofs have emerged which achieve circle squaring with better structured pieces: namely, pieces which are Lebesgue measurable and have the property of Baire (Grabowski--M\'ath\'e--Pikhurko) or even are Borel (Marks--Unger). 

In this paper, we show that circle squaring is possible with Borel pieces of positive Lebesgue measure whose boundaries have upper Minkowski dimension less than~2 (in particular, each piece is Jordan measurable). We also improve the Borel complexity of the pieces: namely,
we show that each piece can be taken to be a Boolean combination of $F_\sigma$ sets.
This is a consequence of our more general result that applies 
to any two bounded subsets of $\I R^k$, $k\ge 1$, of equal positive measure whose boundaries have upper Minkowski dimension smaller than~$k$.
\end{abstract}

\maketitle

\section{Introduction}

Tarski's Circle Squaring Problem~\cite{Tarski25} from 1925 asks if a circle (i.e.,\ a circular disk) and a square of the same area in $\mathbbm{R}^2$
are \emph{equidecomposable}, that is, whether we can partition the circle  into finitely many pieces and apply some isometry to each piece to get a partition of the square.
This question was answered affirmatively some 65 years later by Laczkovich who showed in a deep and groundbreaking paper~\cite{Laczkovich90} that, in fact, it is possible to square a circle  using translations only. 

The Axiom of Choice plays a crucial role in his proof and, consequently, the pieces of his circle squaring could not be guaranteed to have any discernible regularity properties. A notable problem (mentioned by 
e.g.,~Wagon~\cite{Wagon81}*{Appendix~C} or Laczkovich~\cite{Laczkovich90}*{Section~10}) has been to determine whether there exist circle squarings using ``better'' structured pieces. Recently, Grabowski, M\'{a}th\'{e} and Pikhurko~\cite{GrabowskiMathePikhurko17} proved that the pieces of a circle squaring can simultaneously be Lebesgue measurable and have the property of Baire. Then, Marks and Unger~\cite{MarksUnger17} proved that the pieces can be made Borel. (Let us assume in this paper that the disk and the square are closed and thus Borel sets.)
In fact, Marks and Unger~\cite{MarksUnger17}*{Section~7} showed that the pieces of a circle squaring can be chosen to be in $\bB(\bSigma_4^0)$, where $\bSigma_i^0$ is the $i$-th additive class of the standard Borel hierarchy (see e.g.,\ \cite{Kechris:cdst}*{Section 11.B}) and $\bB(\C F)$ denotes the algebra generated by a set family $\C F$ (that is, the family of all Boolean combinations of elements from~$\C F$). For some generalisations and simplifications of the above results, we refer the reader to Cie\'sla and Sabok~\cite{CieslaSabok22} and Bowen, Kun and Sabok~\cite{BowenKunSabok21,BowenKunSabok25}. 

Shortly after his circle-squaring paper, Laczkovich~\cites{Laczkovich92,Laczkovich92b} proved a far-reaching generalisation.  Before stating it, let us set up some notation.
Fix $k\ge 1$. Let $\lambda$ denote the Lebesgue measure on $\I R^k$ and let $\partial X$ denote the (topological) boundary of $X\subseteq \I R^k$. Recall that the \emph{upper Minkowski dimension}, sometimes called \emph{box} or \emph{grid dimension}, of  $X\subseteq \mathbbm{R}^k$ is
\begin{equation}
\label{eq:DefBoxDim}
\boxdim(X):=\limsup_{\delta\to0^+}\frac{\log(N_{\delta}(X))}{\log(\delta^{-1})},
\end{equation}
where $N_\delta(X)$ is the number of boxes from the regular grid in $\I R^k$ of side-length~$\delta$ that intersect~$X$.

\begin{theorem}[Laczkovich~\cites{Laczkovich92,Laczkovich92b}]
\label{th:L}
If $k\ge1$ and $A,B\subseteq \mathbbm{R}^k$ are bounded sets such that $\lambda(A)=\lambda(B)>0$, $\boxdim(\partial A)<  k$ and $\boxdim(\partial B)<k$, then $A$ and $B$ are equidecomposable by translations.\end{theorem}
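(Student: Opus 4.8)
\medskip
\noindent\emph{Proof strategy.} The plan is to recover Laczkovich's argument: realise the equidecomposition as a single bijection $\psi\colon A\to B$ of uniformly bounded displacement built from finitely many translations, and construct $\psi$ by a matching argument fed by a discrepancy estimate. Fix an integer $d$ (to be chosen large) and pick $v_1,\dots,v_d\in\I R^k$ so that $\{e_1,\dots,e_k,v_1,\dots,v_d\}$ is linearly independent over $\I Q$; then $G:=\I Z^k+\I Z v_1+\dots+\I Z v_d$ is a free abelian group of rank $k+d$ acting freely on $\I R^k$ by translation, and fixing the generators identifies each $G$-orbit $O$ with $\I Z^{k+d}$. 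Since $A$ and $B$ are bounded, $A\cap O$ and $B\cap O$ become subsets $P^O_A,P^O_B\subseteq\I Z^{k+d}$ that are bounded in the $k$ ``integer'' coordinates and spread out only in the remaining $d$ coordinates. For a finite symmetric $T\subseteq G$ --- in coordinates a box $[-R,R]^k\times[-M,M]^d$ --- let $\Gamma$ be the bipartite graph on $A\sqcup B$ with $a\sim b$ iff $b-a\in T$; a perfect matching of $\Gamma$ is exactly an equidecomposition of $A$ and $B$ into at most $|T|$ pieces (the piece $A_t$ being the set of $a$ matched to $a+t$). As $\Gamma$ is locally finite all of its components are countable, and translations in $T$ preserve $G$-orbits, so each component lies in a single $O$; hence, by the standard matching theorem for locally finite bipartite graphs (a compactness argument; cf.\ Rado's matching theorem, or max-flow/min-cut for locally finite networks --- this is the only non-effective step, and is exactly why the resulting pieces carry no regularity a priori), $\Gamma$ has a perfect matching as soon as Hall's condition holds on both sides. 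By symmetry of the hypotheses in $A$ and $B$ it suffices to check the one-sided condition $|(F+T)\cap B|\ge|F|$ for finite $F\subseteq A$; and since $T$ preserves orbits and $F$ is finite, this reduces to the purely combinatorial inequality $|(F+T)\cap P^O_B|\ge|F|$ for every orbit $O$ and every finite $F\subseteq P^O_A$.

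The number-theoretic input is the choice of the $v_i$. I would pick them so that the $N^d$-point configuration $n\mapsto n_1v_1+\dots+n_dv_d\pmod{\I Z^k}$, $n\in\{0,\dots,N-1\}^d$, is as equidistributed in the unit cube as possible --- say with box-discrepancy $O_\e(N^{-d+\e})$ for every $\e>0$; this is achieved by taking the coordinates of the $v_i$ in a suitable algebraic number field so that the associated linear forms are badly approximable, whence the Weyl sums are small and the Erd\H{o}s--Tur\'{a}n--Koksma inequality yields the bound. Since box-discrepancy is translation invariant, the same estimate holds for every shifted configuration $n\mapsto x+\sum_i n_iv_i$, uniformly in $x$, i.e.\ uniformly over all $G$-orbits. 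How large $d$ must be is dictated by the final step: in a box of side $L$ (in the $d$ spread-out coordinates) fattening by $T$ gains $\asymp ML^{d-1}$ lattice points, and this must dominate the discrepancy-induced error, which a short computation puts at $L^{ds/k+o(1)}$ with $s:=\max\{\boxdim(\partial A),\boxdim(\partial B)\}$; the inequality $ds/k<d-1$, i.e.\ $d>k/(k-s)$, is solvable precisely because $s<k$ --- which is the hypothesis of the theorem, and explains why it cannot be relaxed to $\boxdim=k$.

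With the $v_i$ fixed, the geometry enters via a transfer estimate: if $S\subseteq\I R^k$ is bounded with $\boxdim(\partial S)=s<k$, then sandwiching $S$ between unions of $\delta$-cubes (only $O(\delta^{-s-\e})$ of which meet $\partial S$) and optimising $\delta$ against the discrepancy bound shows that, for every orbit $O$ and every axis-parallel box $Q$ of side $L$ in the $d$ spread-out coordinates, $\bigl||P^O_S\cap Q|-\lambda(S)|Q|\bigr|\le C_\e L^{ds/k+\e}$, where $|Q|=L^d$. Thus $P^O_A$ and $P^O_B$ have the \emph{same} (positive) density $\lambda(A)=\lambda(B)$, with a power-saving error at \emph{every} scale and uniformly over orbits. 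The combinatorial core --- which I expect to be the main obstacle --- is to deduce from this the Hall inequality $|(F+T)\cap P^O_B|\ge|F|$ for all finite $F\subseteq P^O_A$, with $M$ suitably large. Roughly: if $F$ is $M$-dense in its bounding box $Q_0$ of side $L$ then $F+[-M,M]^d$ contains $Q_0$ enlarged by $M$ on each side, so $|(F+T)\cap P^O_B|\ge\lambda(B)(L+2M)^d-C_\e L^{ds/k+\e}$, which beats $|F|\le\lambda(A)L^d+C_\e L^{ds/k+\e}$ since $d-1>ds/k$; if $F$ is sparse, $F+T$ multiplies the count by essentially $|T|$, leaving ample room; and the intermediate range is handled by splitting $F$ at an auxiliary scale and using the discrete isoperimetric inequality $|F+[-M,M]^d|-|F|\gtrsim_d M|F|^{(d-1)/d}$ to keep the growth ahead of the discrepancy defect, the smallest scales (down to $F$ a single point) being absorbed by taking $M$ large enough that every box of side $M$ meets both $P^O_A$ and $P^O_B$. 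Granting this, Hall's condition holds in every orbit; the matching theorem then yields a bounded-displacement bijection $\psi_O\colon A\cap O\to B\cap O$ for each $O$, and $\psi:=\bigcup_O\psi_O\colon A\to B$ (a last appeal to the Axiom of Choice) is a bijection with all displacements in the finite set $T\subseteq G$ of translations --- that is, an equidecomposition of $A$ and $B$ by translations.
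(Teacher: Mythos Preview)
Your approach is essentially Laczkovich's original one --- set up a bipartite matching between $A$ and $B$ using a finite set $T$ of translations, verify Hall's condition via a discrepancy estimate, and invoke the infinite Hall/Rado theorem --- and is genuinely different from the route the paper takes. The paper instead follows the Marks--Unger framework: it reduces to the torus, picks the $\vvec{x}_i$ \emph{randomly} (rather than from a number field) so that Lemma~\ref{lem:discrep} holds almost surely, builds a bounded real-valued $(\ind_A-\ind_B)$-flow $f_\infty$ in the Schreier graph $G_d$ as a uniform limit of locally defined approximations $f_m$ (Lemma~\ref{outline:lem:realFlows}), rounds $f_\infty$ to an integer flow via the Integral Flow Theorem (Theorem~\ref{th:IFT}), and converts that to an equidecomposition by Lemma~\ref{outline:lem:matching}. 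The paper notes explicitly (end of Section~\ref{subsec:realFlows}) that this much already yields Theorem~\ref{th:L}. The flow approach sidesteps any direct Hall verification --- the discrepancy bound enters only through controlling $\|\fout{f_m}-\ind_A+\ind_B\|_\infty$ and $\|f_m-f_{m-1}\|_\infty$ --- and its real payoff, irrelevant for Theorem~\ref{th:L} but central to the paper, is that each $f_m$ is a \emph{local} function of $A$ and $B$, which is what opens the door to the Borel and Jordan-measurable refinements.

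One caution on your sketch: the Hall verification is where the argument is thinnest, and the case analysis as written does not close. Your transfer estimate $\bigl||P^O_S\cap Q|-\lambda(S)|Q|\bigr|\le C_\e L^{ds/k+\e}$ is only for \emph{boxes} $Q$, but for an arbitrary finite $F\subseteq P^O_A$ the set $F+T$ has no box structure, so you cannot directly estimate $|(F+T)\cap P^O_B|$ from it; your isoperimetric bound controls $|F+T|$, not its intersection with $P^O_B$. Laczkovich avoids this by not matching $P^O_A$ to $P^O_B$ directly: the key combinatorial lemma is that any subset of $\I Z^d$ satisfying the box-discrepancy bound with density $\alpha$ admits a bounded bijection to a fixed regular set of that density (``uniformly spread''), and one then composes the two bijections. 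The Hall verification in that auxiliary problem is tractable precisely because one side is a lattice, so arbitrary $F$ can be compared against unions of boxes. Your direct $A$-to-$B$ matching would need this intermediate step (or a substantially reworked argument) to go through.
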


The subsequent papers~\cites{GrabowskiMathePikhurko17,MarksUnger17} on circle squaring in fact prove  appropriate ``constructive'' versions of Theorem~\ref{th:L} and derive the corresponding circle squaring results as special cases. The aim of this paper is to extend this line of research.  
In the context of circle squaring, we prove the following result which decreases the Borel complexity of the pieces by two hierarchy levels and ensures that the boundary of each piece is ``small'' in a strong sense.

\begin{theorem}
\label{th:circleSquare}
In $\I R^2$, a closed disk and a closed square of the same area can be equidecomposed  using translations so that every piece has
boundary of upper Minkowski dimension at most~$1.987$, belongs to $\bB(\bSigma_2^0)$ (i.e.,\ is a Boolean combination of $F_\sigma$ sets), and has positive Lebesgue measure.
\end{theorem}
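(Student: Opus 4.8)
The plan is to deduce Theorem~\ref{th:circleSquare} from a quantitative strengthening of Theorem~\ref{th:L} and then specialise. Let $A$ be the closed disk and $B$ the closed square of equal area in $\I R^2$. Their boundaries are a circle and a union of four segments, hence rectifiable curves, so $\boxdim(\partial A)=\boxdim(\partial B)=1<2$. The general statement I would establish is: any bounded $A,B\subseteq\I R^k$ with $\lambda(A)=\lambda(B)>0$ and $\boxdim(\partial A),\boxdim(\partial B)<k$ are equidecomposable by translations into finitely many pieces each of which (a) lies in $\bB(\bSigma_2^0)$, (b) has positive Lebesgue measure, and (c) has boundary of upper Minkowski dimension at most an explicit $d=d\bigl(k,\boxdim(\partial A),\boxdim(\partial B)\bigr)<k$. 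Taking $k=2$ and $\boxdim(\partial A)=\boxdim(\partial B)=1$ and tracking the constants produced by the construction gives $d\le 1.987$, which is Theorem~\ref{th:circleSquare}.

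For the general statement I would follow the flow/matching paradigm of Laczkovich and of Marks--Unger. Laczkovich's Diophantine approximation lemma provides finitely many translation vectors $v_1,\dots,v_d\in\I R^k$ generating a free $\I Z^d$-action whose orbits equidistribute with small discrepancy relative to boxes; combined with $\lambda(A)=\lambda(B)$, this yields a bounded \emph{real}-valued flow $f$ on the Cayley graph of $\I Z^d$ with $\partial f=\ind_A-\ind_B$, i.e.\ a fractional transport of $\ind_A$ onto $\ind_B$ along uniformly bounded $\I Z^d$-words. It then suffices to round $f$ to a bounded \emph{integer}-valued flow with the same boundary in a Borel fashion, since such a flow encodes a genuine equidecomposition in which each piece is a union of points of $A$ routed along a fixed word and translated by the displacement of that word.

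The new work is to carry out this rounding on a carefully chosen multiscale grid so that the resulting pieces are simultaneously low in Borel complexity and small in boundary. I would fix scales $\delta_1>\delta_2>\dots\to0$ with $\delta_{n+1}=\delta_n^{1+\eta_n}$ for a summable sequence $(\eta_n)$, and at stage $n$ adjust the current integer approximation only inside $\delta_n$-cells, in a ``toast''-type pattern in which stage-$n$ corrections are confined to cells that were frozen at a coarser scale. Then each cell's final label stabilises after finitely many stages and depends on the data in an $F_\sigma$ way, so ``$x$ ends up in piece $P$'' is an increasing union of closed conditions and $P\in\bB(\bSigma_2^0)$. For the boundary, $\partial P$ is contained in $\partial A$ (or $\partial B$) together with the boundaries of those stage-$n$ cells that are ever subdivided; the number of stage-$n$ cells meeting $\partial A$ is $\delta_n^{-\boxdim(\partial A)-o(1)}$, while the number that must be subdivided later is controlled by the stage-$n$ rounding error, so a geometric estimate on $\delta_m$-covering numbers bounds $\boxdim(\partial P)$ by $\max\{\boxdim(\partial A),\boxdim(\partial B)\}$ plus a quantity that tends to $0$ as $\sum\eta_n\to0$. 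Positivity of the measure of each piece is arranged by bundling cells into macroscopic blocks before the final labelling, so that no piece is null.

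The main obstacle is the tension between requirements (a) and (c): fast stabilisation, needed for the $\bB(\bSigma_2^0)$ bound, pushes toward aggressive subdivision, which roughens the internal boundaries, whereas keeping $\boxdim(\partial P)<k$ limits how much may be subdivided per stage. Reconciling them forces a delicate choice of the schedule $(\delta_n)$ and of the permitted magnitude of the stage-$n$ correction, together with a proof that each one-step rounding error can be absorbed within the current scale without cascading to coarser scales. Showing that a Borel integer flow obeying all of these scheduling constraints exists --- presumably via a scale-by-scale Borel analogue of an integral-flow or Hall-type theorem on the bounded-degree graph --- is the technical heart of the argument.
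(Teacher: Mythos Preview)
Your overall plan coincides with the paper's: Theorem~\ref{th:circleSquare} is deduced from the general Theorem~\ref{th:main}, and the latter is proved via Laczkovich's discrepancy estimate (Lemma~\ref{lem:discrep}), the Marks--Unger real-valued flows (Lemma~\ref{outline:lem:realFlows}), and a toast-type rounding to an integer flow. However, two steps of your sketch do not work as stated.

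First, your boundary-dimension estimate is too optimistic. You claim $\boxdim(\partial P)\le\max\{\boxdim\partial A,\boxdim\partial B\}$ plus a term that tends to $0$ as $\sum\eta_n\to0$; if that were so you would get pieces with boundary dimension arbitrarily close to~$1$, not merely $1.987$. In the actual construction (Section~\ref{sec:Dim}), the integer flow on a stage-$i$ component of diameter $\sim r_i$ is a local function of $A,B$ and strips with locality radius of order $2^{m_i}+r_i^{d+1+o(1)}$, where $2^{m_i}\sim r_i^{d/\varepsilon}$ is \emph{forced} by the requirement~\eqref{eq:miDefAgain} that the cumulative error of $f_{m_i}$ over an entire component be below $1/2$. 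This radius enters the covering count as a $d$th power and cannot be made $O(1)$ by any scheduling of your $\delta_n$. Balancing it against the measure $O(r_{i-1}'/r_i)$ of the leftover (Lemma~\ref{lem:Jmeas}) yields the explicit $\zeta$ of~\eqref{eq:zeta}; with $k=2$, $d=3$, $\varepsilon\approx1/2$, $\epsilon\approx1$ this gives $k-\zeta=145/73<1.987$, but nothing close to~$1$.

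Second, the assertion that ``each cell's final label stabilises after finitely many stages,'' hence each piece lies in $\bB(\bSigma_2^0)$, is not automatic. In any toast construction of this type, a null but nonempty set of points is never reached by $\bigcup_i J_i$; extending the integer flow over this residual set while keeping the Borel bound requires the auxiliary toast sequences $K_i,L_i$ of Section~\ref{sec:covering}, the compactness partition $\{T_{\vec p}\}$ of Lemma~\ref{lem:Licover}, and the overwriting analysis of Lemma~\ref{lem:Zpcomplexity}. Your sketch does not address this residual set at all. Likewise, the paper obtains positive measure not by ``bundling cells into macroscopic blocks'' but by pre-selecting, for each translation vector $\vvec t$ actually used, a small non-null subset of $(A+\vvec t)\cap B$ (Claim~\ref{cl:preSelect} and the ``moreover'' part of Lemma~\ref{outline:lem:matching}); this relies on verifying hypothesis~\eqref{eq:it:c}, which holds trivially for a disk and a square.
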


Recall that a subset $X\subseteq \mathbbm{R}^k$ is \emph{Jordan measurable} if its indicator function is Riemann integrable. An equivalent definition is that $X$ is bounded and $\lambda(\partial X)=0$. It easily follows that any bounded set $X\subseteq\mathbbm{R}^k$ with $\boxdim(\partial X)<k$ is Jordan measurable. Therefore, Theorem~\ref{th:circleSquare} implies that circle squaring is possible with Jordan measurable pieces, which addresses questions by Laczkovich~\cite{LaczkovichTalk} and  M\'{a}th\'{e}~\cite{Mathe18icm}*{Question~6.2}. An advantage of a Jordan measurable circle squaring is that an arbitrarily large portion of it can be described in an error-free way with finitely many bits of information. Namely, for every $\e>0$, if $n$ is large enough, then at most $\e n^2$ boxes of the regular $n\times n$ grid on the equidecomposed unit square can intersect more than one piece and thus $O(n^2)$ bits are enough to describe our equidecompostion up to a set of measure at most~$\e$. (Furthermore, the dimension estimate of Theorem~\ref{th:circleSquare} shows that $\e$, as a function of $n\to\infty$, can be taken to be~$n^{-0.013+o(1)}$.)

%A Jordan measurable circle squaring can, in a sense, be drawn: for every $\e>0$ if $n$ is large enough then at most $\e n^2$ boxes of the regular $n\times n$ grid on the equidecomposed square can intersect more than one piece. 

%Similarly to~\cites{GrabowskiMathePikhurko17,MarksUnger17}, w
We will obtain Theorem~\ref{th:circleSquare} as a special case of the following general result. For a set $A\subseteq \I R^k$, let $\C T_{A}:=\{A+\vvec{t}:\vvec{t}\in\I R^k\}$ consist of all its translations. For a family $\mathcal{F}$ of sets, let $\bSigma(\mathcal{F})$ be the collection of all countable unions of sets in $\mathcal{F}$. Also, recall that $\bSigma_1^0$ stands for the collection of open sets in $\I R^k$.

\begin{theorem}
\label{th:main}
If $k\ge1$ and $A,B\subseteq \mathbbm{R}^k$ are bounded sets such that $\lambda(A)=\lambda(B)>0$, $\boxdim(\partial A)<  k$ and $\boxdim(\partial B)<k$, then $A$ and $B$ are equidecomposable by translations so that all the following statements hold simultaneously:
\begin{enumerate}[(a)]
\item \label{it:a} for some $\zeta>0$ that depends on $k$, $\boxdim(\partial A)$ and $\boxdim(\partial B)$ only, the topological boundary of each piece has upper Minkowski dimension at most $k-\zeta$,
\item \label{it:b} each piece belongs to $\bB(\bSigma(\bB (\bSigma_1^0\cup \C T_A\cup \C T_B)))$,
\item \label{it:c} if
 \begin{equation}\label{eq:it:c}
  \lambda\left(\left\{\vvec{t}\in\mathbbm{R}^k: (A+\vvec{t})\cap B\neq \emptyset \text{ and } \lambda\left((A+\vvec{t})\cap B\right)=0\right\}\right) = 0
  \end{equation}
 (that is, the set of vectors $\vvec{t}\in\I R^k$ such that $(A+\vvec{t})\cap B$ is non-empty and Lebesgue-null has measure 0),  then each piece has positive Lebesgue measure.
\end{enumerate}
\end{theorem}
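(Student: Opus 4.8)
\textbf{Proof strategy for Theorem~\ref{th:main}.}
The plan is to follow the flow-based approach of Laczkovich, as recast in a ``constructive'' form by Grabowski--M\'ath\'e--Pikhurko and Marks--Unger, but to track simultaneously the Minkowski dimension of the boundaries and the Borel complexity of the pieces, and to arrange positivity of measure under the hypothesis~\eqref{eq:it:c}. First I would reduce to the setting where $A,B\subseteq[0,1)^k$ are viewed inside the torus $\I T^k=\I R^k/\I Z^k$: since $\boxdim(\partial A),\boxdim(\partial B)<k$, the sets $A$ and $B$ are Jordan measurable, and after a harmless scaling and translation we may assume $A,B$ live in a fundamental domain so that translations by a suitably chosen finite set of vectors generate a free action of $\I Z^d$ on $\I T^k$ for some $d$. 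The classical input (Laczkovich's number-theoretic lemma, or the uniform-distribution/discrepancy estimates in \cite{Laczkovich92}) gives real vectors $\vvec{u}_1,\dots,\vvec{u}_d$ such that the ``edge counting'' function on the bipartite-type graph between $A$ and $B$ has bounded discrepancy; this is where the dimension hypothesis on the boundaries enters, ensuring the number of grid boxes meeting $\partial A$ or $\partial B$ at scale $\delta$ is $O(\delta^{-k+\zeta_0})$ for some $\zeta_0>0$, which in turn bounds the discrepancy by $O(N^{k-\zeta_0})$ on $N$-boxes.

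Next I would set up the graph $G$ on the disjoint union $A\sqcup B$ (or better, on $\I T^k$ with two ``colours'') whose edges are given by the translations $\vvec{u}_1,\dots,\vvec{u}_d$, and invoke the fact that a bounded-discrepancy $\I Z^d$-flow between $\ind_A$ and $\ind_B$ exists and, crucially, can be chosen \emph{Borel} and with good complexity. Here I would use the Marks--Unger machinery (cohomological/flow argument combined with a Borel version of the Rado--Hall / integer max-flow-min-cut theorem) to produce a Borel integer-valued flow $f$; the quantity one gains over \cite{MarksUnger17} is that the sets defining $f$ can be taken to be obtained from open sets and the translates $\C T_A,\C T_B$ by countable unions and Boolean operations of bounded ``depth'', giving membership in $\bB(\bSigma(\bB(\bSigma_1^0\cup\C T_A\cup\C T_B)))$ for the pieces. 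The pieces of the equidecomposition are then read off from $f$ in the usual way: for each finite ``route'' $\vvec{r}$ (a sequence of steps among $\pm\vvec{u}_i$ together with an integer version of ``how the flow pushes mass''), the piece $P_{\vvec{r}}$ is the set of points of $A$ whose flow-orbit is transported by $\vvec{r}$ to $B$, and each $P_{\vvec{r}}$ is a Boolean combination of translates of the sets cut out by $f$. Since $f$ is bounded, there are only finitely many routes, so only finitely many pieces.

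For part~\eqref{it:a}, I would estimate $\boxdim(\partial P_{\vvec{r}})$: the boundary of each piece is contained in a finite union of translates of $\partial A$, $\partial B$, and of the ``jump sets'' of the flow $f$; the jump sets of $f$ are themselves, by construction, contained in translates of the hyperplanes/boundaries arising from the combinatorial cut, which are again controlled by translates of $\partial A$ and $\partial B$ (this is the step where one must be careful that the Borel flow can be chosen so that its level sets have small boundary — one should build $f$ by a greedy/canonical procedure along the grid so that its discontinuities sit on the $\delta$-grid skeleton intersected with a neighbourhood of $\partial A\cup\partial B$). Since a finite union of translates of sets of upper Minkowski dimension $\le k-\zeta_0$ again has dimension $\le k-\zeta_0$, we may take $\zeta=\zeta_0$. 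For part~\eqref{it:c}, under hypothesis~\eqref{eq:it:c} I would argue that each nonempty piece $P_{\vvec{r}}$ must have positive measure: if $P_{\vvec{r}}$ were nonempty but null, then picking $\vvec{x}\in P_{\vvec{r}}$ and letting $\vvec{t}$ be the total translation vector carrying $\vvec{x}$ into $B$, the set of ``bad'' vectors in~\eqref{eq:it:c} would contain a neighbourhood-sized obstruction — more precisely one shows that the collection of routes $\vvec{r}$ realised on a positive-measure set is already enough to cover $A$ up to measure zero, and then a nonempty null piece can be absorbed (merged) into an adjacent positive-measure piece along the flow without destroying the other properties; the hypothesis~\eqref{it:c} guarantees this absorption can be done so that the merged piece still maps into $B$.

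\textbf{Main obstacle.} The hardest part is obtaining the Borel flow $f$ \emph{simultaneously} with (i) bounded discrepancy, (ii) low Borel complexity ($\bB(\bSigma(\bB(\bSigma_1^0\cup\C T_A\cup\C T_B)))$ rather than the $\bSigma_4^0$ of Marks--Unger), and (iii) level sets with boundary of Minkowski dimension $<k$. Achieving (ii) requires replacing the somewhat lossy Borel-combinatorics iterations (toast structures / hyperfinite exhaustions) with a more explicit construction: one wants to solve the flow problem ``locally'' on dyadic scales and glue, so that the only non-closed sets entering the definition are the countable unions coming from the scale parameter and the open sets coming from strict inequalities in the discrepancy bounds, keeping the Boolean depth uniformly bounded. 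Reconciling this explicitness with (iii) — keeping the flow's jumps confined to a thin neighbourhood of $\partial A\cup\partial B$ — is delicate because naive Borel selection spreads discontinuities everywhere; the resolution is to exploit that away from $\partial A\cup\partial B$ both $\ind_A$ and $\ind_B$ are locally constant on the grid, so a canonical flow can be taken locally constant there too, forcing its jump set into the desired small neighbourhood.
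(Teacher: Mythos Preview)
Your high-level plan (torus reduction, discrepancy via Laczkovich, integer flow via Marks--Unger, read off pieces) matches the paper, but the proposal has genuine gaps in the arguments for all three parts.

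\textbf{Part~\ref{it:a}.} Your claim that the jump set of the integer flow $f$ can be ``confined to a thin neighbourhood of $\partial A\cup\partial B$'' is not how the argument works, and in fact this cannot be arranged. The rounding of the real-valued flow $f_\infty$ to an integer flow is done along the boundaries of finite ``toast'' components $J_i$, and these boundaries are unions of \emph{strips} (translates of hyperplanes $\{x_1=\text{const}\}$), not neighbourhoods of $\partial A\cup\partial B$. The flow value on an edge inside $J_i$ depends on the combinatorics of the whole component of $J_i$, so $f$ jumps across the strip boundaries even deep inside the interiors of $A$ and $B$. The actual dimension bound comes from a delicate box-counting argument balancing two competing quantities: the number of translates of strips and of $A,B$ needed to express $f$ restricted to $J_i$ (this grows polynomially in $r_i$), versus the measure of the region not yet covered by $J_i$ (this decays like $r_{i-1}'/r_i$). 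To make both error terms $\le\delta^{\zeta}$ at scale $\delta$ one needs the specific doubly-exponential growth $r_i\approx r_{i-1}^2$ and a carefully chosen index $i=i(\delta)$; the constant $\zeta$ then comes out as an explicit function of $d$ and $\varepsilon$, and is \emph{strictly smaller} than the $\zeta_0$ coming from $\partial A,\partial B$ alone.

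\textbf{Part~\ref{it:b}.} Saying ``solve locally on dyadic scales and glue, keeping Boolean depth bounded'' does not explain where the two levels of Borel hierarchy are saved over Marks--Unger. The paper's mechanism is: (i) at stage $i$, round using the locally-defined approximation $f_{m_i}$ rather than the limit $f_\infty$ (this avoids one countable intersection), and (ii) run a separate toast sequence $(K_i,L_i)$ interleaved with $(J_i)$ to cover the null leftover $\mathbb{T}^k\setminus\bigcup J_i$ in a Borel way, using a compactness argument over finitely many shifts $\vec p$ of the base sets; the resulting sets $T_{\vec p}$ are shown to be Boolean combinations of $G_\delta$ sets. Without something playing the role of this second toast sequence, you have no Borel control on the null remainder, only Jordan-measurability.

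\textbf{Part~\ref{it:c}.} The absorption argument is wrong: two pieces can be merged only if they use the \emph{same} translation vector, but a null piece $P_{\vvec r}$ and a positive-measure neighbour $P_{\vvec r'}$ typically use different vectors, so merging destroys the equidecomposition. The hypothesis~\eqref{eq:it:c} is used differently: one first chooses the translation vectors $\vvec{x}_1,\dots,\vvec{x}_d$ so that every integer combination $\vvec t$ avoids the null ``bad'' set, hence $(A+\vvec t)\cap B$ is either empty or has positive measure. Then, \emph{before} building the flow, one pre-selects for each translation $\vvec t$ that will actually be used a small positive-measure seed $A_{\vvec t}\subseteq A\cap(B-\vvec t)$, disjointly and sparsely enough not to interfere with the main construction, and runs the flow argument on the remainder. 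Each final piece contains its seed, hence is non-null.
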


In fact, an explicit expression for a possible parameter $\zeta>0$ in Part~\ref{it:a} as a function of  $k$, $\boxdim(\partial A)$ and $\boxdim(\partial B)$ is given in~\eqref{eq:zeta}. For circle squaring, the bound in~\eqref{eq:zeta} states that any $\zeta<1/73$ suffices. Interestingly, by Part~\ref{it:a}, each non-null piece of the equidecomposition individually satisfies the hypothesis of the theorem and is therefore equidecomposable to a ball or a cube of the same measure. Moreover, each non-null piece automatically has non-empty interior. (Note that a Jordan measurable subset of $\I R^k$ is null if and only if it has empty interior.)

If the sets $A$ and $B$ in Theorem~\ref{th:main} are Borel, then, by Part~\ref{it:b}, all pieces of the equidecomposition can be taken to be Borel with a good control over their Borel complexity (that improves upon the analogous result in~\cite{MarksUnger17} by two hierarchy levels). For example, if each of $A$ and $B$ is  simultaneously  a $G_\delta$ set and an $F_\sigma$ set, that is, belongs to $\bDelta_{2}^0:=\{X\in \bSigma_2^0\mid \I R^k\setminus X\in \bSigma_2^0\}$  (in particular, if each of $A$ and $B$ is open or closed), then every piece in the equidecomposition is a Boolean combination of $F_\sigma$ sets, which follows from
$$
 \bB(\bSigma(\bB (\bDelta_2^0)))=\bB(\bSigma (\bDelta_2^0))\subseteq \bB(\bSigma(\bSigma_2^0))=\bB(\bSigma_2^0),
%\bB(\bSigma(\bB (\bSigma_2^0\cap \bPi_2^0)))=\bB(\bSigma (\bSigma_2^0\cap \bPi_2^0))\subseteq \bB(\bSigma(\bSigma_2^0))=\bB(\bSigma_2^0),
$$
 where  we use the easy facts that $\bDelta_{2}^0$ is closed under Boolean combinations and $\bSigma_2^0$ is closed under countable unions.

Regarding Part~\ref{it:c} of Theorem~\ref{th:main}, note that it is impossible to guarantee that all pieces in Theorem~\ref{th:main} have positive measure in general. For example, if $A$ contains a point $\vvec{x}$ such that the distance from $\vvec{x}$ to $A\setminus\{\vvec{x}\}$ is greater than the diameter of $B$, then every equidecomposition of $A$ to $B$ must include the  set $\{\vvec{x}\}$ as a single piece. However, the extra assumption of Part~\ref{it:c} applies in many natural cases: for example, it holds for a ball and a cube in any dimension (with arbitrary subsets of their boundaries removed), or for any open sets $A$ and $B$. 

It seems difficult to weaken the assumptions of Theorem~\ref{th:main} in some substantial way. Laczkovich~\cite{Laczkovich93}*{Corollary~3.5} showed that, for every $k\ge 1$, there is a bounded subset $A\subseteq \I R^k$ which is a countable union of pairwise disjoint convergent cubes but which is not equidecomposable to a single cube by translations. In particular, even in Theorem~\ref{th:L}, one cannot replace the assumption that $\boxdim(\partial A),\boxdim(\partial B)<k$ by requiring that, for example, the Hausdorff dimension of the boundaries is at most $k-1$. Another family that refutes various extensions of Theorem~\ref{th:L} (and thus of Theorem~\ref{th:main}) comes from the work of Laczkovich~\cite{Laczkovich03}*{Theorem~3} who showed that for every $k\ge 2$ there are continuum many \emph{Jordan domains} (i.e.\ homeomorphic images of the closed ball), each of volume 1 with everywhere differentiable boundary, so that no two are equidecomposable using any amenable subgroup of isometries. (In particular, this applies to the group $\I R^k$ of translations for any $k$ and the full group of isometries of $\I R^2$, which are amenable.)

On the other hand, if $k\ge 3$ and one allows all orientation-preserving isometries of $\I R^k$, then the obvious necessary conditions for a set to be equidecomposable to a cube using Lebesgue (resp.\ Baire) measurable pieces turn out to be sufficient, see Grabowski, M\'ath\'e and Pikhurko~\cite{GrabowskiMathePikhurko22}*{Corollary~1.10}. However, nothing like this is known for Borel and Jordan measurable equidecompositions. 

\subsection{Historical Background}

Tarski's Circle Squaring Problem has its roots in the theory of paradoxical decompositions. The most famous result in this area is undoubtedly the theorem of Banach and Tarski~\cite{BanachTarski24} that, if $k\ge3$ and $A,B\subseteq \mathbbm{R}^k$ are bounded and have non-empty interior, then $A$ and $B$ are equidecomposable. As a special case, one obtains the striking Banach--Tarski Paradox that a solid ball in $\mathbbm{R}^3$ admits an equidecomposition to two disjoint copies of itself. 

The assumption $k\ge3$ is necessary in the Banach--Tarski Theorem, as Banach~\cite{Banach23} proved that the Lebesgue measure on $\mathbbm{R}$ or $\mathbbm{R}^2$ can be extended to a finitely additive measure on all subsets which is invariant under isometries.
Thus, in $\mathbbm{R}^2$, there cannot exist equidecomposable sets of different Lebesgue measure. The theory of amenable groups, pioneered by von Neumann~\cite{Neumann29}, originated as an attempt to obtain a deeper group-theoretic understanding of the change in behaviour between dimensions two and three; the key difference turns out to be that the group of isometries of $\mathbbm{R}^k$ is amenable for $k\in\{1,2\}$ but not for $k\ge 3$. Generally, if $A$ and $B$ are Lebesgue measurable subsets of $\mathbbm{R}^k$ which are equidecomposable using an amenable subgroup of isometries, then they must have the same measure. Note that the group of translations of $\mathbbm{R}^k$ is amenable for all $k\ge1$, and so the condition that $\lambda(A)=\lambda(B)$ is necessary in Theorem~\ref{th:L} in general.

Decades before Laczkovich squared the circle, Dubins, Hirsch and Karush~\cite{DubinsHirschKarush63} proved that a disk is not \emph{scissor-congruent} to a square, meaning that there does not exist a squaring of the circle with pieces that are interior-disjoint Jordan domains, even if boundaries are ignored. This is in strong contrast to the case of polygons: the classical Wallace--Bolyai--Gerwein Theorem states that it is possible to dissect a polygon into finitely many pieces using straight lines and, ignoring boundaries, reassemble them to form any polygon of the same area; see e.g.,~\cite{TomkowiczWagon:btp}*{pp.~34--35}. Another related result, by Gardner~\cite{Gardner85}, asserts that there is no solution to the Circle Squaring Problem using isometries from a locally discrete subgroup of isometries.

For more background, we refer the reader to the monograph of Tomkowicz and Wa\-gon~\cite{TomkowiczWagon:btp}, whose Chapter 9 is dedicated to circle squaring.

\subsection{Some Ideas Behind the Proof of Theorem~\ref{th:main}}
\label{sec:ideas}

Let us give a very high-level outline of the proof of Theorem~\ref{th:main}; all formal definitions will appear later in the paper. A more detailed sketch of the partial result that all pieces can be made Jordan measurable is given in Section~\ref{sec:JSec}.

Like in the previous work, we assume that $A$ and $B$ are subsets of the torus $\I T^k:=\I R^k/\I Z^k$ and do all translations modulo $1$. We pick a suitable (somewhat large) integer $d$ and vectors $\vvec{x}_1,\ldots,\vvec{x}_d\in\I T^k$ satisfying certain conditions (that are satisfied with positive probability by random vectors). Let $G_d$ be the graph on $\I T^k$ where we connect $\vvec{u}$ to $\vvec{u}+\sum_{i=1}^d n_i \vvec{x}_i$ for each non-zero $(n_1,\ldots,n_d)\in\{-1,0,1\}^d$. Assuming that $\vvec{x}_1,\ldots,\vvec{x}_d$ do not satisfy any linear dependencies with rational coefficients, each component of $G_d$ is a $(3^d-1)$-regular graph on a copy of~$\I Z^d$. 

Our aim is to ``construct'' a bijection from $A$ to $B$ such that, for some constant $r$, each element of $A$ is moved by the bijection by distance at most $r$ within the graph $G_d$. Such a bijection naturally gives an equidecomposition between $A$ and $B$ that uses at most $(2r+1)^d$ pieces. As was observed by Marks and Unger~\cite{MarksUnger17}, the problem of finding such a bijection can be reduced to finding a  uniformly bounded integer-valued flow within the graph $G_d$, where the demand is 1 on $A$, $-1$ on $B$ and 0 elsewhere (see Lemma~\ref{outline:lem:matching} here). 

As one of the first steps of their proof, Marks and Unger~\cite{MarksUnger17} constructed a real-valued (i.e. not necessarily integer-valued) flow $f_\infty$ which satisfies this demand; see Lemma~\ref{outline:lem:realFlows}. The flow $f_\infty$ is defined to be a pointwise limit of a sequence of flows $f_m$ that are locally constructed from $A$ and $B$. Since the collection of subsets of $\I T^k$ with boundary of upper Minkowski dimension at most $k-\zeta$ is not a $\sigma$-algebra, we should not use the values of $f_\infty$ if we want to produce pieces with this structure. Instead, we work with the locally defined approximations $f_m$.
%(but using the approximations $f_m$ will be allowed for our purposes). We get around this restriction by making sure that we construct our pieces, say inside the set $A$, in countably many stages so that the pieces can only grow and exhaust the whole set $A$ up to a null-set. Then the final pieces are necessarily Jordan measurable (by Lemma~\ref{lem:JordanLimit}). 

For flow rounding (that is, making all flow values integral), we construct Jordan measurable subsets $J_1,J_2,\ldots$ of $\I T^k$ such that their union $\bigcup_{i=1}^\infty J_i$ is co-null in $\I T^k$ and  $(J_i)_{i=1}^\infty$ is a \emph{toast sequence} (see Definition~\ref{def:scaff}), roughly meaning that each $J_i$ induces only finite (in fact, uniformly bounded) components in $G_d$ and the graph boundaries of all components arising this way are well separated from each other. In fact, each set $J_i$ is a finite union of \emph{strips}, i.e., sets of the form $[a,b)\times[0,1)^{k-1}$; in particular, it is Borel and its boundary is $(k-1)$-dimensional. The idea of using toast sequences to construct satisfying assignments was previously applied to many problems in descriptive combinatorics (with the exact definition of ``toast sequence'' often being problem-specific). For a systematic treatment of this idea for general actions of $\I Z^d$, we refer the reader to Greb\'\i k and Rozho\v n~\cite{GrebikRozhon23a}.

We can view the toast sequence $(J_1,J_2,\ldots)$ as a process where, at time $i$, vertices of the set $J_i$ arrive and our algorithm has to decide the value of the final integer flow $f$ on every edge with at least one vertex in this set. We are not allowed to look into the future nor modify any already defined values of the flow~$f$. We prove that, if all things are set up carefully, then this is indeed possible to do and, in fact, there are some constants $m_i$ and $R_i$ such that the value of $f$ on any edge $xy\in E(G_d)$ intersecting $J_i$ can be computed only from the current picture in the $R_i$-neighbourhood of $\{x,y\}$ in $G_d$ and the values of the approximation $f_{m_i}$ of $f_\infty$ there. 
Here, a key challenge is that, when we round the flow on $J_i$, we have only incomplete information (namely, the flow $f_{m_i}$ which meets the demands only within some small error). The idea that allows us to overcome this difficulty is that, if the cumulative error of $f_{m_i}$ on each component of $J_i$ is small, then whenever our algorithm encounters some inconsistency, it can round it to the nearest integer and produce values that are in fact perfectly compatible with all past and future choices of the algorithm.

The proof coming from the above arguments, with a careful choice of how the size of the components of $J_i$ can grow with $i$, produces a partial equidecomposition between $A$ and $B$ so that the topological boundaries of the pieces as well as the unmatched part of $A$ and of $B$ have upper Minkowski dimension less than $k$. Thus if we can extend this equidecomposition to all of $A$ and $B$ (even by using the Axiom of Choice), then we can achieve Part~\ref{it:a} of Theorem~\ref{th:main}. 

However, these ideas do not seem to be enough to yield pieces which satisfy Parts~\ref{it:a} and~\ref{it:b} of the theorem simultaneously. If $A$ and $B$ are Borel, then all pieces in the partial equidecomposition are Borel, but we did not see a way to match the remaining parts of $A$ and $B$ in a Borel way by using the results of Marks and Unger~\cite{MarksUnger17} as a ``black box''. 
So, in order to achieve Part~\ref{it:b} of Theorem~\ref{th:main},  we essentially run the proof from~\cite{MarksUnger17} on the complement of $\bigcup_{i=1}^\infty J_i$, making sure that, when we define the sets $J_i$ in the first place, we leave enough  remaining ``wiggle'' space. Running the Borel proof on the remaining set 
%in a way that is compatible with the construction of the equidecomposition with sets of small boundary described above 
is probably the most technical part of this paper. This extra work is worth the effort though, since our modification of both the construction and analysis of Marks and Unger~\cite{MarksUnger17} also allows to reduce the Borel complexity of the obtained pieces. %(Using $f_{m_i}$ instead of $f_\infty$ during flow rounding reduces the level by one while various tricks when defining and analysing the pieces gives a further reduction.)

For Part~\ref{it:c}, we choose  $\vvec{x}_1,\ldots,\vvec{x}_d\in\I T^k$
so that, additionally, for every vector $\vvec{t}$ which is an integer linear combination of them, the intersection $(A+\vvec{t})\cap B$ is null only if it is empty.
Recall that our equidecomposition translates each
%element of $A$ to an element of $B$ at bounded distance, say at most $r$, in the graph $G_d$. In other words, 
$\vvec{u}\in A$ by a vector of the form $\sum_{i=1}^dn_i\vvec{x}_i$ where $n_1,n_2,\ldots,n_d$ are integers between $-r$ and $r$, for some~$r$. Denote  by $\vvec{t}_1,\ldots,\vvec{t}_N$ the vectors $\vvec{t}$ of this form for which $(A+\vvec{t})\cap B$ is non-empty (and thus non-null). 
%If \eqref{eq:it:c} holds then for almost all $\vvec{x}_1,\ldots,\vvec{x}_d\in\I T^k$,  $(A+\vvec{t}_i)\cap B$ is non-null for every $i$. 
To achieve Part~\ref{it:c}, for each $1\leq i\leq N$, we pre-select a small non-null subset $A_{\vvec{t}_i}$ of $(B-\vvec{t}_i)\cap A$ to be a part of the piece of the equidecomposition that is translated according to $\vvec{t}_i$. 
This is done in such a way that the sets $A_{\vvec{t}_1}+\vvec{t}_1,\ldots,A_{\vvec{t}_N}+\vvec{t}_N$ are pairwise disjoint and our proof of Parts~\ref{it:a} and~\ref{it:b} still applies to the remaining sets $A\setminus \bigcup_{i=1}^NA_{\vvec{t}_i}$ and $B\setminus \bigcup_{i=1}^N(A_{\vvec{t}_i}+\vvec{t}_i)$, using the same translation vectors $\vvec{t}_1,\ldots,\vvec{t}_N$. Thus, all pieces are non-null by construction.

\subsection{Organisation}
\label{subsec:organisation}

As Theorem~\ref{th:circleSquare} is a direct consequence of Theorem~\ref{th:main} and the bound on $\zeta$ in~\eqref{eq:zeta}, all our focus will be on proving Theorem~\ref{th:main}.

In Section~\ref{sec:prelim}, we present some notation, introduce a few of the tools used in the paper
and prove some auxiliary results. In particular, we present the reduction of Marks and Unger~\cite{MarksUnger17} that it is enough to construct a bounded integer-valued flow $f$ from $A$ to $B$ in $G_d$ with appropriate properties (Lemma~\ref{outline:lem:matching}). This lemma will also take care of Part~\ref{it:c} of the theorem. 

Given these preliminaries, we can give a more detailed outline of the proof of Theorem~\ref{th:main}  in Section~\ref{sec:JSec}. There, we will mostly concentrate  on the (less technical) special case of making the pieces just Jordan measurable.
%, which itself addresses questions of Laczkovich~\cite{LaczkovichTalk} and  M\'{a}th\'{e}~\cite{Mathe18icm}*{Question~6.2}. 
We will conclude that section by discussing some of the additional ideas which allow us to control the dimension of the boundaries and the Borel complexity of the pieces. 

Then we turn our attention to proving Theorem~\ref{th:main} in earnest. In Section~\ref{sec:realFlows}, we describe the procedure from~\cite{MarksUnger17} for constructing a real-valued flow from $A$ to $B$ in the graph $G_d$. In Section~\ref{sec:covering}, we construct the toast sequences that are needed in the proof.  In Section~\ref{sec:integerFlow} we show how to transform a converging sequence of real-valued flows into a bounded integer-valued flow using a toast sequence. In Section~\ref{sec:pieces}, we define our final flow $f$ (that produces the required equidecomposition via Lemma~\ref{outline:lem:matching}) and
analyse the upper Minkowski dimension of the boundaries of the pieces (in Section~\ref{sec:Dim}) as well as the Borel complexity of the pieces (in Section~\ref{sec:BorelComplexity}).

Even though many of our auxiliary results can be strengthened or generalised in various ways, we try to present just the simplest versions that suffice for proving Theorem~\ref{th:main}, since its proof is already very intricate and delicate. In fact, we sometimes present a separate (sketch) proof of a special case before giving the general proof, when we believe that this improves readability.

%On the other hand, we try to make this paper, which uses results from many fields, as self-contained as is reasonably possible. 

\section{Preliminaries}
\label{sec:prelim}

We denote by $\I R$ and $\I Z$ the sets of reals and integers, respectively. For $x\in\I R$, we define its \emph{nearest integer} $[x]$ to be equal to $\lfloor x\rfloor$ if $x-\lfloor x\rfloor <1/2$ and $\lceil x\rceil$ otherwise. The set of all values assumed by a function $f$ is denoted by $\range(f)$. When we write e.g.,\ $(a_i,b_i)_{i=1}^\infty$, we mean the sequence $(a_1,b_1,a_2,b_2,\ldots)$. The indicator function of a set $X$ is denoted by~$\ind_X$. By $\log$ we mean the natural logarithm.

\subsection{Discrepancy Bounds for the Torus}
\label{subsec:disc}

%\subsection{The Torus}\label{sec:Torus}

The \emph{$k$-torus} $\mathbbm{T}^k$ is the quotient group of $(\mathbbm{R}^k,+)$ modulo $\mathbbm{Z}^k$.
When we write the sum of $\vvec{u},\vvec{v}\in\I T^k$, we take it modulo 1 and thus $\vvec{u}+\vvec{v}$ is an element of $\I T^k$. In particular, the translation of $Y\subseteq \mathbb{T}^k$ by a vector $\vvec{t}\in\I T^k$,
 $$
  Y+\vvec{t}:=\left\{\vvec{y}+\vvec{t}: \vvec{y}\in Y\right\}\subseteq \I T^k,
  $$
  is  done modulo 1. Similarly,  scalar multiples of vectors in $\I T^k$ are also taken modulo 1.
  
For notational convenience, we may occasionally identify $\mathbb{T}^k$ with~$[0,1)^k$. While this identification is not a topological homeomorphism, it does preserve the set families that are of interest to us, namely, sets that are Borel, are Jordan measurable, have boundary of upper Minkowski dimension at most $k-\zeta$, etc.
% and assume that it inherits its topological and measure-theoretic structure from~$\mathbbm{R}^k$.
  The restriction of the Lebesgue measure on~$\mathbbm{R}^k$ to the torus will be denoted by the same symbol~$\lambda$. We have $\lambda(\I T^k)=1$.

Given a finite set $F\subseteq \mathbbm{T}^k$ and a Lebesgue measurable set $X\subseteq \mathbbm{T}^k$, the \emph{discrepancy of $F$ relative to $X$} is defined to be
\begin{equation}\label{eq:discrepDef}D(F,X):= \big|\,|F\cap X| - |F|\cdot\lambda(X)\,\big|.\end{equation}
In other words, $D(F,X)$ is the deviation between $|F\cap X|$ and the expected size of this intersection if $F$ were a uniformly random subset of $\mathbbm{T}^k$ of cardinality $|F|$.

 A key tool used in this paper, as well as in~\cites{GrabowskiMathePikhurko17,MarksUnger17}, is the following discrepancy lemma of Laczkovich~\cite{Laczkovich92b}, whose proof sketch can be found in
 Appendix~\ref{app:discrep}.
 
\begin{lemma}[Laczkovich~{\cite{Laczkovich92b}*{Proof of Theorem~3}}; see also~{\cite{GrabowskiMathePikhurko17}*{Lemma~2.4}}]
\label{lem:discrep}
Let $X$ be a measurable subset of $\mathbbm{T}^k$ such that $k-1\le \boxdim(\partial X)<k$, let $d$ be a positive integer such that 
\[d>\frac{k}{k-\boxdim(\partial X)}\]
and let $\varepsilon\in\mathbbm{R}$ be such that
\[0 <\varepsilon<\frac{d(k-\boxdim(\partial X))-k}{k}.\]
If $\vvec{x}_1,\ldots,\vvec{x}_d$ are chosen uniformly at random from $\mathbbm{T}^k$ and independently of one another then, with probability $1$, there exists $c>0$ such that,  for every $\vvec{u}\in\mathbbm{T}^k$ and every integer $n\ge0$, we have
\[D\left(N_{n}^+[\vvec{u}],X\right) \leq c\cdot (n+1)^{d-1-\varepsilon},\]
 where we define 
 \beq{eq:Nn+}
 N_n^+[\vvec{u}]:=\left\{\vvec{u}+ \sum_{i=1}^dn_i\vvec{x}_i: (n_1,\ldots,n_d)\in\{0,\ldots,n\}^d\right\},
 \eeq
 calling it a \emph{discrete $(n+1)$-cube} (or just a \emph{discrete cube}).
\end{lemma}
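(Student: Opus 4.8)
## Proof Strategy for Lemma~\ref{lem:discrep}

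The plan is to reduce the statement for arbitrary measurable $X$ with $\boxdim(\partial X)<k$ to the case where $X$ is a single small box of the regular grid, where one can compute the discrepancy of a discrete cube directly using classical equidistribution estimates for linear sequences on the torus. First I would fix a scale $\delta>0$ (to be optimised as a function of $n$) and compare $X$ with its approximation by the grid of side-length $\delta$: let $X^-$ be the union of closed grid boxes entirely contained in $X$ and $X^+$ the union of grid boxes meeting $X$. Then $X^-\subseteq X\subseteq X^+$, and $X^+\setminus X^-$ is contained in the union of the at most $N_\delta(\partial X)$ boxes meeting $\partial X$, so $\lambda(X^+\setminus X^-)\le N_\delta(\partial X)\,\delta^k$. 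By definition of upper Minkowski dimension, for any $\beta$ with $\boxdim(\partial X)<\beta<k$ we have $N_\delta(\partial X)\le \delta^{-\beta}$ for all small $\delta$, hence $\lambda(X^+\setminus X^-)\le \delta^{k-\beta}$ and also $X^+\setminus X^-$ is covered by at most $\delta^{-\beta}$ grid boxes. A standard sandwiching argument then shows
\[
D(F,X)\ \le\ \max\bigl\{D(F,X^-),D(F,X^+)\bigr\}\ +\ |F|\cdot\lambda(X^+\setminus X^-),
\]
so it suffices to bound the discrepancy of $F=N_n^+[\vvec u]$ relative to a finite union of grid boxes, with the error $|F|\cdot\delta^{k-\beta}$ kept in reserve (note $|F|\le (n+1)^d$).

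The heart of the matter is then the single-box estimate: for a fixed half-open grid box $Q$ of side $\delta$, bound
\[
D\bigl(N_n^+[\vvec u],Q\bigr)
=\Bigl|\ \#\{\vec n\in\{0,\dots,n\}^d:\ \vvec u+\textstyle\sum_i n_i\vvec x_i\in Q\}\ -\ (n+1)^d\lambda(Q)\ \Bigr|.
\]
Expanding $\ind_Q$ in a Fourier series on $\I T^k$ (or, to avoid convergence issues, using the Erd\H{o}s--Tur\'an / Koksma type inequality for boxes), the discrepancy is controlled by a sum over nonzero frequencies $\vvec m\in\I Z^k$ of weighted exponential sums $\prod_{i=1}^d \bigl|\sum_{n_i=0}^n e(n_i\langle \vvec m,\vvec x_i\rangle)\bigr|$, each geometric sum being $\min\{n+1,\ \|\langle\vvec m,\vvec x_i\rangle\|^{-1}\}$ where $\|\cdot\|$ is distance to the nearest integer. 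Here the randomness enters: for a.e.\ choice of $\vvec x_1,\dots,\vvec x_d$ the numbers $\langle\vvec m,\vvec x_i\rangle$ are simultaneously badly approximable in a quantitative sense — more precisely, by a Borel--Cantelli argument one shows that almost surely there is $c_0>0$ with $\|\langle\vvec m,\vvec x_i\rangle\|\ge c_0\,|\vvec m|^{-(k+\eta)}$ for all nonzero $\vvec m$ and all $i$, for any fixed $\eta>0$ (this is where the hypothesis $d>k/(k-\boxdim\partial X)$ and the specific range of $\varepsilon$ get used, to make the resulting geometric series in $\vvec m$ converge after accounting for the $d$-fold product and the number of boxes). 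Multiplying out the product of the $d$ geometric sums and splitting frequencies into "small" ($|\vvec m|\le n$, where some factor is $O(n)$) and "large" ($|\vvec m|>n$) ranges yields a bound of the form $D(N_n^+[\vvec u],Q)\le C\,n^{d-1}(\log n)^{d}$ or, absorbing logs, $\le C\,n^{d-1+\eta'}$, uniformly in $\vvec u$ and in the box $Q$; summing over the $\le \delta^{-\beta}$ boxes making up $X^\pm$ gives $\delta^{-\beta}\cdot C n^{d-1+\eta'}$.

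Finally I would combine the two contributions: the approximation error $(n+1)^d\delta^{k-\beta}$ and the Fourier error $\delta^{-\beta}Cn^{d-1+\eta'}$. Choosing $\delta = n^{-\theta}$ for a suitable $\theta>0$ balances these, and a short computation shows that the exponent of $n$ in the resulting bound can be made at most $d-1-\varepsilon$ precisely when $\varepsilon<\bigl(d(k-\boxdim\partial X)-k\bigr)/k$, which is exactly the hypothesis (choosing $\beta$ close to $\boxdim\partial X$ and $\eta,\eta'$ small). This gives $D(N_n^+[\vvec u],X)\le c(n+1)^{d-1-\varepsilon}$ for all $\vvec u$ and $n$, with $c$ depending on the (almost sure) badly-approximable constant $c_0$ but not on $\vvec u$ or $n$. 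I expect the main obstacle to be the uniformity in $\vvec u$ and especially the quantitative simultaneous Diophantine estimate on $\langle\vvec m,\vvec x_i\rangle$: one must be careful that a single null set is discarded (not one per frequency $\vvec m$), which the Borel--Cantelli argument handles, and that the exponent $k+\eta$ in the Diophantine bound is good enough that the frequency sum converges given only $d$ geometric factors — this is the point where the precise threshold on $d$ is essential.
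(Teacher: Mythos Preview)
Your high-level strategy---grid approximation at scale $\delta$, reduction to box discrepancy via Erd\H{o}s--Tur\'an--Koksma, then optimisation of $\delta$---matches the paper's Appendix (which follows Niederreiter--Wills and Laczkovich). However, two quantitative ingredients are misstated, and with your bounds the balancing cannot close.

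First, the single-box discrepancy is polylogarithmic, not of order $n^{d-1}$: Lemma~\ref{lem:intervalLog} gives $D(N_n^+[\vvec u],Q)\le C(\log n)^{k+d+t}$ for a.e.\ $\vvec x_1,\dots,\vvec x_d$, uniformly in $\vvec u$ and in the box $Q$. Your proposed Diophantine input $\|\langle\vvec m,\vvec x_i\rangle\|\ge c_0|\vvec m|^{-(k+\eta)}$ is too crude even to recover your own claimed $n^{d-1}$ bound: it makes each of the $d$ geometric-sum factors as large as $c_0^{-1}|\vvec m|^{k+\eta}$, and the frequency series $\sum_{\vvec m}r(\vvec m)^{-1}|\vvec m|^{d(k+\eta)}$ diverges badly. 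The polylog bound requires a sharper almost-sure Khintchine-type estimate and a more delicate treatment of the sum over $\vvec m$ (as in Schmidt's and Laczkovich's original arguments). Note too that the hypothesis $d>k/(k-\boxdim\partial X)$ is not used at this stage; it enters only in the final balancing. Second, the sets $X^\pm$ are unions of $\Theta(\delta^{-k})$ grid boxes, not $\delta^{-\beta}$; only the boundary layer $X^+\setminus X^-$ is covered by $\le\delta^{-\beta}$ boxes. Even the correct polylog single-box bound summed over $\delta^{-k}$ boxes gives $\delta^{-k}(\log n)^{O(1)}$, which is too large. The paper's remedy is to merge adjacent interior boxes along one coordinate into longer boxes: each merge can be blocked only by a boundary box, so one obtains a covering of $X$ by at most $\delta^{-(k-1)}+N_\delta(\partial X)\le 2\delta^{-k+\alpha}$ boxes, where $\alpha:=(1+\varepsilon)k/d$.

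With both corrections and the choice $\delta\approx(n+1)^{-d/k}$, the two error terms are each $(n+1)^{d-1-\varepsilon+o(1)}$, exactly as needed. With your stated bounds, even granting the erroneous box count, balancing $\delta^{-\beta}n^{d-1}$ against $n^d\delta^{k-\beta}$ is optimal at $\delta=n^{-1/k}$ and yields only $n^{d-1+\beta/k}$, which exceeds $n^{d-1}$ for every $\beta>0$.
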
 

 Essentially, this lemma says that, if a set $X\subseteq\mathbbm{T}^k$ has ``small'' topological boundary, then, for $d$ sufficiently large, the discrepancy of any large discrete cube given by typical vectors $\vvec{x}_1,\ldots,\vvec{x}_d\in\mathbb{T}^k$ with respect to $X$ is significantly smaller than the number of points on the combinatorial boundary of the cube.

\subsection{Graph-Theoretic Definitions}

A (simple undirected) \emph{graph} is a pair $G=(V,E)$ where the elements of $V$ are called \emph{vertices} and $E$ is a collection of unordered pairs $\{u,v\}$ of vertices called \emph{edges}. For brevity, an edge $\{u,v\}\in E$ is written $uv$ or, equivalently, $vu$.  The vertex set and edge set of a graph $G$ are denoted by $V(G)$ and $E(G)$, respectively. 

A vertex $u$ is said to be \emph{adjacent to} (or \emph{a neighbour of}) a vertex $v$ if $uv\in E(G)$. Given a set $S\subseteq V(G)$, the \emph{subgraph of $G$ induced by $S$}, denoted as $G\induced S$, is the graph with vertex set $S$ and edge set $\{uv\in E(G): u,v\in S\}$.

Given a graph $G$ and $u,v\in V(G)$, we let $\dist_G(u,v)$ denote the \emph{graph distance from $u$ to $v$ in $G$}, i.e., the fewest number of edges in a path from $u$ to $v$ in $G$. If no such path exists, then $\dist_G(u,v):=\infty$. Also, note that $\dist_G(u,u)=0$ for every vertex~$u$. For sets $S,T\subseteq V(G)$, we let 
$$
 \dist_{G}(S,T):=\min\{\dist_G(u,v): u\in S\text{ and }v\in T\}
 $$ and, for $w\in V(G)$, we write $\dist_G(w,T)$ to mean $\dist_G(\{w\},T)$. 
%The \emph{diameter} of a set $S\subseteq V(G)$ in $G$ is defined to be $\diam_{G}(S):=\sup\{\dist_G(u,v): u,v\in S\}$. 
Given $u\in V(G)$, the \emph{connected component of $G$ containing $u$} is the set 
$$[u]_G:=\{v\in V(G): \dist_G(u,v)<\infty\}.
$$ 
We say that $G$ is \emph{connected} if $[u]_G=V(G)$ for every (equivalently, some) vertex $u\in V(G)$. 

Given a graph $G$ and $u\in V(G)$, let 
$$N_G[u]:=\{v\in V(G): \dist_G(u,v)\leq 1\}
$$ denote the \emph{(closed) neighbourhood of $u$ in $G$}. Also, for a set $S\subseteq V(G)$, let $N_G[S]:=\bigcup_{u\in S}N_G[u]$ be the \emph{(closed) neighbourhood of $S$ in $G$}. The \emph{degree} of a vertex $u\in V(G)$ is defined to be 
$$\deg_G(u):=\left|N_G[u]\setminus \{u\}\right|=\left|N_G[u]\right|-1.
$$ 
%We say that $G$ is \emph{locally finite} or \emph{locally countable} if $\deg_G(u)$ is respectively finite or countable for every $u\in V(G)$. 
We say that $G$ is \emph{locally finite} if $\deg_G(u)$ is finite for every $u\in V(G)$. 
For $d\in \mathbbm{N}$, we say that $G$ is \emph{$d$-regular} if $\deg_G(u)=d$ for all $u\in V(G)$.

The \emph{edge boundary} of $S\subseteq V(G)$ in $G$ is the set 
\begin{equation}
\label{eq:DefPartialE}
 \partial_ES:=\{uv\in E(G): u\in S\text{ and }v\in V(G)\setminus S\}.
 \end{equation}

%\begin{defn}
%\label{def:edgeBoundary}
%Given a graph $G$ and a set $S\subseteq V(G)$,  the \emph{edge boundary of $S$ in $G$} is the set 
%\[\partial_ES:=\{uv\in E(G): u\in S\text{ and }v\in V(G)\setminus S\}.\]
%\end{defn}

\subsection{Network Flows}

A \emph{flow} in a graph $G$ is a function $f:V(G)\times V(G)\to \mathbbm{R}$ such that
\[f(u,v)=-f(v,u)\text{ for all }u,v\in V(G), \text{ and}\]
\[f(u,v)=0\text{ if }uv\notin E(G).\]
The quantity $f(u,v)$ is called the \emph{flow from $u$ to $v$ under $f$}. Given a finite set $S\subseteq V(G)$, the \emph{flow out of $S$ under $f$} is defined to be
\[\fout{f}(S):= \sum_{\substack{u\in S\\ v\in V(G)\setminus S}}f(u,v).\]
Note that, in this paper, we will deal with locally finite graphs only, and so the flow out of a finite set will always be well-defined. For a vertex $u\in V(G)$, the \emph{flow out of $u$ under $f$} is $\fout{f}(u):=\fout{f}(\{u\})$. The following is an easy consequence of the definition of a flow.
\begin{obs}
\label{obs:flowOutOfS}
Given a locally finite graph $G$, a flow $f$ in $G$ and a finite set $S\subseteq V(G)$, it holds that
$\fout{f}(S) = \sum_{u\in S}\fout{f}(u)$.\qed
\end{obs}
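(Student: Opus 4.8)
The plan is to expand the right-hand side directly from the definitions and then cancel the contribution of edges lying entirely inside $S$ using the antisymmetry $f(u,v)=-f(v,u)$.

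First I would rewrite, for each vertex $u$, the quantity $\fout{f}(u)=\sum_{v\in V(G)\setminus\{u\}}f(u,v)$, where the sum extends over \emph{all} other vertices; this is harmless because $f(u,v)=0$ whenever $uv\notin E(G)$, so only the (finitely many, since $G$ is locally finite) neighbours of $u$ contribute. Summing over $u\in S$ and splitting the inner sum according to whether the second argument lies in $S$ or in $V(G)\setminus S$ gives
$$\sum_{u\in S}\fout{f}(u)=\sum_{\substack{u,v\in S\\ u\ne v}}f(u,v)+\sum_{\substack{u\in S\\ v\in V(G)\setminus S}}f(u,v)=\sum_{\substack{u,v\in S\\ u\ne v}}f(u,v)+\fout{f}(S).$$
It then remains to observe that the first double sum vanishes: since $S$ is finite, we may group its terms into pairs $\{f(u,v),f(v,u)\}$ with $u\ne v$, each of which sums to $0$ by the antisymmetry axiom. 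This yields $\sum_{u\in S}\fout{f}(u)=\fout{f}(S)$, as claimed.

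There is essentially no obstacle here: this is a routine consequence of the flow axioms, the ``telescoping'' being that each edge $uv$ internal to $S$ is counted once as $f(u,v)$ (from the term $\fout{f}(u)$) and once as $f(v,u)$ (from $\fout{f}(v)$), so it contributes $0$ in total, while each edge from $S$ to its complement is counted exactly once. The only minor point worth checking is that every sum appearing above has only finitely many nonzero terms, which is immediate from the hypotheses that $S$ is finite and $G$ is locally finite (so that, for instance, $\partial_E S$ is finite and $\fout{f}(S)$ is well-defined).
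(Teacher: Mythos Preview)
Your proof is correct. The paper itself gives no proof for this observation (it is marked with an immediate \qed\ as an easy consequence of the definition of a flow), so your antisymmetry-based cancellation argument is exactly the kind of routine verification the authors had in mind.
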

When it is clear from the context, we may view $\fout{f}$ just as a function $V(G)\to\I R$; for example, 
when we write $\|\fout{f} - g\|_\infty$ for some $g: V(G)\to\I R$, we mean the supremum of $|\fout{f}(u) - g(u)|$ over $u\in V(G)$.

 Given a function $\chi:V(G)\to \mathbbm{R}$, which we call a \emph{demand function}, a \emph{$\chi$-flow in $G$} is a flow $f$ such that $\fout{f}(u)=\chi(u)$ for every $u\in V(G)$. For $S,T\subseteq V(G)$, a \emph{flow from $S$ to $T$ in $G$} is a $\left(\ind_S-\ind_T\right)$-flow in $G$.

It is well-known that, if $\chi$ is an integer-valued demand function for a finite graph $G$, then a real-valued $\chi$-flow in $G$ can be converted into an integer-valued $\chi$-flow in $G$ by changing the flow along each edge by less than~$1$. This is known as the Integral Flow Theorem. It seems to have been first observed by Dantzig and Fulkerson~\cite{DantzigFulkerson56} using ideas of Dantzig~\cite{Dantzig51}; see~\cite{Schrijver03co1}*{Corollary~10.3a} and the discussion in~\cite{Schrijver03co1}*{p.~64}. The following version can be derived from the finite case via a standard compactness (i.e., Axiom of Choice) argument; for a proof see e.g.,~\cite{MarksUnger17}*{Corollary~5.2}.

\begin{theorem}[Integral Flow Theorem]
\label{th:IFT}
Let $G$ be a locally finite graph and let $\chi:V(G)\to \mathbbm{Z}$ be an integer-valued demand function. Then, for every $\chi$-flow $g$ in $G$, there exists an integer-valued $\chi$-flow $f$ in $G$ with $|f(u,v)-g(u,v)|<1$ for all $u,v\in V(G)$.\qed
\end{theorem}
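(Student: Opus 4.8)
The plan is to prove the finite case by a classical cycle-cancelling argument and then lift to an arbitrary locally finite $G$ by a compactness argument over a product of finite sets. Throughout, for an edge $uv$ let $I_{uv}$ be the set of integers in the closed interval between $\lfloor g(u,v)\rfloor$ and $\lceil g(u,v)\rceil$, so $|I_{uv}|\le 2$ and $I_{uv}=\{g(u,v)\}$ when $g(u,v)\in\I Z$; we will always arrange $f(u,v)\in I_{uv}$, which yields $|f(u,v)-g(u,v)|<1$ on edges (and on non-edges both sides are $0$). \emph{Finite case.} Assume $G$ is finite. Starting from $g$, repeat the following. Let $H$ be the subgraph of $G$ formed by the edges on which the current flow is not an integer. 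If $E(H)=\emptyset$ we are done. Otherwise $H$ has no vertex of degree $1$: at a vertex $v$ the total flow $\fout{f}(v)=\chi(v)$ is an integer and the edges of $G$ at $v$ outside $H$ contribute an integer, so the flows on the $H$-edges at $v$ sum to an integer, which is impossible if there is exactly one such edge. Hence $H$ contains a cycle $C$; orient it, add a parameter $\theta$ to the flow on each forward edge of $C$ and subtract $\theta$ on each backward edge (keeping antisymmetry). This changes no $\fout{f}(v)$, so we still have a $\chi$-flow; take $\theta$ maximal subject to the flow on every edge of $C$ remaining in its interval $[\lfloor g(u,v)\rfloor,\lceil g(u,v)\rceil]$. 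For this $\theta$ some edge of $C$ reaches an endpoint of its interval, hence becomes integral, while no edge leaves its interval. Each step strictly decreases the number of non-integral edges, so the process terminates, and the interval constraint is preserved throughout.

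\emph{Infinite case.} Fix an orientation of each edge and let $K:=\prod_{e\in E(G)}I_e$ with the product topology, each finite $I_e$ discrete; $K$ is compact by Tychonoff's theorem. An element of $K$, extended antisymmetrically, is an integer flow $f$ with $f(u,v)\in I_{uv}$ on every edge, so it suffices to find one lying in $C_v:=\{f\in K:\fout{f}(v)=\chi(v)\}$ for every vertex $v$. Since $G$ is locally finite, $C_v$ depends on only finitely many coordinates and is closed, so by compactness it is enough to show $\bigcap_{v\in S}C_v\neq\emptyset$ for every finite $S\subseteq V(G)$. Fix $S$ and set $W:=N_G[S]$, a finite set such that every edge of $G$ incident to a vertex of $S$ lies in $G\induced W$. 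Let $g'$ be the restriction of $g$ to $G\induced W$; then $\fout{g'}(v)=\chi(v)\in\I Z$ for all $v\in S$, but $\fout{g'}(v)$ may be non-integral for $v\in W\setminus S$, so the finite case does not apply to $g'$ directly. To repair this, form the finite graph $H_0$ on $W\cup\{a\}$ ($a$ a new apex vertex) with edge set $E(G\induced W)$ together with an edge $av$ for each $v\in W\setminus S$, and let $g''$ agree with $g'$ on $E(G\induced W)$ and satisfy $g''(a,v):=\fout{g'}(v)$ on the apex edges. A short computation (using that $\sum_{v\in W}\fout{g'}(v)=0$) shows that $g''$ is a flow on $H_0$ with demand $\chi(v)$ at each $v\in S$, $0$ at each $v\in W\setminus S$, and $-\sum_{v\in S}\chi(v)$ at $a$ --- all integers. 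Applying the finite case to $g''$ gives an integer flow $f''$ on $H_0$ with $f''(u,v)\in I_{uv}$ for $uv\in E(G\induced W)$ and $\fout{f''}(v)=\chi(v)$ for $v\in S$. Defining $f^S\in K$ by $f^S:=f''$ on the edges inside $W$ and $f^S(u,v):=\lfloor g(u,v)\rfloor$ on all other edges, we get $f^S\in\bigcap_{v\in S}C_v$. By compactness $\bigcap_{v\in V(G)}C_v\neq\emptyset$, and any element is the required integer $\chi$-flow $f$ with $f(u,v)\in I_{uv}$, hence $|f(u,v)-g(u,v)|<1$ for all $u,v$.

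The main obstacle is the step just described: a finite "ball" $G\induced W$ of $g$ is a flow whose demands at the boundary vertices $W\setminus S$ need not be integers, so one cannot simply invoke the finite case on it. The apex-vertex trick routes precisely this fractional boundary excess through a single extra vertex, where by flow conservation it aggregates to an integer, restoring integrality of all demands; everything else --- the cycle-cancelling and the bookkeeping with the rounding intervals $I_{uv}$ --- is routine.
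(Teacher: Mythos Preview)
Your proof is correct and follows the approach the paper describes (but does not spell out): prove the finite case by the classical cycle-cancelling argument, then lift to the locally finite case by a compactness argument over the product of the rounding intervals. The paper gives no proof of its own, only citing \cite{MarksUnger17}*{Corollary~5.2} and remarking that the result ``can be derived from the finite case via a standard compactness (i.e., Axiom of Choice) argument''; your write-up supplies exactly those details, including the natural apex-vertex device to integralise the boundary demands when restricting to a finite neighbourhood.
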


%\subsection{Graph $G_d$}

 \subsection{The Setting}
 \label{subsec:setting}

At this point, we are ready to make some key definitions and assumptions that will apply throughout the rest of the paper. 

Let $k\ge1$ be integer, and let $A$ and $B$ be sets that satisfy the assumptions of Theorem~\ref{th:main}. Thus $A,B\subseteq \mathbbm{R}^k$ are bounded sets that have the same positive Lebesgue measure (i.e., $\lambda(A)=\lambda(B)>0$) and boundary of the upper Minkowski dimension less than $k$ (i.e., $\boxdim(\partial A)< k$ and $\boxdim(\partial B)< k$). 

By scaling $A$ and $B$ by the same factor, we
can assume that each of them has diameter less than $1/2$ in the $\ell_\infty$-norm. Furthermore, by translating them, we can assume that $A$ and $B$ are disjoint subsets of $[0,1)^k$.  If $A$ and $B$ are equidecomposable using translations inside $\mathbbm{T}^k$, then they are also equidecomposable using translations in $\mathbbm{R}^k$ with exactly the same pieces (by the diameter assumption). From now on, we always assume that we are working in the setting of the $k$-torus.

In order to satisfy Lemma~\ref{lem:discrep} and to optimise the bound on the upper Minkowski dimension of the boundaries of the final pieces, we make the following assignments. First, fix $\epsilon$ so that
 \begin{equation}
 \label{eq:epsilonGap}
 0<\epsilon< k-\max\left\{\boxdim(\partial A),\boxdim(\partial B)\right\},
 \end{equation} 
 where we should choose $\epsilon$ close to the upper bound for optimality. Then
  we set
\begin{equation}
\label{eq:dBound}
d:= \left\lfloor k/\epsilon\right\rfloor+1,
\end{equation}
that is, $d$ is the smallest integer greater than $k/\epsilon$. We also define
\begin{equation}
\label{eq:epsBound}
\varepsilon:= (d\epsilon -k)/k.
\end{equation}

Note that, if the boundaries of $A$ and $B$ have upper Minkowski dimension $k-1$, as is the case for most simple ``geometric'' sets like balls and cubes, then we can take $d=k+1$ and let $\epsilon$ and $\varepsilon$ be close to $1$ and $1/k$, respectively.

%Now, we are ready to provide the remaining global parameters and assumptions.

For each integer $i\ge0$, define
\begin{equation}\label{eq:ri'specific}r_{i}':=100^{2^{i+1} - 1}\end{equation}
and, for each integer $i\ge 1$, define
\begin{equation}\label{eq:rispecific}r_i:=100^{2^{i+1} - 2}.\end{equation}

Next, we recursively define $q_0'<q_1<q_1'<q_2<\ldots$ and $t_1<t_1'<t_2<t_2'<\ldots$ by setting $q_0':=0$ and, for each $i\ge1$,
\begin{eqnarray}
t_i&:=&2r_i+4q'_{i-1}+4,\nonumber\\
q_i&:=&t_i+2q'_{i-1}+4,\nonumber\\
t'_i&:=&4r'_i/5 + 2q_i,\nonumber\\
q'_i&:=&t'_i+2q_i+4.\label{eq:qi'}
\end{eqnarray}

We fix, for the rest of the paper, some vectors $\vvec{x}_1,\ldots,\vvec{x}_d\in \mathbbm{T}^k$ that satisfy all of the following properties:
 \begin{enumerate}
 \stepcounter{equation}\item\label{eq:XsDiscr} 
  the conclusion of Lemma~\ref{lem:discrep}, with respect to the above $d$ and $\varepsilon$, holds for some $c>0$ (which is also fixed 
 throughout the paper) for both $X:=A$ and $X:=B$;
 \stepcounter{equation}\item\label{eq:XsLinInd} 
 the projections of $\vvec{x}_{1},\ldots,\vvec{x}_{d}$ on the first coordinate are linearly independent over the rationals;
 \stepcounter{equation}
 \item\label{eq:XsAvoids}  if the extra assumption of Part~\ref{it:c} of Theorem~\ref{th:main} (that is,~\eqref{eq:it:c}) holds, then no integer combination of $\vvec{x}_1,\ldots,\vvec{x}_d$ belongs to the ``bad'' set 
 $$
 %\begin{equation}\label{eq:ExceptionalSet}
 \{\vvec{t}\in\I T^k\mid (A+\vvec{t})\cap B\mbox{ is non-empty and $\lambda$-null}\};
 $$
 %\end{equation}
 \stepcounter{equation}\item\label{eq:XQi} 
 the conclusion of Lemma~\ref{lem:stripLem}
 (to be stated in Section~\ref{sec:Dim}) holds.
 \end{enumerate}
 This is possible since, if $\vvec{x}_1,\ldots,\vvec{x}_d$ are chosen uniformly at random from $\mathbbm{T}^k$ independently of one another, then each of Properties~\eqref{eq:XsDiscr}--\eqref{eq:XsAvoids} holds with probability $1$, while Lemma~\ref{lem:stripLem} is satisfied with positive probability. For example, Property \eqref{eq:XsAvoids} holds almost surely
 % by the countable additivity of the Lebesgue measures
  since, for each non-zero $(n_1,\ldots,n_d)\in\I Z^d$, the vector $\sum_{i=1}^d n_i\vvec{x}_i$ is a uniform element of $\I T^k$ while the ``bad'' set  has measure 0 by~\eqref{eq:it:c};
 also, the zero vector is ``good''
 %cannot belong to the ``bad'' set in~\eqref{eq:ExceptionalSet} 
  by our assumption that $A\cap B=\emptyset$.

\begin{remark}\label{rm:FastRi}
If the reader is interested only in a version of Theorem~\ref{th:main} where the conclusion of Part~\ref{it:a} is weakened to requiring that the pieces are just Jordan measurable, then we could have chosen an arbitrary sufficiently fast growing sequence $r_0'\ll r_1\ll r_1'\ll r_2\ll\ldots$ and ignored Property~\eqref{eq:XQi}, which would not be needed. This is why the (somewhat technical) statement of Lemma~\ref{lem:stripLem} is postponed to Section~\ref{sec:Dim}, where the dimension of the boundaries of the obtained pieces is analysed.
\end{remark}

We denote elements of $\mathbbm{Z}^d$ as vectors accented by an arrow, e.g.,\ $\vec{n}$, to distinguish them from vectors in $\mathbbm{T}^k$ which are typeset in boldface. For $\vec{n}\in\mathbbm{Z}^d$ and $1\leq i\leq d$, let $n_i$ denote the $i$-th coordinate of $\vec{n}$. %That is, $n_i$ is the scalar product of $\vec{n}$ and the $i$-th standard basis vector~$\vec{e_i}$. 
Consider the action $a:\mathbbm{Z}^d\actson \mathbbm{T}^k$ defined by
\[
\vec{n}\cdot_a \vvec{u} := \vvec{u} + \sum_{i=1}^dn_i\vvec{x}_i,\quad\mbox{for  $\vec{n}\in \mathbbm{Z}^d$ and $\vvec{u}\in\mathbbm{T}^k$.}
\]
 This action is continuous and preserves the measure~$\lambda$. 
%Moreover, the action is free by~\eqref{eq:XsLinInd}.

Next, we recall the definition of the graph $G_d$ from Subsection~\ref{sec:ideas}. Namely, the vertex set of $G_d$ is $\mathbbm{T}^k$
and the neighbours of $\vvec{u}\in\mathbbm{T}^k$ are exactly the vectors $\vvec{u}+\sum_{i=1}^dn_i\vvec{x}_i$
for non-zero $\vec{n}\in \{-1,0,1\}^d$. Equivalently, $G_d$ is the $(3^d-1)$-regular Schreier graph associated to the action $a$ with respect to the symmetric set $\{\vec{n}\in\mathbbm{Z}^d: \|\vec{n}\|_\infty=1\}$.   
The edge set of $G_d$, when viewed as a symmetric binary relation on the (standard Borel) space $\mathbb{T}^k$, is a closed and thus Borel set. This means that $G_d$ is a \emph{Borel graph}, a fundamental object of study in \emph{descriptive graph combinatorics} (see e.g.,\ the survey of this field by Kechris and Marks~\cite{KechrisMarks20survey}).
Note that the action $a$ is free
by our choice of the vectors $\vvec{x}_i$, namely by~\eqref{eq:XsLinInd}. Therefore, the subgraph of $G_d$ induced by any component of $G_d$ is nothing else than a copy of $\mathbbm{Z}^d$ in which two elements are adjacent if they are at $\ell_\infty$-distance~$1$. 

We pause to remind the reader of the role of the graph $G_d$ in the equidecomposition. 

\begin{remark}
\label{rem:GdRole}
The translation vectors used in the constructed equidecomposition will be of the form $\sum_{i=1}^dn_i\vvec{x}_i$ where $\vec{n}\in \mathbbm{Z}^d$ and $\|\vec{n}\|_\infty$ is bounded by a large constant $r$ depending only on $A$ and $B$ (which we do not attempt to estimate). Then an equidecomposition is equivalent to a bijection from $A$ to $B$ where each element of $A$ is matched to an element of $B$ at distance at most $r$ from it in $G_d$. Given such a matching, each piece of the  equidecomposition is indexed by an integer vector in $\{-r,\ldots,r\}^d$,
where the piece corresponding to $\vec{n}$ is the set of all $\vvec{u}\in A$ such that $\vvec{u}$ is matched to $\vvec{u}+\sum_{i=1}^dn_i\vvec{x}_i \in B$.  
\end{remark}

\subsection{Some Auxiliary Structures in $G_d$}\label{se:aux}

Here we present some useful building blocks and definitions.
% for various constructions are subsets of $\mathbbm{T}^k$ which are maximal with the property of being pairwise far apart in $G_d$. 
Note that, once $d$ and $\vvec{x}_1,\ldots,\vvec{x}_d$ are fixed,
the definitions and constructions in this section do not depend on the sets $A$ and~$B$.

At certain points during the construction, we have to make arbitrary ``tie-breaking'' choices in a consistent manner. In doing so, it is useful to endow $\I T^k$ with a partial order in which each component of $G_d$ is totally ordered. We will use the \emph{lexicographic ordering} where, for distinct $\vvec{u},\vvec{v}\in \mathbbm{T}^k$, we write $\vvec{u}\lex\vvec{v}$ to mean $[\vvec{u}]_{G_d}=[\vvec{v}]_{G_d}$ and the first non-zero entry of $\vec{n}$ is positive, where $\vec{n}$ is the (unique) element of $\I Z^d$ with
\[\vvec{v}-\vvec{u} = \sum_{i=1}^dn_i\vvec{x}_i.\]
This naturally extends to a partial order on $\left(\mathbbm{T}^k\right)^t$ for any $t\ge1$ where $(\vvec{u}_1,\ldots,\vvec{u}_t)\lex (\vvec{v}_1,\ldots,\vvec{v}_t)$ if there exists $1\leq i\leq t$ such that $\vvec{u}_i\lex \vvec{v}_i$ and $\vvec{u}_j=\vvec{v}_j$ for all $1\leq j\leq i-1$. 

Given $\vvec{u}\in\mathbbm{T}^k$, we write $N_{G_d}[\vvec{u}]$ simply as $N[\vvec{u}]$. For $n\ge1$, define the \emph{$n$-neighbourhood} of $\vvec{u}$ to be
\[N_n[\vvec{u}]:=\left\{\vvec{v}\in \mathbbm{T}^k: \dist_{G_d}(\vvec{u},\vvec{v})\leq n\right\}.\]
 Note that $N_n[\vvec{u}]=N_{2n}^+[\vvec{v}]$, where $\vvec{v}:=\vvec{u}-n(\vvec{x}_1+\ldots+\vvec{x}_d)$ and $N_r^+[\vvec{v}]$ is defined as in~\eqref{eq:Nn+}. 
 %That is, $N_n[\vvec{u}]$ is the discrete $(2n+1)$-cube in $G_d$ with $\vvec{u}$ at its centre. 
 Thus, $N_n[\vvec{u}]$ (resp.\ $N_n^+[\vvec{u}]$) is the set of vertices which can be reached from $\vvec{u}$ by taking at most $n$ steps in any (resp.\ ``completely non-negative'') directions in~$G_d$.
Given a set $S\subseteq\mathbbm{T}^k$ and an integer $n\ge1$, let 
$$
N_n[S]:=\bigcup_{\vvec{u}\in S}N_n[\vvec{u}]\quad\mbox{and}\quad N_n^+[S]:=\bigcup_{\vvec{u}\in S}N_n^+[\vvec{u}].
$$ 

We say that a set $Y\subseteq\mathbb{T}^k$ \emph{$r$-locally depends} on (or is an \emph{$r$-local function} of) $X_1,\ldots,X_m\subseteq \mathbb{T}^k$ if the inclusion of $\vvec{u}\in\mathbb{T}^k$ in $Y$ depends only on the intersection of $X_1,\ldots,X_m$ with~$N_r[\vvec{u}]$, by which we mean that it depends only on the sequence 
$$
\left(\left\{\vec{n}\in\I Z^d\mid \|\vec{n}\|_\infty\le r,\, \vvec{u}+n_1\vvec{x_1}+\ldots+n_d\vvec{x_d}\in X_j\right\}
\right)_{j=1}^m.
$$
 This is equivalent to $Y$ being some Boolean combination of the sets of form $X_j+n_1\vvec{x}_1+\ldots+n_d\vvec{x}_d$ for $1\le j\le m$ and $\vec{n}\in\I Z^d$ with $\|\vec{n}\|_\infty\le r$. When the specific value of $r$ is unimportant or clear from context, we simply say that $Y$ \emph{locally depends} on (or is a \emph{local function} of) $X_1,\ldots,X_m$. Here is a trivial but very useful observation.

\begin{obs}\label{obs:LocalRule} If $Y$ locally depends on some finite sequence of sets from an algebra $\C A$ on $\mathbb{T}^k$ which is \emph{invariant} under the action $a$ (that is, $A\pm\vvec{x}_i\in \C A$ for every $A\in\C A$ and $i\in \{1,\ldots,d\}$), then $Y$ also belongs to~$\C A$.\qed\end{obs}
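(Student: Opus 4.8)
The plan is to reduce immediately to the equivalent reformulation of "$r$-locally depends" stated just before the observation, and then combine invariance with the closure properties of an algebra. First I would fix $r$ so that $Y$ is an $r$-local function of $X_1,\dots,X_m\in\C A$, and invoke the stated equivalence: $Y$ is a Boolean combination of finitely many sets of the form
\[
X_j+n_1\vvec{x}_1+\dots+n_d\vvec{x}_d,\qquad 1\le j\le m,\ \vec{n}\in\I Z^d,\ \|\vec{n}\|_\infty\le r.
\]
(If one wishes to be fully self-contained, this equivalence is itself a routine unwinding of the definition: membership of $\vvec{u}$ in $Y$ depends only on which of the $\le m(2r+1)^d$ translated sets above contain $\vvec{u}$, and there are only finitely many such "types", so $Y$ is the union of the translated-set intersections realising the accepted types.)

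Next I would show that each translate $X_j+\sum_{i=1}^d n_i\vvec{x}_i$ lies in $\C A$. This follows by a trivial induction on $\|\vec{n}\|_1$: the base case $\vec{n}=\vvec{0}$ is the hypothesis $X_j\in\C A$, and the inductive step is exactly the invariance assumption $A\mapsto A\pm\vvec{x}_i\in\C A$ applied to the set from the previous step (note $A+\vvec{x}_i=(A-(-\vvec{x}_i))$, or simply that $\pm\vvec{x}_i$ both appear). Finally, since $\C A$ is an algebra it is closed under complementation and finite unions (hence finite intersections and all Boolean combinations), so the Boolean combination expressing $Y$ belongs to $\C A$. This completes the argument.

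\textbf{Main obstacle.} There is essentially none — the statement is deliberately elementary, and the authors flag it as "trivial". The only point requiring the smallest amount of care is recording why each translate $X_j+\sum_i n_i\vvec{x}_i$ is in $\C A$ (a one-line induction using that invariance is assumed for both signs $\pm\vvec{x}_i$), and, if desired, spelling out the "Boolean combination" reformulation; neither involves any real difficulty.
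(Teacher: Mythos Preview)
Your proposal is correct and is exactly the intended argument: the paper itself gives no proof at all (the observation is marked with a bare \qed\ and described as ``trivial''), so your unpacking of the Boolean-combination reformulation together with invariance and the algebra's closure under Boolean operations is precisely what the authors have in mind.
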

 
Note that the families of subsets of $\I T^k$ that are Jordan measurable or have boundary of upper Minkowski dimension  at most some given constant are both examples of $a$-invariant algebras.

If $f$ is a real-valued flow in $G_d$, then for $\vec{\gamma}\in\mathbbm{Z}^d$ with $\|\vec{\gamma}\|_\infty=1$ and $\ell\in \mathbbm{R}$, we define
\begin{equation}\label{eq:Zdef}
 Z^{f}_{\vec{\gamma},\ell}:=\left\{\vvec{v}\in\mathbbm{T}^k: f\left(\vvec{v},\vec{\gamma}\cdot_a\vvec{v}\right) = \ell\right\}.
\end{equation}
%Usually, we will use~\eqref{eq:Zdef} only when $f$ assumes finitely many values. 

\begin{obs}
\label{obs:Zpartition}
Given a flow $f$ in $G_d$ and $\vec{\gamma}\in \mathbbm{Z}^d$ with $\|\vec{\gamma}\|_\infty=1$, the sets $Z^{f}_{\vec{\gamma},\ell}$ for $\ell\in\I R$ with $|\ell|\leq \|f\|_\infty$ partition $\mathbbm{T}^k$. If the flow $f$ assumes finitely many possible values, then only finitely many of these sets are non-empty. \qed
\end{obs}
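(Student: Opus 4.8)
The plan is simply to unpack the definition of $Z^f_{\vec{\gamma},\ell}$ and observe that it sorts points of $\mathbbm{T}^k$ by the value of a single well-defined function. First I would note that, since $\|\vec{\gamma}\|_\infty=1$, for every $\vvec{v}\in\mathbbm{T}^k$ the vertex $\vec{\gamma}\cdot_a\vvec{v}$ is a neighbour of $\vvec{v}$ in $G_d$, so the edge $\{\vvec{v},\vec{\gamma}\cdot_a\vvec{v}\}$ lies in $E(G_d)$ and the quantity $f(\vvec{v},\vec{\gamma}\cdot_a\vvec{v})$ is a well-defined real number. Hence the map $\vvec{v}\mapsto f(\vvec{v},\vec{\gamma}\cdot_a\vvec{v})$ is a genuine function $\mathbbm{T}^k\to\I R$, and by definition $\vvec{v}\in Z^f_{\vec{\gamma},\ell}$ if and only if this function takes value $\ell$ at $\vvec{v}$. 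Consequently each $\vvec{v}$ belongs to exactly one set $Z^f_{\vec{\gamma},\ell}$ (namely for $\ell:=f(\vvec{v},\vec{\gamma}\cdot_a\vvec{v})$), so the non-empty members of $\{Z^f_{\vec{\gamma},\ell}:\ell\in\I R\}$ form a partition of $\mathbbm{T}^k$.

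It remains to check that restricting the index $\ell$ to the range $|\ell|\le\|f\|_\infty$ discards only empty sets: if $Z^f_{\vec{\gamma},\ell}\ne\emptyset$, pick $\vvec{v}$ in it, so $|\ell|=|f(\vvec{v},\vec{\gamma}\cdot_a\vvec{v})|\le\|f\|_\infty$ by definition of the sup-norm (this also covers the degenerate case $\|f\|_\infty=\infty$). Finally, for the last sentence, observe that every non-empty $Z^f_{\vec{\gamma},\ell}$ has its index $\ell$ in $\range(f)$, so if $f$ assumes only finitely many values then at most $|\range(f)|$ of the sets $Z^f_{\vec{\gamma},\ell}$ are non-empty. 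I do not expect any real obstacle here: the statement is immediate once the definitions are spelled out, and the only point worth a word of care is confirming that $\vec{\gamma}\cdot_a\vvec{v}$ is indeed adjacent to $\vvec{v}$ so that $f$ is evaluated on an actual edge.
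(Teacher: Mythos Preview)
Your proposal is correct and matches the paper's treatment: the paper states this observation with a \qed\ and no proof, regarding it as immediate from the definition of $Z^f_{\vec{\gamma},\ell}$ as the level sets of the function $\vvec{v}\mapsto f(\vvec{v},\vec{\gamma}\cdot_a\vvec{v})$. Your unpacking is exactly the intended reading; note that since a flow is defined on all of $V(G_d)\times V(G_d)$ (being zero on non-edges), the adjacency check is not strictly required, though it does no harm.
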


A flow $f$ with finite range will be identified with the finite sequence of sets $Z^{f}_{\vec{\gamma},\ell}$ for all possible choices of $\vec{\gamma}$ and $\ell$. Thus we will say that a flow $f$ is an \emph{$r$-local function} of $X_1,\ldots,X_m$ to mean that each set $Z^f_{\vec{\gamma},\ell}$ is an $r$-local function of $X_1,\ldots,X_m$. In the other direction,
$r$-local dependence of a set $Y$ on a flow $f$ means $r$-local dependence on the sequence of the sets $Z^f_{\vec{\gamma},\ell}$ (that is, the membership condition for $\vvec{u}\in\I T^k$ to be an element of $Y$ can be determined from the flow values on the edges intersecting~$N_r[\vvec{u}]$).

\begin{defn}
\label{def:strip}
Define a \emph{strip} to be a subset of $\mathbbm{T}^k$ of the form $[a,b)\times [0,1)^{k-1}$ for some $0\leq a<b\leq 1$. The \emph{width} of the strip $[a,b)\times [0,1)^{k-1}$ is defined to be $b-a$. 
\end{defn}

Note that any translation of a strip by a vector can be written as a union of at most two strips and that any Boolean combination of strips can be written as a union of finitely many disjoint strips. For these reasons, strips are particularly convenient to work with.

\begin{defn}
\label{def:discrete}
A set $X\subseteq \mathbbm{T}^k$ is said to be \emph{$r$-discrete (in $G_d$)} if $\dist_{G_d}(\vvec{x},\vvec{y}) > r$ for any distinct $\vvec{x},\vvec{y}\in X$ and \emph{maximally $r$-discrete (in $G_d)$} if it is maximal under set inclusion with respect to this property. 
\end{defn}

Of course, the above properties are not affected when we replace $X$ by any translate. The following fact is a consequence of~\eqref{eq:XsLinInd}, one of our assumptions on~$\vvec{x}_1,\ldots,\vvec{x}_d$.

\begin{obs}
\label{outline:obs:diamDiscrete}
For any $r\ge1$ there exists $\delta>0$ such that every set $X\subseteq \mathbbm{T}^k$ whose projection onto the first coordinate has diameter at most $\delta$ is $r$-discrete. \qed
\end{obs}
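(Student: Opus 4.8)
The statement to prove is Observation~\ref{outline:obs:diamDiscrete}: for every $r\ge 0$ there is $\delta>0$ so that any $X\subseteq\mathbb{T}^k$ whose projection onto the first coordinate has diameter at most $\delta$ is $r$-discrete in $G_d$. Recall that $X$ is $r$-discrete means $\dist_{G_d}(\vvec{x},\vvec{y})>r$ for all distinct $\vvec{x},\vvec{y}\in X$. The only structural feature of $G_d$ we need is that two vertices $\vvec{u},\vvec{v}$ at graph distance at most $r$ differ by $\vec n\cdot_a$-translation for some $\vec n\in\mathbb{Z}^d$ with $\|\vec n\|_\infty\le r$, i.e.\ $\vvec{v}-\vvec{u}=\sum_{i=1}^d n_i\vvec{x}_i$. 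So the plan is: (1) use the finite, $r$-bounded description of the graph-distance-$\le r$ relation to reduce to a finite list of ``forbidden difference vectors''; (2) use property~\eqref{eq:XsLinInd} (rational linear independence of the first coordinates of $\vvec{x}_1,\dots,\vvec{x}_d$) to show that none of these finitely many nonzero difference vectors has a first coordinate that is $0$ or too small; (3) take $\delta$ smaller than the smallest such first coordinate.

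\textbf{Key steps.} First, enumerate the set $F_r:=\{\vec n\in\mathbb{Z}^d:\ 1\le\|\vec n\|_\infty\le r\}$; this is finite. If $\vvec{x},\vvec{y}\in X$ are distinct with $\dist_{G_d}(\vvec{x},\vvec{y})\le r$, then concatenating the steps of a shortest path shows $\vvec{y}-\vvec{x}=\sum_{i=1}^d n_i\vvec{x}_i$ for some $\vec n\in F_r$ (the coordinatewise sum of at most $r$ vectors each of $\ell_\infty$-norm $1$ has $\ell_\infty$-norm at most $r$; and $\vec n\ne 0$ because the action is free by~\eqref{eq:XsLinInd}, so $\vvec{x}\ne\vvec{y}$ forces $\vec n\ne\vec 0$). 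Second, for each $\vec n\in F_r$ consider the first coordinate $s(\vec n):=\sum_{i=1}^d n_i x_i^{(1)}\in\mathbb{T}^1=\mathbb{R}/\mathbb{Z}$, where $x_i^{(1)}$ is the projection of $\vvec{x}_i$ to the first coordinate. By~\eqref{eq:XsLinInd} the reals $x_1^{(1)},\dots,x_d^{(1)}$ are $\mathbb{Q}$-linearly independent, and in particular $1,x_1^{(1)},\dots,x_d^{(1)}$ cannot satisfy a nontrivial integer relation unless all the $x_i^{(1)}$-coefficients vanish; hence for $\vec n\ne\vec 0$ we have $s(\vec n)\ne 0$ in $\mathbb{T}^1$. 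Third, set
\[
\delta:=\tfrac12\,\min_{\vec n\in F_r}\ \mathrm{dist}_{\mathbb{T}^1}\bigl(s(\vec n),0\bigr),
\]
which is strictly positive since it is the minimum of finitely many positive numbers. Now if $X$ has first-coordinate projection of diameter at most $\delta$, then for any two distinct $\vvec{x},\vvec{y}\in X$ the first coordinate of $\vvec{y}-\vvec{x}$ lies within $\delta$ of $0$ in $\mathbb{T}^1$, so it cannot equal $s(\vec n)$ for any $\vec n\in F_r$; therefore no such $\vec n$ realises $\vvec{y}-\vvec{x}$, and by step one $\dist_{G_d}(\vvec{x},\vvec{y})>r$. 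Thus $X$ is $r$-discrete.

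\textbf{Main obstacle.} There is no serious obstacle here — this is a genuine ``Observation.'' The one point that needs a little care is the bookkeeping in step one: namely that traversing a path of length $\le r$ in $G_d$, whose edges correspond to $\pm$ increments in the coordinates, produces a net displacement vector $\vec n$ with $\|\vec n\|_\infty\le r$ (each single step changes $\vec n$ by a vector of $\ell_\infty$-norm $1$, and $\ell_\infty$-norm is subadditive), together with the observation that freeness of the action (from~\eqref{eq:XsLinInd}) is what lets us pass between ``the difference is $\sum n_i\vvec{x}_i$ in $\mathbb{T}^k$'' and ``$\vec n$ is the unique witnessing vector,'' so that distinctness of $\vvec{x},\vvec{y}$ is equivalent to $\vec n\ne\vec 0$. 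Everything else is the pigeonhole-style argument that a finite set of nonzero points in $\mathbb{T}^1$ stays a positive distance from $0$.
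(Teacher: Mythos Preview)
Your proof is correct and is exactly the argument the paper has in mind; the paper itself gives no proof beyond noting that the observation follows from~\eqref{eq:XsLinInd}. One small remark on phrasing: the condition in~\eqref{eq:XsLinInd} should be read as rational independence \emph{in the torus}, i.e.\ that $1,x_1^{(1)},\dots,x_d^{(1)}$ are $\mathbb{Q}$-linearly independent as reals (this is what makes the action free, as the paper uses immediately afterwards), so your ``in particular'' is really ``by definition'' --- but your conclusion $s(\vec n)\ne 0$ in $\mathbb{T}^1$ for $\vec n\ne\vec 0$ is exactly right.
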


We will also need the following lemma (whose main proof idea goes back to Kechris, Solecki and Todorcevic~\cite{KechrisSoleckiTodorcevic99}*{Proposition~4.2}). 

\begin{lemma}
\label{outline:lem:simpleDiscrete}
For $r\ge0$, let $C_1,\ldots,C_m$ be subsets of $\mathbbm{T}^k$ such that every $C_i$ is $r$-discrete in $G_d$ and $\bigcup_{i=1}^mC_i = \mathbbm{T}^k$. Then there exists a maximally $r$-discrete set $X$ which is an $r(m-1)$-local function of $C_1,\ldots,C_m$.
\end{lemma}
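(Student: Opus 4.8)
The plan is to build $X$ greedily, processing the $r$-discrete sets $C_1,\dots,C_m$ one index at a time, and to check that each point's membership in $X$ only depends on a bounded neighbourhood. First I would set $X_0:=\emptyset$ and, for $1\le i\le m$, define
\[
X_i := X_{i-1}\cup\bigl\{\vvec{u}\in C_i : \dist_{G_d}(\vvec{u},X_{i-1})>r\bigr\},
\]
and put $X:=X_m$. The two things to verify are that $X$ is maximally $r$-discrete and that $X$ is an $r(m-1)$-local function of $C_1,\dots,C_m$.

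For maximality: each $X_i$ is $r$-discrete by induction — a new point $\vvec{u}\in C_i$ is added only if it is at $G_d$-distance $>r$ from $X_{i-1}$, and two distinct new points $\vvec{u},\vvec{u}'\in C_i$ satisfy $\dist_{G_d}(\vvec{u},\vvec{u}')>r$ because $C_i$ is $r$-discrete; so $X=X_m$ is $r$-discrete. For maximality, suppose some $\vvec{v}\in\mathbbm{T}^k$ has $\dist_{G_d}(\vvec{v},X)>r$. Since $\bigcup_{i=1}^m C_i=\mathbbm{T}^k$, pick the least $i$ with $\vvec{v}\in C_i$. Then $\dist_{G_d}(\vvec{v},X_{i-1})\ge\dist_{G_d}(\vvec{v},X)>r$ (as $X_{i-1}\subseteq X$), so $\vvec{v}$ would have been added at stage $i$, i.e.\ $\vvec{v}\in X_i\subseteq X$, contradicting $\dist_{G_d}(\vvec{v},X)>r\ge 0$. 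Hence $X$ is maximally $r$-discrete.

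For the locality bound, I would prove by induction on $i$ that membership of $\vvec{u}$ in $X_i$ is determined by $C_1,\dots,C_i$ restricted to $N_{r(i-1)}[\vvec{u}]$ (interpreting $N_0[\vvec{u}]=\{\vvec{u}\}$ when $i=1$); the case $i=m$ then gives the claim. The base case $i=1$ is immediate since $\vvec{u}\in X_1\iff\vvec{u}\in C_1$. For the inductive step, $\vvec{u}\in X_i$ iff either $\vvec{u}\in X_{i-1}$, or ($\vvec{u}\in C_i$ and no vertex $\vvec{w}\in N_r[\vvec{u}]$ lies in $X_{i-1}$). The first alternative depends only on $C_1,\dots,C_{i-1}$ on $N_{r(i-2)}[\vvec{u}]\subseteq N_{r(i-1)}[\vvec{u}]$ by induction; in the second alternative, for each of the finitely many $\vvec{w}\in N_r[\vvec{u}]$, membership $\vvec{w}\in X_{i-1}$ depends only on $C_1,\dots,C_{i-1}$ on $N_{r(i-2)}[\vvec{w}]\subseteq N_{r(i-2)+r}[\vvec{u}]=N_{r(i-1)}[\vvec{u}]$. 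So $\vvec{u}\in X_i$ is an $r(i-1)$-local function of $C_1,\dots,C_i$, completing the induction and the proof.

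I expect no serious obstacle here: the greedy construction is standard and the locality radius is bookkept exactly by how many stages a ``distance-$r$ query'' can propagate. The only point requiring a little care is making sure the radius accounting is tight enough to land at $r(m-1)$ rather than, say, $rm$ — which is why the induction hypothesis is phrased with $r(i-1)$ and the base case starts at radius $0$.
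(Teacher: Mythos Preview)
Your proof is correct and takes essentially the same approach as the paper: the paper defines $C_i':=C_i\setminus N_r[\bigcup_{j<i}C_j']$ and sets $X:=\bigcup_i C_i'$, which is exactly your greedy construction with $X_i=\bigcup_{j\le i}C_j'$, and the locality induction is identical.
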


\begin{proof}
Let $C_1':=C_1$ and, for $2\leq i\leq m$, define
\[
C_i':=C_i \setminus N_r\left[\bigcup_{j=1}^{i-1}C_j'\right].
\]
Let $X:=C_1'\cup\ldots \cup C_m'$. By construction, and because the sets $C_1,\ldots,C_m$ are $r$-discrete in $G_d$, the set $X$ is also $r$-discrete in $G_d$. Since the sets $C_1,\ldots,C_m$ cover $\mathbbm{T}^k$, it follows that $X$ is maximally $r$-discrete in $G_d$. 

Since $X=C_1'\cup\ldots \cup C_m'$, it is enough to show that each $C_i'$ is an $r(i-1)$-local function of $C_1,\ldots,C_i$. We use induction on $i=1,\ldots,m$. This is true for $i=1$ since the set $C_1'=C_1$ is a $0$-local function of~$C_1$. For each $2\leq i\leq m$, the set $C_i'$ is a  Boolean combination of $C_i$ and the sets $C_1',\ldots,C_{i-1}'$ translated by vectors of the form $\sum_{i=1}^d n_i\vvec{x}_i$ where $\|\vec{n}\|_\infty\leq r$.  So, by induction, $C_i'$ is an $r(i-1)$-local function of $C_1,\ldots,C_i$, as desired.
\end{proof}

The following corollary is easily derived from Observation~\ref{outline:obs:diamDiscrete} and Lemma~\ref{outline:lem:simpleDiscrete}.

\begin{corollary}
\label{outline:cor:stripDiscrete}
For any $r\ge0$ there exists a set $X\subseteq \mathbbm{T}^k$ which is maximally $r$-discrete in $G_d$ and can be expressed as a union of finitely many disjoint strips in $\mathbbm{T}^k$.\qed 
\end{corollary}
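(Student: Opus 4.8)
The plan is to combine Observation~\ref{outline:obs:diamDiscrete} and Lemma~\ref{outline:lem:simpleDiscrete}, the two immediately preceding results. First I would apply Observation~\ref{outline:obs:diamDiscrete} with the given $r$ to obtain $\delta>0$ such that any subset of $\mathbbm{T}^k$ whose projection onto the first coordinate has diameter at most $\delta$ is $r$-discrete in $G_d$. Then I would pick an integer $m\ge 1/\delta$ and, for $1\le i\le m$, set
\[
C_i:=\left[\tfrac{i-1}{m},\tfrac{i}{m}\right)\times[0,1)^{k-1},
\]
so that each $C_i$ is a strip of width $1/m\le\delta$, hence $r$-discrete in $G_d$, and $\bigcup_{i=1}^mC_i=\mathbbm{T}^k$.

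Next I would feed $C_1,\ldots,C_m$ into Lemma~\ref{outline:lem:simpleDiscrete} to obtain a maximally $r$-discrete set $X$ which is an $r(m-1)$-local function of $C_1,\ldots,C_m$. It remains to check that $X$ is a finite union of disjoint strips. By Observation~\ref{obs:LocalRule}-style reasoning (or directly from the construction in the proof of Lemma~\ref{outline:lem:simpleDiscrete}), $X$ is a Boolean combination of translates of the strips $C_1,\ldots,C_m$ by vectors of the form $\sum_{j=1}^dn_j\vvec{x}_j$ with $\|\vec{n}\|_\infty\le r(m-1)$. Each such translate of a strip is, by the remark following Definition~\ref{def:strip}, a union of at most two strips, and any Boolean combination of strips can be rewritten as a finite disjoint union of strips; hence $X$ itself is a finite union of disjoint strips, as required.

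I do not anticipate a genuine obstacle here: the statement is essentially a bookkeeping corollary, and the only mild point of care is to invoke the correct closure property of strips (translates split into $\le2$ strips, Boolean combinations reduce to finitely many disjoint strips) so that the output set genuinely has the claimed ``finite disjoint union of strips'' form rather than merely being Borel. One could phrase the whole argument without explicitly tracking the locality constant $r(m-1)$, since the corollary only asserts existence of such an $X$ and does not record its locality; but keeping the construction explicit makes it transparent that $X$ stays within the algebra generated by strips.

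\begin{proof}
Let $r\ge0$. By Observation~\ref{outline:obs:diamDiscrete}, there is $\delta>0$ such that every subset of $\mathbbm{T}^k$ whose projection onto the first coordinate has diameter at most $\delta$ is $r$-discrete in $G_d$. Fix an integer $m\ge 1/\delta$ and, for $1\le i\le m$, let $C_i:=\left[\tfrac{i-1}{m},\tfrac{i}{m}\right)\times[0,1)^{k-1}$. Each $C_i$ is a strip of width $1/m\le\delta$, and its projection onto the first coordinate has diameter $1/m\le\delta$, so $C_i$ is $r$-discrete in $G_d$; moreover $\bigcup_{i=1}^mC_i=\mathbbm{T}^k$. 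By Lemma~\ref{outline:lem:simpleDiscrete}, there is a maximally $r$-discrete set $X$ which is an $r(m-1)$-local function of $C_1,\ldots,C_m$; in particular, $X$ is a Boolean combination of translates of the strips $C_1,\ldots,C_m$ by vectors of the form $\sum_{j=1}^dn_j\vvec{x}_j$ with $\|\vec{n}\|_\infty\le r(m-1)$. By the remarks following Definition~\ref{def:strip}, each such translate of a strip is a union of at most two strips, and any Boolean combination of strips can be written as a union of finitely many disjoint strips. Hence $X$ is a union of finitely many disjoint strips in $\mathbbm{T}^k$.
\end{proof}
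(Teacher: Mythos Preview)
Your proof is correct and follows exactly the approach the paper intends: the corollary is stated with a \qed and the paper only remarks that it ``is easily derived from Observation~\ref{outline:obs:diamDiscrete} and Lemma~\ref{outline:lem:simpleDiscrete},'' which is precisely what you do. Your additional care in verifying, via the remarks after Definition~\ref{def:strip}, that the resulting local function of strips is again a finite disjoint union of strips fills in the only detail the paper leaves implicit.
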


\begin{defn}
For $S\subseteq \mathbbm{T}^k$, let $\comp(S)$ be the collection of all components of $G_d\induced S$. 
\end{defn}

The following definition describes the types of structures that we will use to round a sequence of real-valued flows to an integer-valued one. We use the word ``toast'' which seems to be the standard term for structures of this type now.
%~\cite{GaoJacksonKrohneSeward22}*{Definition~4.1}. 

\begin{defn}
\label{def:scaff}
We say that a sequence $(D_1,D_2,\ldots)$ of subsets of $\mathbbm{T}^k$ is a \emph{toast sequence (in $G_d$)} if the following three conditions are satisfied for all $i\ge1$:
\begin{enumerate}
\stepcounter{equation}
\item\label{scaffBdd} the elements of $\comp(D_i)$ are finite and have uniformly bounded cardinality,
\stepcounter{equation}
\item\label{scaffDist} any two distinct elements of $\comp(D_i)$ are at distance at least $3$ in $G_d$, and
\stepcounter{equation}
\item\label{scaffUt} for $1\leq j<i$, every $S\in \comp(D_j)$ satisfies either $N_2[S]\subseteq D_i$ or $\dist_{G_d}(S,D_i)\ge 3$.
\end{enumerate}
\end{defn}

%Equivalently, equirements~\ref{scaffDist} and~\ref{scaffUt} state that the edge boundaries of all components are pairwise vertex-disjoint.

Let us make a brief contextual remark. A concept which is ubiquitous in  descriptive graph combinatorics is the notion of hyperfiniteness. Namely, a Borel graph $G$ is said to be \emph{hyperfinite} if its edge set 
%the equivalence relation of being in the same component of $G$  
can be written as an increasing union of edge sets of Borel graphs with finite components; see, e.g.,\ 
%Kechris and Miller~\cite{KechrisMiller:toe}*{Section~II.6}, or Gao~\cite{Gao09idst}*{Section~7.2}.
%or~\cite{KechrisMarks20survey}*{Section~5(C)}. 
\cite{Gao09idst}*{Section~7.2} or
\cite{KechrisMiller:toe}*{Section~II.6}.
In this context, a Borel toast sequence $(D_1,D_2,\ldots)$
gives a specific type of a hyperfiniteness certificate $(D_1,D_1\cup D_2,\ldots)$  (or, more precisely, the corresponding sequence of the induced edge sets) for the graph $G_d\induced \bigcup_{i=1}^\infty D_i$; see, e.g.,~\cites{GaoJacksonKrohneSeward22,ConleyJacksonMarksSewardTuckerdrob23}. 

\begin{remark}\label{rem:holes} Note that that our definition of a toast sequence allows \emph{holes} in $D_i$, that is, finite components of $G_d\induced (\I T^k\setminus D_i)$. (In fact, it even allows $D_i$ to have two holes at distance 2 in~$G_d$, even though this can be shown not occur in the toasts constructed in Section~\ref{sec:covering}.)
While it should be possible to get rid of all holes by modifying our constructions in Section~\ref{sec:ToastConstructions} (as in the approach taken by Marks and Unger~\cite{MarksUnger17}), we found it easier instead to write our proofs so that they apply to toast sequences with holes.\end{remark}

\subsection{The Setting (Continuation)}
\label{sec:Setting2}
%\subsection{Assumptions}
\label{sec:ToastAssumptions}

Here we define two sequences of sets, with these definitions applying throughout the paper. Namely, for each $i\ge1$, we fix two sets $X_i$ and $Y_i$, each of which is a union of finitely many disjoint strips, so that $X_i$ is maximally $r_i$-discrete and $Y_i$ is maximally $r'_i$-discrete. This is possible by Corollary~\ref{outline:cor:stripDiscrete}.

For Part~\ref{it:b} of Theorem~\ref{th:main}, we will need the additional assumption that 
\begin{equation}\label{eq:disjointBoundaries}
%N_{r'_i}[\partial Y_i]\cap N_{r'_j}[\partial Y_j]=\emptyset,\quad \mbox{for all $i\neq j$}.
 \dist_{G_d}(\partial Y_i,\partial Y_j)=\infty,\quad \mbox{for all $i\neq j$},
\end{equation}
 that is, each component of $G_d$ can intersect the topological boundary of $Y_i$ for at most one value of $i$.
Since each $Y_i$ is a finite union of strips, the projection on the first coordinate of the union of all components of $G_d$ that intersect the topological boundary $\partial Y_i$ is a countable set. Thus, if $Y_1,Y_2,\ldots$ are translated by uniformly random vectors in $\mathbbm{T}^k$ which are independent of one another and of $\vvec{x}_1,\ldots,\vvec{x}_d$, then, almost surely, the translated countable sets inside $\I T^1$ are  pairwise disjoint and~\eqref{eq:disjointBoundaries} holds.

To achieve Part~\ref{it:a} of Theorem~\ref{th:main} with good quantitative bounds on the dimension of the boundaries, we additionally require by our assumption~\eqref{eq:XQi} that the sets $X_i$ satisfy Lemma~\ref{lm:Xi}, which basically requires that $X_i$ is an $r_i^{d+1+o(1)}$-local function of a single strip.
This extra assumption directly gives an upper bound on the number of strips that make up $X_i$; also, it will allow us to analyse the boundary of sets which are local functions depending on some sets~$X_i$.

\subsection{Equidecompositions from Integer-Valued Flows}
\label{subsec:final}

We will need the following lemma, whose main idea is inspired by the proof sketch in~\cite{MarksUnger17}*{Remark~6.2}. In the context of Theorem~\ref{th:main}, this lemma %Lemma~\ref{outline:lem:matching} 
shows that it is enough to find a bounded integer-valued flow $f$ from $A$ to $B$ with the desired regularity properties.

\begin{lemma}
\label{outline:lem:matching}
Let $f$ be a bounded integer-valued flow from $A$ to $B$ in $G_d$. Then there is an integer $R$ and an equidecomposition 
between $A$ and $B$ such that each piece is an $R$-local function of $f$, $A$, $B$ and finitely many strips. Moreover, if~\eqref{eq:it:c} (that is, the extra assumption of Part~\ref{it:c} of Theorem~\ref{th:main}) holds, then we can additionally require that each piece contains a Lebesgue measurable subset of positive measure.
\end{lemma}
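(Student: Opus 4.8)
The plan is to turn the integer flow $f$ into a bijection from $A$ to $B$ and then cut that bijection into pieces indexed by translation vectors, exactly along the lines sketched in Remark~\ref{rem:GdRole}. Since $f$ is a bounded integer-valued flow from $A$ to $B$, on each component $C$ of $G_d$ the restriction $f\induced C$ is an integer $(\ind_A-\ind_B)$-flow on a copy of $\mathbbm Z^d$ whose support edges carry bounded capacity; such a flow decomposes into finitely-supported-per-vertex paths, so the vertices of $A\cap C$ can be matched injectively to vertices of $B\cap C$ moving each along $G_d$ by at most some universal distance $r$ (depending only on $\|f\|_\infty$ and $d$). The first step is to make this matching canonically, hence $R$-locally, definable for some $R=R(r,d)$: within each neighbourhood $N_r[\vvec u]$ one knows the entire relevant portion of $f$, so one can enumerate all ``augmenting structures'' and pick, say, the $\lex$-least one, which gives a Borel (indeed $r$-local) perfect matching $M$ between $A$ and $B$. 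Then, for each $\vec n\in\mathbbm Z^d$ with $\|\vec n\|_\infty\le r$, let $P_{\vec n}:=\{\vvec u\in A: \vvec u \text{ is matched to }\vec n\cdot_a\vvec u\}$; these finitely many sets partition $A$, their translates $P_{\vec n}+\sum_i n_i\vvec x_i$ partition $B$, and by construction each $P_{\vec n}$ is an $R$-local function of $f$, $A$, $B$ (no strips are needed for this part). By the diameter assumption on $A,B$ (Subsection~\ref{subsec:setting}), a translation by $\sum_i n_i\vvec x_i$ on the torus lifts to $3^{\|\vec n\|_1}$-many genuine translations in $\mathbbm R^k$; splitting each $P_{\vec n}$ accordingly (which is where ``finitely many strips'' enter — the splitting is along coordinate hyperplanes $\{x_j=c\}$, i.e.\ boundaries of strips) yields an honest equidecomposition of $A$ and $B$ by translations in $\mathbbm R^k$ with each piece an $R$-local function of $f$, $A$, $B$ and finitely many strips.

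For the ``Moreover'' clause, suppose \eqref{eq:it:c} holds; we need each piece to contain a Lebesgue measurable set of positive measure. By our choice of $\vvec x_1,\dots,\vvec x_d$, namely Property~\eqref{eq:XsAvoids}, for every $\vec n\in\mathbbm Z^d$ the vector $\vvec t_{\vec n}:=\sum_i n_i\vvec x_i$ is ``good'': $(A+\vvec t_{\vec n})\cap B$ is either empty or has positive measure. The pieces $P_{\vec n}$ we might worry about are exactly those with $(A+\vvec t_{\vec n})\cap B\neq\emptyset$, and for those $(B-\vvec t_{\vec n})\cap A=(A+\vvec t_{\vec n})\cap B -\vvec t_{\vec n}$ has positive measure; but $P_{\vec n}$ could still be a measure-zero subset of this set. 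The fix is the pre-selection idea described in Section~\ref{sec:ideas}: before running the flow construction one reserves, for each of the finitely many ``active'' vectors $\vvec t_1,\dots,\vvec t_N$ (the $\vvec t_{\vec n}$ with $(A+\vvec t_{\vec n})\cap B\neq\emptyset$), a small positive-measure Borel set $A_i\subseteq (B-\vvec t_i)\cap A$ with the $A_i+\vvec t_i$ pairwise disjoint, and arranges the flow $f$ so that each $\vvec u\in A_i$ is matched to $\vvec u+\vvec t_i$; then $A_i\subseteq P_{\vec n_i}$ for the corresponding $\vec n_i$, so each active piece contains $A_i$, a measurable set of positive measure. Since Lemma~\ref{outline:lem:matching} only assumes that such an $f$ is \emph{given}, the statement as phrased is really just the bookkeeping claim that if $f$ already respects such a pre-selection then the resulting pieces inherit the $A_i$; one phrases this by passing to the sets $A\setminus\bigcup_i A_i$ and $B\setminus\bigcup_i(A_i+\vvec t_i)$, noting the flow restricts appropriately, and re-adding the $A_i$ to the relevant pieces at the end.

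\textbf{Main obstacle.} The genuinely delicate point is the first step: extracting a \emph{uniformly bounded}, \emph{Borel} (in fact $R$-locally defined) perfect matching from the bounded integer flow on each $\mathbbm Z^d$ component. One needs a flow-decomposition argument that is simultaneously (i) quantitatively bounded — each vertex is matched within distance $r=r(\|f\|_\infty,d)$ — and (ii) canonical — the choice of matching at $\vvec u$ depends only on the finite picture of $f,A,B$ in a fixed-radius ball around $\vvec u$, with ties broken by $\lex$. The boundedness requires an argument that integer flows of bounded value on $\mathbbm Z^d$ with demand $\pm1$ support a matching of bounded displacement (a max-flow/Hall-type argument localized to balls, using that within a large enough ball the net demand imbalance is controlled); the locality then follows by running this argument inside balls of radius $R\gg r$ and checking the answer at the center is unaffected by what happens outside. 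Everything else — the splitting into translations, the strip bookkeeping, and the positive-measure reservation — is routine given the setup already established in the paper.
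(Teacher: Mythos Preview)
Your first paragraph contains a genuine gap: a bounded integer $(\ind_A-\ind_B)$-flow on $\mathbbm Z^d$ does \emph{not} in general decompose into paths of uniformly bounded length. A single unit of flow can travel along an arbitrarily long path while every edge carries flow at most~$1$; nothing about $\|f\|_\infty$ alone forces a matched pair $(\vvec u,\vvec v)$ to be close in~$G_d$. So the sentence ``such a flow decomposes into finitely-supported-per-vertex paths, so the vertices of $A\cap C$ can be matched \ldots\ by at most some universal distance $r$ (depending only on $\|f\|_\infty$ and $d$)'' is simply false as stated, and the $\lex$-canonical choice you propose cannot repair this.

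The paper's route is different and uses an ingredient you never invoke: the discrepancy bound (Lemma~\ref{lem:discrep}). One fixes a maximally $r$-discrete set $W_r$ that is a finite union of strips and forms Voronoi cells $V_r(\vvec u)$. Discrepancy gives $|V_r(\vvec u)\cap A|,\,|V_r(\vvec u)\cap B|=\Omega(r^d)$, while $|\partial_E V_r(\vvec u)|=O(r^{d-1})$; hence for $r$ large (depending on $\|f\|_\infty$) each cell has more $A$-points and more $B$-points than the total $|f|$-flux across its boundary. One then aggregates $f$ into intercell quantities $F(\vvec u,\vvec u')$, ships $\max\{0,F(\vvec u,\vvec u')\}$ many $\lex$-earliest $A$-points from cell $\vvec u$ to cell $\vvec u'$ and matches them to $\lex$-earliest $B$-points there, with leftovers matched inside their own cell. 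This is where the strips enter essentially --- $W_r$ is a union of strips, and the cell membership of a point is a local function of $W_r$ --- not, as you suggest, merely for torus-versus-$\mathbbm R^k$ bookkeeping. Your ``Hall-type argument localized to balls'' at the end is closer in spirit, but without the discrepancy input it has no traction.

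For the ``moreover'' clause your pre-selection idea is right, but the lemma takes $f$ as given; you cannot assume $f$ ``already respects'' a pre-selection. The paper instead chooses the non-null sets $A_{\vvec t}$ inside the proof, builds a $\{0,\pm1\}$-valued path flow $f^*$ realising the bijection $A_{\vvec t}\to A_{\vvec t}+\vvec t$, sets $f':=f-f^*$, checks the (slightly strengthened) Voronoi inequality still holds for $f'$, and runs the main argument on $f'$ before adjoining each $A_{\vvec t}$ to its piece.
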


\begin{proof}
Using Corollary~\ref{outline:cor:stripDiscrete}, let $W_r$, for each integer $r\geq1$,  be a maximally $r$-discrete subset which is a finite union of strips.
For each $\vvec{v}\in \mathbbm{T}^k$, let $\eta_r(\vvec{v})$ be the vertex $\vvec{u}\in W_r$  such that $\dist_{G_d}(\vvec{v},\vvec{u})$ is minimised and, among all vertices of $W_r$ at the minimum distance from $\vvec{v}$, the vertex $\vvec{u}$ comes earliest under $\lex$. For each $r\ge1$ and $\vvec{u}\in W_r$, let
\[V_r(\vvec{u}):=\{\vvec{v}\in\mathbbm{T}^k: \eta_r(\vvec{v})=\vvec{u}\}.\]
The sets $V_r(\vvec{u})$ for $\vvec{u}\in W_r$ clearly partition $\mathbbm{T}^k$. They can be thought of as ``Voronoi cells'' generated by $W_r$ with respect to the graph distance in $G_d$, where ties are broken using $\lex$. Since $W_r$ is a maximal $r$-discrete set, every element of $V_r(\vvec{u})$ is at distance at most $r$ from $\vvec{u}\in W_r$ and thus the diameter of every Voronoi cell is at most~$2r$.

Since $W_r$ is $r$-discrete, we have
\[N_{\lfloor r/2\rfloor}[\vvec{u}]\subseteq V_r(\vvec{u})\]
for every $\vvec{u}\in W_r$. Combining this with Lemma~\ref{lem:discrep}, we see that
\[
\min_{\vvec{u}\in W_r}\min\left\{\,|V_r(\vvec{u})\cap A|,|V_r(\vvec{u})\cap B|\,\right\} = \Omega(r^d)
\]
as $r\to \infty$. It is not hard to argue (see, e.g., the proof of Lemma~\ref{lem:Jmeas}) that $\left|\partial_EV_r(\vvec{u})\right| = O(r^{d-1})$ as $r\to\infty$ where the implicit constant depends on $d$ only. (Recall that $\partial_E$ denotes the edge boundary of a set, as defined in~\eqref{eq:DefPartialE}.) Thus, if $r$ is sufficiently large with respect to $\|f\|_\infty$, then 
\begin{equation}\label{eq:VoronoiLots}
\min\left\{\,|V_r(\vvec{u})\cap A|,|V_r(\vvec{u})\cap B|\,\right\} \ge \sum_{\vvec{v}\vvec{w}\in \partial_EV_r(\vvec{u})}|f(\vvec{v},\vvec{w})|\end{equation}
for every $\vvec{u}\in W_r$. We fix $r$ large enough so that \eqref{eq:VoronoiLots} holds for every $\vvec{u}\in W_r$; to achieve the ``moreover'' part of the lemma, we will need the slightly stronger inequality \eqref{eq:enoughAB} stated later.

For each pair $\vvec{u},\vvec{u}'\in W_r$, define 
\[F(\vvec{u},\vvec{u}'):=\sum_{\substack{\vvec{v}\in V_r(\vvec{u})\\ \vvec{w}\in V_r(\vvec{u}')}} f(\vvec{v},\vvec{w}),\]
 that is, $F(\vvec{u},\vvec{u}')$ is the total flow sent by $f$ from the Voronoi cell of $\vvec{u}$ to that of~$\vvec{u}'$. Define $A(\vvec{u},\vvec{u}'):=\emptyset$ for every pair of distinct $\vvec{u},\vvec{u}'\in W_r$ with $F(\vvec{u},\vvec{u}')=0$.
Given $\vvec{u}\in W_r$, there are finitely many $\vvec{u}'\in W_r\setminus\{\vvec{u}\}$ for which $F(\vvec{u},\vvec{u}')\neq 0$. For every such $\vvec{u}'$, one by one in order prescribed by $\lex$, we define $A(\vvec{u},\vvec{u}')$ to be the set of those $\max\{0,F(\vvec{u},\vvec{u}')\}$  elements of $V_r(\vvec{u})\cap A$ that have not already been assigned to $A(\vvec{u},\vvec{u}'')$ for some $\vvec{u}''\lex \vvec{u}'$ and, subject to that, are minimal under $\lex$. Note that, by \eqref{eq:VoronoiLots}, this is always possible. Similarly, we define $B(\vvec{u},\vvec{u}')$ to be the set of those $\max\left\{0,-F(\vvec{u},\vvec{u}')\right\}$ $\lex$-minimal elements of $V_r(\vvec{u})\cap B$ that have not already been assigned to $B(\vvec{u},\vvec{u}'')$ for some $\vvec{u}''\lex \vvec{u}'$.  Finally, for each $\vvec{u}\in W_r$, define $A(\vvec{u},\vvec{u})$ to be the set of those vertices in $V_r(\vvec{u})\cap A$ that have not been assigned to $A(\vvec{u},\vvec{u}')$ for any $\vvec{u}'\in W_r\setminus\{\vvec{u}\}$ and define $B(\vvec{u},\vvec{u})$ similarly. We have by construction that
$$
|A(\vvec{u},\vvec{u}')|=|B(\vvec{u}',\vvec{u})|,\quad \mbox{for all distinct $\vvec{u},\vvec{u}'\in W_r$,}
$$
and this holds also for $\vvec{u}=\vvec{u'}$ since $f$ is a flow from $A$ to $B$.
 The final equidecomposition assigns,  for all $\vvec{u},\vvec{u}'\in W_r$, the vertices of $A(\vvec{u},\vvec{u}')$ to those of $B(\vvec{u}',\vvec{u})$ in the order prescribed by $\lex$; see Figure~\ref{fig:matching} for an illustration.

\begin{figure}[htbp]
\begin{center}
\includegraphics[width=0.95\textwidth]{VoronoiNew.1}
\end{center}
\caption{An illustration of the construction of the equidecomposition from an integer-valued flow $f$ in Lemma~\ref{outline:lem:matching} in the simplified setting $d=2$. White nodes are elements of a maximally 19-discrete set and the boundaries of the induced Voronoi cells are in bold. Elements of $A$ and $B$ correspond to black round and square nodes, respectively. The total flow from the central Voronoi cell to the bottom-left cell is $2$; hence the first two elements of $A$ in the lexicographic order in the central Voronoi cell are associated to that cell. After distributing elements of $A$ and $B$ to the neighbouring cells, the two remaining (lexicographically latest) elements of $A$ and of $B$ in the central cell are mapped to one another.}
\label{fig:matching}
\end{figure}

Since each cell has diameter at most~$2r$, this yields an equidecomposition by translations in which the translation vectors are of the form $\sum_{i=1}^dn_i\vvec{x}_i$ with $\|\vec{n}\|_\infty\le 4r+1$. Thus the total number of pieces is finite. Furthermore, the vector by which we translate any $\vvec{v}\in V_r(\vvec{u})\cap A$ depends only on the situation inside $V_r(\vvec{u})$, its adjacent Voronoi cells and their adjacent Voronoi cells, and the values of $f$ at their boundary edges. All these cells are contained entirely inside~$N_{6r+2}[\vvec{v}]$, and so the statement holds with $R:=6r+2$.

The ``moreover'' part of the lemma is only needed for Part~\ref{it:c} of Theorem~\ref{th:main}. Its proof may be skipped by the reader interested only in the other parts of Theorem~\ref{th:main}. In brief, the new idea is to arrange that the piece  corresponding to each translation vector $\vvec{t}\in\I T^k$ that is used in the equidecomposition is pre-assigned a small non-null subset $A_{\vvec{t}}\subseteq A$. Of course, this is possible only if $(A+\vvec{t})\cap B$ has positive measure, so we have to exclude all vectors violating this.

Let us provide the details. For each integer $r\ge 1$, let $W_r$, $\eta_r$ and $V_r$ be as in the proof of the first part. Now, fix an integer $r$ such that, for every $\vvec{u}\in W_r$, it holds that 
\begin{equation}\label{eq:enoughAB}\min\left\{\,\left|V_r(\vvec{u})\cap A\right|,\left|V_r(\vvec{u})\cap B\right|\,\right\} \ge \sum_{\vvec{v}\vvec{w}\in \partial_EV_r(\vvec{u})}\left(|f(\vvec{v},\vvec{w})|+1\right),\end{equation}
 that is, we require a slightly stronger bound than the one in~\eqref{eq:VoronoiLots}.
%The existence of $r$ follows in the same way as before from the equidistribution of the sets $A$ and $B$ relative to large discrete cubes.

Let $T$ be the set of all vectors of the form $\vvec{t}=\sum_{i=1}^dn_i\vvec{x}_i$ such that $\vec{n}\in \mathbbm{Z}^d$ with $\|\vec{n}\|_\infty \leq 10r$ and $(A+\vvec{t})\cap B\neq\emptyset$.  Since \eqref{eq:it:c} holds, we have by~\eqref{eq:XsAvoids} that, for every $\vvec{t}\in T$, the set $(A+\vvec{t})\cap B$ has positive measure. (Recall that the sets $A$ and $B$ satisfying the assumptions of Theorem~\ref{th:main} are necessarily Lebesgue measurable.)

%The following lemma  useful for achieving this.

\begin{claim}
\label{cl:preSelect}
There exists a sequence $(A_{\vvec{t}})_{\vvec{t}\in T}$ of pairwise disjoint measurable non-null subsets of $A$ such that the union $\bigcup_{\vvec{t}\in T}A_{\vvec{t}}$ is $(100r)$-discrete in~$G_d$ and, for all $\vvec{t}\in T$, $A_{\vvec{t}}+\vvec{t}\subseteq B$.
%and $A_{\vvec{t}}=A\cap S_{\vvec{t}}$ for some strip~$S_{\vvec{t}}$.
\end{claim}

\begin{proof}
We use a simple greedy argument. By Observation~\ref{outline:obs:diamDiscrete}, we can choose $\gamma$ sufficiently small so that every strip of width at most $\gamma$ is $(100r)$-discrete. Assume further that $\gamma$ is chosen small enough so that
\[\gamma(200r+1)^d|T| < \lambda\left((A+\vvec{t})\cap B\right)=\lambda\left(A\cap (B-\vvec{t})\right)\]
for every $\vvec{t}\in T$. Label the elements of $T$ by $\vvec{t}_1,\ldots,\vvec{t}_{|T|}$ in an arbitrary fashion.  

Start by taking $S_{\vvec{t}_1}$ to be a strip of width $\gamma$ such that the intersection 
\[
A_{\vvec{t}_1}:=A\cap (B-\vvec{t}_1)\cap S_{\vvec{t}_1}
\] 
is non-null. Clearly, the measure of $A_{\vvec{t}_1}$ is at most the measure of $S_{\vvec{t}_1}$, which is~$\gamma$.  

Now, let $2\leq i\leq |T|$ and assume that each of the sets $A_{\vvec{t}_1},\ldots,A_{\vvec{t}_{i-1}}$ has measure at most $\gamma$. Then, by our choice of $\gamma$, the set
\[
\left(A\cap (B-\vvec{t}_i)\right)\setminus \left(\bigcup_{j=1}^{i-1}N_{100r}\left[A_{\vvec{t}_j}\right]\right)
\]
has positive measure. Take any strip $S_{\vvec{t}_i}$ of width $\gamma$ that has non-null intersection with the above set and let $A_{\vvec{t}_i}$ be this intersection.

The collection $\{A_{\vvec{t}_i}:1\leq i\leq |T|\}$ has all of the desired properties, simply by construction, finishing the proof of the claim. 
\end{proof}

Let $(A_{\vvec{t}})_{\vvec{t}\in T}$ be the sequence returned by the claim. Observe that, for any distinct $\vvec{t},\vvec{t}'\in T$, the sets $A_{\vvec{t}}+\vvec{t}$ and $A_{\vvec{t}'}+\vvec{t}'$ are disjoint; otherwise, by the definition of $T$, the sets $A_{\vvec{t}}$ and $A_{\vvec{t}'}$ would be at distance at most $20r$ in $G_d$, contradicting the fact that $\bigcup_{\vvec{t}\in T}A_{\vvec{t}}$ is $(100r)$-discrete in $G_d$. Thus, the sets $A_{\vvec{t}}$ for $\vvec{t}\in T$ form an equidecomposition of the subset
\[A^*:=\bigcup_{\vvec{t}\in T}A_{\vvec{t}}\]
of $A$ to the subset 
\[B^*:=\bigcup_{\vvec{t}\in T}\left(A_{\vvec{t}}+\vvec{t}\right)\]
of $B$ using translations in $T$.

Now, for each $\vvec{t}\in T$ and each vertex $\vvec{v}\in A_{\vvec{t}}$, take a shortest path from $\vvec{v}$ to $\vvec{v}+\vvec{t}$ in $G_d$, chosen to be $\lex$-minimal among all such paths, and let $f_{\vvec{v}}$ be the (unique) $\{0,\pm1\}$-valued flow in $G_d$ from $\{\vvec{v}\}$ to $\{\vvec{v}+\vvec{t}\}$ supported on the pairs which form edges of this path. Since the set $\bigcup_{\vvec{t}\in T}A_{\vvec{t}}$ is $(100r)$-discrete, any two such paths are edge disjoint; in fact they are  at least $80r$ apart in~$G_d$.
Thus the flow
\[f^*:=\sum_{\vvec{v}\in \bigcup_{\vvec{t}\in T}A_{\vvec{t}}}f_{\vvec{v}}\]
satisfies $\|f^*\|_\infty\leq 1$. Also, letting $f':= f-f^*$, we see that \eqref{eq:enoughAB} implies that, for every $\vvec{u}\in W_r$,
\[
\min\left\{\,|V_r(\vvec{u})\cap A|,|V_r(\vvec{u})\cap B|\,\right\}\ge \sum_{\vvec{v}\vvec{w}\in \partial_EV_r(\vvec{u})}|f'(\vvec{v},\vvec{w})|,
\]
 since at most one path in the support of $f^*$ can intersect $\partial_EV_r(\vvec{u})$.
Clearly, $f'$ is a flow from $A\setminus A^*$ to $B\setminus B^*$ in $G_d$. By the proof of the first part of the lemma with $f'$ in the place of $f$, we see that there is an equidecomposition from $A\setminus A^*$ to $B\setminus B^*$ in which each $\vvec{v}\in A\setminus A^*$ is associated to a point in $B\setminus B^*$ which is either in the same Voronoi cell as $\vvec{v}$ or one which neighbours it. In particular, all of the translation vectors used in the equidecomposition are contained in $T$. Moreover, by making the $\lex$-smallest choices inside each Voronoi cell, we can assume
that each piece is an $O(r)$-local function of $f,W_r,A,B,A_{\vvec{t}_1},\ldots,A_{\vvec{t}_{|T|}}$. In turn, each set $A_{\vvec{t}_{i}}$ is a $O(r)$-local function of $A$, $B$ and the strips $S_{\vvec{t}_1},\ldots,S_{\vvec{t}_{|T|}}$. 

Taking, for each $\vvec{t}\in T$, the union of $A_{\vvec{t}}$ and the (possibly empty) piece of the equidecomposition from $A\setminus A^*$ to $B\setminus B^*$ corresponding to $\vvec{t}$ yields an equidecomposition of $A$ to $B$ using translation vectors in $T$ such that every piece is non-null, as required.
\end{proof}

\section{Proof Outline}  
%\section{Proof Outline and Jordan Measurable Equidecompositions}  
\label{sec:JSec}

With some preliminaries covered by Section~\ref{sec:prelim}, we can now 
provide a more detailed outline of the proof of Theorem~\ref{th:main} than the one given in Section~\ref{sec:ideas}.
%, which aims to capture the main essence of the construction while postponing most of the technicalities until later in the paper. 
In order to illustrate how the various arguments fit together, we will first sketch the construction of an equidecomposition with Jordan measurable pieces (which are not necessarily Borel) and then outline how to build upon it to get the full result.
Our approach mostly follows the general recipe established by Marks and Unger~\cite{MarksUnger17}, some aspects of which can be traced back to the ideas of Laczkovich~\cites{Laczkovich90,Laczkovich92,Laczkovich92b} and Grabow\-s\-ki, M\'ath\'{e} and Pikhurko~\cite{GrabowskiMathePikhurko17}. A major challenge here is that the family of Jordan measurable subsets of $\mathbb{T}^k$ is not a $\sigma$-algebra (although it is an algebra).
%; the same applies to the family of sets with ``small'' boundary.
Therefore, there are a few key differences in our implementation and analysis of this strategy which are necessary to achieve Jordan measurable pieces. 
%We highlight some of these differences as they arise.

\subsection{Real-Valued Flows}
\label{subsec:realFlows}

Following Marks and Unger~\cite{MarksUnger17}*{Section~4}, we construct a sequence $f_1,f_2,\ldots$ of bounded real-valued flows in $G_d$ which converge uniformly to a bounded real-valued flow $f_\infty$ from $A$ to $B$. 
%This idea originated in the work of Marks and Unger~\cite{MarksUnger17}. 
%In fact, the flows defined in Section~4 of their paper would be sufficient for our purposes; we include a construction in this paper mainly for the sake of completeness. 
The following lemma 
%(proved in Section~\ref{sec:realFlows}) 
summarises the key properties of $f_1,f_2,\ldots$ that we will need.
% and $f_\infty$. 
Recall that $c$ and $\varepsilon$ are fixed quantities which were defined in Section~\ref{subsec:setting}.

\begin{lemma}[Marks and Unger~\cite{MarksUnger17}]
\label{outline:lem:realFlows}
There exist flows $f_1,f_2,\ldots$ in $G_d$ such that for all $m\ge1$ the following statements hold with $f_0:=0$ being the flow which is identically zero:
\begin{enumerate}
\stepcounter{equation}
\item\label{eq:fmRational} $2^{2dm} f_m$ is integer-valued,
\stepcounter{equation}
\item\label{eq:AtoB}  $\|\fout{f_m} - \ind_A+\ind_B\|_\infty\leq \frac{2c}{2^{m(1+\varepsilon)}}$, 
\stepcounter{equation}
\item\label{eq:fmChange} $\|f_m-f_{m-1}\|_\infty\leq\frac{2c}{2^{d+\varepsilon(m-1)}}$, and
\stepcounter{equation}
\item\label{eq:local} 
$f_m$ is a $(2^m-1)$-local function of $A$ and $B$.
\end{enumerate}
\end{lemma}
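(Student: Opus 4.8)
The plan is to construct the flows $f_1, f_2, \ldots$ recursively, following the strategy of Marks and Unger: starting from $f_0 = 0$, at step $m$ we set $f_m := f_{m-1} + g_m$, where $g_m$ is a flow that ``equalises'' the residual demand
\[
\chi_{m-1} := \ind_A - \ind_B - \fout{f_{m-1}}
\]
within the cells of a partition of each $\mathbbm{Z}^d$-orbit into pieces that closely approximate discrete cubes of side roughly $2^{m}$. Concretely, on a cell $Q$ we replace $\chi_{m-1}$ by the constant $\tfrac{1}{|Q|}\sum_{\vvec{v}\in Q}\chi_{m-1}(\vvec{v})$, routing $g_m$ \emph{inside} $Q$ so as to achieve this. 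Since $Q$ is (close to) a discrete cube, Lemma~\ref{lem:discrep} applied to both $X=A$ and $X=B$ gives $\big|\sum_{\vvec{v}\in Q}(\ind_A-\ind_B)(\vvec{v})\big|\le 2c\,(2^m)^{d-1-\varepsilon}$; and because each earlier $g_j$ ($j<m$) routes flow only inside cells that are refined by $Q$, Observation~\ref{obs:flowOutOfS} shows $\sum_{\vvec{v}\in Q}\fout{g_j}(\vvec{v})=0$, so in fact $\sum_{\vvec{v}\in Q}\chi_{m-1}(\vvec{v})=\sum_{\vvec{v}\in Q}(\ind_A-\ind_B)(\vvec{v})$ obeys the same bound. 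Dividing by $|Q|\approx 2^{md}$ yields $\|\chi_m\|_\infty\le 2c\,2^{-m(1+\varepsilon)}$, which is exactly statement~\ref{eq:AtoB}.

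For statement~\ref{eq:fmChange}: equalising a demand bounded by $\|\chi_{m-1}\|_\infty \le 2c\,2^{-(m-1)(1+\varepsilon)}$ across a cell of diameter $O(2^m)$ can be achieved with per-edge flow $O(2^m\cdot 2^{-(m-1)(1+\varepsilon)})$, and spreading the excess through the cell as evenly as possible sharpens the constant to the claimed $\tfrac{2c}{2^{d+\varepsilon(m-1)}}$; in particular $\sum_m\|f_m-f_{m-1}\|_\infty<\infty$, so $f_m\to f_\infty$ uniformly to a bounded flow which, by~\ref{eq:AtoB}, is a flow from $A$ to $B$. For statement~\ref{eq:fmRational}: if the partitions at consecutive steps are \emph{nested} (each step-$m$ cell is a union of step-$(m-1)$ cells), then $\chi_{m-1}$ is constant on each step-$(m-1)$ cell and a multiple of $1/2^{(m-1)d}$ there, so averaging over a step-$m$ cell of size $\approx 2^{md}$ produces values — hence a flow $g_m$ — that are multiples of $1/2^{md}$; inductively $f_m$ is a multiple of $1/2^{md}$, which more than suffices for $2^{2dm}f_m$ to be integer-valued.

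The heart of the argument, and the point requiring the most care, is to carry all of this out in a way that is \emph{Borel} and \emph{local}, so that statement~\ref{eq:local} holds. What is needed is a sequence of partitions of the $\mathbbm{Z}^d$-orbits into box-like cells of side $\approx 2^m$ that is simultaneously (i) nested across $m$ (needed for~\ref{eq:fmRational} and for the telescoping used in~\ref{eq:AtoB}), (ii) a local function of $A$ and $B$ (indeed one can take it independent of them), with the cell through $\vvec{u}$ determined inside a ball of radius roughly $2^{m-1}$, so that after adding the $(2^{m-1}-1)$-local information needed to read off $\chi_{m-1}$ the whole rule stays $(2^m-1)$-local, and (iii) with cells that approximate discrete cubes $N_n^+[\cdot]$ closely enough to invoke Lemma~\ref{lem:discrep}. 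Since a free $\mathbbm{Z}^d$-action admits no Borel choice of orbit origin, such a hierarchy cannot be obtained by fixing a single grid; instead one builds it from the tools already in place — maximal $r$-discrete sets that are finite unions of strips (Corollary~\ref{outline:cor:stripDiscrete}) together with the lexicographic tie-breaking order on each orbit — taking Voronoi-type cells around a nested family of such discrete sets and correcting them into near-cubes. I expect the main obstacle to be verifying that these corrected cells still behave like discrete cubes for the discrepancy estimate while remaining nested and locally determined, and that $g_m$ built on them simultaneously satisfies all three quantitative bounds; this is the bulk of the work and will occupy Section~\ref{sec:realFlows}.
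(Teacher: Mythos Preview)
Your recursive framework (set $f_m=f_{m-1}+g_m$ with $g_m$ equalising the residual demand on cells, then use Lemma~\ref{lem:discrep} on each cell) is exactly the skeleton of the Marks--Unger construction. However, you have missed the key idea that makes the construction local, and your proposed substitute runs into real trouble.

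The paper does \emph{not} attempt to build a single nested Borel hierarchy of cube-like cells. Instead it exploits the fact that flows, unlike matchings, can be convexly combined: for each $m$ one defines $\theta_m$ as the \emph{average} over all $2^{dm}$ possible partitions of each orbit (a copy of $\mathbbm{Z}^d$) into a regular grid of discrete $2^m$-cubes. Concretely, $\theta_m$ is a sum over every discrete $2^{m-1}$-cube $C$ through each point $\vvec{u}$, weighted by $\xi(C)$, of straight-line unit flows spreading the demand uniformly among the $2^d$ sub-cubes of each $2^m$-cube containing $C$. Averaging kills the need to choose a grid: the value of $\theta_m$ on an edge at $\vvec{u}$ depends only on which discrete $2^m$-cubes pass through $\vvec{u}$, hence only on $A\cap N_{2^m-1}[\vvec{u}]$ and $B\cap N_{2^m-1}[\vvec{u}]$, giving~\eqref{eq:local} for free. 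Because every cell is a genuine discrete cube $N_{2^m-1}^+[\cdot]$, Lemma~\ref{lem:discrep} applies verbatim to each summand, yielding~\eqref{eq:AtoB} and~\eqref{eq:fmChange} with the stated constants. The extra factor $2^{-dm}$ from averaging, together with $|C|=2^{(m-1)d}$ and the factor $2^{-d}$ from spreading over sub-cubes, explains why the denominator in~\eqref{eq:fmRational} is $2^{2dm}$ rather than the $2^{dm}$ your single-grid heuristic predicts.

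Your alternative via Voronoi cells around maximally $r$-discrete sets faces the obstacle you yourself flag and do not resolve: Lemma~\ref{lem:discrep} is stated only for discrete cubes $N_n^+[\vvec{u}]$, and Voronoi cells in the $\ell_\infty$-metric on $\mathbbm{Z}^d$ are not cubes (nor unions of boundedly many). Getting nested Voronoi hierarchies with the precise locality radius $2^m-1$ and the exact constants in~\eqref{eq:fmChange} would require substantial additional work that the averaging trick bypasses entirely.
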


The proof of the lemma follows that of a similar result in~\cite{MarksUnger17}*{Section~4}; we include the proof in Section~\ref{sec:realFlows} for completeness. The rough idea is as follows. Suppose that we have fixed a partition of $[\vvec{u}]_{G_d}\cong \I Z^d$ into discrete $2^m$-cubes. Within each cube $Q$, we define the flow $f_m$ so that it cancels as much as possible between the positive demand $\ind_A$ and the negative demand $-\ind_B$, and spreads the rest uniformly over~$Q$. A bit more formally, the restriction of $f_m$ to $Q$ is an $(\ind_A-\ind_B+\xi(Q))$-flow, where $\xi(Q):=(|A\cap Q|-|B\cap Q|)/|Q|$. Also, $f_m$ is zero on all edges between distinct cubes of the partition.  Assuming that the fixed dyadic partitions are aligned for different values of $m$, we can construct such $f_m$ incrementally from~$f_{m-1}$. As long as we do this increment in a ``reasonable'' way, the discrepancy bound of Lemma~\ref{lem:discrep} gives a good upper bound on~$\|f_m-f_{m-1}\|_\infty$. Unfortunately, one cannot take a perfect dyadic decomposition of each component of $G_d$ in a constructive way for a generic choice of $\vvec{x}_1,\ldots,\vvec{x}_d$. However, the convenience of working with flows (versus graph matchings) is that we can always take their convex combinations. So, in order to make the construction of each $f_m$ local, one can simply take the average over all $2^{dm}$ possible partitions of each component (a copy of $\I Z^d$) into a grid of discrete $2^m$-cubes. 

Each flow $f_m$ assumes finitely many values by~\eqref{eq:fmRational} and is a local function of the Jordan measurable sets $A$ and $B$.
Since the right hand side of \eqref{eq:fmChange} is summable, the sequence $f_1,f_2,\ldots$ converges uniformly to a bounded flow in $G_d$, which we denote by~$f_\infty$. By \eqref{eq:AtoB}, the limit $f_\infty$ is a flow 
from $A$ to~$B$.

With Lemma~\ref{outline:lem:realFlows} in hand, the goal of the next two steps is to transform the sequence $f_1,f_2,\ldots$ into a bounded integer-valued flow $f$ from $A$ to $B$ in $G_d$. Notice that, if we were indifferent about the structure of the pieces of the final equidecomposition, then we could just apply the Integral Flow Theorem to $f_\infty$ to obtain such a flow $f$ and then feed it into Lemma~\ref{outline:lem:matching}. In particular, this approach is sufficient for proving Theorem~\ref{th:L}. However, this would use the Axiom of Choice and yield virtually no structural guarantees on the pieces of the equidecomposition. The main purpose of the next two steps, therefore, is to obtain an integer-valued flow from $A$ to $B$ in a more careful manner which allows us to analyse the obtained pieces.

\subsection{Toast Sequences}

Recall that the notion of a toast sequence was defined in Definition~\ref{def:scaff}. Informally, if $(J_i)_{i=1}^\infty$ is a toast sequence, then one can view $J_i$ as a collection of bounded and well separated  connected subgraphs of $G_d$ that arrive at time $i$ so that every component $S$ of vertices that arrived in an earlier stage is either entirely inside $J_i$ or entirely outside $J_i$, including some ``padding''.

The following lemma provides a simple construction of a toast sequence which covers $\mathbbm{T}^k$ up to a null set. (This will be sufficient for our construction of an equidecomposition with Jordan measurable pieces.)

\begin{lemma}
\label{outline:lem:JordanScaff} 
There exists a toast sequence $(J_1,J_2,\ldots)$ in $G_d$ such that $\lambda\left(\bigcup_{i=1}^\infty J_i\right)=1$ and, for each $i\ge 1$, the set $J_i$ is a union of finitely many disjoint strips.
% and the components of $G\induced J_i$ have diameter uniformly bounded. 
\end{lemma}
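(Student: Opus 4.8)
The plan is to build the toast sequence $(J_1,J_2,\ldots)$ greedily, using the maximally discrete sets $X_i$ (which are finite unions of strips, hence Borel and Jordan measurable) as the ``centers'' of the components that arrive at stage $i$. Recall that $X_i$ is maximally $r_i$-discrete in $G_d$ and $r_1 < r_2 < \ldots$ grows very fast (indeed $r_i = 100^{2^{i+1}-2}$, and the gaps $q_i, q_i'$ were set up precisely so that different scales do not interfere). First I would set
\[
 J_i := N_{q_i}[X_i],
\]
the $q_i$-neighbourhood in $G_d$ of $X_i$, where $q_i$ is the constant defined in~\eqref{eq:qi'}. Since $X_i$ is a finite union of strips and $N_{q_i}[\,\cdot\,]$ of a strip is a finite union of strips (being a finite Boolean combination of translates of strips by vectors $\sum n_j\vvec{x}_j$ with $\|\vec n\|_\infty \le q_i$), each $J_i$ is indeed a union of finitely many disjoint strips, as required.

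Next I would verify the three conditions of Definition~\ref{def:scaff}. For~\eqref{scaffBdd}: every component of $G_d\induced J_i$ is contained in $N_{q_i}[\vvec x]$ for a single $\vvec x\in X_i$ (because distinct points of $X_i$ are at distance $> r_i \ge 2q_i + 3$ in $G_d$ by the fast growth of $r_i$ relative to $q_i$ — more precisely one checks $r_i > 2q_i$ from~\eqref{eq:qi'} since $r_i = 100\,r_i'{}^{\text{-ish}}$ dwarfs the linear-in-$r_i$ quantities $t_i,q_i$), so components have cardinality at most $|N_{q_i}[\vvec 0]| = (2q_i+1)^d$, a uniform bound. Condition~\eqref{scaffDist} follows similarly: two distinct components sit in $N_{q_i}[\vvec x]$ and $N_{q_i}[\vvec y]$ for distinct $\vvec x,\vvec y\in X_i$ with $\dist_{G_d}(\vvec x,\vvec y) > r_i > 2q_i + 2$, so the components are at distance at least $3$. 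For the nesting condition~\eqref{scaffUt}, fix $j < i$ and a component $S\in\comp(J_j)$; then $S\subseteq N_{q_j}[\vvec x]$ for some $\vvec x\in X_j$, so $N_2[S]\subseteq N_{q_j+2}[\vvec x]$. Because $X_i$ is maximally $r_i$-discrete and $r_i$ is enormous compared to all the $q_j, t_j$ with $j\le i$, every vertex of $\mathbbm{T}^k$ is within distance $r_i$ of $X_i$; I would use this together with the gap inequalities in~\eqref{eq:qi'} to argue that $N_{q_j+2}[\vvec x]$ is either swallowed by $N_{q_i}[X_i] = J_i$ (when the nearest point of $X_i$ to $\vvec x$ is within distance $q_i - q_j - 2$) or is at distance $\ge 3$ from $J_i$ (when that nearest point is far); the intermediate annulus is avoided precisely by the slack built into $t_i = 2r_i + 4q_{i-1}' + 4$ and $q_i = t_i + 2q_{i-1}' + 4$, which guarantee that no $S\in\comp(J_j)$ can straddle $\partial J_i$. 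This dichotomy is exactly~\eqref{scaffUt}.

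Finally, for the covering claim $\lambda\!\left(\bigcup_i J_i\right) = 1$: since $X_i$ is maximally $r_i$-discrete, $\mathbbm{T}^k = N_{r_i}[X_i]$, so $J_i = N_{q_i}[X_i]$ covers a $\lambda$-proportion of $\mathbbm{T}^k$ that is at least $(2q_i+1)^d/(2r_i+1)^d$ of the ``cells'' — which does not directly tend to $1$. Instead I would argue measure-theoretically: by Lemma~\ref{lem:discrep} applied with $X$ a small strip, or more simply by a direct ergodicity/density argument, the sets $X_i$ can be (and, via Corollary~\ref{outline:cor:stripDiscrete}, are) chosen as unions of strips with $\lambda(N_{q_i}[X_i]) \to 1$; alternatively, since the $\mathbbm{Z}^d$-action $a$ is free and ergodic, the increasing-in-scale neighbourhoods of maximal discrete sets exhaust $\mathbbm{T}^k$ up to null sets. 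Concretely, one picks $X_i$ so that additionally $\lambda(J_i) \ge 1 - 2^{-i}$ — this is possible because among all maximally $r_i$-discrete unions of strips one may take a translate for which the strip union has total width close to $1$, forcing $\lambda(N_{q_i}[X_i])$ close to $1$. Then $\lambda\!\left(\bigcup_{i\ge 1} J_i\right) \ge \sup_i \lambda(J_i) = 1$. I expect the main obstacle to be exactly the bookkeeping in~\eqref{scaffUt}: checking that the constants $r_i, t_i, q_i, t_i', q_i'$ from~\eqref{eq:qi'} are related so that no earlier component can partially overlap $\partial J_i$, which forces one to track the ``padding'' distances carefully through the recursion rather than relying on asymptotics. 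The covering claim, by contrast, is soft once one is willing to choose the $X_i$ with large measure, which Corollary~\ref{outline:cor:stripDiscrete} together with a translation-averaging argument permits.
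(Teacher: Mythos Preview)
Your proposal has two genuine gaps.

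\textbf{First, an arithmetic error that breaks~\eqref{scaffBdd}.} You assert $r_i > 2q_i$, but from~\eqref{eq:qi'} one has $q_i = t_i + 2q_{i-1}' + 4 = 2r_i + 6q_{i-1}' + 8 > 2r_i$, so in fact $q_i > r_i$. Since $X_i$ is \emph{maximally} $r_i$-discrete, $N_{r_i}[X_i] = \mathbbm{T}^k$, and therefore your $J_i := N_{q_i}[X_i]$ is all of $\mathbbm{T}^k$. Its components in $G_d$ are then the (infinite) orbits of the action, so~\eqref{scaffBdd} fails immediately.

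\textbf{Second, even with a corrected radius, your argument for~\eqref{scaffUt} does not work.} Suppose you instead took $J_i := N_{\rho_i}[X_i]$ for some $\rho_i < r_i/2$, so that the balls stay disjoint. You claim that for $j<i$ and $S = N_{\rho_j}[\vvec{x}]$ with $\vvec{x}\in X_j$, the set $N_2[S]$ is either inside $J_i$ or at distance $\ge 3$ from it, and that ``the intermediate annulus is avoided precisely by the slack built into $t_i,q_i$''. But those constants only bound diameters and separations \emph{within a single scale}; they say nothing about where a point $\vvec{x}\in X_j$ sits relative to the sphere $\partial N_{\rho_i}[\vvec{y}]$ for $\vvec{y}\in X_i$. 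Nothing prevents $\dist_{G_d}(\vvec{x},\vvec{y}) = \rho_i$, in which case $S$ straddles~$\partial J_i$. The paper handles exactly this by an iterative ``swallowing'' step: one starts not with balls but with \emph{retracted Voronoi cells} $I_i$ around $X_i$ (retracted by $5r_{i-1}'$ from the true Voronoi boundaries), and then repeatedly absorbs into $J_i$ any earlier component $S\in\comp(J_j)$ (with padding) that comes within distance~$2$; the recursion~\eqref{eq:qi'} is designed so that this absorption terminates within $N_{q_{i-1}'}[I_i]$ and components of $I_i$ never merge. This closure operation is the key idea your construction is missing.

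Finally, your covering argument is also too loose. With balls of radius $\rho_i < r_i/2$ around an $r_i$-discrete set, $\lambda(J_i)$ is bounded away from~$1$ uniformly in~$i$, and the fixes you propose (translation averaging, ergodicity) do not salvage this. The paper's retracted Voronoi cells satisfy $\lambda(I_i) = 1 - O(r_{i-1}'/r_i)\to 1$ because the retraction $5r_{i-1}'$ is tiny compared to the cell diameter~$\Theta(r_i)$; this is Lemma~\ref{lem:Jmeas}, and it is not a soft statement.
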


\begin{proof}[Proof Sketch] The construction is slightly more complicated than is necessary, in order for the sets $J_i$ to be defined in the same way as they will be in the proof of Theorem~\ref{th:main}.

Recall that $r_0'<r_1<r_1'<r_2<r_2'<\ldots$ is a rapidly increasing sequence of integers and $X_i\subseteq \I T^k$ is  a maximally $r_i$-discrete
set which is a union of finitely many strips. 
Define $I_i$ to be the set of all $\vvec{v}\in \mathbbm{T}^k$ for which there exists $\vvec{u}\in X_i$ such that
\[\dist_{G_d}(\vvec{v},\vvec{u}')\ge\dist_{G_d}(\vvec{v},\vvec{u})+5 r'_{i-1}, \quad\mbox{for every $\vvec{u}'\in X_i\setminus \{\vvec{u}\}$.}\]
 Informally speaking, the components of $I_i$ are the Voronoi cells of $X_i\subseteq V(G_d)$, as defined in the proof of Lemma~\ref{outline:lem:matching}, except we retract somewhat from their graph boundaries. It is not hard to show  that the diameter of each component of $I_i$ is at most $2r_i$ and every two components are at distance at least $5r_{i-1}'$ (see Lemma~\ref{lem:Ii}).

We use induction on $i$ to define $J_i$. Fix $i\ge 1$ and assume that the sets $J_j$, for all $j<i$, have already been defined. We obtain $J_i$ as the result of the following procedure. Initialise $J_i:=I_i$. 
Then, while there exist $j<i$ and a component $S$ of $J_j$ such that $\dist_{G_d}(S,J_i)\leq q_{j-1}'+4$
and $N_{q_{j-1}'+4}[S]\not\subseteq J_i$, we add  to $J_i$ all elements of $N_{q_{j-1}'+4}[S]$. (Recall that $q_i'$ was defined by~\eqref{eq:qi'}, being slightly larger than~$4r_i'/5$.)

Since the sequence $r_0'<r_1<r_1'<\ldots$ increases sufficiently rapidly, this procedure eventually terminates and in fact stays within the $q'_{i-1}$-neighbourhood of $I_i$ (Lemma~\ref{lem:Ji}). Thus, when we construct $J_i$ from $I_i$, the original components of $I_i$ do not merge and stay well separated, being at distance at least~$5r_{i-1}'-2q'_{i-1}$ from each other.
(This stronger separation property is not needed here, but will be useful in the general proof.)
The other properties of a toast sequence from Definition~\ref{def:scaff} hold because, whenever some earlier component $S'$ (that is, $S'\in \comp(J_j)$ with $j<i$)
comes too close to a currently defined component $S$ of $J_i$, we add $S'$ with some padding to $S$ (with the new enlarged set $S$ still being connected).

We trivially have $\lambda(J_i)\ge \lambda(I_i)$. On the other hand, the measure of $I_i$ can be shown to be least $1-O(r_{i-1}'/r_i)$ (see Lemma~\ref{lem:Jmeas}), which approaches 1 as $i\to\infty$.

Whether or not a vertex $\vvec{u}\in\mathbbm{T}^k$ is contained in $J_i$ can be determined by the structure of the sets $X_1,\ldots,X_i$ at bounded distance (say, $4r_i$) from $\vvec{u}$ in $G_d$ (see Lemma~\ref{lm:JiLocality}); thus, $J_i$ is a local function of $X_1,\ldots,X_i$. So, it can be written as a union of finitely many disjoint strips by Observation~\ref{obs:LocalRule} (since such sets form an $a$-invariant algebra), finishing our proof sketch. 
\end{proof}

\subsection{Integer-Valued Flow}
\label{subsec:intFlows}

The last remaining step is to use the flows $(f_m)_{m=1}^\infty$ from Lemma~\ref{outline:lem:realFlows} and
the toast sequence $(J_i)_{i=1}^\infty$ from Lemma~\ref{outline:lem:JordanScaff} to construct an integer-valued bounded flow $f$ from $A$ to $B$ that will be used as input to Lemma~\ref{outline:lem:matching} to equidecompose $A$ and $B$. Unfortunately, the construction of $f$ is somewhat involved, even in the context of Jordan measurable pieces.

Although this is not strictly necessary, we keep the perspective
% in this proof outline
%from Section~\ref{sec:ideas} 
that  a new set of vertices $J_i$  arrives at time $i$ and we have to fix for good the value of $f$ on every new edge intersecting $J_i$, being compatible with all previous choices. It is enough to concentrate on constructing the final values of $f$ on the edges in $\partial_E J_i$, that is, on all new boundary edges. Indeed, since every component $S$ of $J_i$ is finite, there are only finitely many possible extensions of $f$ to a uniformly bounded integer flow inside $S$ and, if at least one exists, then the lexicographically smallest extension is a local function of the boundary flow values, $J_1,\ldots,J_i,A$ and $B$ whose radius is at most the diameter of~$S$. A bit later, we will address the problem of certifying the existence of such an extension.

So, take any $S\in\comp(J_i)$, i.e.,\ a component of $G_d\induced J_i$ which must be finite by the definition of a toast sequence. Initially let $f$ be $f_{m_i}$ on $\partial_E S$, where $m_i$ is sufficiently large integer depending on the maximum diameter of the components of $J_i$.
We repeat the following for every ``connected'' part $P\subseteq \partial_E S$ of the edge boundary of~$S$. Let $S'$ consist of the vertices ``enclosed'' by~$P$; thus, $S'$ is either a hole of $S$ or the set $S$ with all its holes filled.
By adjusting the value of $f$ on one edge of~$P$, we make the total flow out of $S'$ to be $|A\cap S'|-|B\cap S'|$. 
%Since $m_i$ is large enough, the new flow out of $S'$ is exactly $|A\cap S'|-|B\cap S'|$
Since $\fout{f_{m_i}}$ is a very good approximation to $\ind_A-\ind_B$,
% and the diameter of $S'$ is at most the diameter of $S$,
%we can assume that 
%$$\left|\,\fout{f_{m_i}}(S')+|A\cap S'|-|B\cap S'|\,\right|<1/2.$$ 
we need to adjust the flow by less than~$1/2$.
Using Lemma~\ref{lem:triEvenDeg}, we find a sequence $(\vvec{u}_1\vvec{v}_1,\ldots,\vvec{u}_{t-1}\vvec{v}_{t-1})$ in $P$ so that each edge in $P$ appears at least once but at most constant number of times while $\vvec{u}_j\vvec{v}_j$ and $\vvec{u}_{j+1}\vvec{v}_{j+1}$ are in a triangle in $G_d$ for each $1\le j\le t-2$. 
%, where we set $\vvec{u}_t\vvec{v}_t:=\vvec{u}_1\vvec{v}_1$. 
Make $f$ to be integer on $\vvec{u}_1\vvec{v}_1$ by adding a $0$-flow (i.e., a flow $\phi$ such that $\fout{\phi}:V(G_d)\to\I R$ is the zero function) on the unique triangle containing both $\vvec{u}_1\vvec{v}_1$ and $\vvec{u}_2\vvec{v}_2$. (Note 
that the third edge of this triangle is not in $\partial_E S$
and thus we do not care about the flow through it being integral at this stage.) By adding a $0$-flow along the triangle containing both
$\vvec{u}_2\vvec{v}_2$ and $\vvec{u}_3\vvec{v}_3$,
we make the flow value on $\vvec{u}_2\vvec{v}_2$ integer, and so on. We repeat this procedure $t-2$ times in total, ensuring that the final flow $f$ is integer-valued on all edges of $P$ except perhaps $\vvec{u}_{t-1}\vvec{v}_{t-1}$. Note that this part of the process does not change the flow out of any vertex under~$f$. So $\fout{f}(S')$ keeps its (integer) value and thus the final flow has to be integer also on $\vvec{u}_{t-1}\vvec{v}_{t-1}$.
Clearly, the definition of $f$ can be made local by fixing a consistent rule for choosing the adjustment, the edge sequence and the values of the added $0$-flows.

%, that is, $\fout{f}=\fout{f_{m_i}}$. 

If we had used $f_\infty$ as the initial values of $f$ on $P$, then the initial adjustment step would not be necessary, and the obtained rounding algorithm would be essentially the same as used by Marks and Unger~\cite{MarksUnger17}. Unfortunately, if we wish to have Jordan measurable pieces, then we should (and, in fact, we do) avoid using the values of $f_\infty$ (which is the pointwise limit of $f_1,f_2,\ldots$) when defining $f$ on $\bigcup_{i=1}^\infty \partial_E J_i$.
% OP: I removed this part as later we mention Axiom of Choice again.
%, except at the end for rounding the flow inside the null set $\I T^k\setminus\cup_{i=1}^\infty J_i$ which is Jordan measurable by Lemma~\ref{lem:JordanLimit} (so this step will not harm Jordan measurability). 
However, we can freely use $f_\infty$ to define, in a way that parallels the construction of $f$ from $f_{m_i}$, a real-valued flow which certifies via the
Integral Flow Theorem (Theorem~\ref{th:IFT}) that the constructed integer flow values on $\bigcup_{i=1}^\infty \partial_E J_i$, can be extended to a bounded integer $(\ind_A-\ind_B)$-flow on all edges of $G_d$
%with at least one endpoint in $\bigcup_{=1}^iJ_j$
(see Lemma~\ref{lm:RoundingGD} for details). As mentioned above, given that such an extension exists, we can choose the lexicographically minimal one on each finite component of the remaining graph; this includes all edges inside~$\bigcup_{i=1}^\infty J_i$. Here, crucially, the choice of the extension is independent of~$f_\infty$.

Now, during the construction, each of the (finitely many) sets $Z^{f}_{\vec{\gamma},\ell}\cap \left(\bigcup_{j=1}^i J_j\right)$ that describe the flow $f$ on edges incident to the vertices which have arrived by time $i$ (where $Z^{f}_{\vec{\gamma},\ell}$ is as in~\eqref{eq:Zdef}), only grows in time, since the values of $f$ fixed at time $j$ are never overwritten later. These sets are Jordan measurable (as local functions of $f_{m_1},\ldots,f_{m_i}$ and $J_1,\dots,J_i$) and their union over all values of $i$ and $\ell$ covers the co-null subset $\bigcup_{i=1}^\infty J_i$ of~$\I T^k$. Thus, for every $\ell$, the set $Z^{f}_{\vec{\gamma},\ell}\cap \left(\bigcup_{j=1}^\infty J_j\right)$ is Jordan measurable by the following lemma (applied with $Z:=\I T^k$).

\begin{lemma}
\label{lem:JordanLimit}
Let $Z_1,\ldots,Z_N$ be pairwise disjoint subsets of a Jordan measurable set $Z\subseteq \mathbbm{T}^k$.
% such that $Z\setminus \bigcup_{j=1}^N Z_j$ is a null-set. 
If, for every real $\e>0$, there are Jordan measurable subsets $Z_j'\subseteq Z_j$  such that $\lambda(Z\setminus \bigcup_{j=1}^N Z_j')<\e$, then all of the sets $Z_1,\ldots,Z_N$ are Jordan measurable. 
\end{lemma}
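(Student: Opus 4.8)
The plan is to recall the standard characterisation of Jordan measurability in terms of inner and outer Jordan content and to exploit the monotonicity and finite additivity of these contents. Recall that for a bounded set $Y\subseteq \I T^k$, the inner Jordan content $\underline{c}(Y)$ is the supremum of $\lambda(E)$ over elementary (finite unions of boxes) sets $E\subseteq Y$, and the outer Jordan content $\overline{c}(Y)$ is the infimum of $\lambda(E)$ over elementary $E\supseteq Y$; always $\underline{c}(Y)\le\overline{c}(Y)$, and $Y$ is Jordan measurable precisely when equality holds, in which case the common value is $\lambda(Y)$. The key monotonicity facts are: $\underline{c}$ and $\overline{c}$ are monotone under inclusion; $\underline{c}$ is superadditive on disjoint sets; and $\overline{c}$ is subadditive. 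Since $Z$ is Jordan measurable, $\underline{c}(Z)=\overline{c}(Z)=\lambda(Z)$.

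First I would fix $\e>0$ and pick Jordan measurable $Z_j'\subseteq Z_j$ with $\lambda\bigl(Z\setminus\bigcup_{j=1}^N Z_j'\bigr)<\e$. Set $W:=\bigcup_{j=1}^N Z_j'$; note $W$ is Jordan measurable (finite union of disjoint Jordan measurable sets) and $\lambda(W)=\sum_{j=1}^N\lambda(Z_j')>\lambda(Z)-\e$. The second step is to bound each outer content from above. For a fixed index $j_0$, observe that $Z_{j_0}\subseteq \bigl(Z\setminus W\bigr)\cup Z_{j_0}'$ (because $Z_{j_0}\subseteq Z$, and any point of $Z_{j_0}$ not in $W$ lies in $Z\setminus W$, while $Z_{j_0}\cap W = Z_{j_0}'$ by disjointness of the $Z_j$'s and $Z_j'\subseteq Z_j$). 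Hence by subadditivity and monotonicity of outer content,
\[
\overline{c}(Z_{j_0}) \le \overline{c}(Z\setminus W) + \overline{c}(Z_{j_0}') \le \e + \lambda(Z_{j_0}'),
\]
using that $Z\setminus W$ has Jordan outer content $<\e$ (as $Z$ and $W$ are Jordan measurable, so is their difference, with measure $<\e$) and that $Z_{j_0}'$ is Jordan measurable.

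The third step is the matching lower bound on inner content. Since the $Z_j$ are pairwise disjoint, so are their inner elementary approximants, and superadditivity of inner content combined with $Z_j'\subseteq Z_j$ gives
\[
\sum_{j=1}^N \underline{c}(Z_j) \;\ge\; \sum_{j=1}^N \underline{c}(Z_j') \;=\; \sum_{j=1}^N \lambda(Z_j') \;=\; \lambda(W) \;>\; \lambda(Z)-\e.
\]
On the other hand, from the previous step, $\sum_{j=1}^N \overline{c}(Z_j) \le N\e + \sum_{j=1}^N\lambda(Z_j') \le N\e + \lambda(Z)$. Combining, for each $j_0$,
\[
\overline{c}(Z_{j_0}) - \underline{c}(Z_{j_0}) \;\le\; \sum_{j=1}^N \bigl(\overline{c}(Z_j)-\underline{c}(Z_j)\bigr) \;\le\; \bigl(N\e+\lambda(Z)\bigr) - \bigl(\lambda(Z)-\e\bigr) \;=\; (N+1)\e,
\]
where the first inequality holds because each summand $\overline{c}(Z_j)-\underline{c}(Z_j)$ is nonnegative. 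Since $\e>0$ was arbitrary and $N$ is fixed, we get $\overline{c}(Z_{j_0})=\underline{c}(Z_{j_0})$, so $Z_{j_0}$ is Jordan measurable for every $j_0$, as claimed.

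I do not anticipate a serious obstacle here; the only point requiring mild care is the set-theoretic inclusion $Z_{j_0}\subseteq (Z\setminus W)\cup Z_{j_0}'$, which rests on the hypothesis that the $Z_j$ are pairwise disjoint (so that $Z_{j_0}\cap Z_j' = \emptyset$ for $j\ne j_0$ and $Z_{j_0}\cap Z_{j_0}' = Z_{j_0}'$), and the bookkeeping that telescopes the individual content gaps into a single sum bounded by $\overline{c}$ of $Z$ minus $\underline{c}$ of $Z$ up to an $O(\e)$ error. Everything else is the standard machinery of Jordan content, which one may invoke freely.
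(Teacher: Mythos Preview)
Your proof is correct and takes a genuinely different route from the paper's. You work directly with inner and outer Jordan content: you bound $\overline{c}(Z_{j_0})$ from above via the inclusion $Z_{j_0}\subseteq (Z\setminus W)\cup Z_{j_0}'$, bound $\sum_j \underline{c}(Z_j)$ from below via monotonicity and the measure of $W$, and then telescope to squeeze each gap $\overline{c}(Z_j)-\underline{c}(Z_j)$ down to an $O(\e)$ quantity. (A tiny stylistic point: the step $\sum_j \underline{c}(Z_j)\ge\sum_j\lambda(Z_j')$ uses only monotonicity of inner content term by term, not superadditivity as such.) The paper instead argues topologically: after reducing to $Z=\mathbbm{T}^k$, it takes the interiors $U_{j,i}$ of the approximants $Z_{j,i}$, sets $U_j:=\bigcup_i U_{j,i}$, and observes that the open sets $U_1,\ldots,U_N$ are pairwise disjoint and co-null, while $\partial Z_j$ is disjoint from every $U_{j'}$; hence $\lambda(\partial Z_j)=0$. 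Your content-based argument is slightly more self-contained (no reduction step, no passage through interiors) and makes the quantitative dependence on $N$ explicit; the paper's argument highlights the geometric picture that the boundaries are trapped in a common null set, which matches how the lemma is later invoked.
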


\begin{proof} By adding $\mathbbm{T}^k\setminus Z$ as an extra set and increasing $N$ by 1, we can assume that $Z=\mathbbm{T}^k$. For each integer $i\ge 1$, let $Z_{1,i}\subseteq Z_1,\ldots,Z_{N,i}\subseteq Z_N$ be Jordan measurable sets such that $\lambda(Z\setminus \bigcup_{j=1}^N Z_{j,i})<1/i$ and, for $1\le j\le N$, we  let $U_{j,i}$ be the interior of $Z_{j,i}$. Also, define $U_j:=\bigcup_{i=1}^\infty U_{j,i}$.  Since $Z_{j,i}$ is Jordan measurable, we have that $U_{j,i}$ has the same Lebesgue measure as $Z_{j,i}$. Thus, the sets $U_1,\ldots,U_N$ are pairwise disjoint, open and cover $\mathbbm{T}^k$ up to a null set. For any $1\leq j\leq N$, since $U_j$ is open and contained in $Z_j$, the boundary of $Z_j$ is disjoint from~$U_j$. Also, for any $j'\neq j$, since the set $U_{j'}$ is open and disjoint from $Z_{j}$, we see that the boundary of $Z_{j}$ is also disjoint from~$U_{j'}$. Therefore, for each $1\leq j\leq N$, the boundary of $Z_{j}$ is contained in the complement of $\bigcup_{i=1}^NU_i$ and therefore has measure zero. Thus the set $Z_j$ is Jordan measurable.
\end{proof}

It also follows from the above lemma  that the remaining set $\I T^k\setminus \bigcup_{i=1}^\infty J_i$ is Jordan measurable and, being also a null set, has null closure. Thus, without destroying Jordan measurability, we can use the Axiom of Choice to define a suitable integer-valued flow on all edges inside this set, using $f_{\infty}$ in the same way as above to certify the existence via the Integral Flow Theorem.

%Thus, the flow obtained by the above procedure may not be integer-valued on every edge of $G_d$. As a final stage of the construction, we may apply Theorem~\ref{th:IFT} on the subgraph of $G_d$ consisting of all edges that have not yet been rounded to an integer value. Note that this step uses the Axiom of Choice, and therefore precludes any chance of yielding Borel pieces in the end. However, this procedure only changes the sets $Z^{f}_{\vec{\gamma},\ell}$ on a set with null closure, and so they remain Jordan measurable (which is sufficient for the purposes of this sketch). 

Finally, we apply Lemma~\ref{outline:lem:matching} to $f$ to get an equidecomposition between $A$ and $B$ as an $R$-local function of $f$ (that is, the sets  $Z^{f}_{\vec{\gamma},\ell}$) and finitely many strips; thus, the pieces are Jordan measurable.

\begin{remark} We could have slightly restructured the proof so that whenever the partial integer-valued flow $f$ is defined in the whole $R$-neighbourhood of some $\vvec{u}\in A$, where $R$ is the constant returned by Lemma~\ref{outline:lem:matching}, then we assign $\vvec{u}$ to the appropriate part of the final equidecomposition. Then the obtained Jordan measurable pieces will be incrementally growing as we process $J_1,J_2,\ldots$ one by one, exhausting the set $A$ up to measure zero. Thus, we could have applied Lemma~\ref{lem:JordanLimit} directly to the pieces (with $Z:=A$).
\end{remark}

\subsection{Making the Pieces Borel}
\label{se:Borel}

Let us now discuss how the approach to obtaining a Jordan measurable equidecomposition described above can be built upon to obtain the stronger conditions of Theorem~\ref{th:main}. 

The main issue to address is the use of the Axiom of Choice in the final step, which must be avoided in order to yield Part~\ref{it:b} of the theorem. If $A$ and $B$ are Borel, then a natural idea for obtaining simultaneously Jordan and Borel pieces is to try to use the existence of a Jordan measurable equidecomposition together with the equidecomposition of Marks and Unger~\cite{MarksUnger17}; however, we did not see a way of doing this directly.  Instead, we follow essentially 
the same proof as outlined above (with all locally defined structures being Borel if the sets $A$ and $B$ are) except that we use a version of Marks and Unger's proof in~\cite{MarksUnger17} to round the flow values inside $\mathbbm{T}^k\setminus \bigcup_{i=1}^\infty J_i$ in a way that preserves Borel structure. The Jordan measurability of the final pieces is preserved since, in this extra stage, we do not change any flow value on any edge intersecting~$\bigcup_{i=1}^\infty J_i$. 
%Unfortunately, running the Borel proof on the leftover part of the Jordan measurable equidecomposition is probably the most challenging and technical part of the paper.

Our extra steps can be summarised as follows.
With sets $J_i$ defined as above and fixed for good (where the role of the parameter $r_{i-1}'$ is to provide enough ``wiggle space'' between the components of $J_i$), we interleave the sequence $(J_1,J_2,\ldots)$ with sets $K_1,K_2,\ldots$ to create an augmented toast sequence $(J_i,K_i)_{i=1}^\infty$. (Recall that $(J_i,K_i)_{i=1}^\infty$ is a shorthand for $(J_1,K_1,J_2,K_2,\ldots)$.) Simultaneously, we will construct sets $L_1,L_2,\ldots$ so that $(K_i,L_i)_{i=1}^\infty$ is also a toast sequence. The purpose of the sets $L_1,L_2,\ldots$ will be to cover all of the points of $\I T^k$ that are ``left behind'' by the construction of $J_1,J_2,\ldots$ in a Borel way, while the purpose of $K_i$ is to provide some extra ``cushioning'' around $J_i$ to protect the flow values on edges intersecting $J_i$ from being influenced by~$L_i$. The sequence $(K_1,K_2,\ldots)$ can be seen as a mediator between the competing goals of $J_1,J_2,\ldots$ (to keep the boundaries of the final pieces small) and $L_1,L_2,\ldots$ (to cover all of $\I T^k$ in a Borel way). 

We build $K_i$ by initialising $K_i:=N_2[J_i]$ and then iteratively adding (with an appropriate padding) all components of $J_j$, $K_j$ or $L_j$ with $j<i$ that come too close. As before, this process when started at any $S\in\comp(J_i)$ goes only a small distance away from $S$ and the obtained component $S'$ of $K_i$ satisfies $S'\subseteq N_{q_{i-1}'+2}[S]$ (Lemma~\ref{lem:KiLi}) while, trivially, $S'\supseteq N_2[S]$. Thus every component $S$ of $J_i$ is ``mimicked'' by a component $S'$ of $K_i$.
% (and this, in fact, gives a bijection between $\comp(J_i)$ and $\comp(K_i)$). 
When we construct $L_i$, we start with $L_i:=N_{2r_i'/5}[Y_i]$ (where $Y_i$ is maximally $r_i'$-discrete) and keep adding (with some padding) components of $K_j$ for $j\le i$ and $L_j$ for $j<i$ that come too close. Note that each component $S$ of $J_j$ with $j\le i$ is ``protected'' in this process by the  component $S'$ of $K_j$ that mimics it: if $L_i$ comes too close to $S$ then $S'\supseteq S$ with some extra padding is added to $L_i$.  What we have achieved is that the toast sequence $(K_i,L_i)_{i=1}^\infty$ covers all vertices in $\bigcup_{i=1}^\infty L_i\supseteq \bigcup_{i=1}^\infty N_{2r_i'/5}[Y_i]$. Using a compactness argument, we will show that, by applying this construction with a modified version of $Y_i$, we can ensure that all vertices of $\I T^k$ are covered; see the discussion following Lemma~\ref{lem:Licover}.

Unfortunately, $(J_i,K_i,L_i)_{i=1}^\infty$ is not, in general, a toast sequence. Also, we do not want any $S\in\comp(K_j)\cup\comp(L_j)$  to ``interfere'' with what happens inside $J_i$  for some $i>j$. 
Our solution is, essentially, to remove all such conflicting components $S$, obtaining sets $\Tilde{K}_j$ and $\Tilde{L}_j$. The new sequence $(J_i,\Tilde{K}_i,\Tilde{L}_i)_{i=1}^\infty$ is then a toast sequence that does not break up any finite components coming from 
the toast sequence 
$(J_i)_{i=1}^\infty$; also, 
%the new sequence 
it still covers every vertex of $\bigcup_{i=1}^\infty L_i$ (see Lemma~\ref{lm:Tilde}). The proofs of these claims rely on the fact that the components of $K_j$ ``mimic'' the components of~$J_j$.
%(see Lemmas~\ref{lem:Ji} and~\ref{lem:KiLi}).

We define the desired integer-valued Borel flow $f$ by using the same construction as in Section~\ref{subsec:intFlows}, except that we replace the toast sequence $(J_i)_{i=1}^\infty$ by $(J_i,\Tilde{K}_i,\Tilde{L}_i)_{i=1}^\infty$.

\subsection{Reducing/Analysing the Complexity of the Pieces}
\label{se:Analysis}

The idea to use $f_{m_i}$ instead of $f_\infty$ in the construction of the integer-valued flow $f$ is, essentially by itself, enough to save one level of the Borel hierarchy when compared to the proof of Marks and Unger~\cite{MarksUnger17}.
There are a couple of places where we change their construction or its analysis in order to drop down by another level of Borel complexity. (In particular, we have to use a different toast sequence $(J_i,\Hat{K}_i,\Hat{L}_i)_{i=1}^\infty$, some modification of
$(J_i,\Tilde{K}_i,\Tilde{L}_i)_{i=1}^\infty$, for the final rounding.) These changes are incorporated in the main construction, while the Borel complexity analysis is postponed until Section~\ref{sec:BorelComplexity} for the clarity of presentation.

%Recall that the upper Minkowski dimension $\boxdim(X)$ of $X\subseteq \I T^k$ is the limit superior as $\delta\to0$ of $(\log N)/\delta$ where $N$ is a number of boxes in a regular side-$\delta$ grid partition of $\I T^k$ that intersect $X$.
Our approach to bounding the upper Minkowski dimension (defined in~\eqref{eq:DefBoxDim}) of the boundaries of the pieces is, for given $\delta>0$, to choose some index $i=i(\delta)$ and run our algorithm to process $J_1,\ldots,J_i$. Although we do not know the final equidecomposition at this moment, we may nonetheless determine for some elements of $A$ to which final piece they will belong. This defines the current partial pieces $A_1,\ldots,A_N$ (that give an equidecomposition of some subset of $A$ to a subset of $B$). Now, we estimate the number of boxes of a regular grid with side length $\delta$ that intersect the currently unassigned part of $A$ or the boundary of any current piece $A_j$. (Thus, our dimension estimates depend only on $J_1,J_2,\ldots$ and not on how  $K_1,L_1,K_2,L_2,\ldots$ are built around them.) Recall by Lemma~\ref{outline:lem:JordanScaff} that each set $J_j$ is a finite union of strips, whose number we can estimate if the sets $X_i$ are carefully chosen (namely, as in Lemma~\ref{lm:Xi}).  Also,
we can estimate a radius $R$ so that the current pieces $A_1,\ldots,A_N$ can be $R$-locally determined from $A$, $B$ and $J_1,\ldots,J_i$. Thus each  $A_j$ is a Boolean combination of $(2R+1)^d$ translates of each of these sets, and the number of boxes that intersect its boundary $\partial A_j$ can be bounded above by the number of boxes intersecting the boundary of at least one of these translates. Furthermore, every other box that may potentially intersect the boundary of a final piece has to be a subset of $A':=A\setminus(\bigcup_{j=1}^N A_j)$ and their number can be upper bounded by $\lambda(A')/\delta^k$ by the trivial volume argument. Thus we have to control our parameters carefully to get a good balance between the minimal distance between the components of $J_i$ (in order to control the measure of the leftover part $A'$ of $A$) and
their maximum diameter (as our local rule processing $J_i$ has to use radius at least as large as this diameter).

\section{Proof of Lemma~\ref{outline:lem:realFlows}}
%Dispersing Discrepancy via Real-Valued Flows
\label{sec:realFlows}

Our goal in this section is to present, for the sake of completeness, the (somewhat rephrased) construction of Marks and Unger~\cite{MarksUnger17} of a sequence $f_1,f_2,\ldots$ of real-valued flows in $G_d$ which converge uniformly to a bounded flow $f_\infty$ from $A$ to $B$ in~$G_d$. For the reader's convenience, let us repeat the  statement of the result that we will use.

\begin{manuallemma}{\ref{outline:lem:realFlows}}[Marks and Unger~\cite{MarksUnger17}]
There exist flows $f_1,f_2,\ldots$ in $G_d$ such that for all $m\ge1$ the following statements hold with $f_0:=0$ being the flow which is identically zero:
\begin{enumerate}
\item[{\rm \eqref{eq:fmRational}}] $2^{2dm} f_m$ is integer-valued,
\item[{\rm \eqref{eq:AtoB}}]   $\|\fout{f_m} - \ind_A+\ind_B\|_\infty\leq \frac{2c}{2^{m(1+\varepsilon)}}$, 
\item[{\rm \eqref{eq:fmChange}}]  $\|f_m-f_{m-1}\|_\infty\leq\frac{2c}{2^{d+\varepsilon(m-1)}}$, and
\item[{\rm \eqref{eq:local}}]
$f_m$ is a $(2^m-1)$-local function of $A$ and $B$.
\end{enumerate}
\end{manuallemma}

\begin{proof}[Proof of Lemma~\ref{outline:lem:realFlows}] First, we need to introduce a few definitions (that apply only in this section).
Given a discrete cube $Q$ in $G_d$ (as defined in \eqref{eq:Nn+}), define
\[\xi(Q) := \frac{|Q\cap A| - |Q\cap B|}{|Q|}.\]
In other words, $\xi(Q)$ measures the difference between the number of vertices of $A$ and $B$ in $Q$ normalised by the number of vertices in $Q$. In particular, if $Q$ is a discrete $1$-cube, i.e.,\ $Q=\{\vvec{u}\}$ for some $\vvec{u}\in \I T^k$, then $\xi(Q) = \I 1_A(\vvec{u}) - \I 1_B(\vvec{u})$. 

By our assumption~\eqref{eq:XsDiscr}, it holds for every discrete $2^m$-cube $Q$ that
\begin{equation}\label{eq:Q}
|\xi(Q)|\le \frac{2c(2^m)^{d-1-\varepsilon}}{2^{dm}}=\frac{2c}{2^{m(1+\varepsilon)}}.
\end{equation}

Given $\vvec{u}\in\I T^k$,  $\vec{\gamma}\in\{-1,0,1\}^d$ and $n\in \I Z$, let $\phi_{\vvec{u},n\vec{\gamma}}$ be the unique $\{-1,0,1\}$-valued $\left(\I 1_{\{\vvec{u}\}}- \I 1_{\{(n\vec{\gamma})\cdot \vvec{u}\}}\right)$-flow in $G_d$ supported on the edges of the path
\[\vvec{u},\,\vec{\gamma}\cdot \vvec{u},\,(2\vec{\gamma})\cdot \vvec{u},\,\ldots,\,(n\vec{\gamma})\cdot \vvec{u}.\]
In other words, $\phi_{\vvec{u},n\vec{\gamma}}$ sends a unit of flow  from~$\vvec{u}$ to $(n\vec{\gamma})\cdot \vvec{u}$ along the straight-line path (whose direction is~$\vec{\gamma}$ and graph length is~$|n|$).
In particular, if $n\vec{\gamma}$ is the zero vector, then the flow $\phi_{\vvec{u},n\vec{\gamma}}$ is identically zero. Also, for an integer $m\ge 0$ and a discrete $2^{m+1}$-cube $Q$ containing $\vvec{u}\in\I T^k$, let
 $$
 \phi_{\vvec{u},Q}:=\frac1{2^d} \sum_{\substack{\vec{\gamma}\in\{-1,0,1\}^d\\ (2^{m}\vec{\gamma})\cdot \vvec{u}\in Q}} \phi_{\vvec{u},2^m\vec{\gamma}}.
 $$
 Note that the above sum contains exactly $2^d$ terms. Informally speaking, $\phi_{\vvec{u},Q}$ spreads one unit of demand from $\vvec{u}$ uniformly among $Q\cap ((2^m\I Z^d)\cdot_a \vvec{u})$, the set of 
 all $2^d$ points of $Q$
 %  (including $\vvec{u}$ itself) 
 that, when we view $Q$ as a subset of $\I Z^d$, are congruent to $\vvec{u}$ modulo~$2^m$.
For every discrete $2^{m+1}$-cube $Q$, let $\mathcal{P}(Q)$ be the unique partition of $Q$ into $2^d$ discrete $2^{m}$-cubes. Given  $\vvec{u}\in \I T^k$, let $\mathcal{Q}_n(\vvec{u})$ denote the set of all discrete $n$-cubes that contain~$\vvec{u}$. See Figure~\ref{fig:FlowF} for an illustration of some of these definitions.

\begin{figure}[htbp]
\begin{center}
\includegraphics[width=0.96\textwidth]{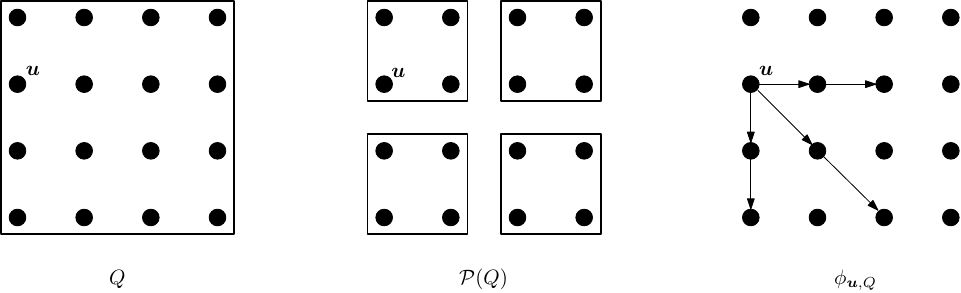}
\end{center}
\caption{An example, for $d=2$, of a 4-cube $Q$, its dyadic partition $\mathcal{P}(Q)$ into $2$-subcubes and the flow $\phi_{\vvec{u},Q}$ for a vertex $\vvec{u}\in Q$ (where each arrow represents a flow of value $1/4$ in that direction).}
\label{fig:FlowF}
\end{figure}

We are now ready to define the flows $f_m$. We use induction on $m$ and 
start by defining $f_0:=0$ to be the identically zero flow. For $m\geq1$, we define $f_m:=f_{m-1}+\theta_m$, where
\begin{equation}\label{eq:ThetaM}
\theta_m:= \frac{1}{2^{dm}} \sum_{\vvec{u}\in \I T^k}   \sum_{C\in \mathcal{Q}_{2^{m-1}}(\vvec{u})} \xi(C) \sum_{\substack{\mbox{\scriptsize discrete $2^m$-cube $Q$}\\ \mathcal{P}(Q)\ni C}} \phi_{\vvec{u},Q}.
\end{equation}

Let us provide a probabilistic interpretation that motivates the definition in~\eqref{eq:ThetaM} and relates it to the proof outline of Section~\ref{subsec:realFlows}. Suppose that we wish to compute $\theta_m$ and $f_m$ inside the component $[\vvec{v}]_{G_d}$ of some vertex $\vvec{v}\in\I T^k$. Take a random partition $\C G_m$ of the orbit $[\vvec{v}]_{G_d}$, which is a copy of $\I Z^d$, into a \emph{$2^m$-grid} (that is, a regular grid of discrete
$2^m$-cubes), where all $2^{dm}$ choices of $\C G_m$ are equally likely. By defining 
 $$
\C G_{i}:=\bigcup\{\mathcal{P}(Q)\mid Q\in\C G_{i+1}\}
 $$ 
inductively for $i=m-1,\ldots,0$, we get the dyadic refinements of $\C G_m$ down to the (deterministic) partition $\C G_0$ into discrete $1$-cubes (i.e., singletons). Note that we work inside one component only since, for generic $\vvec{x}_1,\ldots,\vvec{x}_d$, there cannot exist a measurable choice of a $2^m$-grid inside each orbit by ergodicity considerations. 
%Of course, $\C G_{i}$ for each $1\le i\le m$ is a uniformly random $2^i$-grid. 

Starting with  $f_{\C G_0}:=0$ being the zero flow, obtain inductively for $i=1,2,\ldots,m$, the flow $f_{\C G_i}$ from $f_{\C G_{i-1}}$ by adding 
\begin{equation}\label{eq:ThetaIGm}
\theta_{\C G_i}:= \sum_{Q\in \C G_i} \sum_{\vvec{u}\in Q} \left( \I 1_A(\vvec{u}) - \I 1_B(\vvec{u}) -\fout{f_{\C G_{i-1}}}(\vvec{u})\right)\phi_{\vvec{u},Q}.
\end{equation}

 Note that $f_{\C G_i}$ and $\theta_{\C G_i}$ do not depend on $m$ (for $m\ge i$). Inside each $2^i$-cube $Q\in\C G_i$, the increment flow $\theta_{\C G_i}$ spreads the current demand error $\I 1_A - \I 1_B-\fout{f_{\C G_{i-1}}}$ uniformly inside each congruence class modulo $2^{i-1}$. %In fact, $\theta_{\C G_i}$ depends only on~$\C G_i$.
Of course, if the error $\I 1_A - \I 1_B-\fout{f_{\C G_{i-1}}}$ is constant on each cube in  $\mathcal{P}(Q)$ for some $Q\in\C G_i$ then $\theta_{\C G_i}$ spreads this error evenly inside~$Q$. Thus an easy induction on $i=1,2,\ldots,m$ shows that $\I 1_A - \I 1_B-\fout{f_{\C G_i}}$ is constant on every cube $Q\in \C G_i$. Moreover, since $f_{\C G_i}$ is zero on the edge boundary of $Q\in \C G_i$ (and thus sends no flow out of $Q$), this constant is~$\xi(Q)$, that is,
 \begin{equation}\label{eq:FoutFiGi}
 \I 1_A(\vvec{u}) - \I 1_B(\vvec{u})-\fout{f_{\C G_i}}(\vvec{u}) =\xi(Q),\quad \mbox{for all
 %$1\le i\le m$, 
 $Q\in\C G_i$ and $\vvec{u}\in Q$}.
 \end{equation}
 Thus we have
\begin{equation}\label{eq:ThetaIGm2}
\theta_{\C G_i}:= \sum_{Q\in \C G_i} \sum_{C\in\mathcal{P}(Q)}\xi(C) \sum_{\vvec{u}\in C} \phi_{\vvec{u},Q},\quad\mbox{for every $1\le i\le m$}
\end{equation}
  (and for $i=m$ we arrive at the definition in~\eqref{eq:ThetaM}).

It follows from~\eqref{eq:ThetaM} and~\eqref{eq:ThetaIGm2} that, inside the component $[\vvec{v}]_{G_d}$, the flow $\theta_i$  for each $1\le i\le m$ is the expectation of $\theta_{\C G_i}$ over a uniformly random $2^m$-grid $\C G_m$, since $\C G_i=\mathcal{P}^{m-i}(\C G_m)$ is a uniformly random $2^i$-grid.
Note that, by the linearity of expectation, $f_m$ inside $[\vvec{v}]_{G_d}$ is the expectation of~$f_{\C G_m}$.

Now we are ready to verify all four conclusions, \eqref{eq:fmRational}--\eqref{eq:local}, of Lemma~\ref{outline:lem:realFlows}.

Conclusion~\eqref{eq:fmRational} states that $2^{2dm} f_m$ is integer-valued. Indeed, the only non-integer factors in~\eqref{eq:ThetaM} in addition to $2^{-dm}$ are $2^{-d(m-1)}$ (from the definition of $\xi(C)$) and $2^{-d}$ (from the definition of~$\phi_{\vvec{u},Q}$), proving~\eqref{eq:fmRational}.

Next, let estimate how close $\fout{f}_m$ is from the desired demand $\ind_A-\ind_B$. 
By taking the expectation over $\C G_m$ of~\eqref{eq:FoutFiGi} for $i:=m$, we get that
 \begin{equation}\label{eq:foutm-1}  \I 1_A(\vvec{u}) - \I 1_B(\vvec{u})-\fout{f_{m}}(\vvec{u}) = \frac1{2^{dm}}\sum_{Q\in \mathcal{Q}_{2^{m}}(\vvec{u})}\xi(Q),
 \end{equation}
 for all $m\ge 0$ and $\vvec{u}\in [\vvec{v}]_{G_d}$. As $\vvec{v}\in\I T^k$ was arbitrary, \eqref{eq:foutm-1} holds for every $\vvec{u}\in \I T^k$.
Now, \eqref{eq:foutm-1} and~\eqref{eq:Q}
%Lemma~\ref{lem:discrep} 
imply that, for every $\vvec{u}\in\I T^k$,
\[\left|\fout{f_m}(\vvec{u}) - \I 1_A(\vvec{u}) +\I 1_B(\vvec{u}) \right| = \left|\sum_{Q\in \mathcal{Q}_{2^m}(\vvec{u})}\frac{\xi(Q)}{2^{dm}}\right| \leq
%2^{dm}\frac{2c(2^m)^{d-1-\varepsilon}}{2^{dm}\cdot 2^{dm}} =
2^{dm}\cdot \frac{2c}{2^{m(1+\varepsilon)}}\cdot\frac1{2^{dm}}= \frac{2c}{2^{m(1+\varepsilon)}},\]
which is exactly the bound of Conclusion~\eqref{eq:AtoB}.

For \eqref{eq:fmChange}, we need to compute the maximum flow along any given edge under $\theta_m$. Let us re-write the right-hand side of~\eqref{eq:ThetaM} using only straight-line paths:
\begin{equation}\label{eq:ThetaM2}
\theta_m= \frac{1}{2^{dm}} \sum_{\vvec{u}\in\I T^k}  \sum_{C\in \mathcal{Q}_{2^{m-1}}(\vvec{u})}\xi(C)
\sum_{\vec{\gamma}\in\{-1,0,1\}^d\atop \vec{\gamma}\not=(0,\dots,0)}
%\sum_{Q\in \mathcal{E}(C,C_{\vec{\gamma}})}
2^{|\{i\mid \gamma_i=0\}|}\, \frac{\phi_{\vvec{u},2^{m-1}\vec{\gamma}}}{2^d}.
\end{equation}
(Note that, for a discrete $2^{m-1}$-cube $C$, the quantity $2^{|\{i\mid \gamma_i=0\}|}$ is exactly the number of discrete $2^m$-cubes $Q$ such that $\mathcal{P}(Q)$ contains both $Q$ and $(2^{m-1}\vec{\gamma})\cdot_a Q$.) For any given edge $\vvec{v}\vvec{w}$, there is a unique $\vec{\gamma}\in\{-1,0,1\}^d$ such that $\vvec{w}=\vec{\gamma}\cdot \vvec{v}$. The number of vertices $\vvec{u}$ such that either $\phi_{\vvec{u},2^{m-1}\vec{\gamma}}$ or $\phi_{\vvec{u},-2^{m-1}\vec{\gamma}}$ are non-zero on the pair $(\vvec{v},\vvec{w})$ is precisely $2\cdot 2^{m-1} = 2^m$. Since $\vec{\gamma}$ is non-zero, we have $2^{|\{i\mid \gamma_i=0\}|}\le 2^{d-1}$. So, by %Lemma~\ref{lem:discrep} 
\eqref{eq:Q} and~\eqref{eq:ThetaM2}, we have
\[|\theta_m(\vvec{v},\vvec{w})|\leq 2^{-dm}\cdot 2^m\cdot 2^{(m-1)d}\cdot
%\frac{2c(2^{m-1})^{d-1-\varepsilon}}{2^{(m-1)d}}
\frac{2c}{2^{(m-1)(1+\varepsilon)}}\cdot\,\frac{2^{d-1}}{2^d} = \frac{2c}{2^{d+\varepsilon(m-1)}}.\]
 Thus indeed, as stated by Conclusion~\eqref{eq:fmChange}, we have $\|f_m-f_{m-1}\|_\infty\leq \frac{2c}{2^{d+\varepsilon(m-1)}}$.
 
Finally, let us prove \eqref{eq:local} which states that $f_m$ is a $(2^m-1)$-local function of $A$ and~$B$. Since this is trivially true for the zero flow $f_0$, it is enough to prove that, for every $m\ge 1$, the flow $\theta_m$ is a $(2^m-1)$-local function of $A$ and $B$. %This can be argued using the probabilistic interpretation~\eqref{eq:ThetaIGm} as follows. 
%One can easily show by induction on $m$ 
Since every discrete $2^m$-cube containing a vertex~$\vvec{v}$ lies inside the $(2^m-1)$-neighbourhood of~$\vvec{v}$,
it follows from~\eqref{eq:ThetaM} that the values of the flow $\theta_{m}$ on the edges incident to $\vvec{v}$ depend only on intersections of $A$ and $B$ with $N_{2^m-1}[\vvec{v}]$. Thus $f_m=\theta_1+\ldots+\theta_m$ is indeed a $(2^m-1)$-local function of $A$ and $B$, as required.

This completes the proof of Lemma~\ref{outline:lem:realFlows}. \end{proof}

Note that the sequence $f_1,f_2,\ldots$ is clearly convergent as the expression on the right side of the inequality \eqref{eq:fmChange} is summable as a function of $m$. We can therefore define $f_\infty$ to be the pointwise limit of $f_1,f_2,\ldots$\,. The following corollary is immediate by applying the triangle inequality and summing the bound in \eqref{eq:fmChange}.

\begin{obs}
\label{obs:sumAll}
For any $m\geq0$,
\[
\|f_\infty-f_m\|_\infty\le \sum_{i=m}^{\infty}\|f_{i+1}-f_i\|_\infty 
\le \sum_{i=m}^{\infty} \frac{2c}{2^{d+\varepsilon i}}
= \frac{c\,2^{1+\varepsilon}}{2^{d+\varepsilon m}(2^\varepsilon-1)}.\qed\]
\end{obs}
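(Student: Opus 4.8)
The plan is to deduce this directly from the uniform Cauchy estimate~\eqref{eq:fmChange} together with the definition of $f_\infty$ as the pointwise limit of the sequence $(f_m)_{m\ge0}$. First I would fix an ordered pair $(\vvec{u},\vvec{v})$ with $\vvec{u}\vvec{v}\in E(G_d)$ (for non-edges every flow vanishes identically, so there is nothing to prove there). For each $m\ge0$ one has the telescoping identity $f_\infty(\vvec{u},\vvec{v})-f_m(\vvec{u},\vvec{v})=\sum_{i=m}^\infty\bigl(f_{i+1}(\vvec{u},\vvec{v})-f_i(\vvec{u},\vvec{v})\bigr)$, and the series on the right converges absolutely because, by~\eqref{eq:fmChange}, its $i$-th term is bounded in absolute value by $\|f_{i+1}-f_i\|_\infty\le 2c\,2^{-d-\varepsilon i}$, which is summable in $i$ since $\varepsilon>0$ by~\eqref{eq:epsBound} and~\eqref{eq:dBound}. (This absolute convergence is exactly what guarantees that the pointwise limit $f_\infty$ exists and is a finite-valued flow in the first place.)

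Applying the triangle inequality to this absolutely convergent series gives $|f_\infty(\vvec{u},\vvec{v})-f_m(\vvec{u},\vvec{v})|\le\sum_{i=m}^\infty\|f_{i+1}-f_i\|_\infty$, and since the right-hand side does not depend on the chosen edge, taking the supremum over all ordered pairs $(\vvec{u},\vvec{v})$ yields the first inequality $\|f_\infty-f_m\|_\infty\le\sum_{i=m}^\infty\|f_{i+1}-f_i\|_\infty$ in the statement.

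For the remaining two estimates I would simply substitute the bound of~\eqref{eq:fmChange} term by term, obtaining $\sum_{i=m}^\infty\|f_{i+1}-f_i\|_\infty\le\sum_{i=m}^\infty 2c\,2^{-d-\varepsilon i}$, and then evaluate this geometric series: pulling out the factor $2c\,2^{-d}$ and using $\sum_{i=m}^\infty 2^{-\varepsilon i}=2^{-\varepsilon m}/(1-2^{-\varepsilon})$, then multiplying numerator and denominator by $2^\varepsilon$, produces exactly $c\,2^{1+\varepsilon}/\bigl(2^{d+\varepsilon m}(2^\varepsilon-1)\bigr)$, where $2^\varepsilon-1>0$ because $\varepsilon>0$. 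There is no genuine obstacle here; the only point deserving a word of care is the absolute convergence of the telescoping series, since it is precisely what permits passing from the pointwise bound to the uniform one and interchanging the supremum over edges with the tail sum over $i$.
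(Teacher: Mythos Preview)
Your proof is correct and follows exactly the approach indicated by the paper, which simply states that the observation is immediate from the triangle inequality and summing the bound in~\eqref{eq:fmChange}. You have merely spelled out the telescoping-sum argument and the geometric-series computation that the paper leaves implicit.
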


\section{Toast Sequences}
\label{sec:covering}

Our next goal is to formally describe the construction of the toast sequence $(J_1,J_2,\ldots)$ from the proof of Lemma~\ref{outline:lem:JordanScaff} as well as some other toast sequences built around it.

\subsection{Some Preliminaries}
\label{sec:ToastPrelim}

Let us say that a set $A$ \emph{cuts} a set $B$ if both $B\setminus A$ and $B\cap A$ are non-empty.
The following definition is key to describing our constructions.

\begin{defn}
\label{def:closure}
For $i\ge1$, let $D=(D_1,D_2,\ldots,D_i)$ be a sequence of subsets of $\mathbbm{T}^k$ and let $b\ge0$. Define $\mathcal{C}_b(D)\subseteq\mathbbm{T}^k $ to be the set constructed as follows:
\begin{enumerate}
\stepcounter{equation}
\item initialise $\mathcal{C}_b(D):=D_i$,
\stepcounter{equation}
\item\label{it:add} while there exist $1\leq j\leq i-1$ and $S\in\comp(D_{j})$ such that $\mathcal{C}_b(D)$ cuts $N_b[S]$,
%$\dist_{G_d}(S,\mathcal{C}_b(D))\leq b$, 
add all vertices of $N_b[S]$ to $\mathcal{C}_b(D)$.
\end{enumerate}
\end{defn}

It may help the reader to have  the following 
%(trivially equivalent) 
informal description of the step in~\eqref{it:add} in mind: the current set $\mathcal{C}_b(D)$ iteratively swallows each set $N_b[S]$ that it intersects (unless the whole set $N_b[S]$ is already inside it). Note that the order in which we perform the operations in~\eqref{it:add} does not affect the final set~$\mathcal{C}_b(D)$.
Intuitively, Definition~\ref{def:closure} is designed so that the boundaries of the components of the final set $\mathcal{C}_b(D)$ are well separated from $D_1\cup \dots\cup D_{i-1}$.  Unfortunately, neither some separation between the components of the new set $\mathcal{C}_b(D)$  nor their boundedness holds automatically (as two distinct components of $D_i$ may grow too close to each other, or even merge); this will require proofs based on various extra properties of the input sets~$D_j$.

The next lemma will be used to demonstrate that if the diameter of each component of $G_d\induced D_j$ is not too large and the distance between them is not too small, then the obtained set $\mathcal{C}_b(D)$ can be added without violating Properties~\eqref{scaffBdd} and \eqref{scaffDist} from the definition of a toast sequence. Note that the lemma holds without assuming any bound on the distance between distinct components of $G_d\induced D_i$.
%Definition~\ref{def:scaff}. 

%The following lemma is stated for the graph $G_d$, but an analogous statement holds in any graph.

\newcommand{\NewQ}{b}
\newcommand{\NewT}{a}

\begin{lemma}
\label{lem:chain}
Let $D=(D_1,D_2,\ldots,D_i)$ be a sequence of subsets of $\mathbbm{T}^k$, let $b\ge0$, and let $(\NewT_1,\NewT_2,\ldots,\NewT_{i-1})$ and $(\NewQ_0,\NewQ_1,\ldots,\NewQ_{i-1})$ be sequences of non-negative integers such that
\begin{equation}\label{eq:chain}
 \NewQ_j\ge \NewT_{j}+2\NewQ_{j-1}+2b,\quad\mbox{for all $1\leq j\leq i-1$}.
 \end{equation}
If, for every $1\leq j\leq i-1$, every component of $G_d\induced D_j$ has diameter at most $\NewT_j$ in $G_d$ and the distance in $G_d$ between any two components of $G_d\induced D_j$ is greater than $\NewQ_{j-1}+2b$, then $\mathcal{C}_b(D)\subseteq N_{\NewQ_{i-1}}[D_i]$. 
\end{lemma}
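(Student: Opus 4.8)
The natural approach is induction on $i$, tracking how far the construction $\mathcal{C}_b(D)$ can spread from $D_i$. The base case $i=1$ is trivial since then $\mathcal{C}_b(D)=D_i=D_1\subseteq N_{b_0}[D_1]$ as $b_0\ge0$. For the inductive step, the key is to analyse the ``growth'' process in Definition~\ref{def:closure}\eqref{it:add}: we start with $D_i$ and repeatedly absorb the $b$-neighbourhood $N_b[S]$ of any component $S\in\comp(D_j)$ (with $j\le i-1$) that our current set \emph{cuts}. I would process the indices $j$ in some convenient order, but the cleanest bookkeeping is to argue by ``reverse induction on $j$'': define an increasing chain of sets $E_{i}\subseteq E_{i-1}\subseteq\cdots\subseteq E_1=\mathcal{C}_b(D)$ where $E_{j}$ is the set obtained after all absorptions of components of $D_{i-1},D_{i-2},\ldots,D_{j}$ have been exhausted (in that order of priority), and show by induction that $E_j\subseteq N_{b_{i-1}+b_{i-2}+\cdots+b_{j-1}-\text{(something)}}$... actually the cleaner statement to prove is simply $E_j\subseteq N_{b_{i-1}}[D_i]$ for every $j$, i.e.\ the bound $b_{i-1}$ is an invariant that is never exceeded.

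Here is the heart of the argument. Suppose inductively that at some stage of the process the current set $C$ satisfies $C\subseteq N_{m}[D_i]$ for some $m\le b_{i-1}$, and that the next absorption is triggered by a component $S$ of $D_j$ (some $1\le j\le i-1$) that $C$ cuts, i.e.\ $C\cap N_b[S]\ne\emptyset$ and $N_b[S]\not\subseteq C$. Since $C$ meets $N_b[S]$, and $N_b[S]$ has diameter at most $a_j+2b$ (using $\diam_{G_d}(S)\le a_j$), the whole of $N_b[S]$ lies within distance $a_j+2b$ of $C$, hence within $N_{m+a_j+2b}[D_i]$. The crucial point is now a separation argument: I must show $m+a_j+2b\le b_{i-1}$, so that the invariant is preserved. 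This is where hypothesis \eqref{eq:chain} together with the separation assumption (distinct components of $D_j$ are more than $b_{j-1}+2b$ apart) comes in. The separation guarantees that the ``reach'' contributed by level $j$ components, when chained through levels $i-1,i-2,\ldots$, telescopes: absorbing an $S\in\comp(D_j)$ can only happen because $C$ reached $N_b[S]$ via material coming from higher-index levels, and by reverse induction that material sits in $N_{b_{j}}[D_i]$ (wait — we need the correct indexing). The right invariant is: \emph{after all absorptions of components from $D_{i-1},\ldots,D_{j+1}$ are done, the current set lies in $N_{b_j}[D_i]$}; then absorbing one component $S$ of $D_j$ adds at most $a_j+2b$ more reach, and because components of $D_j$ are pairwise more than $b_j+2b$ apart (using the hypothesis with index $j$, i.e.\ distance $>b_{j-1}+2b$ — so I should set up the indices so that "level $j$" uses separation $b_{j-1}+2b$), no two distinct components of $D_j$ can both be reached and then trigger a cascade of further absorptions of the \emph{same} level; combined with \eqref{eq:chain}, $b_j + a_j + 2b \le$ ... hmm, we want $\le b_j$? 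No. Let me restate: the invariant is that after finishing levels $>j$ we are inside $N_{b_{j}}[D_i]$, and after also finishing level $j$ we are inside $N_{b_{j-1}}[D_i]$ — no, the indices decrease as we go, and \eqref{eq:chain} reads $b_j\ge a_j+2b_{j-1}+2b$, so the budget \emph{grows} as $j$ decreases. So the right invariant is: after finishing all levels $\ge j$, the set is contained in $N_{b_{j-1}}[D_i]$. I would prove this by downward induction on $j$ from $i$ to $1$; the step from "$\ge j+1$ done" to "$\ge j$ done" uses that when we absorb an $S\in\comp(D_j)$, we are already within $N_{b_j}[D_i]$, the new reach is $\le b_j + a_j + 2b$, but by the separation of $D_j$-components (distance $> b_{j-1}+2b > b_j + 2b$ — needs checking, follows since $b_{j-1}\ge\cdots$, actually $b_{j-1}\ge b_j$? no, $b_j\ge 2b_{j-1}$ so $b_{j-1}\le b_j/2$, contradiction with what I want) —

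I realize the indices need care, and that is precisely where I expect the real work to be. The plan is therefore: (1) fix the order of absorption to process level $i-1$ first, then $i-2$, etc.; (2) formulate the precise invariant "after all level-$\ge j$ absorptions, current set $\subseteq N_{P_j}[D_i]$" for an explicit $P_j$ defined by a recursion matching \eqref{eq:chain}, with $P_i = b_0$ or $0$; (3) prove it by downward induction, where the inductive step shows that each newly-absorbed $N_b[S]$ for $S\in\comp(D_j)$ stays within the next budget, using $\diam(N_b[S])\le a_j+2b$ for the "how far one ball reaches" part and the pairwise separation $>b_{j-1}+2b$ of $D_j$-components for the "the cascade cannot chain indefinitely within level $j$" part, with \eqref{eq:chain} being exactly the arithmetic that makes $P_{j-1}:=a_j+2P_j+2b$ work and $P_0 = b_{i-1}$ by an easy comparison with the $b_j$ recursion; (4) conclude $\mathcal{C}_b(D)=E_1 \subseteq N_{P_0}[D_i]\subseteq N_{b_{i-1}}[D_i]$. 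The main obstacle is getting the invariant and the index alignment exactly right so that the separation hypothesis genuinely prevents a component of $D_j$ from being cut by material that itself originated (transitively) from absorbing another, far-away component of the \emph{same} $D_j$ — this is the only place the lower bound on inter-component distance is used, and it is subtle because absorptions at level $j$ can be triggered by material deposited by levels both above and below in the processing order if one is not careful about the ordering. I would likely need the auxiliary observation that when we are in the phase of absorbing level-$j$ components, the current set is connected on each component of its own, or at least that each $D_j$-component is reached "independently", so that the total reach is $\max$ rather than $\sum$ over $D_j$-components — this is what keeps the bound from blowing up.
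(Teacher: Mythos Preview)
Your plan has a real gap. The ``process levels in decreasing order and maintain an invariant'' scheme does not work as stated, for exactly the reason you flag but do not resolve: after you finish absorbing all level-$(i-1)$ components and move on to level $i-2$ (and below), absorbing some $N_b[T]$ with $T\in\comp(D_{i-2})$ can newly cut $N_b[S']$ for a level-$(i-1)$ component $S'$ that was previously untouched. So level $i-1$ is not ``finished'' after your first phase, and the phase structure collapses. Relatedly, your recursion $P_{j-1}=a_j+2P_j+2b$ with $P_{i-1}=0$ yields $P_0$ with coefficient $2^{j-1}$ on $a_j$, whereas the lemma's bound $b_{i-1}$ (from $b_j\ge a_j+2b_{j-1}+2b$) has coefficient $2^{i-1-j}$ on $a_j$; these are not comparable, so even if your invariant held you would not recover the stated conclusion.

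The paper's proof sidesteps both issues by arguing pointwise via chains rather than phase-wise. For any $\vvec{w}\in\mathcal C_b(D)\setminus D_i$ one records a witnessing sequence $S_1,\ldots,S_n$ of distinct components (each $S_\ell\in\comp(D_{j_\ell})$ for some $j_\ell\le i-1$) with $\dist(D_i,S_1)\le b$, $\dist(S_\ell,S_{\ell+1})\le 2b$, and $\dist(S_n,\vvec{w})\le b$. One then does induction on $i$ with a case split on how many of the $S_\ell$ lie at the \emph{top} level $i-1$: if none, apply the inductive hypothesis to $(D_1,\ldots,D_{i-2},D_i)$ to get the bound $b_{i-2}$; if exactly one, say $S_\ell$, split the chain there and apply the inductive hypothesis to $(D_1,\ldots,D_{i-2},N_b[S_\ell])$ on each half, then add the diameter $a_{i-1}$ of $S_\ell$ and use \eqref{eq:chain}; if two, take them with minimal gap, apply induction to the sub-chain between them (which has no level-$(i-1)$ components) to get distance $\le b_{i-2}+2b$, contradicting the separation hypothesis on $D_{i-1}$. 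The separation assumption is used exactly once, in this last case, to rule out two top-level components in a single chain---this is the ``$\max$ rather than $\sum$'' phenomenon you were groping for, but localised to the top level and handled cleanly by the induction.
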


\begin{proof}
Take any vertex $\vvec{w}\in \mathcal{C}_b(D)\setminus D_i$.
%; if no such $\vvec{w}$ exists, then we are done as $\mathcal{C}_b(D)=D_i$. 
By definition of $\mathcal{C}_b(D)$, there exists a sequence $S_1,S_2,\ldots,S_n$ of distinct subsets of $\mathbbm{T}^k$ such that
\begin{enumerate}
\stepcounter{equation}
\item for each $1\leq \ell\leq n$, there exists $1\leq j\leq i-1$ such that $S_\ell$ is a component of $G_d\induced D_j$,
\stepcounter{equation}
\item $\dist_{G_d}(S_1,D_i)\leq b$,
\stepcounter{equation}
\item $\dist_{G_d}(S_\ell,S_{\ell+1})\leq 2b$ for $1\leq \ell < n$, and
\stepcounter{equation}
\item $\dist_{G_d}(S_n,\vvec{w})\leq b$. 
\end{enumerate}
Given such a vertex $\vvec{w}$ and sets $S_1,\ldots,S_n$, let $\vvec{u}\in D_i$ be such that $\dist_{G_d}(\vvec{u},S_1)\leq b$. Our aim is to prove, by induction on $i$, that, for every such $\vvec{w}$ and $\vvec{u}$, we have $\dist_{G_d}(\vvec{u},\vvec{w})\leq \NewQ_{i-1}$. In the base case $i=1$, the statement is true vacuously as $\mathcal{C}_b(D)=D_1$ (and so no such $\vvec{w}$ can exist). 

So, suppose that $i\ge2$. If none of the sets $S_1,\ldots,S_n$ is a component of $G_d\induced D_{i-1}$, then the sequence $S_1,\ldots,S_n$ actually certifies that $\vvec{w}\in \mathcal{C}_b(D_1,D_2,\ldots,D_{i-2},\vvec{u})$ and we have that $\dist_{G_d}(\vvec{u},\vvec{w})\leq \NewQ_{i-2}\le \NewQ_{i-1}$ by induction on $i$. 

Next, suppose that there is a unique index $\ell$ such that $S_\ell$ is a component of $G_d\induced D_{i-1}$. Let $\vvec{y}$ be a vertex of $S_\ell$ at distance at most $2b$ from $S_{\ell-1}$ (or, at distance at most $b$ from $\vvec{u}$ in the case $\ell=1$) and let $\vvec{z}$ be a vertex of $S_\ell$ at distance at most $2b$ from $S_{\ell+1}$ (or at distance at most $b$ from $\vvec{w}$ in the case $\ell=n$). The sequences $S_{\ell-1},S_{\ell-2},\ldots S_1$ and $S_{\ell+1},S_{\ell+2},\ldots, S_n$ certify respectively that $\vvec{u}\in \mathcal{C}_b(D_1,D_2,\ldots, D_{i-2},N_b[\vvec{y}])$ and $\vvec{w}\in\mathcal{C}_b(D_1,D_2,\ldots, D_{i-2},N_b[\vvec{z}])$. So, by the inductive hypothesis and~\eqref{eq:chain},
\begin{eqnarray*}
\dist_{G_d}(\vvec{u},\vvec{w})&\leq& \dist_{G_d}(\vvec{u},\vvec{y})+\dist_{G_d}(\vvec{y},\vvec{z})+\dist_{G_d}(\vvec{z},\vvec{w})\\
&\leq& \left(\dist_{G_d}(\vvec{u},N_b[\vvec{y}]) +b\right)
+ \NewT_{i-1} + \left(\dist_{G_d}(N_b[\vvec{z}],\vvec{w})+b\right) \\
&\leq& \left(\NewQ_{i-2} +b\right)+ \NewT_{i-1} + \left(\NewQ_{i-2} +b\right)\ \leq\ \NewQ_{i-1},
\end{eqnarray*}
as desired.

Finally, we consider the remaining case that there are two indices $1\leq \ell < \ell'\leq n$ such that $S_{\ell}$ and $S_{\ell'}$ are distinct components of $G_d\induced D_{i-1}$. Choose these indices so that $\ell'-\ell$ is minimised. Let $\vvec{y}$ be a vertex of $N_b[S_{\ell}]$ which is at distance at most $b$ from $S_{\ell+1}$ and let $\vvec{z}$ be a vertex of $N_b[S_{\ell'}]$ at distance at most $b$ from $S_{\ell'-1}$. The sequence $S_{\ell+1},S_{\ell+2},\ldots,S_{\ell'-1}$ certifies that $\vvec{z}$ is in $\mathcal{C}_b(D_1,\ldots,D_{i-2},\{\vvec{y}\})$. By the inductive hypothesis, 
\[\dist_{G_d}(S_{\ell},S_{\ell'})\leq \dist_{G_d}(\vvec{y},\vvec{z})+2b\leq \NewQ_{i-2}+2b,\]
which contradicts the hypotheses of the lemma. This completes the proof.
\end{proof}

\subsection{Constructions of Toast Sequences}
\label{sec:ToastConstructions}

We now construct some toast sequences that we use in the proof of Theorem~\ref{th:main}. We remark that, as constructed, they will not immediately cover all of~$\mathbbm{T}^k$. However, at the end of the section, we will give a compactness argument of Boykin and Jackson~\cite{BoykinJackson07} (see also Marks and Unger~\cite{MarksUnger17}*{Lemma~A.2}) which allows us to produce one toast sequence that covers all of the vertices (see Lemma~\ref{lm:FinalToast}).

We will use the global parameters $r_i,r_i',q_i,q_i',t_i,t_i'$ that were defined in Section~\ref{subsec:setting}. 
For the reader's convenience, we repeat these definitions here. Namely, we set
\begin{align}
\tag{\ref{eq:ri'specific}}
r_{i}'&:=100^{2^{i+1} - 1},\quad i\ge 0,\\
r_i&:= 100^{2^{i+1} - 2},\quad i\ge 1.
\tag{\ref{eq:rispecific}}
\end{align}
Thus $r_0'=100$ while $r_i=(r_{i-1}')^2$ and $r_i'=100r_i$ for each $i\ge 1$. Then we defined $q_0':=0$ and, for $i\ge 1$,
 \begin{align}
t_i&:=2r_i+4q'_{i-1}+4,\nonumber\\
q_i&:=t_i+2q'_{i-1}+4,\nonumber\\
t'_i&:=4r'_i/5 + 2q_i,\nonumber\\
q'_i&:=t'_i+2q_i+4.\tag{\ref{eq:qi'}}
\end{align}
 Note that, within additive $O(\sqrt{r_i})$ as $i\to\infty$, the last four values are $2r_i$, $2r_i$, $84r_i$ and $88r_i$ respectively. 
 It is tedious, but not hard, to verify that, with the specific definitions above (or for the sufficiently fast-growing sequences as in Remark~\ref{rm:FastRi}, with $r_i'$ divisible by $5$), the following inequalities hold for all $i\ge 1$:
\begin{align}
\label{eq:qtMonotone}
r_i'-2&>q_i'>t_i'>q_i>t_i>r_i>5r'_{i-1},\\
%\label{eq:r'q'}
%r'_j&\ge q'_{j}+2,\\ % \text{ and} 
\label{eq:r'q}
r_i'&\ge 15q_i+25,\\
%\label{eq:rr'}
%r_i&\ge 5r'_{i-1}-1,\\
\label{eq:qjAnotherIneq}
q_{i+1}'&\ge 2r_{i+1}+6q_{i}+4.
\end{align}
These are some key inequalities that will be used when establishing various properties of the constructed sets in Section~\ref{sec:ToastProperties}.

Recall that, for each $i\ge1$,   $X_i$ is maximally $r_i$-discrete and $Y_i$ is maximally $r'_i$-discrete such that both these sets are the unions of finitely many disjoint strips.
 For each $i\ge1$, define 
 \[
 I_i:=\bigcup_{\vvec{u}\in X_i} I_{i}^{\vvec{u}},
 \]
  where $I_{i}^{\vvec{u}}$ denotes the set of all $\vvec{v}\in \mathbbm{T}^k$ such that
 \[\dist_{G_d}(\vvec{v},\vvec{u}')\ge\dist_{G_d}(\vvec{v},\vvec{u})+5 r'_{i-1},\quad\mbox{for every $\vvec{u}'\in X_i\setminus \{\vvec{u}\}$.}
 \]
  The set $I_i^{\vvec{u}}$ can be viewed as the ``partial Voronoi cell'' of~$\vvec{u}\in X_i$.
  
 Now, inductively for $i=1,2,\ldots$, define
 \begin{eqnarray}
 J_i&:=&\mathcal{C}_2(N_{q'_0+2}[J_1],\ldots,N_{q'_{i-2}+2}[J_{i-1}],I_i),\label{eq:Ji}\\
 K_i&:=&\mathcal{C}_2(K_1,L_1,\ldots,K_{i-1},L_{i-1},N_2[J_i])\label{eq:Ki},\\
L_i&:=&\mathcal{C}_2(K_1,L_1,\ldots,K_{i-1},L_{i-1},K_i,N_{ 2r'_i/5}[Y_i]).\label{eq:Li}
 \end{eqnarray}

We will show that  $(J_1,K_1,J_2,K_2,\ldots)$ and $(K_1,L_1,K_2,L_2,\ldots)$ are toast sequences. In particular, this implies that $(J_1,J_2,\ldots)$ is a toast sequence.
 Along the way, we will also prove some key properties of these sequences of sets which will be applied later in the paper to prove Theorem~\ref{th:main}.

\subsection{Properties of the Constructed Sequences}
\label{sec:ToastProperties}
We begin with the following statement, which simply follows by construction.

\begin{lemma}
\label{lem:JKLtrivial}
For every $i\ge1$, we have $I_i\subseteq J_i$, $N_2[J_i]\subseteq K_i$ and $N_{2r'_i/5}[Y_i]\subseteq L_i$.\qed
\end{lemma}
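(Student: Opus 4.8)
The three inclusions are asserted to follow "by construction," and indeed each is an immediate consequence of the first step in the relevant instance of Definition~\ref{def:closure}. The only thing to check is that the initialisation set is actually a subset of the final output of the $\mathcal{C}_2$-construction, i.e., that the operation in step~\eqref{it:add} of Definition~\ref{def:closure} only ever \emph{adds} vertices and never removes any.

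\textbf{Key step.} Observe directly from Definition~\ref{def:closure} that $\mathcal{C}_b(D)$ is built by starting with $D_i$ (the last entry of the sequence $D$) and repeatedly taking unions with sets of the form $N_b[S]$; in particular $D_i\subseteq \mathcal{C}_b(D)$ always holds. Applying this with $b=2$:
\begin{enumerate}[(i)]
\item For $J_i=\mathcal{C}_2(N_{q'_0+2}[J_1],\ldots,N_{q'_{i-2}+2}[J_{i-1}],I_i)$ the last entry of the defining sequence is $I_i$, so $I_i\subseteq J_i$.
\item For $K_i=\mathcal{C}_2(K_1,L_1,\ldots,K_{i-1},L_{i-1},N_2[J_i])$ the last entry is $N_2[J_i]$, so $N_2[J_i]\subseteq K_i$.
\item For $L_i=\mathcal{C}_2(K_1,L_1,\ldots,K_{i-1},L_{i-1},K_i,N_{\lfloor 2r'_i/5\rfloor}[Y_i])$ the last entry is $N_{\lfloor 2r'_i/5\rfloor}[Y_i]$, so $N_{\lfloor 2r'_i/5\rfloor}[Y_i]\subseteq L_i$.
\end{enumerate}
This exhausts all three claims, so the proof is a single short paragraph citing Definition~\ref{def:closure}. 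Since the statement is flagged as "follows by construction," I would simply write the one-line argument (the construction only grows the set from its initialisation, which in each of the three cases is the named set) and place a \qed.

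\textbf{Main obstacle.} There is essentially none — this is a bookkeeping lemma whose sole purpose is to record, for convenient later citation, that the initialising sets survive into the $\mathcal{C}_2$-outputs. The only mild subtlety is making sure the reader sees \emph{which} argument of each $\mathcal{C}_2(\cdot)$ is the one that appears verbatim in the conclusion; this is why the lemma isolates $I_i$, $N_2[J_i]$, and $N_{\lfloor 2r'_i/5\rfloor}[Y_i]$ specifically (rather than, say, $J_i$ or $J_{i-1}$, which enter only through neighbourhoods). No inequality among the parameters $r_i,r_i',q_i,q_i',t_i,t_i'$ is needed here; those come into play only for the later separation and boundedness properties.
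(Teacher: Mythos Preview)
Your proposal is correct and matches the paper's approach exactly: the paper marks this lemma with a \qed\ and the remark that it ``simply follows by construction,'' and your argument---that $\mathcal{C}_b(D)$ is initialised as the last entry of $D$ and step~\eqref{it:add} only adds vertices---is precisely the intended justification.
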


Next, we prove a lemma regarding the structure of $I_i$. 

\begin{lemma}
\label{lem:Ii}
The sets $I_i^{\vvec{u}}$ for $\vvec{u}\in X_i$ are exactly the components of $G_d\induced I_i$ and the distance in $G_d$ between any two of these sets is at least $5r'_{i-1}$. Also, for each $\vvec{u}\in X_i$, the diameter of $G_d\induced I_i^{\vvec{u}}$ is at most $2r_i$ and $I_i^{\vvec{u}}\cap X_i=\{\vvec{u}\}$. Furthermore, $I_i$ is a $2r_i$-local function of $X_i$.
\end{lemma}

\begin{proof}
Take any $\vvec{u}\in X_i$ and $\vvec{v}\in I_i^{\vvec{u}}$. It holds by $r_i\ge 5r'_{i-1}$ that $\vvec{u}\in I_i^{\vvec{u}}$. Every vertex on a shortest path from $\vvec{v}$ to $\vvec{u}$ in $G_d$ is contained in $I_i^{\vvec{u}}$ so this set is connected. Since $X_i$ is maximally $r_i$-discrete and $\vvec{u}$ is the element of $X_i$ at minimum distance from $\vvec{v}$, we have that $\dist_{G_d}(\vvec{u},\vvec{v})\leq r_i$. Thus, the diameter of $G_d\induced I_i^{\vvec{u}}$ is at most $2r_i$.

%In particular, this implies that every component of $G_d\induced I_i$ contains at least one vertex of $X_i$. 
Now, for distinct $\vvec{u},\vvec{w}\in X_i$ in the same component of $G_d$, let $\vvec{v}_0\vvec{v}_1\ldots \vvec{v}_n$ be the shortest path in $G_d$ from $I_i^{\vvec{u}}$ to $I_i^{\vvec{w}}$. Then we have by definition that
\[\dist_{G_d}\left(\vvec{v}_0,\vvec{u}\right)+2n\ge \dist_{G_d}(\vvec{v}_n,\vvec{u})+n\ge\dist_{G_d}(\vvec{v}_n,\vvec{w})+5r'_{i-1}+n\]
\[\ge \dist_{G_d}(\vvec{v}_0,\vvec{w})+5 r'_{i-1}\ge \dist_{G_d}\left(\vvec{v}_0,\vvec{u}\right)+10r'_{i-1}\]
and so $n\ge 5r'_{i-1}$. Therefore, the sets $I_i^{\vvec{u}}$, for $\vvec{u}\in X_i$, are the components of $G_d\induced I_i$ and the distance in $G_d$ between any two such components is at least $5r'_{i-1}$. 

Finally, in order to decide if some $\vvec{v}\in\I T^k$ belongs to $I_i$, it is enough to compute the distance $d$ from $\vvec{v}$ to $X_i$ in $G_d$ and then check if the $(d+5r'_{i-1}-1)$-neighbourhood of $\vvec{v}$ contain a unique element of~$X_i$. Since $d\le r_i$, the set $I_i$ is a local function of $X_i$ of radius $r_i+5r'_{i-1}\le 2r_i$, finishing the proof of the lemma.
\end{proof}

The following lemma holds because $Y_i$ is maximally $r'_i$-discrete; we omit its easy proof.

\begin{lemma}
\label{lem:2/5}
Every component of $G_d\induced N_{2r'_i/5}[Y_i]$ has diameter at most $4r_i'/5$ and contains exactly one vertex of~$Y_i$. Also, the distance in $G_d$ between any two such components is at least~$r_i'/5$. \qed
\end{lemma}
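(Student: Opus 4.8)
The plan is to use that $Y_i$ is (maximally) $r_i'$-discrete in order to show that $N_{\lfloor 2r_i'/5\rfloor}[Y_i]$ splits in $G_d$ into exactly the balls centred at the points of $Y_i$, after which both assertions become immediate distance estimates. Throughout write $m:=\lfloor 2r_i'/5\rfloor$, so that $2m\le 4r_i'/5<r_i'$.

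First I would record that, for each $\vvec{u}\in Y_i$, the ball $N_m[\vvec{u}]$ is connected in $G_d$ (every vertex of a shortest $G_d$-path from $\vvec{u}$ to a point of $N_m[\vvec{u}]$ lies again in $N_m[\vvec{u}]$), and hence is contained in a single component of $G_d\induced N_m[Y_i]$. Next, for distinct $\vvec{u},\vvec{u}'\in Y_i$ we have $\dist_{G_d}(\vvec{u},\vvec{u}')\ge r_i'+1$ since $Y_i$ is $r_i'$-discrete, so by the triangle inequality
\[
\dist_{G_d}\bigl(N_m[\vvec{u}],N_m[\vvec{u}']\bigr)\ \ge\ (r_i'+1)-2m\ \ge\ r_i'+1-\tfrac{4}{5}r_i'\ =\ \tfrac{1}{5}r_i'+1 .
\]
Since $r_i'\ge 5$ (indeed $r_i'$ is enormous by~\eqref{eq:ri'specific}), the right-hand side is at least $2$, so there is no edge of $G_d$ joining $N_m[\vvec{u}]$ to $N_m[\vvec{u}']$ when $\vvec{u}\ne\vvec{u}'$. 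Combining this with $N_m[Y_i]=\bigcup_{\vvec{u}\in Y_i}N_m[\vvec{u}]$, a short argument (the component of any vertex of $N_m[Y_i]$ cannot leave the ball containing that vertex, as distinct balls are pairwise non-adjacent) gives $\comp\bigl(N_m[Y_i]\bigr)=\bigl\{N_m[\vvec{u}]:\vvec{u}\in Y_i\bigr\}$.

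Granting this, both conclusions are immediate: two points of a single ball $N_m[\vvec{u}]$ are at $G_d$-distance at most $2m\le 4r_i'/5$, which is the diameter bound, and the displayed inequality bounds the $G_d$-distance between any two distinct components from below by $r_i'/5+1>r_i'/5$. I do not expect any real obstacle; the only points needing mild care are (i) checking that the balls do not merge into larger components, which is exactly where $2m<r_i'$ and the $r_i'$-discreteness of $Y_i$ enter, and (ii) carrying the floor in $m=\lfloor 2r_i'/5\rfloor\le 2r_i'/5$ through the estimates so that the constants come out as stated. This is a simpler version of the computation already carried out for $I_i$ in the proof of Lemma~\ref{lem:Ii}.
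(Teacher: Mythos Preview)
Your proposal is correct and is precisely the natural argument the paper has in mind; the paper omits the proof entirely, noting only that it follows from $Y_i$ being maximally $r_i'$-discrete. Your observation that only $r_i'$-discreteness (not maximality) is used here is accurate.
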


The proofs of the next two lemmas are the main applications of Lemma~\ref{lem:chain}.

\begin{lemma}
\label{lem:Ji}
For $i\ge1$, we have that $J_i\subseteq N_{q'_{i-1}}[I_i]$, the distance in $G_d$ between any two distinct components  of $G_d\induced J_i$ is least $5r_{i-1}'-2q_{i-1}'$, and every component  of $G_d\induced J_i$ has diameter at most $2r_i+2q'_{i-1}$ and contains exactly one vertex of~$X_i$.
\end{lemma}

\begin{proof}
We proceed by induction on $i$. The base case $i=1$ follows from Lemma~\ref{lem:Ii} as $J_1=I_1$ and $q'_0=0$.  Let $i\ge2$. We would like to apply Lemma~\ref{lem:chain} with $b:=2$ to the sequences 
\begin{equation}
\label{eq:seqs}
 \mbox{$(N_{q'_0+2}[J_1],\ldots,N_{q'_{i-2}+2}[J_{i-1}],I_i)$, $(t'_1,\ldots,t'_{i-1})$ and $(q'_0,\ldots,q'_{i-1})$.}
 \end{equation} 

Let us check the assumptions of Lemma~\ref{lem:chain}. By induction, for every $1\leq j\leq i-1$, every component of $G_d\induced N_{q'_{j-1}+2}[J_j]$ has diameter at most $(2r_{j}+2q'_{j-1})+2(q'_{j-1}+2)= t_j<t'_j$
and the distance in $G_d$ between any two such components is at least $(5r'_{j-1}-2q'_{j-1})-2(q'_{j-1}+2)$, which is strictly larger than $q'_{j-1}+4$ by \eqref{eq:qtMonotone}. Also, the distance between any two components of $G_d\induced I_i$ is at least $5r'_{i-1}>q'_{i-1}+4$ while the inequality in~\eqref{eq:chain} for $1\le j\le i-1$ (which states that $q'_j\ge t'_j+2q'_{j-1}+4$) holds by $q_{j}>q'_{j-1}$.
Thus Lemma~\ref{lem:chain} applies to the sequences in~\eqref{eq:seqs} and gives that $J_i\subseteq N_{q'_{i-1}}[I_i]$, proving the first stated property. 

This routinely implies all other claims about~$J_i$. Indeed, it follows from Lemma~\ref{lem:Ii} that the distance in $G_d$ between different components of $G_d\induced J_i$ is at least $5r'_{i-1}-2q'_{i-1}$ (note that $5r'_{i-1}- 2q'_{i-1}\ge 2$) and each component has diameter at most $2r_i+2q'_{i-1}$. Finally, each component $S\in \comp(J_i)$ is built from $I_i^{\vvec{u}}$ for some $\vvec{u}\in X_i$ and, by the separation bounds proved above, $\vvec{u}$ is the unique vertex in~$S\cap X_i$.
\end{proof}

\begin{lemma}
\label{lem:KiLi}
For each $i\ge1$, it holds that
\begin{enumerate}
\stepcounter{equation}
 \item\label{eq:KiStructure} 
 $K_i\subseteq N_{q'_{i-1}+2}[J_i]$,  the distance in $G_d$ between any two distinct components of $G_d\induced K_i$ is least $5r_{i-1}'-4q_{i-1}'-4$, and  every component of $G_d\induced K_i$ has diameter at most $t_i$ and contains exactly one vertex of~$X_i$.
\stepcounter{equation}
 \item \label{eq:LiStructure}
 $L_i\subseteq N_{2r'_i/5+q_{i}}[Y_i]$, the distance in $G_d$ between any two distinct components  of $G_d\induced L_i$ is least $r_i'/5-2q_i$, and 
  every component of  $G_d\induced L_i$ has diameter at most  $t'_i$ and contains exactly one vertex of~$Y_i$. 
 \end{enumerate}
\end{lemma}

\begin{proof}
We use induction on $i$. The base case $i=1$ holds for $K_i$ by Lemma~\ref{lem:Ji} since $K_1=N_{2}[J_1]$ and $q_0'=0$. Let $i\ge 2$. In the $i$-th step of induction, we prove the claims about $L_{i-1}$ and $K_i$. 
Analogously to the proof of Lemma~\ref{lem:Ji}, we will apply the inductive hypotheses and Lemma~\ref{lem:chain} with $b:=2$ to respectively 
\begin{itemize}
\item $(K_1,L_1,\ldots,K_{i-1},N_{2r'_{i-1}/5}[Y_{i-1}])$, $(t_1,t_1',\ldots,t'_{i-2},t_{i-1})$ and $(q_0',q_1,\ldots,q'_{i-2},q_{i-1})$,
\item $(K_1,L_1,\ldots,L_{i-1},N_2[J_i])$, $(t_1,t_1',\ldots,t_{i-1},t'_{i-1})$ and $(q_0',q_1,\ldots,q_{i-1},q'_{i-1})$.
\end{itemize}

Let us check the assumptions of Lemma~\ref{lem:chain} for the first triple of sequences.
By induction, 
for every $1\leq j\leq i-1$,
% we have $K_j\subseteq N_{q'_{j-1}+2}[J_j]$. Also, for every $2\leq j\leq i-1$, we have $L_{j-1}\subseteq N_{2r'_{j-1}/5 + q_{j-1}}[Y_{j-1}]$. Thus, by Lemmas~\ref{lem:Ii} and~\ref{lem:Ji}, 
every component of $G_d\induced K_j$ has diameter at most $t_j$ and the distance in $G_d$ between any two such components is at least $5r'_{j-1}-4q'_{j-1}-4$, which is strictly more than $q'_{j-1}+4$ by \eqref{eq:qtMonotone}. Similarly, for each $1\leq j\leq i-2$,  every component of $G_d\induced L_{j}$ has diameter at most $t'_{j}$ and the distance in $G_d$ between any two such components is at least $r'_{j}/5-2q_{j}$, which is strictly larger than $q_{j}+4$ by~\eqref{eq:r'q}. Also, 
%the distance between any two components of $G_d\induced N_{2r'_{i-1}/5}[Y_{i-1}]$ is at least $r'_{i-1}/5>q_{i-1}+4$ while 
the inequalities in~\eqref{eq:chain} are routine to verify.
% (with half of the inequalities already verified in the proof of Lemma~\ref{lem:Ji}).

So,  Lemma~\ref{lem:chain} applies and gives that $L_{i-1}\subseteq N_{q_{i-1}}[N_{2r'_{i-1}/5}[Y_{i-1}]]$. This implies all other stated properties of~$L_{i-1}$. Indeed, by Lemma~\ref{lem:2/5}, the distance between any two components of $G_d\induced L_{i-1}$, is at least $r'_{i-1}/5-2q_{i-1}$,
and
each component has diameter at most  $4r'_{i-1}/5+2q_{i-1}= t'_{i-1}$ and contains exactly one vertex from $Y_i$, proving all claims about~$L_{i-1}$.

By $r'_{i-1}/5-2q_{i-1}> q_{i-1}+4$, we can apply Lemma~\ref{lem:chain} also to the second triple of sequences to derive that $K_i\subseteq N_{q'_{i-1}+2}[J_i]$. It follows from Lemma~\ref{lem:Ji} that the distance between any two components of $G_d\induced K_i$ is at least $(5r_{i-1}'-2q_{i-1}')-2(q'_{i-1}+2)=5r'_{i-1}-4q'_{i-1}-4$, while each component has diameter at most $2r_i+2q'_{i-1}+2(q'_{i-1}+2)= t_i$ and contains exactly one element of~$X_i$.
\end{proof}

We will need to refer a few times to the following, not entirely trivial result so we state it as a separate lemma.

\begin{lemma}\label{lm:KLKJ} Let $i>j\ge 1$.
 If $S'\in \comp(K_j)$ and $T\in \comp(J_i)$ are at distance at most $2$ in $G_d$, then $N_2[S']\subseteq T$.
\end{lemma}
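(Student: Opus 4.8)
The plan is to locate, for the given component $S\in\comp(K_j)$, a component $\Hat S$ of the ``layer'' $N_{q'_{j-1}+2}[J_j]$ appearing in the definition~\eqref{eq:Ji} of $J_i$ via the operator $\mathcal{C}_2$, and then to invoke the stopping condition built into Definition~\ref{def:closure}. Note first that since $i>j$, i.e.\ $j\le i-1$, the set $N_{q'_{j-1}+2}[J_j]$ is indeed one of the terms (the $j$-th) of the sequence $(N_{q'_0+2}[J_1],\ldots,N_{q'_{i-2}+2}[J_{i-1}],I_i)$ whose $\mathcal{C}_2$-closure is $J_i$, so every component of it is among those ``processed'' in the construction.

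First I would invoke Lemma~\ref{lem:KiLi} to get $K_j\subseteq N_{q'_{j-1}+2}[J_j]$, so that $G_d\induced K_j$ is an induced subgraph of $G_d\induced N_{q'_{j-1}+2}[J_j]$; consequently the connected set $S$ lies inside a unique component $\Hat S\in\comp(N_{q'_{j-1}+2}[J_j])$. Since $S\subseteq\Hat S$ and $\dist_{G_d}(S,T)\le 2$, also $\dist_{G_d}(\Hat S,T)\le 2$, so $T$ contains a vertex of $N_2[\Hat S]$; in particular $N_2[\Hat S]$ meets the component $T$ of $G_d\induced J_i$ and hence meets $J_i$. Because $\Hat S$ is one of the layer-components involved in the $\mathcal{C}_2$-construction of $J_i$, the stopping condition of Definition~\ref{def:closure} guarantees that $J_i$ does not cut $N_2[\Hat S]$; combined with $N_2[\Hat S]\cap J_i\neq\emptyset$ this forces $N_2[\Hat S]\subseteq J_i$. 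Finally, $\Hat S$ is connected in $G_d$, hence so is $G_d\induced N_2[\Hat S]$ (each vertex of $N_2[\Hat S]$ is joined to $\Hat S$ by a path of length at most $2$ staying inside $N_2[\Hat S]$, and $\Hat S$ is connected), so being a connected subset of $J_i$ meeting $T$ it must satisfy $N_2[\Hat S]\subseteq T$; then $N_2[S]\subseteq N_2[\Hat S]\subseteq T$ since $S\subseteq\Hat S$.

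I do not expect a serious obstacle here: the argument is short once Lemma~\ref{lem:KiLi} is in hand. The one point requiring care is the index bookkeeping — verifying that, because $j\le i-1$, the set $N_{q'_{j-1}+2}[J_j]$ really is one of the sequence entries feeding $\mathcal{C}_2$ in~\eqref{eq:Ji}, so that the ``no-cut'' stopping property legitimately applies to $\Hat S$ — together with the routine facts that $N_2$ of a connected set is connected and that a connected subset of $J_i$ meeting $T$ is contained in $T$.
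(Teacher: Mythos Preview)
Your proposal is correct and follows essentially the same approach as the paper: use Lemma~\ref{lem:KiLi} to embed $S$ in a component $\Hat S$ of $N_{q'_{j-1}+2}[J_j]$, then invoke the no-cut property of the $\mathcal{C}_2$-construction of $J_i$ to conclude $N_2[\Hat S]\subseteq T$. The paper's version is more terse (it does not name $\Hat S$ explicitly or spell out the connectedness of $N_2[\Hat S]$), but the argument is identical.
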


\begin{proof} 
Observe that, by Lemma~\ref{lem:KiLi}, we have $S'\subseteq K_j\subseteq N_{q'_{j-1}+2}[J_j]$. Take a component $S$ of $G_d\induced J_j$ such that $S'':= N_{q'_{j-1}+2}[S]$ intersects~$S'$. Let us show that, in fact, $S'\subseteq S''$.
Suppose on the contrary that we have some $\vvec{u}\in S'\setminus S''$. Again by Lemma~\ref{lem:KiLi}, $\vvec{u}$ is within distance $q'_{j-1}+2$ to some component $X\in\comp(J_j)$ that has to be different from~$S$. But then the distance between $X$ and $S$ is, by the triangle inequality when we go via $S'$, at most $2(q'_{j-1}+2)+t_i<5r'_{j-1}-2q'_{j-1}$, contradicting Lemma~\ref{lem:Ji}.

By the definition of $J_i$, every component of $N_{q'_{j-1}+2}[J_j]$ that is within distance $2$ of $T\in \comp(J_i)$ is included with its 2-neighbourhood into~$T$. Since $S''\subseteq N_{q'_{j-1}+2}[J_j]$, the set $N_2[S'']$ is included into~$T$. Thus $T$ contains $N_2[S']\subseteq N_2[S'']$, as desired.\end{proof}

We are now in position to prove the following lemma.

\begin{lemma}
\label{lem:JKLscaff}
Both of $(J_1,K_1,J_2,K_2,\ldots)$ and $(K_1,L_1,K_2,L_2,\ldots)$ are toast sequences.
\end{lemma}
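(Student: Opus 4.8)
The plan is to verify, for each of the two interleaved sequences, the three defining conditions of a toast sequence from Definition~\ref{def:scaff}: bounded finite components \eqref{scaffBdd}, pairwise distance $\ge 3$ within each level \eqref{scaffDist}, and the nesting/separation condition \eqref{scaffUt} across levels. I would treat the two sequences $(J_i,K_i)_{i=1}^\infty$ and $(K_i,L_i)_{i=1}^\infty$ in parallel, since both are built via the closure operator $\mathcal{C}_2$ and most of the work has already been done in Lemmas~\ref{lem:chain}, \ref{lem:Ii}, \ref{lem:2/5}, \ref{lem:Ji}, \ref{lem:KiLi}, \ref{lm:diam} and \ref{lm:KLKJ}.

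First, conditions \eqref{scaffBdd} and \eqref{scaffDist}: by Lemma~\ref{lm:diam}, every component of $G_d\induced J_i$, $G_d\induced K_i$, $G_d\induced L_i$ has diameter bounded by an explicit function of $r_i, r_i', q_i, q_{i-1}'$, so components are finite and their cardinality is uniformly bounded (the number of vertices in a ball of bounded radius in a copy of $\mathbbm{Z}^d$ is bounded). For the distance-$\ge 3$ property, I would observe that each of the sets $J_i$, $K_i$, $L_i$ is produced as $\mathcal{C}_2(D)$ for an appropriate sequence $D$ ending in $I_i$, $N_2[J_i]$, $N_{\lfloor 2r_i'/5\rfloor}[Y_i]$ respectively. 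The closure operator $\mathcal{C}_2$ is precisely designed so that, whenever it would bring a newly-added chunk within distance $\le 2$ of the current set, it swallows that chunk together with its $2$-neighbourhood; hence distinct components of the resulting set are at distance $\ge 3$. (Formally: if two components $S\neq S'$ of $\mathcal{C}_2(D)$ were at distance $\le 2$, then some piece witnessing one of them cuts the $2$-neighbourhood of the other, contradicting that the construction \eqref{it:add} has terminated.) One must also check the base pieces themselves have well-separated components: for $I_i$ this is Lemma~\ref{lem:Ii} (separation $\ge 5r_{i-1}'\ge 3$), for $N_2[J_i]$ one uses Lemmas~\ref{lem:Ii} and \ref{lem:Ji} together with \eqref{eq:r'q'}, and for $N_{\lfloor 2r_i'/5\rfloor}[Y_i]$ it is Lemma~\ref{lem:2/5} (separation $\ge r_i'/5\ge 3$).

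Next, the nesting condition \eqref{scaffUt}, which is the heart of the matter. For the sequence $(J_1,K_1,J_2,K_2,\ldots)$ I must show: (i) for $j<i$, each $S\in\comp(K_j)$ has $N_2[S]\subseteq J_i$ or $\dist_{G_d}(S,J_i)\ge 3$ --- but this is exactly Lemma~\ref{lm:KLKJ}, since if the distance is $\le 2$ then $N_2[S]\subseteq T$ for the relevant $T\in\comp(J_i)$; (ii) for $j<i$, each $S\in\comp(J_j)$ has $N_2[S]\subseteq K_i$ or $\dist_{G_d}(S,K_i)\ge 3$; (iii) for $j\le i$, each $S\in\comp(J_j)$ or $\comp(K_j)$ with $j$ earlier in the interleaved order than $K_i$ has the analogous property with respect to $K_i$; (iv) each $S\in\comp(K_j)$, $j<i$, has the property with respect to $K_i$. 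Items (ii)--(iv) all follow the same template: $K_i=\mathcal{C}_2(K_1,L_1,\ldots,K_{i-1},L_{i-1},N_2[J_i])$, so any component of $K_j$ or $L_j$ with $j<i$ that comes within distance $\le 2$ of the set being built is, by step \eqref{it:add}, absorbed together with its $2$-neighbourhood; and for $S\in\comp(J_j)$ with $j\le i$, I would note $S\subseteq I_j\subseteq J_j\subseteq K_j$ (by Lemmas~\ref{lem:JKLtrivial}), so $S$ sits inside a component of $K_j$ and is carried along. One subtlety: I need the component of $K_j$ containing $S$ to not merge $S$ with material that would violate the dichotomy --- this is where ``$K_j$ mimics $J_j$'' (the $K_i\supseteq N_2[J_i]$ half of Lemma~\ref{lem:JKLtrivial} plus the diameter bound $K_j\subseteq N_{q_{j-1}'+2}[J_j]$ of Lemma~\ref{lem:KiLi}) is used. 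The sequence $(K_1,L_1,K_2,L_2,\ldots)$ is handled symmetrically: $L_i=\mathcal{C}_2(\ldots,K_i,N_{\lfloor 2r_i'/5\rfloor}[Y_i])$ explicitly absorbs all earlier $K_j,L_j$ ($j<i$) and $K_i$ that come too close, and $K_i=\mathcal{C}_2(\ldots,K_{i-1},L_{i-1},N_2[J_i])$ absorbs all earlier $K_j,L_j$, so the dichotomy holds in both directions, using inequalities \eqref{eq:rr'}--\eqref{eq:r'q} exactly as in Lemmas~\ref{lem:Ji} and \ref{lem:KiLi} to guarantee the closures terminate without runaway growth.

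\textbf{The main obstacle} I anticipate is bookkeeping rather than a genuine mathematical difficulty: the two sequences are interleaved, so ``earlier level'' for, say, $K_i$ in the sequence $(J_1,K_1,J_2,\ldots)$ means all of $J_1,K_1,\ldots,J_{i-1},K_{i-1},J_i$, and one must carefully check the dichotomy \eqref{scaffUt} separately against $J_j$-components and $K_j$-components (and in the other sequence, $K_j$- and $L_j$-components), making sure in each case that the closure in the definition of the \emph{later} set was built from a sequence that includes the relevant \emph{earlier} set (possibly via the containment $J_j\subseteq K_j$, $K_j\subseteq L_j$ forced by Lemma~\ref{lem:JKLtrivial}). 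The delicate point is the ``$N_2[S]\subseteq$'' half: I must argue that when an earlier component $S$ comes within distance $2$, the closure operation adds not just $S$ but all of $N_2[S]$ --- which is immediate from step \eqref{it:add} of Definition~\ref{def:closure} since it adds $N_b[S]=N_2[S]$ wholesale --- and that no later absorption step can ``uncover'' part of $N_2[S]$, which holds because $\mathcal{C}_2$ only ever adds vertices. I would organize the write-up as two short paragraphs (one per sequence), each checking \eqref{scaffBdd}, \eqref{scaffDist}, \eqref{scaffUt} in turn and citing the already-proved lemmas, and I expect the whole proof to be under a page.
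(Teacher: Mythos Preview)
Your proposal is correct and follows essentially the same approach as the paper: verify \eqref{scaffBdd} and \eqref{scaffDist} via the diameter and separation bounds already established (Lemmas~\ref{lem:Ii}, \ref{lem:2/5}, \ref{lem:Ji}, \ref{lem:KiLi}, \ref{lm:diam} together with \eqref{eq:r'q'}--\eqref{eq:r'q}), and verify \eqref{scaffUt} using the termination property of the closure operator $\mathcal{C}_2$ together with Lemma~\ref{lm:KLKJ} for the one case ($K_j$-components against $J_i$) where the earlier set is not literally among the arguments of the closure defining the later set.

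There is one organizational difference worth noting. For the case $S\in\comp(J_i)$ against a later $J_j$ or $K_j$, the paper argues by contradiction: assuming $\dist_{G_d}(S,T)\le 2$ but $N_2[S]\not\subseteq X$ for the relevant component $T$, it deduces $T\subseteq N_2[S]$ and then rules this out by a size comparison (the later component $T$ contains a ball of radius much larger than the diameter of $N_2[S]$). You instead route everything through containing $K_j$-components: since $J_j\subseteq K_j$, any $S\in\comp(J_j)$ sits inside some $S'\in\comp(K_j)$, and the dichotomy for $S'$ (obtained directly from the closure or from Lemma~\ref{lm:KLKJ}) immediately yields the dichotomy for $S$. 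This is a perfectly valid shortcut that avoids the diameter comparison entirely. Your worry that ``the component of $K_j$ containing $S$ might merge $S$ with material that would violate the dichotomy'' is unfounded: if $N_2[S']\subseteq X$ then $N_2[S]\subseteq N_2[S']\subseteq X$, and if $\dist_{G_d}(S',X)\ge 3$ then $\dist_{G_d}(S,X)\ge 3$ since $S\subseteq S'$. So the containment only helps.
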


\begin{proof}
The fact that the components of the subgraphs of $G_d$ induced by each of $J_i$, $K_i$ and $L_i$ are of uniformly bounded cardinality and are pairwise separated by distance at least three follows from 
Lemmas~\ref{lem:Ji} and~\ref{lem:KiLi}.
%Lemmas~\ref{lem:Ii},~\ref{lem:2/5},~\ref{lem:Ji} and~\ref{lem:KiLi} and Inequalities \eqref{eq:r'q'} and \eqref{eq:r'q}.  
So, Properties \eqref{scaffBdd} and \eqref{scaffDist} of Definition~\ref{def:scaff} hold for each of the two sequences. 

Let us check Property \eqref{scaffUt} for the sequence $(J_i,K_i)_{i=1}^\infty$. Note some asymmetry in \eqref{scaffUt}, which states that if two components from two different times are at distance at most 2 then the 2-neighbourhood of the \emph{earlier} one is contained in the \emph{later} one. There are some cases to consider. First, take any $S\in\comp(J_i)$. Suppose on the contrary to \eqref{scaffUt} that $S$ is at distance at most 2 from $X=J_j$ for $j>i$ or $X=K_j$ for $j\ge i$ but $N_2[S]$ is a not a subset of~$X$. Let $T\in\comp(X)$ satisfy $\dist_{G_d}(S,T)\le 2$. By construction, $T$ does not cut $N_2[S]$ as otherwise this set would be added into~$T$ (since $T$ was built after~$S$). Hence we have that $T\subseteq N_2[S]$. 
This is impossible for $j>i$ because the diameter of $S$ is at most $2r_i+2q_{i-1}'$ by Lemma~\ref{lem:Ji} while $T$ contains a ball of radius $\lfloor (r_j-5r_{i-1}')/2\rfloor$ centred at the unique point of $X_j\cap T$ by the definition of $I_j\subseteq J_j\subseteq K_j$. Also, the case $X=K_i$ leads to a contradiction. Indeed, when constructing $T\in\comp(K_i)$ we started with $N_2[T']$ for some $T'\in\comp(J_i)$, By Lemmas~\ref{lem:Ii}, \ref{lem:Ji} and~\ref{lem:KiLi}, we must have $T'=S$. Thus $K_i\supseteq T\supseteq N_2[T']$ contains $N_2[S]$, a contradiction. Second, take any $S\in\comp(K_i)$. No set $J_j$ with $j>i$ can violate \eqref{scaffUt} by Lemma~\ref{lm:KLKJ}. Also, any $T\in\comp(K_j)$ with $j>i$ and $\dist_{G_d}(S,T)\le 2$ contains $N_2[S]$ (again since $T\subseteq N_2[S]$ is impossible by the diameter argument). Thus~\eqref{scaffUt} holds for $(J_i,K_i)_{i=1}^\infty$. 

Likewise, Property~\eqref{scaffUt} can be verified for the sequence $(K_i,L_i)_{i=1}^\infty$, using additionally the fact that each component of $G_d\induced L_i$ contains a ball of radius $2r_i'/5$.

Thus each of $(J_i,K_i)_{i=1}^\infty$ and $(K_i,L_i)_{i=1}^\infty$ is a toast sequence.
%For the most part, property \eqref{scaffUt} holds simply by construction. The only case which require some care is covered by Lemma~\ref{lm:KLKJ}.
\end{proof}

For each $i\ge 1$, define
 $\Tilde{K}_i$ from $K_i$ (resp.\ $\Tilde{L}_i$ from $L_i$) by removing the vertex sets of all components $S$ such that there is $j>i$ with $J_j$ (resp.\ $K_j$) being at distance at most $2$ from~$S$ in the graph~$G_d$. By Lemma~\ref{lm:KLKJ}, it holds that, in fact,
 \begin{equation}\label{eq:TildeKj}
 \Tilde{K}_i=K_i\setminus\left(\bigcup_{j=i+1}^\infty J_j\right),
 \end{equation}
  that is, we could have equivalently defined $\Tilde{K}_i$ from $K_i$ by removing all individual vertices that belong to some~$J_j$ with $j>i$. Likewise (with the claim following trivially from our definitions), we have
   \begin{equation}\label{eq:TildeLj}
   \Tilde{L}_i=L_i\setminus\left(\bigcup_{j=i+1}^\infty K_j\right).
   \end{equation}
%Lemma~\ref{lm:KLKJ} shows that, during this process, we remove whole components of $K_j$ and $L_j$.

The following lemma records some properties of these sets, in particular showing that
they together with $J_i$'s form a toast sequence without losing any single vertex of $\bigcup_{i=1}^\infty L_i$. %(Recall that $J_i\subseteq K_i$ for every $i\ge 1$.) 
Moreover, 
%by~\eqref{it:Tilde4} (and Lemma~\ref{lm:KLKJ}), 
the structure of the toast sequence $(J_1,J_2,\ldots)$ is preserved in a very strong way.

\begin{lemma}\label{lm:Tilde} 
The following properties hold.

\begin{enumerate}
\stepcounter{equation}
 \item\label{it:TildeComponents} For every $i\ge 1$, we have $\comp(\Tilde{K}_i)\subseteq \comp(K_i)$ and
 $\comp(\Tilde{L}_i)\subseteq \comp(L_i)$.
 \stepcounter{equation}
\item\label{it:Tilde2} $\bigcup_{i=1}^\infty (J_i\cup \Tilde{K}_i\cup\Tilde{L}_i)\supseteq \bigcup_{i=1}^\infty L_i$. 
\stepcounter{equation}
 \item \label{it:Tilde4} For every integers $i,j\ge 1$, if $T\in\comp(J_j)$ and $S\in \comp(\Tilde{L}_i)$ are at distance at most $2$, then $i\ge j$ and  $N_2[T]\subseteq S$.
 \stepcounter{equation}
\item\label{it:Tilde3} The sequence $(J_1,\Tilde{K}_1,\Tilde{L}_1,J_2,\Tilde{K}_2,\Tilde{L}_2,\ldots)$ is a toast sequence. 
%\item\label{it:Tilde2} It holds that $\bigcup_{i=1}^\infty (J_i\cup \Tilde{K}_i\cup\Tilde{L}_i)\supseteq \bigcup_{i=1}^\infty L_i$. 
 \stepcounter{equation}
 \end{enumerate}
 \end{lemma}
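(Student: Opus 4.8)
The plan is to prove the four parts in the order they are stated, using the previously established structure lemmas (\ref{lem:Ji}, \ref{lem:KiLi}, \ref{lm:diam}, \ref{lm:KLKJ}) together with the description of $\Tilde K_i$ and $\Tilde L_i$ as vertex-deletions given in \eqref{eq:TildeKj} and \eqref{eq:TildeLj}. For Part~\ref{it:TildeComponents}: by definition $\Tilde K_i$ is obtained from $K_i$ by deleting entire components $S$ that come within distance $2$ of some $J_j$ with $j>i$; since deleting whole components cannot split or merge the surviving components, every component of $\Tilde K_i$ is already a component of $K_i$. The same argument applies verbatim to $\Tilde L_i$ versus $L_i$ (here we use that, by \eqref{eq:TildeLj}, $\Tilde L_i$ is $L_i$ with whole components removed — one should note that deleting $\bigcup_{j>i} K_j$ pointwise really does remove whole components of $L_i$, which follows because any component of $K_j$ meeting $L_i$ is, by the construction of $L_i$ via $\mathcal C_2$ with $j$ an earlier index, entirely absorbed into a single component of $L_i$, so by Lemma~\ref{lm:KLKJ}-type reasoning a $K_j$-component either lies inside an $L_i$-component with padding or is far from it).

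For Part~\ref{it:Tilde2}: fix a vertex $\vvec u\in\bigcup_i L_i$ and let $i$ be the \emph{largest} index with $\vvec u\in L_i$ — this is well-defined because the components of $G_d\induced L_i$ grow in diameter, but more to the point, a given vertex lies in only finitely many $L_i$ since the $Y_i$ become more and more $r_i'$-discrete and $\bigcup_i L_i$ restricted to each component of $G_d$ is a countable increasing-cardinality situation; alternatively one argues directly that if $\vvec u\in L_i\cap L_{i'}$ for $i<i'$ then the $K$-absorption already placed $\vvec u$ into some $K_{i''}$ with $i''$ large. Let $S$ be the component of $G_d\induced L_i$ containing $\vvec u$. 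If $S$ survives in $\Tilde L_i$ we are done. Otherwise, by definition there is $j>i$ with $\dist_{G_d}(S,K_j)\le 2$; but then by Lemma~\ref{lm:KLKJ} (applied with the roles $K_i$ of that lemma played here appropriately — $S$ is a component of $L_i$, close to a $K_j$, $j>i$), the absorption step in building $L_i$ or the definition of $K_j$ forces $N_2[S]\subseteq$ a component of $K_j$, hence $\vvec u\in K_j\subseteq L_j$ with $j>i$, contradicting maximality of $i$. The only subtlety is to handle $J_i$, $\Tilde K_i$ correctly; but $L_i\supseteq K_i\supseteq J_i$ for each $i$, so covering $\bigcup_i L_i$ automatically covers the $J$'s and $\Tilde K$'s sit between.

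For Part~\ref{it:Tilde4}: suppose $T\in\comp(J_j)$, $S\in\comp(\Tilde L_i)$, $\dist_{G_d}(S,T)\le 2$. First, $i\ge j$: if $i<j$ then $S$ (a component of $L_i$ by Part~\ref{it:TildeComponents}) lies within distance $2$ of $J_j$ with $j>i$, so $S$ would have been deleted when forming $\Tilde L_i$ — contradiction. Given $i\ge j$, we must show $N_2[T]\subseteq S$. Since $J_j\subseteq K_j$ and (for $i>j$) $L_i$ was built by $\mathcal C_2$ with $K_j$ appearing as an earlier term, the cut condition in Definition~\ref{def:closure} forces $N_2$ of the relevant $K_j$-component (which contains $N_2[T]$ by Lemma~\ref{lm:KLKJ}, or by construction when $i=j$, where $J_j\subseteq L_j$ directly and $T$ is absorbed into an $L_j$-component with its $2$-padding) to lie inside $S$; one also checks $S$ was not subsequently deleted in a way that removes these vertices, which is automatic since deletions remove whole components. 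Finally Part~\ref{it:Tilde3}: conditions \eqref{scaffBdd} and \eqref{scaffDist} for $(J_i,\Tilde K_i,\Tilde L_i)_{i=1}^\infty$ are inherited from Lemma~\ref{lem:JKLscaff} via Part~\ref{it:TildeComponents} (passing to subsets of components only helps). Condition~\eqref{scaffUt} is the main work: one must check that each earlier component is either swallowed with a $2$-neighbourhood or is at distance $\ge 3$ from each later set in the interleaved sequence. For a $J_j$-component inside a later $J_i$, $\Tilde K_i$, or $\Tilde L_i$: the $\Tilde K_i$ and $\Tilde L_i$ cases follow from Part~\ref{it:Tilde4} and the analogous statement for $\Tilde K$ (proved the same way as Part~\ref{it:Tilde4} using Lemma~\ref{lm:KLKJ}), while the $J_i$ case is exactly \eqref{scaffUt} for $(J_i)$. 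For a $\Tilde K_j$- or $\Tilde L_j$-component: if it is close to a later $J_i$, then by the very definition of $\Tilde K_j$ (deleting components close to later $J$'s) it was already removed — so this case is vacuous; if it is close to a later $\Tilde K_i$ or $\Tilde L_i$, we fall back to \eqref{scaffUt} for $(K_i,L_i)_{i=1}^\infty$ from Lemma~\ref{lem:JKLscaff}, again using that passing to $\Tilde{}$-versions only removes whole components and only makes the swallowed set $\Tilde K_i$ or $\Tilde L_i$ potentially smaller — so we additionally must verify that if $S$ is swallowed by $K_i$ with a $2$-neighbourhood, it is also swallowed by $\Tilde K_i$, i.e. the relevant component of $K_i$ was not deleted; but a component of $K_i$ is deleted only if it is close to some $J_{i'}$ with $i'>i$, and then by Lemma~\ref{lm:KLKJ} that whole component is inside $J_{i'}$, so $S\subseteq J_{i'}$ too and the pair $(S,J_{i'})$ is governed by the $J$-part of the argument instead.

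\emph{Main obstacle.} The delicate point throughout is verifying \eqref{scaffUt} for the interleaved sequence after the pointwise deletions: one has to track carefully that whenever a component gets deleted from $K_i$ or $L_i$, the vertices it covered are recovered at a \emph{later} stage (inside some $J_{i'}$, $i'>i$), and that this re-covering is itself ``padded'' so that the separation requirement \eqref{scaffUt} is not violated. The identities \eqref{eq:TildeKj}, \eqref{eq:TildeLj} and Lemma~\ref{lm:KLKJ} are precisely what make this bookkeeping go through, but assembling all the cases cleanly (earlier set $J$ vs $\Tilde K$ vs $\Tilde L$; later set $J$ vs $\Tilde K$ vs $\Tilde L$) is the bulk of the proof.
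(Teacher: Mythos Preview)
Your strategy for Parts~\ref{it:TildeComponents}, \ref{it:Tilde4} and~\ref{it:Tilde3} is essentially the paper's, though you spell out more cases in~\ref{it:Tilde3} than necessary. The paper observes that removing whole components from the \emph{later} set in a pair automatically preserves Property~\eqref{scaffUt}: if $N_2[S]\subseteq D$ lands in a component of $D$ that is subsequently deleted, then by~\eqref{scaffDist} we have $\dist_{G_d}(S,\Tilde D)\ge 3$. Hence the two known toast sequences $(J_i,K_i)_{i\ge1}$ and $(K_i,L_i)_{i\ge1}$ from Lemma~\ref{lem:JKLscaff} already cover every pair in the interleaved sequence except those mixing a $J$ and an $\Tilde L$, which is precisely what Part~\ref{it:Tilde4} handles. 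Your concern that ``the relevant component of $K_i$ was not deleted'' is therefore misplaced.

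Your argument for Part~\ref{it:Tilde2}, however, has a genuine gap. You want to pick the \emph{largest} index $i$ with $\vvec u\in L_i$, but you give no valid reason such a maximum exists; the justifications you offer (growing diameters, increasing discreteness of~$Y_i$) do not imply that a vertex lies in only finitely many $L_i$, and in fact the compactness argument underlying Lemma~\ref{lem:Licover} shows that many vertices lie in $N_{\lfloor 2r_i'/5\rfloor}[Y_i]\subseteq L_i$ for infinitely many~$i$. The paper sidesteps maximality entirely with a two-step forward chain: for $\vvec u\in L_i\setminus\Tilde L_i$, by~\eqref{eq:TildeLj} there is $j>i$ with $\vvec u\in K_j$; then either $\vvec u\in\Tilde K_j$, or by~\eqref{eq:TildeKj} there is $h>j$ with $\vvec u\in J_h$. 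Either way $\vvec u\in\bigcup_\ell(J_\ell\cup\Tilde K_\ell)$ and we are done. Your ``alternative'' hint gestures toward this, but you route from $K_j$ back to $L_j$ (which, incidentally, would also require the containment $K_j\subseteq L_j$ that you have not established) rather than forward to $\Tilde K_j$ or $J_h$, so your argument never terminates.
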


 \begin{proof}
  Conclusion~\eqref{it:TildeComponents} is a trivial consequence of the definition of $\Tilde{K}_i$ and~$\Tilde{L}_i$.
  
  For~\eqref{it:Tilde2}, take any $\vvec{u}\in L_i\setminus \Tilde{L}_i$. We have to show that
  $\vvec{u}\in \bigcup_{j=1}^\infty (J_j\cup \Tilde{K}_j)$.
  By~\eqref{eq:TildeLj}, there is $j>i$ with $\vvec{u}\in K_j$. If $\vvec{u}\in \Tilde{K}_j$ then we are done; otherwise the vertex $\vvec{u}$ lies
  %  Lemma~\ref{lm:KLKJ} 
  inside some $J_h$ with $h>j$  by~\eqref{eq:TildeKj}, proving~\eqref{it:Tilde2} in either case.
 
For~\eqref{it:Tilde4}, note that $i\ge j$ for otherwise the component $S$ of $L_i$ (recall that 
$\comp(\Tilde{L}_i)\subseteq \comp(L_i)$ by~\eqref{it:TildeComponents}) would be  removed when constructing  $\Tilde{L}_i$ from~$L_i$. 
%By Lemma~\ref{lem:JKLtrivial}, 
By the definition of $K_j$, we have that $K_j\supseteq N_2[J_j]$. Thus $K_j$ contains every vertex of the connected set $N_2[T]$ which in turn intersects~$S$. Thus by the  construction of the set $L_i$, its component
$S$ must contain $N_2[T]$ as a subset, proving~\eqref{it:Tilde4}. 
% $\comp(\Tilde{K}_j)\subseteq \comp(K_j)$ and $\comp(\Tilde{L}_j)\subseteq \comp(L_j)$ for every $j$, that is, 
 %The case $S\in \comp(\Tilde{L}_i)$ follows from Lemma~\ref{lm:KLKJ} 

For~\eqref{it:Tilde3}, note by~\eqref{it:TildeComponents} that 
no new components (and thus no new boundaries) 
are created when we construct $\Tilde{K}_j$  from $K_j$ and $\Tilde{L}_j$ from $L_j$. 
Since  $(J_1,K_1,J_2,K_2,\ldots)$ and $(K_1,L_1,K_2,L_2,\ldots)$ are toast sequences by Lemma~\ref{lem:JKLscaff},  the only remaining case that requires some checking is that no conflict to the definition of a toast sequence (namely, Property~\eqref{scaffUt} of Definition~\ref{def:scaff}) can come from $S\in \comp(\Tilde{L}_i)$ and $C\in \comp(J_j)$. This has already been taken care of by~\eqref{it:Tilde4}.
% that are at distance at most 2 in $G_d$.  %while the 2-neighbourhood of each set is not a subset of the other set. 
\end{proof}

\subsection{Some Structure of the Constructed Sets}

The next two lemmas will be useful in analysing the structure of the pieces in the final equidecomposition.

%Later, we will need the following result.
% (where we do not try to optimise the constant in front of $r_i$).

\begin{lemma}\label{lm:JiLocality} For each integer $i\ge 1$, the set $J_i$ is a $4r_i$-local function of $X_1,\dots,X_i$.\end{lemma}

\begin{proof} We use induction on $i$ with the base case $i=1$ following from $J_1=I_1$ being in fact an $(r_1+5r_0)$-local function of~$X_1$. Let $i\ge 2$. In order to decide if some $\vvec{v}\in\I T^k$ is included into $J_i$, it is enough to know all elements $\vvec{u}\in X_i$ at distance at most $r_i+q'_{i-1}$ from $\vvec{v}$ and the component of $G_d\induced J_i$ containing each such~$\vvec{u}$. By Lemma~\ref{lem:Ji}, the component $S\in\comp(J_i)$ containing $\vvec{u}\in X_i$ has diameter at most $2r_i+2q'_{i-1}$ and contains no other elements of $X_i$. So to build $S$ for given $\vvec{u}\in X_i$, we need to know only the sets $N_{q'_0+2}[J_1],\dots,N_{q'_{i-2}+2}[J_{i-1}]$ inside $N_{2r_i+2q'_{i-1}}[\vvec{u}]$. 
%(In fact, $N_{r_i+q'_{i-1}}[\vvec{u}]$ is enough here.) 
By induction, each of these sets is a $4r_{i-1}$-local function of $X_1,\dots,X_{i-1}$. The lemma follows since $(r_i+q'_{i-1})+(2r_i+2q'_{i-1})+4r_{i-1}\le 4r_i$.
\end{proof}

\begin{lemma}
\label{lem:JKLstrips}
Each of the sets $J_i,K_i$ and $L_i$ for $i\ge1$ can be expressed as a union of finitely many disjoint strips.
\end{lemma}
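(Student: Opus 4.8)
The plan is to show that, for every $i\ge 1$, each of $J_i$, $K_i$, $L_i$ \emph{locally depends} (with a finite radius that may depend on $i$) on the sets $X_1,\dots,X_i,Y_1,\dots,Y_i$, and then to invoke Observation~\ref{obs:LocalRule}. For that last step to apply, I would first record that the collection $\mathcal{S}$ of all sets expressible as a union of finitely many disjoint strips is an algebra on $\mathbbm{T}^k$: it is closed under Boolean combinations, since any Boolean combination of strips is a finite union of disjoint strips (the remark following Definition~\ref{def:strip}). It is moreover invariant under the action $a$, because for each $1\le j\le d$ any translate of a strip by $\pm\vvec{x}_j$ is again a union of at most two strips (the last $k-1$ coordinates of a strip already fill the torus, so translation only shifts the first coordinate; this too is noted after Definition~\ref{def:strip}). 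Since $X_i,Y_i\in\mathcal{S}$ by construction, once the locality claim is established Observation~\ref{obs:LocalRule} yields $J_i,K_i,L_i\in\mathcal{S}$, which is exactly the lemma.

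I would prove the locality claim by induction on $i$, handling $J_i$, $K_i$, $L_i$ simultaneously. Two elementary facts get used repeatedly: the operation $Y\mapsto N_r[Y]$ turns a $\rho$-local function into a $(\rho+r)$-local function; and $I_i$ is an $(r_i+5r'_{i-1}+1)$-local function of $X_i$, because $X_i$ is maximally $r_i$-discrete, so the closest point of $X_i$ to any $\vvec{v}$ lies within distance $r_i$, whence the defining inequality for $\vvec{v}\in I_i$ only involves points of $X_i$ within distance $r_i+5r'_{i-1}$ of $\vvec{v}$; similarly $N_{\lfloor 2r'_i/5\rfloor}[Y_i]$ is a local function of $Y_i$. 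Granting the inductive hypothesis that $J_j,K_j,L_j$ for $j<i$ are local functions of the relevant $X$'s and $Y$'s, all the sets appearing inside the definitions \eqref{eq:Ji}, \eqref{eq:Ki}, \eqref{eq:Li} — that is, $N_{q'_{j-1}+2}[J_j]$, $K_j$, $L_j$ ($j<i$), $N_2[J_i]$, $N_{\lfloor 2r'_i/5\rfloor}[Y_i]$ and $I_i$ — are local functions of $X_1,\dots,X_i,Y_1,\dots,Y_i$.

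The heart of the matter is to check that the operator $\mathcal{C}_2$ preserves locality in each of the three cases. Fix one of \eqref{eq:Ji}, \eqref{eq:Ki}, \eqref{eq:Li}, written as $\mathcal{C}_2(D)$ with $D=(D_1,\dots,D_i)$, where $D_1,\dots,D_i$ are local functions. The proofs of Lemmas~\ref{lem:Ji} and~\ref{lem:KiLi} already verify that the hypotheses of Lemma~\ref{lem:chain} hold here with $b=2$; hence $\mathcal{C}_2(D)\subseteq N_{\beta}[D_i]$ for a suitable finite $\beta$, and by Lemma~\ref{lm:diam} every component of $G_d\induced\mathcal{C}_2(D)$ has diameter at most a finite $\Delta$ depending only on $i$. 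The claim is then that whether $\vvec{w}\in\mathcal{C}_2(D)$ is determined by the intersections of $D_1,\dots,D_i$ with $N_{\Delta+3}[\vvec{w}]$. To see this, unwind Definition~\ref{def:closure} as in the proof of Lemma~\ref{lem:chain}: $\vvec{w}\in\mathcal{C}_2(D)$ iff either $\vvec{w}\in D_i$ or there is a chain $S_1,\dots,S_n$ of full components of the earlier $D_j$'s with $\dist_{G_d}(S_1,D_i)\le 2$, $\dist_{G_d}(S_\ell,S_{\ell+1})\le 4$ and $\dist_{G_d}(S_n,\vvec{w})\le 2$. Since consecutive balls $N_2[S_\ell]$ and $N_2[S_{\ell+1}]$ then overlap, $N_2[S_1]$ meets $D_i$, and $\vvec{w}\in N_2[S_n]$, the connected set $D_i\cup N_2[S_1]\cup\dots\cup N_2[S_n]$, which contains $\vvec{w}$, lies in a single component of $G_d\induced\mathcal{C}_2(D)$ and hence inside $N_\Delta[\vvec{w}]$; and certifying that each $S_\ell$ is a genuine full component of its $D_j$ only requires inspecting $D_j$ within $N_1[S_\ell]\subseteq N_{\Delta+1}[\vvec{w}]$. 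Thus $\mathcal{C}_2(D)$ is a local function of $D_1,\dots,D_i$, hence of $X_1,\dots,X_i,Y_1,\dots,Y_i$ by induction. Applying this to \eqref{eq:Ji}, \eqref{eq:Ki}, \eqref{eq:Li} in order (each $\mathcal{C}_2$ here refers only to already-processed sets and to $I_i$, $N_2[J_i]$ or $K_i$, which were handled earlier in the same step) closes the induction and, together with the first paragraph, finishes the proof.

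I expect the only genuine work to be a careful write-up of this last paragraph: making the unwinding of $\mathcal{C}_2$ precise (including the easy but slightly fiddly converse direction of the chain characterization, which follows from the fact that for each component $S$ of every $D_j$ one has either $N_2[S]\subseteq\mathcal{C}_2(D)$ or $N_2[S]\cap\mathcal{C}_2(D)=\emptyset$) and confirming that the witnessing chain for $\vvec{w}$ cannot escape $N_\Delta[\vvec{w}]$. The remaining ingredients — that $\mathcal{S}$ is an $a$-invariant algebra, and the locality of $I_i$ and of the $N_r[\cdot]$ operation — are routine.
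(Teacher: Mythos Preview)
Your proposal is correct and follows essentially the same route as the paper: show that each of $J_i,K_i,L_i$ is a local function of $X_1,Y_1,\ldots,X_i,Y_i$ (using the uniform diameter bounds of Lemma~\ref{lm:diam} to confine the $\mathcal{C}_2$ construction to a bounded neighbourhood), then conclude via Observation~\ref{obs:LocalRule} since finite unions of disjoint strips form an $a$-invariant algebra. The paper's proof compresses all of this into two sentences, whereas you have carefully unpacked the locality of $\mathcal{C}_2$ via the chain characterisation from the proof of Lemma~\ref{lem:chain}; this elaboration is sound and faithful to what the paper leaves implicit.
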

\begin{proof} Observe that each of the sets $J_i$, $K_i$ and $L_i$ is a local function of $X_1,Y_1,\ldots,X_i,Y_i$. Lemma~\ref{lm:JiLocality} proves this for $J_i$ and its argument can be easily adapted to $K_i$ and $L_j$ using the uniform diameter bounds of Lemma~\ref{lem:KiLi}.
Since every set $X_j$ and $Y_j$ is in turn a finite union of disjoint strips (by our choices in Section~\ref{sec:Setting2}) and such unions form an invariant algebra, the lemma holds by Observation~\ref{obs:LocalRule}.\end{proof}

Let us remark that the sets $\Tilde{K}_i$ and $\Tilde{L}_i$ need not satisfy Lemma~\ref{lem:JKLstrips}, that is, they are not in general finite unions of strips. 
%(let alone $\Tilde{K}_i'$ and $\Tilde{L}_i'$), 
%since we may remove a non-empty set of vertices for infinitely many $j>i$ when constructing these sets from $K_i$ and $L_i$ respectively.

%The next lemma will be used in Section~\ref{sec:pieces} to analyse the boundaries of the pieces of the equidecomposition. 

\begin{lemma}
\label{lem:Jmeas}
We have
\[\lambda(I_{i}) = 1 - O\left(r'_{i-1}/r_i\right)\]
as $i\to\infty$, where the constant factor is bounded by a function of $d$ only. 
\end{lemma}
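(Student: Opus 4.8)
The plan is to bound the measure of the complement $\I T^k \setminus I_i$ by showing that every point not in $I_i$ must lie within graph-distance roughly $5r'_{i-1}$ of the ``Voronoi cut locus'' of the maximally $r_i$-discrete set $X_i$, and that this cut locus is thin relative to the cells. Recall from the definition that $\vvec{v} \in I_i$ iff there is $\vvec{u}\in X_i$ with $\dist_{G_d}(\vvec{v},\vvec{u}') \ge \dist_{G_d}(\vvec{v},\vvec{u}) + 5r'_{i-1}$ for every other $\vvec{u}'\in X_i$. So $\vvec{v}\notin I_i$ precisely when the two smallest values of $\dist_{G_d}(\vvec{v},\cdot)$ over $X_i$ differ by less than $5r'_{i-1}$. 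Since $X_i$ is maximally $r_i$-discrete, each point of $\I T^k$ is within distance $r_i$ of $X_i$, so the nearest point $\vvec{u}\in X_i$ is at distance at most $r_i$; hence $\vvec{v}\notin I_i$ forces the existence of a \emph{second} point $\vvec{u}'\in X_i$ at distance at most $r_i + 5r'_{i-1}$ from $\vvec{v}$.

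The first key step is a counting/volume estimate inside a single orbit (a copy of $\I Z^d$). Fix a component $\C O \cong \I Z^d$ of $G_d$ and let $P := X_i\cap \C O$. By the discreteness of $X_i$, the balls $N_{\lfloor r_i/2\rfloor}[\vvec{u}]$ for $\vvec{u}\in P$ are pairwise disjoint, each of size $\Theta(r_i^d)$; and because $X_i$ is \emph{maximal} $r_i$-discrete, the balls $N_{r_i}[\vvec{u}]$ cover $\C O$. Now, for each point $\vvec{v}$ counted in the complement, we have just shown there are two distinct $\vvec{u},\vvec{u}'\in P$ within distance $r_i + 5r'_{i-1}$ of $\vvec{v}$, hence within distance $2(r_i+5r'_{i-1})$ of each other. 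The number of such ``close pairs'' in $\I Z^d$ that can occur is controlled: around any fixed $\vvec{u}\in P$ the number of $\vvec{u}'\in P$ with $\dist \le 2(r_i+5r'_{i-1}) = O(r_i)$ (using \eqref{eq:rr'}, $r_i \ge 5r'_{i-1}-1$) is $O(1)$, since those $\vvec{u}'$ have disjoint $\lfloor r_i/2\rfloor$-balls packed into a ball of radius $O(r_i)$. For each such pair, the set of $\vvec{v}$ within distance $r_i + 5r'_{i-1}$ of \emph{both} lies in the intersection of two balls of radius $r_i+5r'_{i-1}$ whose centres are at least $r_i$ apart (by discreteness of $X_i$); an elementary $\ell_\infty$-ball intersection estimate in $\I Z^d$ shows this ``lens'' has size $O\big(r'_{i-1}\cdot r_i^{d-1}\big)$ — it is a slab of thickness $O(r'_{i-1})$ and cross-section $O(r_i^{d-1})$. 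Summing over the $O(1)$ neighbours $\vvec{u}'$ of each $\vvec{u}$, and using that each $\vvec{u}$ ``owns'' a disjoint ball of size $\Theta(r_i^d)$, we get, in each orbit,
\[
\frac{\#\{\vvec{v}\in\C O : \vvec{v}\notin I_i\}}{\#(\text{ball-packing lower bound})} = O\!\left(\frac{r'_{i-1}\,r_i^{d-1}}{r_i^d}\right) = O\!\left(\frac{r'_{i-1}}{r_i}\right),
\]
where all implied constants depend only on $d$.

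The last step is to pass from the combinatorial (per-orbit) density estimate to the measure $\lambda$. Since the sets $X_i$ and hence $I_i$ are local functions of finitely many strips (Lemma \ref{lem:JKLstrips} for $I_i$, or directly the strip structure of $X_i$), the complement $\I T^k\setminus I_i$ is a finite union of disjoint strips, so it is Jordan measurable and its measure can be computed by integrating the local density. Concretely, I would invoke the discrepancy bound of Lemma \ref{lem:discrep}: applying it to $X := \I T^k\setminus I_i$ (which has $\boxdim(\partial(\I T^k\setminus I_i)) = k-1 < k$) tells us that the fraction of points of a large discrete cube $N_n^+[\vvec{u}]$ lying outside $I_i$ equals $\lambda(\I T^k\setminus I_i)$ up to an error $O(n^{-1-\e})$; taking $n\to\infty$ and combining with the per-orbit bound from the previous paragraph (which is exactly such a fraction, evaluated along the whole orbit) yields $\lambda(\I T^k\setminus I_i) = O(r'_{i-1}/r_i)$, as required. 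The main obstacle I anticipate is being careful with the lens/slab intersection estimate in $\I Z^d$ — making sure the thickness is genuinely $O(r'_{i-1})$ and not $O(r_i)$, which is where the hypothesis that the nearest-point distance gap is $< 5r'_{i-1}$ (and $r_i \gg r'_{i-1}$) is essential — and with bookkeeping the transition between graph-metric balls in $G_d$ and Euclidean/strip geometry on the torus; everything else is routine volume counting.
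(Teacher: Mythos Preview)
Your overall strategy matches the paper's --- characterise the complement of $I_i$ as the set of points nearly equidistant from two centres in $X_i$, bound the number of relevant ``close pairs'' per centre by $O_d(1)$ via a packing argument, and then bound the size of the bad set attached to each pair --- but the key volume estimate has a real gap.

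The set of $\vvec{v}$ within $\ell_\infty$-distance $r_i+5r'_{i-1}$ of \emph{both} $\vvec{u}$ and $\vvec{u}'$ is the intersection of two cubes; if the centres differ by $\Delta_j$ in coordinate~$j$ then this box has $j$-th side length $2(r_i+5r'_{i-1})-|\Delta_j|$. Knowing only $\|\vvec{u}-\vvec{u}'\|_\infty\ge r_i+1$ pins just one side to length at most $r_i+10r'_{i-1}-1$, which is still $\Theta(r_i)$, so the intersection has $\Theta(r_i^d)$ points, not $O(r'_{i-1}r_i^{d-1})$. You correctly flag that the gap condition $|\dist(\vvec{u},\vvec{v})-\dist(\vvec{u}',\vvec{v})|<5r'_{i-1}$ is where the thinness must come from, but your ball-intersection setup has discarded it. The paper keeps this third constraint and counts the resulting set $W_i(\vvec{u},\vvec{u}')$ directly in coordinates: writing $\vvec{v}=\vec{m}\cdot_a\vvec{u}$ and $\vvec{u}'=\vec{n}\cdot_a\vvec{u}$, one fixes which indices $j_1,j_2$ realise $\|\vec{m}\|_\infty$ and $\|\vec{n}-\vec{m}\|_\infty$; the constraint $\big|\|\vec{m}\|_\infty-\|\vec{n}-\vec{m}\|_\infty\big|\le 5r'_{i-1}$ then pins one coordinate to $O(r'_{i-1})$ values (either $m_{j_1}$ directly when $j_1=j_2$, or $m_{j_2}$ given $m_{j_1}$ when $j_1\ne j_2$), with the remaining coordinates ranging over $O(r_i)$ values each. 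This is the missing ingredient in your sketch.

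Your final step --- invoking Lemma~\ref{lem:discrep} with $X=\I T^k\setminus I_i$ to convert per-orbit density to Lebesgue measure --- is also problematic: the constant $c$ in that lemma depends on $X$, while here $I_i$ varies with $i$ and is itself built from the already-fixed vectors $\vvec{x}_1,\dots,\vvec{x}_d$, so you cannot re-randomise. The paper avoids this entirely by a direct measure-preservation (mass-transport) argument: the $\lfloor r_i/2\rfloor$-balls around points of $X_i$ are disjoint, so $\lambda(X_i)=O(r_i^{-d})$; and every point of $W_i$ is a translate $\vec{m}\cdot_a\vvec{u}$ of some $\vvec{u}\in X_i$ with $\|\vec{m}\|_\infty=O(r_i)$, with at most $O(r_i^{d-1}r'_{i-1})$ such $\vec{m}$ for each $\vvec{u}$. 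Summing $\lambda(\vec{m}\cdot_a X_i)=\lambda(X_i)$ over these $\vec{m}$ gives $\lambda(W_i)=O(r_i^{d-1}r'_{i-1})\cdot O(r_i^{-d})=O(r'_{i-1}/r_i)$, which is exactly what the paper means by its Voronoi-cell sentence.
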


\begin{proof}
Note that $I_{i}$ is a union of finitely many disjoint strips and, therefore, it is measurable.  For any pair of distinct vertices $\vvec{u},\vvec{u}'\in X_i$, let $W_i(\vvec{u},\vvec{u}')$ be the set of all $\vvec{v}\in \mathbbm{T}^k$ such that
\[\max\left\{\dist_{G_d}(\vvec{u},\vvec{v}),\dist_{G_d}(\vvec{u}',\vvec{v})\right\}\leq r_i+5r'_{i-1}\text{ and}\]
\[\left|\dist_{G_d}(\vvec{u},\vvec{v})-\dist_{G_d}(\vvec{u}',\vvec{v})\right|\leq 5r'_{i-1}.\]
Define $W_i$ to be the union of $W_i(\vvec{u},\vvec{u}')$ over all pairs of distinct vertices $\vvec{u},\vvec{u}'\in X_i$. The set $W_i$ can be written as a Boolean combination of translates of $X_i$, and so it is measurable. By the definition of $I_i$ and the fact that $X_i$ is maximally $r_i$-discrete, we have that $\mathbbm{T}^k\setminus I_i\subseteq W_i$. So, it suffices to prove that $\lambda(W_i)= O(r'_{i-1}/r_i)$. 

First, for each $\vvec{u}\in X_i$, let us bound the number of $\vvec{u}'\in X_i$ such that $W_i(\vvec{u},\vvec{u}')$ is non-empty. Given any such $\vvec{u}'$, we have
\[\dist_{G_d}(\vvec{u},\vvec{u}')\leq 2r_i+10r'_{i-1}.\]
Therefore,
\[N_{\lfloor r_i/2\rfloor}[\vvec{u}']\subseteq N_{2r_i+\lfloor r_i/2\rfloor+10r'_{i-1}}[\vvec{u}].\]
Since the set $X_i$ is $r_i$-discrete, the sets $N_{\lfloor r_i/2\rfloor}[\vvec{u}']$ are disjoint for different $\vvec{u}'\in X_i$. So the number of such $\vvec{u}'$ is at most
\[\frac{\left(4r_i+2\lfloor r_i/2\rfloor+20r'_{i-1}+1\right)^d}{\left(2\lfloor r_i/2\rfloor+1\right)^d},\]
which can be upper  bounded above by a constant that depends on $d$ only.

Now, given a pair of distinct $\vvec{u},\vvec{u}'\in X_i$ in the same component of $G_d$, let $\vec{n}\in \mathbbm{Z}^d$ be such that $\vvec{u}'=\vec{n}\cdot_a\vvec{u}$. Note that $\|\vec{n}\|_\infty\ge r_i+1$ since $X_i$ is $r_i$-discrete. Then $W_i(\vvec{u},\vvec{u}')$ is the set of all $\vvec{v}\in \mathbbm{T}^k$ of the form $\vvec{v}=\vec{m}\cdot_a\vvec{u}$ where 
\begin{itemize}
\item $\|\vec{m}\|_\infty\leq r_i+5r'_{i-1}$,
\item $\|\vec{n}-\vec{m}\|_\infty\leq r_i+5r'_{i-1}$, and
\item $\big|\|\vec{m}\|_\infty-\|\vec{n}-\vec{m}\|_\infty\big|\leq 5r'_{i-1}$.
\end{itemize}
For fixed $d$, the number of vectors $\vec{m}$ of this type is $O\left(r_i^{d-1}r_{i-1}'\right)$. Indeed, there are $d^2$ ways to choose the indices $j_1,j_2\in\{1,\ldots,d\}$ such that $\left|m_{j_1}\right|$ and $\left|n_{j_2}-m_{j_2}\right|$ are maximum. In the case that $j_1=j_2$, it holds that $|m_{j_1}-(n_{j_1}-m_{j_1})|\leq 5r'_{i-1}$ so there are $O(r_{i-1}')$ choices for $m_{j_1}$ and $O(r_i)$ choices for $m_j$ for each $j\in\{1,\ldots,d\}\setminus \{j_1\}$. On the other hand, if $j_1\neq j_2$, then the number of choices of $m_{j_1}$ is $O(r_i)$ and, given this choice, the number of choices for $n_{j_2}-m_{j_2}$ (and thus for $m_{j_2}$) is $O(r_{i-1}')$ and, again, there are $O(r_i)$ choices for $m_j$ for each $j\in\{1,\ldots,d\}\setminus\{j_1,j_2\}$. 

Suppose now that we split $\mathbbm{T}^k$ into Voronoi cells with the points in $X_i$ as the centres, similarly to the proof of Lemma~\ref{outline:lem:matching}. Each Voronoi cell contains $\Omega(r_i^d)$ elements and, by the arguments above, $O(r_i^{d-1}r_{i-1}')$ of these points are in $W_i$. It follows that the measure of $W_i$ is $O(r_{i-1}'/r_i)$. 
\end{proof}

\subsection{Covering the Whole Torus}
\label{sec:ToastCovering}

Finally, we apply a compactness argument of Boykin and Jackson~\cite{BoykinJackson07} to get a toast sequence that covers the whole torus~$\I T^k$. For each $\vec{p}\in \{0,\ldots,5\}^d$ and $i\ge1$, define 
\[Y_i^{\vec{p}}:=Y_i-\lfloor r'_i/3\rfloor\sum_{j=1}^d(p_j-3)\vvec{x}_j.\]
Note that $Y_i^{\vec{p}}$ is simply a shifted version of $Y_i$ and so it inherits the property of being a maximally $r'_i$-discrete union of finitely many strips. Also, $Y_i^{(3,3,\ldots,3)}$ is simply $Y_i$. We define, for each $i\ge1$,
\begin{eqnarray}
 \label{eq:Kip}
K_i^{\vec{p}}&:=&\mathcal{C}_2(K_1^{\vec{p}},L_1^{\vec{p}},\ldots,K_{i-1}^{\vec{p}},L_{i-1}^{\vec{p}},N_2[J_i]),\text{ and}\\
\label{eq:Lip}
L_i^{\vec{p}}&:=&\mathcal{C}_2(K_1^{\vec{p}},L_1^{\vec{p}},\ldots,K_{i-1}^{\vec{p}},L_{i-1}^{\vec{p}},K_i^{\vec{p}},N_{2r'_i/5}[Y_i^{\vec{p}}]),
\end{eqnarray}
 as well as $\Tilde{K}_i^{\vec{p}}\subseteq {K}_i^{\vec{p}}$ and $\Tilde{L}_i^{\vec{p}}\subseteq {L}_i^{\vec{p}}$ by removing whole components that are at distance at most $2$ from respectively $J_j$ and $K_j^{\vec{p}}$ for some $j>i$. Note that these definitions are exactly the shifted versions of  the previous definitions, namely of~\eqref{eq:Ki}, \eqref{eq:Li} and so on, 
  except the same (unshifted) sets $J_i$ are used. (Thus, for example, it is not true in general that e.g.,\ $K_i^{\vec{p}}$ is just a shifted copy of~$K_i$.)
Therefore, all of the lemmas proved in Section~\ref{sec:ToastProperties} apply equally well to the sets 
$$
J_1,K_1^{\vec{p}},\Tilde{K}_1^{\vec{p}},L_1^{\vec{p}},\Tilde{L}_1^{\vec{p}},J_2,K_2^{\vec{p}},\Tilde{K}_2^{\vec{p}},L_2^{\vec{p}},\Tilde{L}_2^{\vec{p}},\ldots\,,
$$ 
as they apply to the sets $J_1,K_1,\Tilde{K}_1,L_1,\Tilde{L}_1,J_2,K_2,\Tilde{K}_2,L_2,\Tilde{L}_2,\ldots$\,.

The above definitions are motivated by the following result which implies that every vertex of $\I T^k$ belongs to $L_i^{\vec{p}}$ for at least one (in fact infinitely many) pairs $(i,\vec{p})$.

%We conclude this section by proving the following lemma which will be applied in the next section to obtain an integer-valued flow without appealing to the Axiom of Choice.

\begin{lemma}
\label{lem:Licover} Every element of $\I T^k$ is covered by infinitely many of the sets $N_{2r'_i/5}[Y_i^{\vec{p}}]$
%$L_i^{\vec{p}}$ 
for $\vec{p}\in\{0,\ldots,5\}^d$ and integer $i\ge 1$.
%$\bigcup_{\vec{p}\in\{0,\ldots,5\}^d}\bigcup_{i=1}^\infty L_i^{\vec{p}}=\mathbbm{T}^k$.
\end{lemma}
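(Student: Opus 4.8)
The plan is to decouple the two indices $i$ and $\vec{p}$ and reduce the statement to a one-dimensional covering fact about multiples of $\lfloor r'_i/3\rfloor$. Since the graph metric $\dist_{G_d}$ is translation invariant, for any $\vec{p}\in\{0,\ldots,5\}^d$ we have $\vvec{u}\in N_{\lfloor 2r'_i/5\rfloor}[Y_i^{\vec{p}}]$ if and only if the shifted point $\vvec{u}+\lfloor r'_i/3\rfloor\sum_{j=1}^d(p_j-3)\vvec{x}_j$ lies in $N_{\lfloor 2r'_i/5\rfloor}[Y_i]$. So it suffices to prove the following: for every $i\ge 1$ and every $\vvec{u}\in\I T^k$ there is some $\vec{p}\in\{0,\ldots,5\}^d$ such that $\vvec{u}+\lfloor r'_i/3\rfloor\sum_{j=1}^d(p_j-3)\vvec{x}_j$ is within graph-distance $\lfloor 2r'_i/5\rfloor$ of $Y_i$. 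Granting this for each $i$, the point $\vvec{u}$ lies in at least one set of the collection for each of the infinitely many values of $i$ (with $\vec p$ allowed to depend on $i$), and since distinct $i$ give distinct members of the collection, $\vvec{u}$ is covered infinitely often, which is the lemma.

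To establish the reduced claim I would fix $i$ and $\vvec{u}$ and argue as follows. Because $Y_i$ is maximally $r'_i$-discrete, no vertex of $\I T^k$ is at graph-distance more than $r'_i$ from $Y_i$; pick $\vvec{y}\in Y_i$ with $\dist_{G_d}(\vvec{u},\vvec{y})\le r'_i$, and, using the freeness of the action (property~\eqref{eq:XsLinInd}, which makes each component of $G_d$ an $\ell_\infty$-copy of $\I Z^d$), write $\vvec{y}=\vvec{u}+\sum_{j=1}^d n_j\vvec{x}_j$ with $\|\vec{n}\|_\infty\le r'_i$. For $\vec{q}\in\{-3,\ldots,2\}^d$ and $\vec{p}:=\vec{q}+(3,\ldots,3)$, the shifted point equals $\vvec{y}+\sum_{j=1}^d\bigl(\lfloor r'_i/3\rfloor q_j-n_j\bigr)\vvec{x}_j$, which is in the same component as $\vvec{y}$ and hence at graph-distance exactly $\max_{1\le j\le d}\bigl|\lfloor r'_i/3\rfloor q_j-n_j\bigr|$ from $\vvec{y}\in Y_i$. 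It therefore remains only to choose, coordinate by coordinate, $q_j\in\{-3,\ldots,2\}$ with $\bigl|\lfloor r'_i/3\rfloor q_j-n_j\bigr|\le\lfloor 2r'_i/5\rfloor$.

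The only point requiring any computation — and the place I would flag as the "hard part", though it is entirely routine — is the one-dimensional claim that the six integers $q\lfloor r'_i/3\rfloor$, $q\in\{-3,\ldots,2\}$, are $\lfloor 2r'_i/5\rfloor$-dense in $\{-r'_i,\ldots,r'_i\}$. Consecutive ones differ by $\lfloor r'_i/3\rfloor\le 2\lfloor 2r'_i/5\rfloor+1$, so the closed integer intervals of radius $\lfloor 2r'_i/5\rfloor$ about them overlap into a gap-free chain covering $\{-3\lfloor r'_i/3\rfloor-\lfloor 2r'_i/5\rfloor,\ldots,2\lfloor r'_i/3\rfloor+\lfloor 2r'_i/5\rfloor\}$; and the extreme estimates $3\lfloor r'_i/3\rfloor+\lfloor 2r'_i/5\rfloor\ge r'_i$ and $2\lfloor r'_i/3\rfloor+\lfloor 2r'_i/5\rfloor\ge r'_i$ show this range contains $\{-r'_i,\ldots,r'_i\}$. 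All three inequalities hold with room to spare since $r'_i\ge r'_1=100^{3}$ (and, more generally, for any sufficiently fast-growing choice of the sequence as in Remark~\ref{rm:FastRi}). Hence a valid $q_j$ exists for each $j$, the corresponding $\vec{p}$ witnesses the reduced claim, and Lemma~\ref{lem:Licover} follows. The whole argument is elementary once graph distances are rewritten in $\I Z^d$-coordinates; the role of the shift set $\{0,\ldots,5\}^d$ is precisely to provide enough translates of $Y_i$ for this covering arithmetic to close.
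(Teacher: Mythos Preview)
Your proof is correct and proceeds by a genuinely different route than the paper's.

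The paper argues via compactness: for each $\vvec{v}$ it normalises the displacement to the nearest point of $Y_i$ by $r'_i$, obtaining a point of $[-1,1]^d$, and lets $R^*(\vvec{v})$ be the set of accumulation points of this sequence. The cube $[-1,1]^d$ is covered by the $6^d$ boxes $\prod_{s}[(p_s-3)/3,(p_s-2)/3]$, so some box meets $R^*(\vvec{v})$; call its index $\vec{p}(\vvec{v})$. One then checks that for this \emph{single} $\vec{p}$ the point $\vvec{v}$ lies in $N_{\lfloor 2r'_i/5\rfloor}[Y_i^{\vec{p}}]$ for infinitely many $i$. Your argument instead shows the stronger pointwise fact that for \emph{every} $i$ there is some $\vec{p}=\vec{p}(\vvec{u},i)$ that works, by a direct $\ell_\infty$ covering computation in $\I Z^d$, with no compactness at all.

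What each buys: your approach is more elementary and yields a sharper per-$i$ statement. The paper's approach, however, produces as a byproduct the $G_d$-invariant partition $\{T_{\vec{p}}\}$ of $\I T^k$ into pieces on which a \emph{fixed} $\vec{p}$ works infinitely often; this invariance is exactly what is used immediately afterwards to define $\Hat{K}_i$ and $\Hat{L}_i$ in~\eqref{eq:HatKi}--\eqref{eq:HatLi} and to establish Lemma~\ref{lm:FinalToast}. Your argument, with $\vec{p}$ varying in $i$, does not directly furnish that structure.
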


\begin{proof}
For each vertex $\vvec{v}\in\mathbbm{T}^k$, let 
\[R_i(\vvec{v}):=\{\vec{n}/r'_i: \vec{n}\cdot_a\vvec{v}\in Y_i\}\subseteq \mathbbm{R}^d.\]
Note that, since $Y_i$ is maximally $r'_i$-discrete, we have that $R_i(\vvec{v})$ contains a point in $[-1,1]^d$ for all $i\ge1$. Thus, by compactness of $[-1,1]^d$, the set $R^*(\vvec{v})$ of accumulation points of the sequence $R_1(\vvec{v}),R_2(\vvec{v}),\ldots$ in $[-1,1]^d$ is non-empty. 

If $\vvec{v}$ and $\vvec{v}'$ are in the same component of $G_d$, then $R_i(\vvec{v})$ is the same as $R_i(\vvec{v}')$ shifted by $\ell_\infty$-distance $\dist_{G_d}(\vvec{v},\vvec{v}')/r'_i$ in $\mathbbm{R}^d$. Since the sequence $r'_1,r'_2,\ldots$ is increasing, this distance tends to zero. Thus the sequences $(R_i(\vvec{v}))_{i=1}^\infty$ and $(R_i(\vvec{v}'))_{i=1}^\infty$ have the same set of accumulation points in $\I R^d$; in particular, we have $R^*(\vvec{v})=R^*(\vvec{v}')$. Thus the function $R^*$ is invariant under the action $a:\I Z^d\actson \I T^k$. 

Define
%\begin{equation}\label{eq:Tp'}
$$
T_{\vec{p}}':=\left\{\vvec{v}\in\mathbbm{T}^k:R^*(\vvec{v})\cap \prod_{s=1}^d\left[\frac{p_s-3}{3},\frac{p_s-2}{3}\right]\neq\emptyset\right\},\quad
\mbox{for $\vec{p}\in \{0,\ldots,5\}^d$}.
$$
%\end{equation}
 These sets cover the whole torus $\I T^k$, since $R^*(\vvec{v})$ is non-empty for every $\vvec{v}$. For each $\vec{p}\in\{0,\ldots,5\}^d$, we let 
 \beq{eq:Tp}
T_{\vec{p}}:=T_{\vec{p}}'\setminus \bigcup_{\substack{\vec{q}\in\{0,\ldots,5\}^d \\ \vec{q}\lex\vec{p}}} T_{\vec{q}'}
 \eeq
 consist of those $\vvec{v}$ for which $\vec{p}$ is the lexicographically smallest vector with $R^*(\vvec{v})$ having non-empty intersection with $\prod_{s=1}^d\left[\frac{p_s-3}{3},\frac{p_s-2}{3}\right]$.

Since the (set-valued) function $R^*$ is constant on each component of $G_d$, we have that each set $T_{\vec{p}}'$ (and thus each set $T_{\vec{p}}$) is a union of components of $G_d$, that is, is invariant under the action~$a$. Also, the sets $T_{\vec{p}}$, for $\vec{p}\in\{0,\ldots,5\}^d$, partition $\mathbbm{T}^k$. 

Now, for any $\vvec{v}\in T_{\vec{p}}$, we have  by the definition of $R^*(\vvec{v})$ that there are infinitely many indices $i$ for which there is a vertex $\vvec{u}\in Y_i$ of the form $\vvec{u}=\vec{n}\cdot_a\vvec{v}$ for some $\vec{n}$ such that 
\[r_i'\cdot \left(\frac{p_j-3}{3} - \frac{1}{100}\right)\leq n_j\leq r_i'\cdot \left(\frac{p_j-2}{3} + \frac{1}{100}\right),\quad\mbox{for all $1\leq j\leq d$.}\]
 Thus, for all such $i$ with $r_i'$ sufficiently large, it holds that
$$
\dist_{G_d}(\vvec{v},Y_i^{\vec{p}})\le \dist_{G_d}\left(\vvec{v},\vvec{u}-\lfloor r'_i/3\rfloor\sum_{j=1}^d(p_j-3)\vvec{x}_j\right)\le \frac{r_i'}3+\frac{3r_i'}{100}<\frac{2r_i'}5.
$$
%OP instead of \frac{2r_i'}{100} one can have instead \frac{r_i'}{100}+3 (where 3 comes from the rounding of r_i'/3 multiplied by max of (p_j-3)
%By
%definition of $Y_i^{\vec{p}}$, the fact that 
%$2r_i/5> r_i(1/3+1/100)$
%for $i$ sufficiently large 
%and by Lemma~\ref{lem:JKLtrivial} (applied to $L_i^{\vec{p}}$ and $Y_i^{\vec{p}}$ as opposed to $L_i$ and $Y_i$), we see that 
We see that every element of $T_{\vec{p}}$ is covered  by $N_{2r'_i/5}[Y_i^{\vec{p}}]$ for infinitely many integers~$i$, proving the lemma.
\end{proof}

Finally, we are ready to construct the toast sequence that will be enough for proving all statements of Theorem~\ref{th:main}. Namely, for $i\ge 1$, define
\begin{eqnarray}
\label{eq:HatKi}
\Hat{K}_i&:=&\bigcup_{\vec{p}\in \{0,\ldots,5\}^d}\left(\Tilde{K}_i^{\vec{p}}\cap T_{\vec{p}}\right), \mbox{ and}\\
\label{eq:HatLi}
\Hat{L}_i&:=&\bigcup_{\vec{p}\in \{0,\ldots,5\}^d}\left(\Tilde{L}_i^{\vec{p}}\cap T_{\vec{p}}\right).
\end{eqnarray}
Since each set $T_{\vec{p}}$ consists of whole components of $G_d$ and the sets $T_{\vec{p}}$ form a partition of the torus $\I T^k$, Lemma~\ref{lm:Tilde} implies the following.

\begin{lemma}\label{lm:FinalToast}
$(J_1,\Hat{K}_1,\Hat{L}_1,J_1,\Hat{K}_2,\Hat{L}_2,\ldots)$ is a toast sequence that completely covers~$\I T^k$.
% Moreover, if $S\in\comp(J_i)$ and $C\in\comp(\Hat{K}_j)\cup \comp(\Hat{L}_j)$ are at distance at most 2, then $j\ge i$ and $N_2[S]\subseteq C$.
\qed\end{lemma}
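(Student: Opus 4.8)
The plan is to deduce the statement directly from the lemmas established in Sections~\ref{sec:ToastProperties} and~\ref{sec:ToastCovering}, with the only genuinely new ingredient being a verification that the ``diagonal'' sets $\Hat K_i$ and $\Hat L_i$ still induce a toast sequence once we glue together the shifted families $(\Tilde K_i^{\vec p},\Tilde L_i^{\vec p})$ along the partition $\{T_{\vec p}\}_{\vec p\in\{0,\ldots,5\}^d}$. Two facts are used repeatedly: each $T_{\vec p}$ is a union of whole components of $G_d$ (established inside the proof of Lemma~\ref{lem:Licover}), and for each fixed $\vec p$ the sequence $(J_i,\Tilde K_i^{\vec p},\Tilde L_i^{\vec p})_{i=1}^\infty$ is a toast sequence that behaves exactly as in Lemma~\ref{lm:Tilde}, because, as remarked after~\eqref{eq:Lip}, every lemma of Section~\ref{sec:ToastProperties} applies verbatim to the shifted data.

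First I would check the toast-sequence axioms for $(J_1,\Hat K_1,\Hat L_1,J_2,\Hat K_2,\Hat L_2,\ldots)$. Fix a component $O$ of $G_d$; then $O\subseteq T_{\vec p}$ for a unique $\vec p$, and on $O$ the sets $J_i$, $\Hat K_i$, $\Hat L_i$ agree with $J_i$, $\Tilde K_i^{\vec p}$, $\Tilde L_i^{\vec p}$ respectively. Hence, restricted to $O$, the whole sequence coincides with the toast sequence $(J_i,\Tilde K_i^{\vec p},\Tilde L_i^{\vec p})_{i=1}^\infty$ of Lemma~\ref{lm:Tilde}\eqref{it:Tilde3}. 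Since the defining conditions \eqref{scaffBdd}--\eqref{scaffUt} of Definition~\ref{def:scaff} are all ``local'' to a single component of $G_d$ (uniform diameter bound, pairwise distance $\ge 3$, and the padding condition~\eqref{scaffUt}, which only ever involves components of $G_d$ and hence cannot straddle two of the $T_{\vec p}$'s), it suffices that each restriction is a toast sequence, which it is. The uniform cardinality bound in~\eqref{scaffBdd} is uniform across $\vec p$ as well, since there are only finitely many shifts $\vec p\in\{0,\ldots,5\}^d$ and the diameter bounds of Lemma~\ref{lm:diam} do not depend on~$\vec p$.

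For the covering claim, let $\vvec v\in\I T^k$ be arbitrary and let $\vec p$ be such that $\vvec v\in T_{\vec p}$. By Lemma~\ref{lem:Licover} there are infinitely many $i$ with $\vvec v\in N_{\lfloor 2r_i'/5\rfloor}[Y_i^{\vec p}]\subseteq L_i^{\vec p}$ (the inclusion by Lemma~\ref{lem:JKLtrivial} applied to the shifted data). For such $i$, Lemma~\ref{lm:Tilde}\eqref{it:Tilde2} applied to the shifted family gives $\vvec v\in\bigcup_{j\ge 1}(J_j\cup\Tilde K_j^{\vec p}\cup\Tilde L_j^{\vec p})$; intersecting with $T_{\vec p}$ (a union of components containing $\vvec v$) and recalling the definitions~\eqref{eq:HatKi}--\eqref{eq:HatLi}, we get $\vvec v\in\bigcup_{j\ge 1}(J_j\cup\Hat K_j\cup\Hat L_j)$. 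Hence $\bigcup_{i\ge 1}(J_i\cup\Hat K_i\cup\Hat L_i)=\I T^k$, completing the proof.

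The main obstacle I anticipate is not any single hard step but rather making sure that ``locality to a component'' is invoked correctly: one must confirm that condition~\eqref{scaffUt} of Definition~\ref{def:scaff}, which compares components arising at different times, never needs to compare a component living in $T_{\vec p}$ with one living in $T_{\vec q}$ for $\vec q\ne\vec p$ — but this is immediate because every such component lies in a single component of $G_d$, and any two components of $G_d$ at finite graph distance lie in the same $T_{\vec p}$. Everything else is bookkeeping over the finitely many shifts $\vec p$.
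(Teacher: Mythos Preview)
Your proposal is correct and follows essentially the same approach as the paper: use that the $T_{\vec p}$ partition $\I T^k$ into unions of $G_d$-components, so that on each component the sequence $(J_i,\Hat K_i,\Hat L_i)_{i\ge 1}$ coincides with $(J_i,\Tilde K_i^{\vec p},\Tilde L_i^{\vec p})_{i\ge 1}$, to which Lemma~\ref{lm:Tilde} (in its shifted form) applies; covering then follows from Lemma~\ref{lem:Licover} combined with Lemma~\ref{lm:Tilde}\eqref{it:Tilde2}. The paper compresses all of this into a single sentence, whereas you have spelled out the component-locality check and the uniformity over the finitely many $\vec p$ explicitly.
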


\section{Using Toast Sequences to Round Flows}
\label{sec:integerFlow}

Here we show how one can use a toast sequence to obtain an integer-valued $(\ind_A-\ind_B)$-flow from a sequence of real-valued flows. The key new challenge is that the used real-valued flows $f_i$ need not meet the demand $\ind_A-\ind_B$ exactly (only when we pass to the limit as $i\to\infty$) while we are not allowed to access the whole sequence when rounding.

\subsection{Dealing with Finite Connected Sets}

Here, we present one of the main subroutines, which  perturbs 
a flow along the boundary of a finite set of vertices, attaining the desired properties on all edges except possibly one.
Many of the ideas appearing here are borrowed from~\cite{MarksUnger17}*{Section~5}. 

\begin{defn}
A \emph{triangle} in a graph $G$ is a set $\{u,v,w\}$ of three distinct vertices of $G$, every pair of which are adjacent in $G$.
\end{defn}

\begin{defn}
Given a graph $G$ and a set $F\subseteq E(G)$, let $\triangle_F$ be the graph with vertex set $F$ where two elements $uv$ and $yz$ of $F$ are adjacent in $\triangle_F$ if they are contained in a common triangle of $G$.
\end{defn}

\begin{defn}
An \emph{Eulerian circuit} in a finite graph $G$ is a sequence $(v_1,v_2,\ldots, v_{t})$ of vertices of $G$ such that $v_1=v_{t}$ and the sequence $(v_1v_2,v_2v_3,\ldots,v_{t-1}v_t)$ is an enumeration of the edge set of~$G$. 
\end{defn}

In the language of graph theory, an Eulerian circuit is a closed walk in a graph which traverses every edge exactly once. Perhaps the most classical result in graph theory is Euler's Theorem from 1736 which says that a finite graph $G$ has an Eulerian circuit if and only if it is connected and all of its vertices have even degree; see e.g.~\cite{Diestel17gt}*{Theorem~1.8.1}. The following lemma highlights a small technical advantage of defining the graph $G_d$ in terms of the set $\{\vec{\gamma}\in \mathbbm{Z}^d: \|\vec{\gamma}\|_\infty=1\}$ of generators of $\mathbbm{Z}^d$ as opposed to the standard basis (although one can also work in the latter graph, see e.g.~\cite{CieslaSabok22}*{Section 6}).
% In the context of equidecompositions with Lebesgue measurable pieces, an alternative approach using the standard basis can be found in~\cite[Section 5]{CieslaSabok22}. 
Recall that the edge-boundary $\partial_E S$ of $S\subseteq\I T^k$ consists of the
edges of $G_d$ with exactly one vertex in~$S$.

\begin{lemma}[Marks and Unger~{\cite{MarksUnger17}*{Proof of Lemma~5.6}}]
\label{lem:triEvenDeg}
If $S\subseteq \mathbbm{T}^k$, then every finite component of $\triangle_{\partial_ES}$ has an Eulerian circuit. 
\end{lemma}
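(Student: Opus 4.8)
By Euler's Theorem, it suffices to show that every vertex of $\triangle_{\partial_E S}$ has even degree inside $\triangle_{\partial_E S}$ and that each finite component is connected (connectedness is immediate once we work within a single component). The plan is to compute, for a fixed boundary edge $\vvec{v}\vvec{w}\in\partial_E S$ with $\vvec{v}\in S$ and $\vvec{w}\notin S$, the number of edges $\vvec{v}'\vvec{w}'\in\partial_E S$ that lie in a common triangle of $G_d$ with $\vvec{v}\vvec{w}$, and to argue that this count is always even. A triangle of $G_d$ containing $\vvec{v}\vvec{w}$ is determined by a third vertex $\vvec{z}$ adjacent to both $\vvec{v}$ and $\vvec{w}$; since $G_d$ is the Schreier graph for the generating set $\{\vec{n}\in\mathbbm{Z}^d:\|\vec{n}\|_\infty=1\}$, if $\vvec{w}=\vec{\gamma}\cdot_a\vvec{v}$ then $\vvec{z}$ ranges over $\vec{\delta}\cdot_a\vvec{v}$ for those $\vec{\delta}\in\{-1,0,1\}^d\setminus\{\vec{0}\}$ with $\vec{\delta}-\vec{\gamma}\in\{-1,0,1\}^d\setminus\{\vec{0}\}$. (Here we use freeness of the action, from~\eqref{eq:XsLinInd}, so that each such $\vec{\delta}$ gives a genuinely distinct vertex and a genuine triangle; this is the small technical point alluded to in the paragraph before the lemma — in the standard-basis Cayley graph of $\mathbbm{Z}^d$ some pairs of generators would not lie in a triangle, whereas here every pair of the $\pm1$-vectors does.)

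The key combinatorial step is the following involution argument. For each third vertex $\vvec{z}$ as above, the triangle $\{\vvec{v},\vvec{w},\vvec{z}\}$ contributes to $\deg_{\triangle_{\partial_E S}}(\vvec{v}\vvec{w})$ exactly the edges among $\{\vvec{v}\vvec{z},\vvec{w}\vvec{z}\}$ that lie in $\partial_E S$, i.e.\ that have exactly one endpoint in $S$. Now partition the eligible third vertices $\vvec{z}$ according to whether $\vvec{z}\in S$ or $\vvec{z}\notin S$. If $\vvec{z}\in S$: then $\vvec{v}\vvec{z}$ has both endpoints in $S$, so it is \emph{not} a boundary edge, while $\vvec{w}\vvec{z}$ has exactly one endpoint in $S$ (namely $\vvec{z}$), so it \emph{is} a boundary edge — the triangle contributes exactly $1$. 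If $\vvec{z}\notin S$: then $\vvec{w}\vvec{z}$ has both endpoints outside $S$, not a boundary edge, while $\vvec{v}\vvec{z}$ has exactly one endpoint ($\vvec{v}$) in $S$, a boundary edge — again the triangle contributes exactly $1$. So each eligible $\vvec{z}$ contributes exactly $1$ to the degree, and hence $\deg_{\triangle_{\partial_E S}}(\vvec{v}\vvec{w})$ equals the number of eligible third vertices $\vvec{z}$, i.e.\ the number of $\vec{\delta}\in\{-1,0,1\}^d$ with $\vec{\delta}\neq\vec{0}$, $\vec{\delta}\neq\vec{\gamma}$ and $\vec{\delta}-\vec{\gamma}\in\{-1,0,1\}^d$.

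It remains to check this last number is even. Here I would use the symmetry $\vec{\delta}\mapsto\vec{\gamma}-\vec{\delta}$, which is an involution on the set of eligible $\vec{\delta}$: indeed $\vec{\gamma}-\vec{\delta}\in\{-1,0,1\}^d$ because $\vec{\delta}-\vec{\gamma}$ is, it is nonzero iff $\vec{\delta}\neq\vec{\gamma}$, it is $\neq\vec{\gamma}$ iff $\vec{\delta}\neq\vec{0}$, and $(\vec{\gamma}-\vec{\delta})-\vec{\gamma}=-\vec{\delta}\in\{-1,0,1\}^d$. This involution has a fixed point precisely when $2\vec{\delta}=\vec{\gamma}$, which is impossible since $\vec{\gamma}$ has a $\pm1$ coordinate and $2\vec{\delta}$ is even in every coordinate. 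Thus the involution is fixed-point-free, the set of eligible $\vec{\delta}$ has even cardinality, and $\deg_{\triangle_{\partial_E S}}(\vvec{v}\vvec{w})$ is even. Since this holds for every vertex $\vvec{v}\vvec{w}$ of $\triangle_{\partial_E S}$, every finite component of $\triangle_{\partial_E S}$ is a connected graph with all degrees even, hence has an Eulerian circuit by Euler's Theorem. The main obstacle is purely bookkeeping: being careful that the two cases ($\vvec{z}\in S$, $\vvec{z}\notin S$) are exhaustive and each really contributes exactly one boundary edge — everything else is a short parity count, and there is no analytic or measure-theoretic content here at all.
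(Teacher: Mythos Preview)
Your proof is correct and follows the same overall strategy as the paper: reduce to Euler's Theorem, observe that each triangle through $\vvec{v}\vvec{w}$ contributes exactly one other boundary edge (by the case split on whether the third vertex lies in $S$), and conclude that the degree of $\vvec{v}\vvec{w}$ in $\triangle_{\partial_E S}$ equals the number of triangles of $G_d$ containing it.

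The only difference is in the final parity step. The paper computes this number explicitly as
\[
\left(3^{|T_0|}-1\right)2^{|T_1|} + 2^{|T_1|}-2 \;=\; 3^{|T_0|}\cdot 2^{|T_1|}-2,
\]
where $T_1$ is the set of nonzero coordinates of $\vec{\gamma}$ and $T_0$ its complement, and observes that $|T_1|\ge 1$ makes this even. Your fixed-point-free involution $\vec{\delta}\mapsto\vec{\gamma}-\vec{\delta}$ establishes the same parity without the explicit count, which is a bit cleaner; the paper's formula, on the other hand, gives the exact degree and is reused implicitly later (e.g.\ in the $3^d-2$ bound of Lemma~\ref{lem:boundary3d}). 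One small point you leave implicit but should perhaps state: distinct triangles through $\vvec{v}\vvec{w}$ contribute \emph{distinct} boundary edges (since two edges sharing an endpoint lie in a unique triangle of $G_d$), so summing the per-triangle contributions really does compute the degree rather than overcounting it.
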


\begin{proof}
By Euler's Theorem, it suffices to show that every vertex of $\triangle_{\partial_ES}$ has even degree. Let $\vvec{u}\vvec{v}\in \partial_E S$ with $\vvec{u}\in S$ and $\vvec{v}\notin S$. Note that, for any triangle $\{\vvec{u},\vvec{v},\vvec{w}\}$ in $G_d$ containing $\vvec{u}$ and $\vvec{v}$, exactly one of the edges $\vvec{u}\vvec{w}$ or $\vvec{v}\vvec{w}$ is in $\partial_ES$. Also, for any pair of edges of $G_d$ which are adjacent in $\triangle_{\partial_ES}$, there is a unique triangle which contains them.  So, it suffices to prove that $\vvec{u}\vvec{v}$ is contained in an even number of triangles of $G_d$.

Let $\vec{n}$ be the unique element of $\mathbbm{Z}^d$ such that $\|\vec{n}\|_\infty =1$ and
$\vvec{v}=\vvec{u}+\sum_{i=1}^dn_i\vvec{x}_i$.
Let $T_0:=\{i: 1\leq i\leq d\text{ and }n_i=0\}$ and $T_1:=\{1,2,\ldots,d\}\setminus T_0$. Note that the number of triangles containing $\vvec{u}\vvec{v}$ is exactly 
\[3^{|T_0|}\cdot 2^{|T_1|} -2.\]
 Indeed, the number of choices of $\vec{n}'=(n_1',\ldots,n_d')\in\mathbbm{Z}^d$ such that
$\vvec{w}:=\vvec{u}+\sum_{i=1}^dn_i'\vvec{x}_i$
 forms a triangle with $\vvec{u}$ and $\vvec{v}$ in $G_d$ or satisfies $\vvec{w}\in\{\vvec{u},\vvec{v}\}$ is exactly $3^{|T_0|}$ (each $n_i'$ for $i\in T_0$ can assume any value in $\{-1,0,1\}$) times $2^{|T_1|}$ (there are exactly two possible values for $n_i'$ for each $i\in T_1$). 
 
 Since $|T_1|\ge1$, the number of triangles is even, finishing the proof.
\end{proof}

Recall that by a \emph{hole} of $S\subseteq \mathbbm{T}^k$ we mean a finite component of $G_d\induced\left(\mathbbm{T}^k\setminus S\right)$. 
Marks and Unger~\cite{MarksUnger17}*{Proof of Lemma~5.6} use a result of Tim\'ar~\cite{Timar13} to show that, if $S$ is a finite subset of $\mathbbm{T}^k$ with no holes such that $G_d\induced S$ is connected, then $\triangle_{\partial_E S}$ is connected. Combining this with Lemma~\ref{lem:triEvenDeg} and Euler's Theorem, we get the following.

\begin{lemma}[Marks and Unger~{\cite{MarksUnger17}*{Lemma~5.6}}]
\label{lem:triEuler}
If $S$ is a finite subset of $\mathbbm{T}^k$ with no holes and $G_d\induced S$ is connected, then $\triangle_{\partial_E S}$ has an Eulerian circuit.\qed
\end{lemma}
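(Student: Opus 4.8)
The plan is to invoke Euler's Theorem, which states that a finite connected graph has an Eulerian circuit if and only if all of its vertices have even degree. Since $S$ is finite, its edge boundary $\partial_E S$ is a finite subset of $E(G_d)$, so $\triangle_{\partial_E S}$ is a finite graph, and it suffices to verify two things: that every vertex of $\triangle_{\partial_E S}$ has even degree, and that $\triangle_{\partial_E S}$ is connected.

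The even-degree condition I would extract directly from Lemma~\ref{lem:triEvenDeg}: as $\triangle_{\partial_E S}$ is finite, each of its components is finite and hence possesses an Eulerian circuit, which forces every incident vertex (equivalently, every vertex of $\triangle_{\partial_E S}$) to have even degree. So the only substantive point is connectivity of $\triangle_{\partial_E S}$, and here I would appeal to the boundary-connectivity theorem of Tim\'ar~\cite{Timar13}. To apply it: since $G_d\induced S$ is connected, $S$ lies in a single component of $G_d$, and by freeness of the action $a$ (guaranteed by our choice~\eqref{eq:XsLinInd}) this component is graph-isomorphic to $\mathbbm{Z}^d$ with two vertices adjacent when their $\ell_\infty$-distance equals $1$. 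Under this identification, the hypothesis that $S$ has no holes says exactly that the complement of $S$ in this copy of $\mathbbm{Z}^d$ has no finite components, which is precisely the situation in which Tim\'ar's result yields that the boundary graph $\triangle_{\partial_E S}$ is connected. Combining this with the even-degree condition and Euler's Theorem completes the argument.

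I expect the only genuine difficulty to be this last step — the connectivity of $\triangle_{\partial_E S}$ — which is a discrete analogue of the statement that the boundary of a connected set with connected complement is itself connected; it is handled not here but in~\cite{Timar13}, and was used in exactly this form by Marks and Unger~\cite{MarksUnger17}. The remaining book-keeping, namely matching the ``no holes'' hypothesis with the ``complement has no finite components'' hypothesis of Tim\'ar's theorem and checking that our triangle-adjacency notion is the one to which the theorem applies (this is where the choice of the generating set $\{\vec{n}:\|\vec{n}\|_\infty=1\}$ for $\mathbbm{Z}^d$, rather than the standard basis, is convenient), is routine.
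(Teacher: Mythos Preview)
Your proposal is correct and matches the paper's approach exactly: the paper likewise derives the even-degree condition from Lemma~\ref{lem:triEvenDeg}, invokes Tim\'ar's result~\cite{Timar13} for connectivity of $\triangle_{\partial_E S}$ under the no-holes hypothesis, and concludes via Euler's Theorem. Your identification of the connectivity step as the only nontrivial ingredient, delegated to~\cite{Timar13} as in~\cite{MarksUnger17}, is precisely how the paper handles it as well.
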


The following definition is helpful for explaining the way in which we perturb flow values on triangles in $G_d$. 

\begin{defn}
Given a graph $G$ and an ordered triple $(u,v,w)\in V(G)^3$ such that $\{u,v,w\}$ is a triangle in $G$, define $\circlearrowleft_{u,v,w}$ to be the flow in $G$ such that, for $x,y\in V(G)$,
\[\circlearrowleft_{u,v,w}(x,y):=\left\{\begin{array}{rl} 1, & \text{if }(x,y)\in \{(u,v),(v,w),(w,u)\},\\
-1, & \text{if }(x,y)\in \{(v,u),(w,v),(u,w)\},\\
0, & \text{otherwise}.
\end{array}\right.\]
\end{defn}

Given a flow $\phi$ in $G_d$ and a finite subset $S$ of $\mathbbm{T}^k$ with no holes such that $G_d\induced S$ is connected, the operation of \emph{rounding $\phi$ along the boundary of $S$} is defined as follows. 
Using Lemma~\ref{lem:triEuler}, we let $\vvec{u}_1\vvec{v}_1,\ldots, \vvec{u}_t\vvec{v}_t$ be an Eulerian circuit in $\triangle_{\partial_ES}$ where, for each $1\leq s\leq t$, we have $\vvec{u}_s\in S$ and $\vvec{v}_s\notin S$. Moreover, among all Eulerian circuits, we choose the \emph{$\lex$-minimal} one (that is, one such that $(\vvec{u}_1,\vvec{v}_1,\ldots,\vvec{u}_t,\vvec{v}_t)$ is minimal under $\lex$). Note that $\vvec{u}_t=\vvec{u}_1$ and $\vvec{v}_t=\vvec{v}_1$. First, we do the \emph{adjustment} step, where we redefine $\phi(\vvec{u}_{t-1},\vvec{v}_{t-1})$ by adding $[\fout{\phi}(S)]-\fout{\phi}(S)$ to it and change $\phi(\vvec{v}_{t-1},\vvec{u}_{t-1})$ accordingly. The flow out of $S$ is now an integer. Then, for each $s=1,\ldots,t-2$, one by one, we let $\vvec{w}_s$ be the (unique) vertex of $\{\vvec{u}_{s+1},\vvec{v}_{s+1}\}$ such that $\{\vvec{u}_s,\vvec{v}_s,\vvec{w}_s\}$ is a triangle in $G_d$ and redefine $\phi$ to be
\begin{equation}\label{eq:PhiIncr}
 \phi:= \phi + \left(\left[ \phi(\vvec{u}_s,\vvec{v}_s)\right] - \phi(\vvec{u}_s,\vvec{v}_s)\right)\circlearrowleft_{\vvec{u}_s,\vvec{v}_s,\vvec{w}_s}.
 \end{equation}
 (See Figure~\ref{fig:rounding} for an illustration.)
Each of the steps in~\eqref{eq:PhiIncr} preserves the flow out of every vertex.   After all $t-2$ steps have been completed,  every edge $\vvec{u}_{i}\vvec{v}_{i}$ of $\partial_ES$,  except possibly $\vvec{u}_{t-1}\vvec{v}_{t-1}$, is assigned to an integer flow value by $\phi$ (because the last triangle update affecting $\phi(\vvec{u}_{i}\vvec{v}_{i})$ makes it integer). However, the total flow out of $S$  is an integer, and so $\phi(\vvec{u}_{t-1},\vvec{v}_{t-1})$ must be an integer as well. 

For future reference, we also define the flow $\FO{\phi}{S}$ (where $\phi$ in this notation will refer to the initial flow $\phi$ before any modifications took place), which is the sum of all $t-2$ increments in~\eqref{eq:PhiIncr}. Thus, $\FO{\phi}{S}$ is a 0-flow (i.e.,\ the flow out of every vertex is 0) and, with $\phi$ referring to its initial value, the flow $\phi+\FO{\phi}{S}$ assumes integer values on all edges of $\partial_ES$ except possibly $\vvec{u}_{t-1}\vvec{v}_{t-1}$.

%We deal with sets that have holes as follows. 
More generally, given a flow $\phi$ in $G_d$ and a set $D\subseteq \mathbbm{T}^k$ such that every set in $\comp(D)$ is finite, the operation of \emph{rounding $\phi$ along the boundary of $D$} is defined as follows. First, for every $S\in \comp(D)$ and every hole $S'$ of $S$, we round $\phi$ along the boundary of~$S'$. Then, for each $S\in\comp(D)$, we round $\phi$ along the boundary of the union of $S$ and all of its holes. All of these operations are well-defined and the order in which we perform them does not affect the result. Note that, for every finite connected $S$, the edge sets $\partial_E
S'$, for $S'$ being a hole in $S$ or being $S$ with all its holes
filled, partition the edge boundary of $S$. Indeed, every pair in
$\partial_E S$ has exactly one point outside of $S$, lying in a
(unique) component $S'$ of $G_d\induced (\I T^k\setminus S)$. If $S'$ is not
a hole of $S$, then $S'$ is infinite while $S''$, the complement of $S'$ in the component of $G_d$ containing  $S$, is finite. Trivially, $S''$ cannot have any holes, so $S''$ is exactly $S$ with all its holes filled, as desired. 

We define 
$$\FO{\phi}{D}:=\sum_{S\in\comp(D)}\sum_{S'} \FO{\phi}{S'},$$
 where $S'$ in the inner sum is a hole of $S$, or $S$ with all its holes filled.

\begin{figure}[htbp]
\begin{center}
\includegraphics[scale=0.7]{rounding.1}
\hspace{1cm}
\includegraphics[scale=0.7]{rounding.2}
\hspace{1cm}
\includegraphics[scale=0.7]{rounding.3}
\end{center}
\caption{Two steps of the operation of rounding a flow $\phi$ along the boundary  of a set $S$. The vertices within the grey region are in $S$. The edge $\vvec{u}_s\vvec{v}_s$ currently being rounded is depicted by a bold black line and the other two edges of the triangle containing $\vvec{u}_s\vvec{v}_s$ and $\vvec{u}_{s+1}\vvec{v}_{s+1}$ are depicted by a bold grey line. Some edges are labelled with numbers which represent their current flow value in the direction indicated by the arrow.}
\label{fig:rounding}
\end{figure}

Next, we show that if, additionally, any two distinct components of $G_d\induced D$ are at distance at least three\footnote{This assumption, while convenient, is actually not necessary.} in $G_d$, then rounding along the boundary of such a set $D$ cannot displace the flow values by an arbitrary amount.

\begin{lemma}
\label{lem:boundary3d}
Let $\phi$ be a flow in $G_d$ and $D$ be a subset of $\mathbbm{T}^k$ such that each component of $G_d\induced D$ is finite and any two distinct components of $G_d\induced D$ are at distance at least three in $G_d$. Then, the sum of the absolute values of flow changes on each $\vvec{u}\vvec{v}\in E(G_d)$ when we round $\phi$ on the boundary of $D$ is at most $(3^d-2)/2$. 
(In particular, this also upper bounds the total change on each edge, that is,
$
\|\FO{\phi}{D}\|_\infty \le (3^d-2)/2$.)

\end{lemma}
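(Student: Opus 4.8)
The plan is to analyze the rounding operation edge by edge, tracking how many times a given edge $\vvec{u}\vvec{v}$ can have its flow value modified, and by how much each modification can change it. Recall that rounding along the boundary of $D$ consists of finitely many independent operations: for each $S\in\comp(D)$ and each hole $S'$ of $S$, we round along $\partial_E S'$; and for each $S\in\comp(D)$ we round along $\partial_E(\text{$S$ with holes filled})$. The edge boundaries $\partial_E S'$ appearing here (over all $S\in\comp(D)$ and all relevant $S'$) are pairwise disjoint as \emph{edge sets}: within a single component $S$, the sets $\partial_E S'$ for $S'$ a hole or the filled version partition $\partial_E S$ (as noted in the text just before the lemma); and edges belonging to boundaries coming from two \emph{different} components $S_1\ne S_2$ of $\comp(D)$ cannot coincide, since any such edge would witness $\dist_{G_d}(S_1,S_2)\le 2$, contradicting the hypothesis that distinct components of $G_d\induced D$ are at distance at least three. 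Hence the total change on a fixed edge $\vvec{u}\vvec{v}$ equals the change it receives during the single rounding-along-$\partial_E S'$ operation for the unique $S'$ (if any) with $\vvec{u}\vvec{v}\in\partial_E S'$, and it suffices to bound that single operation.

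So fix one connected, hole-free set $S'$ (either a hole, or a component with its holes filled) and analyze rounding $\phi$ along $\partial_E S'$. Here we run the adjustment step on the edge $\vvec{u}_{t-1}\vvec{v}_{t-1}$ (changing it by $[\fout{\phi}(S')]-\fout{\phi}(S')$, which has absolute value less than $1/2$), and then perform the $t-2$ increments in~\eqref{eq:PhiIncr}, where the $s$-th increment changes the flow on the three edges of the triangle $\{\vvec{u}_s,\vvec{v}_s,\vvec{w}_s\}$, each by $|[\phi(\vvec{u}_s,\vvec{v}_s)]-\phi(\vvec{u}_s,\vvec{v}_s)|\le 1/2$ in absolute value. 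A fixed edge $\vvec{u}\vvec{v}$ of $G_d$ lies in exactly $3^d-2$ triangles of $G_d$ (this is a direct count: a common neighbour of $\vvec{u}$ and $\vvec{v}$ corresponds to a nonzero $\vec{n}'\in\{-1,0,1\}^d$ compatible with the direction of $\vvec{u}\vvec{v}$ that is not the zero vector and not the full direction vector, and one checks this gives $3^d-2$ choices; in fact the even-degree count inside the proof of Lemma~\ref{lem:triEvenDeg} already exhibits this number $(3^{|T_0|}-1)2^{|T_1|}+2^{|T_1|}-2=3^d-2$). Each such triangle is used by the increment sequence at most once, because $(\vvec{u}_1\vvec{v}_1,\ldots,\vvec{u}_{t-1}\vvec{v}_{t-1})$ traverses each edge of $\triangle_{\partial_E S'}$ exactly once (it is an Eulerian circuit), and a triangle of $G_d$ corresponds to at most one edge of $\triangle_{\partial_E S'}$ — consecutive pairs $\vvec{u}_s\vvec{v}_s,\vvec{u}_{s+1}\vvec{v}_{s+1}$ determine their common triangle uniquely. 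Therefore $\vvec{u}\vvec{v}$ participates in at most $3^d-2$ of the triangle-increments, but we must be slightly careful about double counting: each increment that touches $\vvec{u}\vvec{v}$ corresponds to one \emph{ordered} step $s$, and the edge $\vvec{u}_s\vvec{v}_s$ being rounded versus the edge $\vvec{w}$-edge of the triangle get counted. The clean bound: the increment at step $s$ modifies $\vvec{u}\vvec{v}$ only if $\vvec{u}\vvec{v}$ is one of the three edges of the triangle at step $s$; the number of steps $s$ whose triangle contains $\vvec{u}\vvec{v}$ is at most the number of triangles through $\vvec{u}\vvec{v}$, namely $3^d-2$. Summing the per-step bound $1/2$ gives total change from the increments at most $(3^d-2)/2$, and one sees the adjustment step does not add to this in the worst case because the adjustment edge $\vvec{u}_{t-1}\vvec{v}_{t-1}$ is among the edges of $\partial_E S'$, which is accounted for in a more careful bookkeeping, or simply because $(3^d-2)/2$ already dominates; either way the total is at most $(3^d-2)/2$.

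The main obstacle, and the place requiring real care rather than routine estimation, is the combinatorial bookkeeping of exactly how many times a single edge $\vvec{u}\vvec{v}$ is modified across \emph{all} the triangle-increments within one Eulerian circuit, and ensuring the adjustment step is folded into that count without overshooting. The clean way to organize it is: every modification of $\vvec{u}\vvec{v}$ during the rounding of $\partial_E S'$ is attributable to a distinct triangle of $G_d$ that contains $\vvec{u}\vvec{v}$ and that corresponds to an edge of $\triangle_{\partial_E S'}$ actually used (either as some $\vvec{u}_s\vvec{v}_s$ with the triangle being $\{\vvec{u}_s,\vvec{v}_s,\vvec{w}_s\}$, or as the "$\vvec{w}$-edge" receiving the correction); since the Eulerian circuit uses each edge of $\triangle_{\partial_E S'}$ once and distinct edges of $\triangle_{\partial_E S'}$ sit in distinct triangles of $G_d$, there are at most $3^d-2$ such triangles, hence at most $3^d-2$ modifications, each of size $\le 1/2$. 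Summing gives $(3^d-2)/2$; the "in particular" clause then follows since $\FO{\phi}{D}$ is by definition the total of all these increments.
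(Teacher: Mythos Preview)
Your reduction in the first paragraph is incorrect, and this is the main gap. You claim that the total change on a fixed edge $\vvec{u}\vvec{v}$ equals the change it receives during the single rounding operation for the unique $S'$ with $\vvec{u}\vvec{v}\in\partial_E S'$. But an edge $\vvec{u}\vvec{v}$ can be modified even when it lies in \emph{no} $\partial_E S'$: whenever $\vvec{u}\vvec{v}$ is the ``third edge'' of a triangle $\{\vvec{u},\vvec{v},\vvec{w}\}$ used in some increment~\eqref{eq:PhiIncr}, its flow changes, and this third edge has both endpoints on the same side of $S'$ (both in $S'$ or both outside), so it is not a boundary edge. Worse, such an edge can be modified during \emph{several} different $S'$-operations. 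For instance, take $\vvec{u},\vvec{v}\in S$ for some $S\in\comp(D)$, with common neighbours $\vvec{w}$ lying in one hole of $S$ and $\vvec{w}'$ lying in a different hole (or in the infinite complementary component). Then the triangle $\{\vvec{u},\vvec{v},\vvec{w}\}$ is used when rounding along the boundary of the first hole, while $\{\vvec{u},\vvec{v},\vvec{w}'\}$ is used when rounding along the boundary of the second, and both modify $\vvec{u}\vvec{v}$. So ``fix one $S'$ and bound that single operation'' does not suffice.

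The paper's proof handles precisely this by splitting into three cases according to whether $\vvec{u}\vvec{v}\in\partial_E S$, or $\vvec{u},\vvec{v}\in N[S]\setminus S$, or $\vvec{u},\vvec{v}\in N[\mathbbm{T}^k\setminus S]\cap S$, and in the third case explicitly sums the contributions from the various holes and the filled component, using that each common neighbour $\vvec{w}\notin S$ determines a unique complementary component and hence a unique rounding operation in which the triangle $\{\vvec{u},\vvec{v},\vvec{w}\}$ is used. Your triangle-counting idea is actually the right one and can be salvaged: every modification of $\vvec{u}\vvec{v}$, across \emph{all} $S'$, corresponds to a distinct triangle of $G_d$ through $\vvec{u}\vvec{v}$ (since two boundary edges of any one triangle lie in the same $\partial_E S'$, so a triangle is used in at most one rounding operation, and at most once there by the Eulerian property). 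But you must make this argument globally rather than after the faulty reduction to a single $S'$. Two smaller points: the number of triangles through an edge is $3^{|T_0|}2^{|T_1|}-2\le 2\cdot 3^{d-1}-2$, not exactly $3^d-2$ as you assert (though the upper bound $3^d-2$ is all that is needed); and your treatment of the adjustment step is hand-wavy---the paper attributes it to the unused final triangle of the Eulerian circuit, which is the clean way to fold it into the count.
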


\begin{proof} 
%We prove both claims at the same time, by bounding the sum of absolute values of all changes on  any pair $\vvec{u}\vvec{v}\in E(G_d)$.
By construction, the flow from $\vvec{u}$ to $\vvec{v}$ changes only if there exists $S\in\comp(D)$ such that either
\begin{itemize}
\item $\vvec{u}\vvec{v}\in \partial_ES$,
\item $\vvec{u},\vvec{v}\in N[S]\setminus S$, or
\item $\vvec{u},\vvec{v}\in N[\mathbbm{T}^k\setminus S]\cap S$. 
\end{itemize}
Since any two components of $D$ are separated by a distance of at least three in $G_d$, we see that the choice of the component $S$ is unique.

In the first two cases, we let $T$ be the unique component of $G_d\induced \left(\mathbbm{T}^k\setminus S\right)$ which contains $\{\vvec{u},\vvec{v}\}\setminus S$. While rounding $\phi$ on the boundary of $T$ (in the case that $T$ is finite), or on the boundary of the union of $S$ and all of its holes (in the case that $T$ is infinite), the value of $\phi(\vvec{u},\vvec{v})$ is changed at most once for every triangle containing $\vvec{u}$ and $\vvec{v}$, where we view the last triangle of the Eulerian tour (the one that we have not used for any flow updates) as ``responsible'' for the initial adjustment of the flow on the second to last edge. The number of such triangles is at most the number of neighbours of $\vvec{u}$ different from $\vvec{v}$, which is at most $3^d-2$. Also, each time that the flow value of an edge changes, it is displaced by at most $1/2$ (since we round to a nearest integer). The flow from $\vvec{u}$ to $\vvec{v}$ is not changed at any other stage, giving the required.
% of the operation and so it is displaced by at most $(3^d-2)/2$ overall.

In the third case, for each $\vvec{w}\notin S$ such that $\{\vvec{u},\vvec{v},\vvec{w}\}$ is a triangle in $G_d$, there is a unique component $T$ of $G_d\induced \left(\mathbbm{T}^k\setminus S\right)$ containing $\vvec{w}$. This triangle is used at most once when rounding $\phi$ along the boundary of $T$ (in the case that $T$ is finite) or the union of $S$ and all of its holes (in the case that $T$ is infinite). Thus, each triangle containing $\vvec{u}$ and $\vvec{v}$ contributes at most $1/2$ to the amount by which $\phi(\vvec{u},\vvec{v})$ changes; there are at most $3^d-2$ such triangles, and so the proof is complete.
\end{proof}

Next, given a flow $\phi$ in $G_d$ and a subset $S$ of $\mathbbm{T}^k$ such that $G_d\induced S$ is connected, the operation of \emph{completing $\phi$ within $S$} is defined as follows. Consider all integer-valued flows $\phi'$ in $G_d\induced S$ such that 
\begin{equation}\label{eq:Complete}   
\sum_{\vvec{v}\in S}\phi'(\vvec{u},\vvec{v}) + \sum_{\vvec{v}\in\mathbbm{T}^k\setminus S}\phi(\vvec{u},\vvec{v}) = \ind_A(\vvec{u})-\ind_B(\vvec{u}),\quad\mbox{for every $\vvec{u}\in S$,}
\end{equation}
 and, given this, $\|\phi'\|_\infty$ is as small as possible.
   (It will be the case that, whenever we apply this operation, at least one such $\phi'$ exists.)
 If $S$ is finite then we choose $\phi'$ so that the sequence $(\phi'(\vvec{u},\vvec{v}): (\vvec{u},\vvec{v})\in S^2)$ is lexicographically minimised, where the pairs in $S$ are viewed as being ordered according to $\lex$. If $S$ is infinite then we choose $\phi'$ arbitrarily, using Theorem~\ref{th:IFT} (i.e., using the Axiom of Choice). 
 %For the sake of completeness, if such a flow $\phi'$ does not exist, then simply let $\phi'$ be the restriction of $\phi$ to pairs in $S$. 
Change $\phi(\vvec{u},\vvec{v})$ to be equal to $\phi'(\vvec{u},\vvec{v})$ for all $\vvec{u},\vvec{v}\in S$. 
%More generally, if $D$ is a subset of $\mathbbm{T}^k$ such that every component of $G_d\induced D$ is finite, then \emph{completing $\phi$ within $D$} is defined to mean completing $\phi$ within each $S\in\comp(D)$.  

%\newpage
\subsection{Rounding a Sequence of Flows on a Toast Sequence}
\label{sec:FinalRounding}
%Next, we describe an operation of converting a sequence $(g_1,g_1,\ldots)$ of real-valued flows  in $G_d$ into an integer flow on a given toast sequence $(D_1,D_2,\ldots)$. 

The following key definition will allow us to round flows.

\begin{defn}\label{def:RoundingGD}
%[Rounding a sequence of flows on a toast sequence]
The \emph{rounding} of a sequence $(g_1,g_2,\ldots)$ of real-valued flows  in $G_d$ on a toast sequence $(D_1,D_2,\ldots)$  is the flow $f$ obtained via the following steps.
 \begin{enumerate}[(i)]
 \setcounter{enumi}{0}
 \item Initially, let $f$ be the identically zero flow except for each $i\ge 1$, $S\in\comp(D_i)$, and  $(\vvec{u},\vvec{v})\in\partial_ES$ we define $f(\vvec{u},\vvec{v}):=g_i(\vvec{u},\vvec{v})$ and $f(\vvec{v},\vvec{u}):=g_i(\vvec{v},\vvec{u})$ (that is, we copy the values of $g_i$ on $\partial_E D_i$). 
 \item For each $i\ge 1$, round $f$ along the boundary of~$D_i$.
 % (and denote the increment 0-flow by $\FO{}{S}$).  
 \item For each component $S$ of the graph $G':=(\I T^k,E(G_d)\setminus \bigcup_{i=1}^\infty \partial_ED_i)$, which is obtained from $G_d$ by removing the edge boundaries of all sets $D_i$, complete $f$ within $S$, as specified after~\eqref{eq:Complete}. 
 (If at least one completion step fails, then the whole procedure fails and $f$ is undefined.)
 \end{enumerate}
 \end{defn}
 
Observe that, since $(D_1,D_2,\ldots)$ is a toast sequence, the first two steps in the above definition are well-defined; also, the second step does not depend on the order in which we round the boundaries.

\begin{lemma}\label{lm:RoundingGD} In the notation of Definition~\ref{def:RoundingGD}, suppose that there are a constant $C$  and a sequence  $d_1\le d_2\le\ldots $ of integers such that  \begin{equation}\label{eq:RoundingGDLim}
 \lim_{i\to\infty} g_i(\vvec{u})=\ind_A(\vvec{u})-\ind_B(\vvec{u}),\quad\mbox{for each $\vvec{u}\in\I T^k$}, 
 \end{equation}
$\|g_1\|_\infty \leq C$ and, for each $i\ge 1$, we have that
\begin{equation}\label{eq:RoudingGD}
(3^d-1)\cdot (d_i+1)^d\cdot \sum_{j=i}^\infty\|g_{j+1}-g_{j}\|_\infty < 1/2,
 \end{equation}
 and every component $S$ of $G_d\induced D_i$ has diameter at most $d_i$.
%\begin{equation}\label{eq:RoudingGD}
% (3^d-1)\cdot (d_i+1)^d\cdot\| \fout{g_i}-\ind_A+\ind_B\|_\infty <1/2.
 %,\quad\mbox{for every $i\ge 1$}
% \end{equation}

 Then the following statements hold:
 \begin{enumerate}
 \stepcounter{equation}
\item\label{it:RoundingGDCompl} all completion steps succeed,
 \stepcounter{equation}
\item\label{it:RoundingGDABFlow} the obtained function $f$ is an integer-valued flow from $A$ to $B$,
 \stepcounter{equation}
\item\label{it:RoundingGDNorm} $\|f\|_\infty \le C+(3^d+2)/2$,
 \stepcounter{equation}
\item\label{it:RoundingGDLocal} the restriction of $f$ to the edges intersecting $D_i$ is a $(d_i+1)$-local function of $g_1,\ldots,g_i$ and $D_1,\ldots,D_i$.
 \end{enumerate}
 \end{lemma}
\begin{proof}
By combining the fact that $\|g_1\|_\infty\leq C$ with the bound in \eqref{eq:RoudingGD} in the case $i=1$, we see that the sequence $g_1,g_2,\ldots$ converges uniformly to a bounded function. Let $f_\infty$ denote the pointwise limit of this sequence. By~\eqref{eq:RoundingGDLim}, the function $f_\infty$ is an $(\ind_A-\ind_B)$-flow.

Let us show~\eqref{it:RoundingGDCompl}, that is, that all completion steps succeed.
For this, we introduce one more operation. Given two flows $\phi$ and $\psi$ in $G_d$ and a finite subset $S$ of $\mathbbm{T}^k$ with no holes such that $G_d\induced S$ is connected, the operation of \emph{equalising $\phi$ to $\psi$ along the boundary of $S$} is defined as follows. Let $(\vvec{u}_1\vvec{v}_1,\ldots,\vvec{u}_t\vvec{v}_t)$ with $\vvec{u}_1,\ldots,\vvec{u}_t\in S$ be the lexicographically minimal Eulerian circuit in $\triangle_{\partial S}$ (which exists by Lemma~\ref{lem:triEuler}).
%, where, for each $s=1,\ldots,t$, we have $\vvec{u}_s\in S$ and $\vvec{v}_s\notin S$. 
For each $s=1,\ldots,t-2$, one by one, we let $\vvec{w}_s\in \{\vvec{u}_{s+1},\vvec{v}_{s+1}\}$ be such that $\{\vvec{u}_s,\vvec{v}_s,\vvec{w}_s\}$ is a triangle in $G_d$ and redefine $\phi$ to be
 \begin{equation}\label{eq:FlowEq}
  \phi:=\phi + \left(\psi(\vvec{u}_s,\vvec{v}_s) - \phi(\vvec{u}_s,\vvec{v}_s)\right)\circlearrowleft_{\vvec{u}_s,\vvec{v}_s,\vvec{w}_s}.
  \end{equation}
  Note that this operation makes the new value of $\phi$ on $(\vvec{u}_s,\vvec{v}_s)$ to be equal to $\psi(\vvec{u}_s,\vvec{v}_s)$.
  Also, we define $\FE{\phi,\psi}{S}$ (where $\phi$ stands for its initial value) as the difference between the final and initial flows $\phi$ during this process, that is, $\FE{\phi,\psi}{S}$ is the sum of all $t-2$ increments in~\eqref{eq:FlowEq}. Thus $\FE{\phi,\psi}{S}$ is a $0$-flow such that $\phi+\FE{\phi,\psi}{S}$ is equal to $\psi$ on every pair in $\partial S$ except possibly the pair $\vvec{u}_{t-1}\vvec{v}_{t-1}$.
  % (and, of course, the values on this pair are equal if and only if $\fout{\phi}(S)=\fout{\psi}(S)$).
 More generally, for a set $D\subseteq\mathbbm{T}^k$ such that every component of $G_d\induced D$ is finite, \emph{equalising $\phi$ to $\psi$ along the boundary of $D$} is defined as follows. First, for each component $S$ of $G_d\induced D$ and each hole $S'$ of $S$, we equalise $\phi$ to $\psi$ along the boundary of $S'$. Then, for every component $S$ of $G_d\induced D$, we equalise $\phi$ to $\psi$ along the boundary of the union of $S$ and all of its holes. Also, define $$\FE{\phi,\psi}{D}:=\sum_{S\in\comp(D)}\sum_{S'} \FE{\phi,\psi}{S'},$$ 
 where $S'$ in the inner sum is a hole of $S$, or $S$ with all holes filled.

 The proof of the following claim follows similar lines as that of Lemma~\ref{lem:boundary3d}, so we give only a very brief proof sketch.
 
 \begin{claim}
 \label{cl:boundaryEqualise}
 Let $\phi$ and $\psi$ be flows in $G_d$, $M$ be a non-negative integer and $D$ be a subset of $\mathbb{T}^k$ such that $|\partial_ES|\leq M$ for every component $S$ of $G_d\induced D$ and any two components of $G_d\induced D$ are at distance at least three in $G_d$. Then $\|\FE{\phi,\psi}{D}\|_\infty\le M\|\phi-\psi\|_\infty$.
 %Then, for each $\vvec{u}\vvec{v}\in E(G_d)$, the operation of equalising $\phi$ to $\psi$ along the boundary of $D$ changes the flow from $\vvec{u}$ to $\vvec{v}$ by at most $M\|\phi-\psi\|_\infty$.
 \end{claim}
 
 \begin{proof}[Sketch of Proof] Note that, before the $s$-th step as in~\eqref{eq:FlowEq}, the current value of $\phi$ on $\vvec{u}_{s+1}\vvec{v}_{s+1}$ is still the original value. Thus, during the step, its absolute value increases by at most $|\psi(\vvec{u}_s,\vvec{v}_s) -\phi(\vvec{u}_s,\vvec{v}_s)|$, which in turn is  at most $s\|\phi-\psi\|_\infty$ by an easy induction on $s$. The claim now follows since we do at most $M-1$ steps.
\end{proof}

This claim
% when combined with~\eqref{eq:RoudingGD} 
gives that, for every $S\in\comp(D_i)$, if $S'$ is a hole of $S$ or $S$ will all holes filled then
 \begin{equation}
  \label{eq:EqualiserNorm}
 \|\FE{f_\infty,g_i}{S}\|_\infty \le 1/2.%\quad\mbox{for every $S\in\comp(D_i)$}.
 \end{equation} 
% for every $S$ which is a hole of a compo
 Indeed, $|S'|\le (d_i+1)^d$ since
 the projection of $S'$, when viewed as a subset of $\I Z^d$, on each coordinate is at most $d_i+1$ by the diameter assumption. Thus $|\partial_ES'|\le (3^d-1)(d_i+1)^d$. Also,
 \begin{equation}\label{eq:FInftyG}
 \|f_\infty-g_i\|_\infty\leq \sum_{j=i}^\infty \|g_{j+1}-g_j\|_\infty < \frac{1/2}{(3^d-1)(d_i+1)^d}
 \end{equation}
 by~\eqref{eq:RoudingGD}. Therefore, ~\eqref{eq:EqualiserNorm} follows from Claim~\ref{cl:boundaryEqualise}.

Now, consider the real-valued flow 
 \begin{equation}\label{eq:AlternativeH}
 h:=f_{\infty}+\sum_{i=1}^\infty  \FE{f_\infty,g_i}{D_i}+\sum_{i=1}^\infty  \FO{g_i}{D_i},
 \end{equation}
 which will be used to certify that all completion steps succeed.
 %Since the sets $D_1,D_2,\ldots$ form a toast, every pair appears in $\partial_ES$ with $S\in\comp(D_i)$ for at most one choice of $(i,S)$ (and, in particular, no issues with the convergence of the sums in~\eqref{eq:AlternativeH} arise).
 This function $h$ is a flow from $A$ to $B$ since it is obtained from the $(\ind_A-\ind_B)$-flow $f_\infty$ by adding a 0-flow. 
Take any $S$ which is, for some $T\in\comp(D_i)$, either $T$ with all holes filled or a hole of $T$. By Lemma~\ref{lem:triEuler}, the graph $\triangle_{\partial_ES}$ has an Eulerian circuit; let $(\vvec{u}_1\vvec{v}_1,\ldots, \vvec{u}_t\vvec{v}_t)$ with $\vvec{u}_1,\ldots,\vvec{u}_t\in S$ be the  $\lex$-minimal one. By the definition of $\FE{f_\infty,g_i}{D_i}$, the flows 
$$
 f':=f_\infty+\FE{f_\infty,g_i}{D_i}
 $$ 
 and $g_i$ coincide on all pairs in $\partial_ES$ except possibly $\vvec{u}_{t-1}\vvec{v}_{t-1}$. %Their flows out of $S$ differ by less than $1/2$ by~\eqref{eq:fmiT}. 
 %Note that $f'$ is a flow from $A$ to $B$ since it is obtained from the $(\ind_A-\ind_B)$-flow $f_\infty$ by adding a 0-flow. 
 Also, 
 %since $f_\infty$ is a flow from $A$ to $B$, 
 we have by~\eqref{eq:FInftyG}  that
  $$
  \Big|\,\fout{g_i}(S)-|A\cap S|+|B\cap S|\,\Big|= \Big|\,\fout{g_i}(S)-\fout{f_\infty}(S)\,\Big| \le |\partial_E S| \cdot \|f_\infty - g_i\|_\infty<1/2.
  $$
  Recall that, initially, $f$ was set to be $g_i$ on $\partial_ES$. Thus the adjustment step of making $\fout{f}(S)$ integer by adjusting its value on $\vvec{u}_{t-1}\vvec{v}_{t-1}$ by at most $1/2$ makes $\fout{f}(S)$ to be equal to $|A\cap S|-|B\cap S|$, which in turn is equal to the flow out of $S$ by $f'$ since it is obtained from the $(\ind_A-\ind_B)$-flow $f_\infty$ by adding a 0-flow. Thus $f$ after the adjustment is equal to $f'$ on the edge $\vvec{u}_{t-1}\vvec{v}_{t-1}$ (as they are already equal on every other edge in $\partial_ES$). 
  Thus when we round $f$ along the boundary of $S$, we add the same $t-2$ increments as we would do for $f'$, that is,
 $\FO{f'}{S}=\FO{g_i}{S}$. Hence, the flows $\FO{f'}{D_i}$ and $\FO{g_i}{D_i}$ coincide on~$\partial_ES$.
 We do not modify the values of $f$ on $\partial_E S$ during any other steps,
 so they are the same as the values of $f'+\FO{f'}{D_i}$ (which are in turn the same as the values of $h$ on~$\partial_ES$).
  Thus the final flow $f$ coincides with $h$ on $\partial_E D_i$ (for every $i$)  and the Integral Flow Theorem (Theorem~\ref{th:IFT}) shows that each completion step works, that is,~\eqref{eq:Complete} 
 can always be satisfied. Thus Conclusion~\eqref{it:RoundingGDCompl} holds.
 % Also, $h$
% is a witness that, using the Axiom of Choice, we can round $f$ correctly inside $\I T^k\setminus (\bigcup_{i=1}^\infty D_i)$. This shows that all completion steps succeed.
 
Conclusion~\eqref{it:RoundingGDABFlow} is a direct consequence of the previous conclusion since the completion step is applied to every component obtained by removing $\bigcup_{i=1}^\infty \partial_ED_i$ from $E(G_d)$. 

 For~\eqref{it:RoundingGDNorm}, recall that the Integral Flow Theorem (Theorem~\ref{th:IFT}) guarantees that the produced integer-valued flow differs from the given real-valued flow by at most 1 on every edge. Thus, by Lemma~\ref{lem:boundary3d} and by~\eqref{eq:EqualiserNorm}, we have
  $$
  \|f\|_\infty\le \|h\|_\infty+1 \le \|f_\infty \|_\infty + \frac{3^d-2}{2} + \frac12+1\le C+ \frac{1}{2}+ \frac{3^d+1}{2} = C+\frac{3^d+2}{2},
  $$  
  as desired.

For~\eqref{it:RoundingGDLocal}, take any $\vvec{u}\in D_i$. Its component $S\in\comp(D_i)$ has diameter at most $d_i$ and thus entirely lies inside the $d_i$-neighbourhood
of~$\vvec{u}$. Since $\vvec{u}$ has access to every vertex of $S$, it also has access to the flow values on all edges in $\partial_ES$ (because, by our conventions, the flow value on an edge is encoded in each of its endpoints). Since $(D_i)_{i=1}^\infty$ is a toast sequence, the choices of how the flow $f$ is modified on $S$ during the rounding along $\partial_E S$ or any completion inside $S$ are well-defined functions of the restrictions of $g_1,\ldots,g_i$ and $D_1,\ldots,D_{i-1}$ to $S$. 
Thus the restriction of the final flow $f$ to the edges intersecting $S$ is indeed a $(d_i+1)$-local function of $g_1,\ldots,g_i$ and $D_1,\ldots,D_i$. (Note that we add $1$ to $d_i$ so that each endpoint of an affected edge, which includes points in $N_1[S]\setminus S$, can compute the new flow value on the edge.)
 \end{proof}

  \section{Proof of Theorem~\ref{th:main}}
  \label{sec:pieces}

We are now ready to prove our main result, Theorem~\ref{th:main}. 

For each $i\ge1$, let $m_i$ and $m_i'$ be the minimal non-negative integers such that
 \begin{eqnarray}
\label{eq:miDefAgain}
(3^d-1)\cdot (t_i+1)^d \cdot \left(\frac{c\,2^{1+\varepsilon}}{2^{d+\varepsilon m_i}(2^\varepsilon -1)}\right) &<& 1/2, \mbox{ and}\\
\label{eq:mi'Def}
(3^d-1)\cdot(t_i'+1)^d\cdot\left(\frac{c\,2^{1+\varepsilon}}{2^{d+\varepsilon m_i'}(2^\varepsilon -1)}\right)&<&1/2.
\end{eqnarray}
The main purpose of these bounds is to certify inequalities of the type given by \eqref{eq:RoudingGD}.
%\begin{eqnarray}
%\label{eq:miDefAgain}
%2c\left(2^{1+\varepsilon}+1\right)\left(2^{1+\varepsilon}-2\right)^{-1}(3^d-1) (2r_i+4q_{i-1}'+5)^d&<&2^{\varepsilon m_i}, \mbox{ and}\\
%\label{eq:mi'Def}
%2c\left(2^{1+\varepsilon}+1\right)\left(2^{1+\varepsilon}-2\right)^{-1}(3^d-1)(4r_i'/5 + 2q_i+1)^d&<&2^{\varepsilon m_i'}.
%\end{eqnarray}
%\comment{OP: I think that the above estimates come from the old bounds on $f_m$, while the following definitions are now enough:
%\begin{eqnarray}
%\label{eq:miDefAgainNEW}
%4c\,(3^d-1) (2r_i+4q_{i-1}'+5)^d&<&2^{(1+\varepsilon) m_i}, \mbox{ and}\\
%\label{eq:mi'DefNEW}
%4c\, (3^d-1)(4r_i'/5 + 2q_i+1)^d&<&2^{(1+\varepsilon) m_i'}.
%\end{eqnarray}
% Namely, we have to satisfy~\eqref{eq:RoudingGD} for $(g_i,d_i)$ equal either $(f_{m_i},2r_i+4q_{i-1}'+4)$ or
% $(f_{m_i'},4r'_i/5+2q_{i})$. 
% If so, then this improves our later bound $2^{m_i}=\Theta(r_i^{d/\varepsilon})$ to $2^{m_i}=\Theta(r_i^{d})$ and also improves $\zeta$...
%}

Let $f=f_{J\Hat{K}\Hat{L}}$ be the rounding of the sequence $(f_{m_i},f_{m_i},f_{m_i'})_{i=1}^\infty$ of flows on the toast sequence $(J_i,\Hat{K}_i,\Hat{L}_i)_{i=1}^\infty$. (Recall that  the sets $\Hat{K}_i$ and $\Hat{L}_i$ were defined before
Lemma~\ref{lm:FinalToast}
%by~\eqref{eq:TildeKj} and~\eqref{eq:TildeLj}, 
and  $(J_i,\Hat{K}_i,\Hat{L}_i)_{i=1}^\infty$ is a
toast sequence by the lemma; also, $f_m$ for $m\ge 0$ is the flow returned by Lemma~\ref{outline:lem:realFlows}.) We will use this flow $f$ to obtain the desired equidecomposition between $A$ and~$B$.

\begin{remark} If the reader is interested only in Parts~\ref{it:a} and~\ref{it:c} of Theorem~\ref{th:main} then it suffices to take for $f$ the flow $f_J$, which we define to be the rounding of the sequence $(f_{m_1},f_{m_2},\ldots)$ on the toast sequence $(J_1,J_2,\ldots)$.
By construction (or, formally, by Lemma~\ref{lm:f=fJ}), $f$ and $f_J$ are the same except on the set $\I T^k\setminus \bigcup_{i=1}^\infty J_i$ which is small in many respects (e.g.,\ its closure is null).\end{remark}

%\begin{lemma}\label{lm:FJHatkHatL=FJ} The flows $f_{J\Hat{K}\Hat{L}}$ and $f_J$ assume the same value on every flow on ever pair intersecting the set $\bigcup_{i=1}^\infty J_i$.\qed\end{lemma}

For reader's convenience, let us summarise some key properties of the flow $f=f_{J\Hat{K}\Hat{L}}$ in Lemma~\ref{lem:intFlow} below. This lemma is stated in a more general form so that it also applies to some other
auxiliary flows (in particular to $f_J$ when we take $(J_i,\emptyset,\emptyset)_{i=1}^\infty$ for the toast sequence $(J_i',K_i',L_i')_{i=1}^\infty)$.
%that we need in order to analyse the pieces.
Recall that, for each $\vec{p}\in \{0,\ldots,5\}^d$, the set $T_{\vec{p}}$ (defined in~\eqref{eq:Tp})
is a union of whole components of $G_d$; moreover, these sets partition~$\mathbbm{T}^k$.

\begin{lemma}
\label{lem:intFlow}
Fix any $\vec{p}\in \{0,\ldots,5\}^d$. Suppose that for each $i\ge 1$, $J_i'$, $K_i'$ and $L_i'$ are obtained  from respectively $J_i$, $K_i^{\vec{p}}$ and  $L_i^{\vec{p}}$ by removing the vertex sets of some  components so that $(J_i',K_i',L_i')_{i=1}^\infty$ is a toast sequence. Let $f'$ be the rounding of the flows $(f_{m_i},f_{m_i},f_{m_i'})_{i=1}^\infty$ on
%the toast sequence
$(J_i',{K}_i',{L}_i')_{i=1}^\infty$.
Then the following statements hold:
\begin{enumerate}
\stepcounter{equation}
\item\label{eq:fFlow} the function $f'$ is an integer-valued flow in $G_d$ from $A$ to $B$,
\stepcounter{equation}
\item\label{eq:fBounded} $\|f'\|_\infty\leq \frac{c\,2^{1+\varepsilon}}{2^d(2^\varepsilon-1)} + \frac{3^d+2}2$,
\stepcounter{equation}
\item\label{eq:fLocalK}  there is a $(2^{m_i}+t_i)$-local function of $J_1',\ldots,J_i',K_1',\ldots,K_i'$, $A$ and $B$ that coincides with $f'$ on all edges of $G_d$ intersecting $T_{\vec{p}}\cap\left(\bigcup_{j=1}^iK_j'\right)$,
\stepcounter{equation}
\item\label{eq:fLocalL} there is a $(2^{m_i'}+t_i')$-local function of $J_1',\ldots,J_i',K_1',\ldots,K_i',L_1',\ldots,L_i'$, $A$ and $B$ that coincides with $f'$ on all edges of $G_d$ intersecting $T_{\vec{p}}\cap\left(\bigcup_{j=1}^iL_j'\right)$.
\end{enumerate}
 Moreover, if for every $C\in\comp(J_i')$ and $S\in\comp(K_j')\cup\comp(L_j')$ at distance at most $2$ we have that $j\ge i$ and $N_2[C]\subseteq S$, then
 \begin{enumerate}
 \stepcounter{equation}
\item\label{eq:fLocalOnlyJ}
 the flow $f'$ coincides with $g$ on every edge intersecting $\bigcup_{i=1}^\infty J_i'$, where $g$
is the flow obtained by rounding $(f_{m_i})_{i=1}^\infty$ on $(J_i')_{i=1}^\infty$, \stepcounter{equation}
\item\label{eq:fLocalJ}
there is a $(2^{m_i}+2r_i+2q_{i-1}')$-local function of $J_1',\ldots,J_i'$, $A$ and $B$ that coincides with $f'$ on all edges of $G_d$ intersecting $T_{\vec{p}}\cap \left(\bigcup_{j=1}^i J_j'\right)$.
\end{enumerate}
\end{lemma}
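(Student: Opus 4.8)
The plan is to obtain the whole lemma as an application of Lemma~\ref{lm:RoundingGD} to the toast sequence
\[
(D_1,D_2,\ldots):=(J_1',K_1',L_1',J_2',K_2',L_2',\ldots)
\]
together with the interleaved flow sequence $(g_1,g_2,\ldots):=(f_{m_1},f_{m_1},f_{m_1'},f_{m_2},f_{m_2},f_{m_2'},\ldots)$, so that the sets $J_i'$, $K_i'$, $L_i'$ occupy slots $3i-2$, $3i-1$, $3i$ with the flows $f_{m_i}$, $f_{m_i}$, $f_{m_i'}$ attached to them respectively. First I would assemble the input data of that lemma. The sequence $(D_n)$ is a toast sequence by hypothesis. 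Since every result of Section~\ref{sec:ToastProperties} applies verbatim to the $\vec{p}$-shifted sets $K_i^{\vec{p}},L_i^{\vec{p}}$, Lemma~\ref{lm:diam} gives that the components of $G_d\induced J_i$, $G_d\induced K_i^{\vec{p}}$, $G_d\induced L_i^{\vec{p}}$ have diameters at most $2r_i+2q'_{i-1}$, $2r_i+4q'_{i-1}+4$, $4r'_i/5+2q_i$; deleting whole components only shrinks these, so the same bounds hold for $J_i'$, $K_i'$, $L_i'$. For the diameter sequence $(d_n)$ required by Lemma~\ref{lm:RoundingGD} I would assign $2r_i+4q'_{i-1}+4$ to the $J_i'$- and $K_i'$-slots and $4r'_i/5+2q_i$ to the $L_i'$-slot; this is nondecreasing because $2q_i\ge q_i\ge t_i=2r_i+4q'_{i-1}+4$ and, by the superexponential growth of the parameters recorded in Section~\ref{sec:ToastConstructions}, $2r_{i+1}\ge 4r'_i/5+2q_i$. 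Finally, since $f_0=0$, the triangle inequality and Lemma~\ref{outline:lem:realFlows}\eqref{eq:fmChange} give $\|f_m\|_\infty\le\sum_{j\ge1}\|f_j-f_{j-1}\|_\infty\le C$ for every $m$, where $C:=\frac{c\,2^{1+\varepsilon}}{2^d(2^\varepsilon-1)}$; in particular $\|g_1\|_\infty\le C$.

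The crux is the summability condition~\eqref{eq:RoudingGD}. One first checks that $m_1\le m_1'\le m_2\le m_2'\le\cdots$: the requirement~\eqref{eq:mi'Def} on $m_i'$ is more demanding than~\eqref{eq:miDefAgain} on $m_i$, and~\eqref{eq:miDefAgain} on $m_{i+1}$ is more demanding than~\eqref{eq:mi'Def} on $m_i$ (again using that $r_{i+1}$ dwarfs all parameters of index $\le i$). Hence from any slot $n$ onwards the $m$-indices appearing in $(g_j)$ form a nondecreasing list, and the telescoping inequality $\|f_b-f_a\|_\infty\le\sum_{j=a}^{b-1}\|f_{j+1}-f_j\|_\infty$ applied to consecutive distinct $m$-indices, together with Observation~\ref{obs:sumAll}, yields
\[
\sum_{j\ge n}\|g_{j+1}-g_j\|_\infty\ \le\ \|f_\infty-f_{m_i}\|_\infty\ \le\ \frac{c\,2^{1+\varepsilon}}{2^{d+\varepsilon m_i}(2^\varepsilon-1)}
\]
when slot $n$ lies in the $J_i'$- or $K_i'$-block, with $m_i'$ replacing $m_i$ when slot $n$ lies in the $L_i'$-block. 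With the chosen $(d_n)$, condition~\eqref{eq:RoudingGD} at these slots becomes precisely~\eqref{eq:miDefAgain} and~\eqref{eq:mi'Def} respectively. All hypotheses of Lemma~\ref{lm:RoundingGD} now hold, and conclusion~\eqref{eq:fFlow} is part~\eqref{it:RoundingGDABFlow} of that lemma while~\eqref{eq:fBounded} is part~\eqref{it:RoundingGDNorm} with the above value of $C$.

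For~\eqref{eq:fLocalK} and~\eqref{eq:fLocalL} I would compose Lemma~\ref{lm:RoundingGD}\eqref{it:RoundingGDLocal} with the $(2^m-1)$-locality of $f_m$ from Lemma~\ref{outline:lem:realFlows}\eqref{eq:local}. For an edge meeting $K_i'$ (slot $3i-1$) the rounding lemma gives $(d_{3i-1}+1)$-local dependence on $g_1,\ldots,g_{3i-1}$ and $D_1,\ldots,D_{3i-1}$; since each $g_1,\ldots,g_{3i-1}$ is one of $f_{m_1},\ldots,f_{m_i}$ it is $(2^{m_i}-1)$-local in $A$ and $B$, so adding radii gives a $(2^{m_i}+2r_i+4q'_{i-1}+4)$-local function of $A$, $B$ and $D_1,\ldots,D_{3i-1}$ (for slots of index $j<i$ the corresponding radii are smaller, so the same bound serves on all of $\bigcup_{j\le i}K_j'$, and likewise for $L$). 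Because $T_{\vec{p}}$ is a union of $G_d$-components the rounding operation respects $T_{\vec{p}}$, and the toast separation between the $L_j'$-components with $j<i$ and the $K_i'$-components — guaranteed by the gap parameter $r'_{i-1}$ and the component deletions — shows that the processing of $L_1',\ldots,L_{i-1}'$ alters no flow value on an edge meeting $T_{\vec{p}}\cap\bigcup_{j\le i}K_j'$, so those sets drop from the dependence list and~\eqref{eq:fLocalK} follows; for~\eqref{eq:fLocalL} no set is removed and the same arithmetic, with $m_i'$ and $4r'_i/5+2q_i$, gives the claimed radius.

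For the ``Moreover'' part: under the extra hypothesis I claim that while running Definition~\ref{def:RoundingGD} no flow value on an edge of $\bigcup_i\partial_E J_i'$ is ever set or changed by the processing of any $K_j'$ or $L_j'$. Indeed, if $C\in\comp(J_i')$ and $S\in\comp(K_j')\cup\comp(L_j')$ lie within graph-distance $2$, then $j\ge i$ and $N_2[C]\subseteq S$, so every edge of $\partial_E C$ has both endpoints in $S$, hence in the interior of $K_j'$ (resp.\ $L_j'$): it is not re-initialised in step~(i) for that set, it is not moved in step~(ii) (all three cases in the proof of Lemma~\ref{lem:boundary3d} fail since $\dist_{G_d}(C,\I T^k\setminus S)\ge 3$), and it lies in $\bigcup_n\partial_E D_n$ so is absent from the graph used in step~(iii). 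Consequently $f'$ restricted to edges meeting $\bigcup_i J_i'$ agrees with the flow $g$ obtained by rounding $(f_{m_i})_{i=1}^\infty$ on the toast sequence $(J_i')_{i=1}^\infty$ — whose diameter bounds $2r_i+2q'_{i-1}$ verify~\eqref{eq:RoudingGD} exactly as before — which is~\eqref{eq:fLocalOnlyJ}; then~\eqref{eq:fLocalJ} follows from Lemma~\ref{lm:RoundingGD}\eqref{it:RoundingGDLocal} applied to $(J_i')_{i=1}^\infty$, composed as above with Lemma~\ref{outline:lem:realFlows}\eqref{eq:local}, the radius being $(2r_i+2q'_{i-1})+1+(2^{m_i}-1)=2^{m_i}+2r_i+2q'_{i-1}$. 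The step I expect to be the main obstacle is the bookkeeping of the last two paragraphs — the insulation claim together with the removal of $L_1',\ldots,L_{i-1}'$ from the dependence list in~\eqref{eq:fLocalK} — which is exactly what the (somewhat baroque) choice of gap parameters and of the component-deletion operations was arranged to deliver.
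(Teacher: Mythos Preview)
Your overall approach is exactly the paper's: feed the interleaved toast sequence and flow sequence into Lemma~\ref{lm:RoundingGD}, verify~\eqref{eq:RoudingGD} via the choice of $m_i,m_i'$ and Observation~\ref{obs:sumAll}, and read off the conclusions. Your checks of the monotonicity of $(d_n)$ and of the $m$-indices are more explicit than in the paper, which is fine.

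There is one genuine error. In~\eqref{eq:fLocalK} you try to justify dropping $L_1',\ldots,L_{i-1}'$ from the dependence list by asserting that ``the toast separation between the $L_j'$-components with $j<i$ and the $K_i'$-components \ldots shows that the processing of $L_1',\ldots,L_{i-1}'$ alters no flow value on an edge meeting $\bigcup_{j\le i}K_j'$.'' This is false: by the toast property~\eqref{scaffUt}, a component $S\in\comp(L_j')$ with $j<i$ may satisfy $N_2[S]\subseteq K_i'$, and then every edge of $\partial_E S$ lies inside $K_i'$ and has its value set during the rounding of $L_j'$. So the dependence on $L_j'$ cannot be removed in this generality. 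The paper's own proof simply cites Conclusion~\eqref{it:RoundingGDLocal} without addressing this point; it is harmless there because in all subsequent applications of~\eqref{eq:fLocalK} (to $f_{JK^{\vec p}}$ in Lemma~\ref{lem:Zpcomplexity}) one takes $L_j'=\emptyset$, so the $L$'s are vacuously absent. You should either add $L_1',\ldots,L_{i-1}'$ to the list (which is what~\eqref{it:RoundingGDLocal} actually gives) or note the triviality when $L_j'=\emptyset$.

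A smaller gap sits in your argument for~\eqref{eq:fLocalOnlyJ}. You correctly show that the values of $f'$ on $\bigcup_i\partial_E J_i'$ are unaffected by processing any $K_j'$ or $L_j'$, but then write ``Consequently $f'$ restricted to edges meeting $\bigcup_i J_i'$ agrees with $g$.'' This ``consequently'' needs the case of an edge with \emph{both} endpoints in some $C\in\comp(J_i')$: one must check that the completion step~(iii) uses the same component in both constructions. The paper does this explicitly: if $S\in\comp(K_j')\cup\comp(L_j')$ is within distance~$2$ of $C$ then $N_2[C]\subseteq S$, so no edge of $\partial_E S$ lies inside $C$, whence the $G'$-component containing the edge is the same whether one removes only $\bigcup_n\partial_E J_n'$ or also the $K$- and $L$-boundaries. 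Once you add this sentence your argument matches the paper's.
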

\begin{proof} We just apply Lemma~\ref{lm:RoundingGD} to the flows $(f_{m_i},f_{m_i},f_{m_i'})_{i=1}^\infty$ and the toast sequence $(J_i',K_i',L_i')_{i=1}^\infty$ that were used to define~$f'$. Lemmas~\ref{lem:Ji} and~\ref{lem:KiLi} show that we can take 
\beq{eq:di}
(2r_i+2q_{i-1}', t_i, t_i')_{i=1}^\infty
\eeq
 for the sequence $(d_i)_{i=1}^\infty$ that upper bounds the diameters of finite components in our toast sequence.

By \eqref{eq:fmChange},  we have for all $m\ge0$ that
 \begin{equation}
 \label{eq:fmBound}
 \|f_m\|_\infty \leq \|f_0\|_\infty+\sum_{i=1}^{m}\|f_{i}-f_{i-1}\|_\infty \leq \sum_{i=1}^{m} \frac{2c}{2^{d+\varepsilon (i-1)}} \leq \frac{c\,2^{1+\varepsilon}}{2^d(2^\varepsilon-1)}
 %\frac{c(2^{1+\varepsilon}+1)}{2^{\varepsilon k+1}} %\frac{c\left(2^{1+\varepsilon}+1\right)\left(1-2^{-\varepsilon m}\right)}{2^{1+\varepsilon}-2}
 %< \frac{c\left(2^{1+\varepsilon}+1\right)}{2^{1+\varepsilon}-2},
 %,\quad \mbox{any $m\ge 0$}.
 \end{equation}
 that is, the flows $f_m$ have their $\ell_\infty$-norm uniformly bounded. Furthermore, Assumption~\eqref{eq:RoundingGDLim} of Lemma~\ref{lm:RoundingGD} holds by~\eqref{eq:AtoB} while
 Assumption~\eqref{eq:RoudingGD} holds (with respect to the sequence in~\eqref{eq:di}) by our choice of $m_i$ and $m_i'$ in \eqref{eq:miDefAgain} and~\eqref{eq:mi'Def}. 
 
Thus Lemma~\ref{lm:RoundingGD} applies to the flow $f'$ with $C$ being the expression in the right-hand side of~\eqref{eq:fmBound}. 

The claims in~\eqref{eq:fFlow} and~\eqref{eq:fBounded} follow from respectively Conclusions~\eqref{it:RoundingGDABFlow} and \eqref{it:RoundingGDNorm} of Lemma~\ref{lm:RoundingGD}.
 
Recall that $f_{m_i}$ is a $(2^{m_i}-1)$-local function of $A$ and $B$ by Conclusion~\eqref{eq:local}  of Lemma~\ref{outline:lem:realFlows}. Thus we can replace a local function of $f_{m_i}$ by a local function of $A$ and $B$, with just increasing the locality radius by~$2^{m_i}-1$. Thus~\eqref{eq:fLocalK} holds by Conclusion~\eqref{it:RoundingGDLocal} of Lemma~\ref{lm:RoundingGD}, according to which it is enough to increase the locality radius by the diameter bound plus~1. The same argument applied to $f_{m_i'}$ shows that~\eqref{eq:fLocalL} holds. 

It remains to show that the last two claims of the lemma hold, under the additional assumption.
%~\eqref{eq:fLocalOnlyJ}--\eqref{eq:fLocalJ} hold.
For~\eqref{eq:fLocalOnlyJ}, take any edge $\vvec{u}\vvec{v}$ intersecting some~$J_i'$. 
Suppose first that $\vvec{u}\vvec{v}\in \partial_E J_i'$. Let $S$ be the (unique) component of $G\induced J_i'$ such that $\vvec{u}\vvec{v}\in\partial_E S$. The value of $g$ (as well as the value of\ $f'$) on $\vvec{u}\vvec{v}$ is the one which is assigned during the rounding of $f_{m_i}$ along the boundary of~$S$. Thus $g$ and $f'$ assume the same value on  $\vvec{u}\vvec{v}$, as desired. 
So suppose that both $\vvec{u}$ and $\vvec{v}$ are in~$J_i'$. Let $S$ be the component of $G\induced J_i'$ containing $\vvec{u}$ and~$\vvec{v}$. Consider any $T\in \comp(K_j')\cup \comp(L_j')$ which is at distance at most 2 from~$S$. By the extra assumption, we have that $j\ge i$ and $N_2[S]\subseteq T$.
Thus $\partial_E S$ separates $\vvec{u}\vvec{v}$ from $\partial_E T$. This means that when we remove the boundary edge set
$\bigcup_{j=1}^\infty(\partial_E J_j'\cup \partial_E K_j'\cup \partial_E L_j')$ from $E(G_d)$, the component containing $\vvec{u}\vvec{v}$ will be the same when we remove only $\bigcup_{j=1}^\infty \partial_E J_j'$. Thus, indeed $f'$ assumes the same value  as~$g$ on $\vvec{u}\vvec{v}$. This proves~\eqref{eq:fLocalOnlyJ}.

Finally,~\eqref{eq:fLocalJ} follows from~\eqref{eq:fLocalOnlyJ} by Conclusion~\eqref{it:RoundingGDLocal} of Lemma~\ref{lm:RoundingGD} applied directly to the flow~$g$.
%the restriction of the flow $g$ to the edges intersecting $\bigcup_{j=1}^i J_j'$ is a $(2^{m_i}+2r_i+2q_{i-1}'+1)$-local function of $J_1',\ldots,J_i'$, $A$ and $B$. Thus in order to prove~\eqref{eq:fLocalJ}.
\end{proof}

Observe that, for every $\vec{p}\in \{0,\ldots,5\}^d$, Lemma~\ref{lem:intFlow} (when we take the toast sequence $(J_i,\Tilde{K}_i^{\vec{p}},\Tilde{L}_i^{\vec{p}})_{i=1}^\infty$ for $(J_i',K_i',L_i')_{i=1}^\infty$) applies to the restriction of the flow $f=f_{J\Hat{K}\Hat{L}}$ to the set $T_{\vec{p}}$ (which consists of whole components of $G_d$). Indeed, $\Hat{K}_i\cap T_{\vec{p}}=\Tilde{K}_i^{\vec{p}}\cap T_{\vec{p}}$ and $\Hat{L}_i\cap T_{\vec{p}}=\Tilde{L}_i^{\vec{p}}\cap T_{\vec{p}}$ 
%by the definition of $\Hat{K}_i$ and $\Hat{L}_i$ 
while $\Tilde{K}_i^{\vec{p}}$ and $\Tilde{L}_i^{\vec{p}}$ are obtained from respectively ${K}_i^{\vec{p}}$ and ${L}_i^{\vec{p}}$ by removing whole components.
%(with all these claims trivially following from the definitions of these sets).
%Since the sets $T_{\vec{p}}$ cover the whole space, 
We conclude by Conclusions~\eqref{eq:fFlow} and~\eqref{eq:fBounded} of Lemma~\ref{lem:intFlow} that $f$ is a bounded integer flow from $A$ to~$B$. Also, the extra assumption of Lemma~\ref{lem:intFlow} holds by  by Lemma~\ref{lm:KLKJ} and~\eqref{it:Tilde4}; in particular, the following (obvious from the construction) result is formally proved by Conclusion~\eqref{eq:fLocalOnlyJ}.

\begin{lemma}\label{lm:f=fJ} The flows $f$ and $f_J$ coincide on every edge intersecting $\bigcup_{i=1}^\infty J_i$.\qed\end{lemma}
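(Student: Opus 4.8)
The plan is to deduce this from Conclusion~\eqref{eq:fLocalOnlyJ} of Lemma~\ref{lem:intFlow}, applied once for each shift $\vec{p}\in\{0,\ldots,5\}^d$, and then to patch the resulting statements together using that the sets $T_{\vec{p}}$ partition $\I T^k$ into unions of components of $G_d$. Fix $\vec{p}$ and put $(J_i',K_i',L_i')_{i=1}^\infty:=(J_i,\Tilde{K}_i^{\vec{p}},\Tilde{L}_i^{\vec{p}})_{i=1}^\infty$; as noted after Lemma~\ref{lem:Licover}, every lemma of Section~\ref{sec:ToastProperties} applies verbatim to the shifted sets, so this is a toast sequence by Conclusion~\eqref{it:Tilde3} of Lemma~\ref{lm:Tilde}, and Lemma~\ref{lem:intFlow} applies to the rounding $f'$ of $(f_{m_i},f_{m_i},f_{m_i'})_{i=1}^\infty$ on it.

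First I would observe that $f'$ and $f=f_{J\Hat{K}\Hat{L}}$ agree on every edge with both endpoints in $T_{\vec{p}}$. Indeed, each $T_{\vec{p}}$ is a union of components of $G_d$ and $\Hat{K}_i\cap T_{\vec{p}}=\Tilde{K}_i^{\vec{p}}\cap T_{\vec{p}}$, $\Hat{L}_i\cap T_{\vec{p}}=\Tilde{L}_i^{\vec{p}}\cap T_{\vec{p}}$, so the two toast sequences induce the same edge boundaries and the same finite components inside $T_{\vec{p}}$; since the rounding procedure of Definition~\ref{def:RoundingGD} treats each $G_d$-component independently (rounding along the boundary of a finite component $S$ only modifies edges meeting $N_1[S]$, and each completion is carried out within a single $G_d$-component), $f$ and $f'$ are produced from identical data on edges inside $T_{\vec{p}}$.

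Next I would verify the extra hypothesis of Lemma~\ref{lem:intFlow}: whenever $C\in\comp(J_i)$ and $S\in\comp(\Tilde{K}_j^{\vec{p}})\cup\comp(\Tilde{L}_j^{\vec{p}})$ lie at distance at most $2$ in $G_d$, then $j\ge i$ and $N_2[C]\subseteq S$. The inequality $j\ge i$ is forced because $\Tilde{K}_j^{\vec{p}}$ (resp.\ $\Tilde{L}_j^{\vec{p}}$) is obtained from $K_j^{\vec{p}}$ (resp.\ $L_j^{\vec{p}}$) precisely by deleting every component within distance $2$ of some $J_h$ (resp.\ $K_h^{\vec{p}}$) with $h>j$, so $S$ would have been removed if $j<i$. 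For $j\ge i$ the inclusion $N_2[C]\subseteq S$ follows from the way the components of $K_j^{\vec{p}}$ and $L_j^{\vec{p}}$ mimic those of $J_j$: when $j=i$ it is read off from the construction of $K_i^{\vec{p}}$ and $L_i^{\vec{p}}$ around $N_2[J_i]$ together with the diameter and separation estimates of Lemmas~\ref{lem:Ii}--\ref{lem:KiLi} (as in the proof of Lemma~\ref{lem:JKLscaff}), and when $j>i$ it follows from Lemma~\ref{lm:KLKJ} and Conclusion~\eqref{it:Tilde4} of Lemma~\ref{lm:Tilde} (in their shifted forms), using that the connected set $N_2[C]\subseteq N_2[J_i]\subseteq K_i^{\vec{p}}$ gets absorbed by the closure operation $\mathcal{C}_2$ defining $K_j^{\vec{p}}$ or $L_j^{\vec{p}}$. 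Granted this hypothesis, Conclusion~\eqref{eq:fLocalOnlyJ} of Lemma~\ref{lem:intFlow} tells us that $f'$ coincides, on every edge meeting $\bigcup_{i=1}^\infty J_i$, with the rounding $g$ of $(f_{m_i})_{i=1}^\infty$ on $(J_i)_{i=1}^\infty$; and $g=f_J$ by definition. Combining with the previous paragraph, $f$ and $f_J$ agree on every edge that meets $\bigcup_{i=1}^\infty J_i$ and has both endpoints in $T_{\vec{p}}$. Since every edge of $G_d$ has both endpoints in one component, hence in one $T_{\vec{p}}$, letting $\vec{p}$ run over $\{0,\ldots,5\}^d$ finishes the proof.

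The main obstacle I anticipate is the verification of the extra hypothesis of Lemma~\ref{lem:intFlow}: it requires carefully tracking how the tilde-trimming interacts with the closure operation $\mathcal{C}_2$ and with the fact that components of $K_j^{\vec{p}}$ faithfully mimic those of $J_j$, and getting the case split on the relative sizes of $i$ and $j$ exactly right. The componentwise argument identifying $f'$ with $f$ on $T_{\vec{p}}$ is conceptually routine but still needs a little care, since the completion step in Definition~\ref{def:RoundingGD} refers to flow values on edges leaving a component of the auxiliary graph $G'$.
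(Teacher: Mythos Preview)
Your proposal is correct and follows essentially the same approach as the paper: apply Lemma~\ref{lem:intFlow} with $(J_i',K_i',L_i')=(J_i,\Tilde{K}_i^{\vec{p}},\Tilde{L}_i^{\vec{p}})$ for each $\vec{p}$, verify the extra hypothesis, and invoke Conclusion~\eqref{eq:fLocalOnlyJ}. The paper compresses the verification of the extra hypothesis into a single sentence citing Lemma~\ref{lm:KLKJ} and Conclusion~\eqref{it:Tilde3} of Lemma~\ref{lm:Tilde} (the toast property of $(J_i,\Tilde{K}_i,\Tilde{L}_i)_{i=1}^\infty$ directly yields $N_2[C]\subseteq S$ for $j\ge i$ via~\eqref{scaffUt}), whereas you argue the same conclusion via a case split invoking~\eqref{it:Tilde4} and the explicit construction; both routes are valid.
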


Lemma~\ref{outline:lem:matching} can be applied to the integer $(\ind_A-\ind_B)$-flow $f$, producing
an equidecomposition between $A$ and $B$. We claim that this equidecomposition  satisfies all claims of Theorem~\ref{th:main}. Since Part~\ref{it:c} of Theorem~\ref{th:main} is already taken care of by the second claim of Lemma~\ref{outline:lem:matching}, it remains to prove the remaining two parts.

Recall that each piece returned by Lemma~\ref{outline:lem:matching} is a local function of $A$, $B$, finitely many strips and the flow $f$. 
Since the corresponding target families in Parts~\ref{it:a} and~\ref{it:b} of Theorem~\ref{th:main} form $a$-invariant algebras (that contain all strips as well as our sets $A$ and $B$), it is enough by Observation~\ref{obs:LocalRule} to prove that each set of the form $Z_{\vec{\gamma},\ell}^f$ (as defined in~\eqref{eq:Zdef}) for $\vec{\gamma}\in\{-1,0,1\}^d$ and integer $\ell$ with $|\ell|\leq \|f\|_\infty$ belongs to the corresponding algebra.

\subsection{Dimension of Boundaries}
\label{sec:Dim}

Here we prove Part~\ref{it:a} of Theorem~\ref{th:main} that the upper Minkowski dimension of the boundaries of the obtained pieces is strictly smaller than~$k$. 
Specifically, we show that, for every $\vec{\gamma}\in\{-1,0,1\}^d$ and $|\ell|\leq \|f\|_\infty$, the upper Minkowski dimension of the boundary of $Z_{\vec{\gamma},\ell}^f$ is at most $k-\zeta$, where  
\begin{equation}
\label{eq:zeta}
\zeta:=\frac{\epsilon}{4d\max\{d/\varepsilon,\, d+1\}+1}>0.
%\min\left\{\frac{\varepsilon\epsilon}{4d^2+\varepsilon},\, \frac{\epsilon}{4d(d+1)+1}\right\}>0.
\end{equation}
(Recall that $\epsilon$, $d$ and $\varepsilon$ were defined in \eqref{eq:epsilonGap}, \eqref{eq:dBound} and~\eqref{eq:epsBound}, respectively.) One can show that $d/\varepsilon\ge d+1$ if $k\ge 2$.
% (but this does not need to hold for $k=1$).
\hide{First, note that $d/\varepsilon> d+1$. Indeed, using the definition of $\varepsilon$ we have 
$$
d-(d+1)\varepsilon=2d+1-\frac{d^2\epsilon+d\epsilon}k.
$$
The derivative of this quadratic polynomial of $d$ has root at $k/\epsilon -1/2$. Since $0\le d-k/\epsilon\le1$, the value of the polynomial is at least as large as that for $d=k/\epsilon+1$, which in turn is $(k^2-2\epsilon^2)/(\epsilon k)>0$, as desired. 
}

Let us pause for a moment to discuss the quantitative bound on the boundary dimension that we get when $k\geq2$ and $\boxdim(\partial A)=\boxdim(\partial B)=k-1$. In this special case, we can choose $\epsilon$ to be arbitrarily close to $1$, take $d$ to be $k+1$ and $\varepsilon$ close to $1/k$.  Thus we get that any $\zeta<\frac{1}{4 k (k+1)^2+1}$ works here, and so we can obtain pieces whose boundaries have upper Minkowski dimension arbitrarily close to  $k - \frac{1}{4 k (k+1)^2+1}$.
In particular, if $k=2$, as in the case that $A$ is a disk and $B$ is a square, then this quantity evaluates to $145/73<1.987$. Thus, this is sufficient for proving Theorem~\ref{th:circleSquare}.

Here we need to use the specific choices $r_i':=100^{2^{i+1} - 1}$ and $r_i:=100^{2^{i+1} - 2}$, as defined in~\eqref{eq:ri'specific} and~\eqref{eq:rispecific} respectively. 
%Note that, for every $i\ge1$, we have that $r_i=\left(r_{i-1}'\right)^2$ and $r_i'=100r_i$. 

Let us provide some auxiliary results first. 
A \emph{box} 
in $\mathbbm{T}^k$ is a set of the form $\prod_{i=1}^k[a_i,b_i)$ where $0\leq a_i<b_i\leq 1$ for all $1\leq i\leq k$.

\begin{lemma}[Laczkovich~{\cite{Laczkovich92b}*{Lemma~2}}; see also Schmidt~{\cite{Schmidt64}*{p.~517}}]
\label{lem:intervalLog}
For almost every choice of $\vvec{x}_1,\ldots,\vvec{x}_d$ in $\mathbbm{T}^k$ and for every $t>0$ there exists $C>0$ such that
\begin{equation}\label{eq:intervalLog}
D\left(N_r^+[\vvec{u}], I\right)\leq C\log^{k+d+t}(r)
\end{equation}
for every element $\vvec{u}\in\mathbbm{T}^k$, integer $r\ge 2$ and box $I$ in $\mathbbm{T}^k$. 
\end{lemma}

This result is needed to prove the following lemma, whose conclusion was one of the assumptions (namely, Property~\eqref{eq:XQi}) made about $\vvec{x}_1,\ldots,\vvec{x}_d$ in Section~\ref{subsec:setting}.

\begin{lemma}
\label{lem:stripLem}
There exist positive constants $c_1$ and $C_2$ such that the following holds. 
%If $r_1,r_2,\ldots$ is an increasing sequence of positive integers and, for each $i$,  
If, for $i\ge 1$, we define $\gamma_i:=c_1r_i^{-d}\log^{-2}(r_i)$, $R_i:=C_2r_i\log^{(k+d+3)/d}(r_i)$ and
\begin{equation}\label{eq:Qi}
 Q_i:=\left[0, \gamma_i\right)\times [0,1)^{k-1},
 \end{equation}
then, with positive probability with respect to uniform independent $\vvec{x}_1,\ldots,\vvec{x}_d\in\I T^k$, it holds for every integer $i\ge 1$ that the strip $Q_i$ is $r_i$-discrete in $G_d$ while $N_{R_i}[Q_{i}]=\mathbbm{T}^k$.
\end{lemma}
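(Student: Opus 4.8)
The idea is a Borel--Cantelli argument: show that for each fixed $i$ the ``bad event'' for index $i$ has small probability, and that these probabilities are summable, so that almost surely only finitely many indices are bad; then absorb the finitely many bad indices by noting the event depends only on $\vvec{x}_1,\ldots,\vvec{x}_d$ restricted to a suitable finite-precision approximation, which lets us argue positive probability after a correction. Actually the cleaner route is simply: for each $i$, estimate $\Pr[\text{$Q_i$ is not $r_i$-discrete}]$ and $\Pr[N_{R_i}[Q_i]\neq\mathbbm T^k]$, show both are $o(1)$ and in fact summable in $i$, hence with probability $1$ only finitely many $i$ violate either condition; then, since the violation for a fixed $i$ is an event of probability strictly less than $1$ that is moreover ``open enough'' (it is determined by the values $\{\vvec{x}_j\}$ up to small perturbation), one can intersect over all $i$ and still get positive probability. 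In fact it is easiest just to choose $c_1$ small and $C_2$ large so that each individual bad probability is, say, at most $2^{-i-1}$; then the union bound gives that the good event for all $i$ simultaneously has probability at least $1-\sum_i 2^{-i}>0$. So the whole proof reduces to two probabilistic estimates with explicit dependence on $c_1,C_2$.

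\emph{Discreteness of $Q_i$.} The strip $Q_i=[0,\gamma_i)\times[0,1)^{k-1}$ fails to be $r_i$-discrete iff there are $\vvec{p},\vvec{q}\in Q_i$ with $\vvec{q}=\vvec{p}+\sum_{j=1}^d n_j\vvec{x}_j$ for some nonzero $\vec n\in\mathbbm Z^d$ with $\|\vec n\|_\infty\le r_i$. For this, the first coordinate of $\sum_j n_j\vvec{x}_j$ must lie in $(-\gamma_i,\gamma_i)\pmod 1$. Fixing $\vec n\neq \vec 0$, the first coordinate of $\sum_j n_j\vvec{x}_j$ is a nontrivial integer combination of the i.i.d.\ uniform coordinates $x_{1,1},\ldots,x_{d,1}$, hence is itself uniform on $\mathbbm T^1$, so this has probability $2\gamma_i$. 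Union-bounding over the $\le (2r_i+1)^d$ choices of $\vec n$ gives $\Pr[\text{not $r_i$-discrete}]\le 2(2r_i+1)^d\gamma_i = O(\gamma_i r_i^d)$. Plugging in $\gamma_i=c_1 r_i^{-d}\log^{-2}(r_i)$ makes this $O(c_1\log^{-2} r_i)$, which is summable in $i$ provided $c_1$ is a small enough absolute constant (here one uses that $\log r_i=2^{i+1}\log 100 - O(1)$ grows geometrically, so $\sum_i \log^{-2}r_i<\infty$ easily). So for a suitable $c_1$ the discreteness failures are summable.

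\emph{Covering by $N_{R_i}[Q_i]$.} This is the main obstacle and requires the Laczkovich discrepancy bound, Lemma~\ref{lem:discrep}, applied with $X:=Q_i$. Note $\boxdim(\partial Q_i)=k-1$ (the boundary is two parallel $(k-1)$-dimensional faces), so the hypotheses of Lemma~\ref{lem:discrep} are met for our $d$ and some $\varepsilon'\in(0,1/k)$; wait --- more carefully, one applies Lemma~\ref{lem:discrep} to $Q_i$ directly, obtaining that almost surely there is $c_i>0$ with $D(N_n^+[\vvec u],Q_i)\le c_i(n+1)^{d-1-\varepsilon'}$ for all $\vvec u,n$. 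Since $\lambda(Q_i)=\gamma_i$, the discrete cube $N_n^+[\vvec u]$ (of size $(n+1)^d$) contains at least $(n+1)^d\gamma_i - c_i(n+1)^{d-1-\varepsilon'}$ points of $Q_i$, which is positive --- indeed at least $1$ --- once $(n+1)^d\gamma_i \ge c_i(n+1)^{d-1-\varepsilon'}+1$, i.e.\ once $n+1\gtrsim (c_i/\gamma_i)^{1/(1+\varepsilon')}$ roughly, and certainly once $n \gtrsim \gamma_i^{-1/d}\cdot(\text{polylog})$ if the constant $c_i$ can be controlled. The delicate point is that $c_i$ in Lemma~\ref{lem:discrep} is random and a priori not uniformly bounded over $i$; to handle this one should, instead of invoking Lemma~\ref{lem:discrep} as a black box, re-run its quantitative (finite-scale) version --- as sketched in Appendix~\ref{app:discrep} --- to get, for each fixed $i$ and each fixed scale $n=R_i$, an explicit tail bound $\Pr[\exists\,\vvec u:\ D(N_{R_i}^+[\vvec u],Q_i)> \tfrac12 (R_i+1)^d\gamma_i]\le$ (something like) $C^d (R_i+1)^{O(d)}\exp(-c\,(R_i+1)^{?}\gamma_i^{?})$, with the exponent chosen so that, on substituting $R_i = C_2 r_i\log^{(k+d+3)/d}(r_i)$ and $\gamma_i=c_1 r_i^{-d}\log^{-2}r_i$, the quantity $(R_i+1)^d\gamma_i$ is $\gtrsim C_2^d\log^{k+d+1}(r_i)$, a growing polylog, making the exponential beat the polynomial prefactor and the whole probability summable in $i$ once $C_2$ is large. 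On that good event, every $N_{R_i}^+[\vvec u]$ meets $Q_i$, which since $N_{R_i}^+[\vvec u]\subseteq N_{2R_i}[\vvec u]$ and $\vvec u$ ranges over all of $\mathbbm T^k$ gives $N_{2R_i}[Q_i]=\mathbbm T^k$; rescaling $R_i$ by a factor $2$ (absorbing into $C_2$) yields $N_{R_i}[Q_i]=\mathbbm T^k$ as stated. Summability of both families of bad probabilities then gives, by Borel--Cantelli, that almost surely all but finitely many $i$ are good, and choosing $c_1$ smaller / $C_2$ larger drives each of the finitely many remaining bad probabilities below $2^{-i-1}$ as well, so by the union bound the event that \emph{every} $i\ge 1$ is good has probability at least $1-\sum_{i\ge1}2^{-i-1}>0$, completing the proof. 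The hardest part, as indicated, is extracting a clean non-asymptotic discrepancy tail bound with the right dependence on the scale so that the polylog factors in $R_i$ and $\gamma_i$ make things summable --- essentially a careful bookkeeping exercise on top of the proof of Lemma~\ref{lem:discrep}.
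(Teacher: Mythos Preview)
Your discreteness argument is essentially the paper's: a union bound over $\vec n$ gives failure probability $O((2r_i+1)^d\gamma_i)=O(c_1\log^{-2}r_i)$, summable in $i$, so all $Q_i$ are $r_i$-discrete with probability bounded away from zero.

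The covering argument, however, has a genuine gap. You correctly identify the obstacle: applying Lemma~\ref{lem:discrep} to each $Q_i$ separately yields a constant $c_i$ that may grow with $i$, so the bound is useless unless you control it uniformly. Your proposed fix --- extracting a non-asymptotic tail bound of the form $\Pr[\exists\,\vvec u:\ D(N_{R_i}^+[\vvec u],Q_i)> \tfrac12 (R_i+1)^d\gamma_i]\le C^d (R_i+1)^{O(d)}\exp(-c\,(R_i+1)^{?}\gamma_i^{?})$ --- is not carried out (the ``?'' exponents are never determined), and the Erd\H{o}s--Tur\'an--Koksma machinery behind Lemma~\ref{lem:discrep} does not obviously yield exponential tails of this shape; it gives almost-sure polylogarithmic bounds, not concentration inequalities. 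Turning that into a summable tail bound at each scale would require substantial new work, not ``careful bookkeeping.''

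The paper avoids all of this by invoking Lemma~\ref{lem:intervalLog} directly. That lemma states that almost surely there is a \emph{single} constant $C_1$ such that $D(N_r^+[\vvec u],I)\le C_1(\log r)^{k+d+t}$ for every $\vvec u$, every $r$, and \emph{every box $I$} simultaneously. Since each $Q_i$ is a box, one constant handles all $i$ at once: the covering condition reduces to the deterministic inequality $(2R_i+1)^d\gamma_i>C_1(\log(2R_i))^{k+d+1}$, which holds for all $i$ once $C_2$ is chosen large enough relative to $C_1$ and $c_1$ (and this is exactly why the exponent $(k+d+3)/d$ appears in $R_i$ and the exponent $-2$ in $\gamma_i$). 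Thus the covering event has probability $1$, and only the discreteness side needs the union bound. You missed that the uniformity over boxes is already built into Lemma~\ref{lem:intervalLog}, which is precisely the tool that dissolves the difficulty you flagged.
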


\begin{proof} Let $C_1$ be a constant satisfying Lemma~\ref{lem:intervalLog} with $t=1$ for at least half (in measure) of the choices of $(\vvec{x}_1,\ldots,\vvec{x}_d)\in(\I T^k)^d$. One way to see its existence (without checking whether the proof of Lemma~\ref{lem:intervalLog} gives some effective bounds on $C_1$) is as follows. For every real $C$, the set $\C X_C$ of sequences $(\vvec{x}_1,\ldots,\vvec{x}_d)\in(\I T^k)^d$ for which Lemma~\ref{lem:intervalLog} holds for this $C$ (with $t=1$) is an analytic subset, since it is the projection of the Borel subset of $(\I T^k)^d\times\Omega$ of points satisfying~\eqref{eq:intervalLog}, where $\Omega\subseteq \I T^k\times \{2,3,\ldots\}\times \I T^{2k}$ is the subspace encoding all suitable triples $(\vvec{u},r,I)$ equipped with the standard (Polish) topology. As every analytic subset is universally measurable (see e.g.~\cite{Kechris:cdst}*{Theorem~21.10}) and the countable nested union $\bigcup_{i=1}^\infty \C X_i\subseteq \I T^{k}$ has Lebesgue measure 1, by the countable additivity there is an index $i$ such that the measure of $\C X_i$ is at least $1/2$ and we can take $C_1$ to be equal to this $i$.

We will use the following estimates that hold by choosing the constants $c_1>0$ and $C_2\gg C_1$ suitably: for every integer $i\ge 1$,
 \beq{eq:gammai}
  \sum_{i=1}^\infty(2r_i+1)^d\gamma_i<1/12
  \eeq
  and
  \beq{eq:Ri}
  (2R_i+1)^d\gamma_i>C_1\log^{k+d+1}(2R_i).%,\quad\mbox{for every integer $i\ge 1$.}
  \eeq
  Let us briefly check that suitable $c_1$ and $C_2$ exist. The $i$-th summand in the left-hand side of~\eqref{eq:gammai} is at most $(3r_i)^d\cdot c_1r_i^{-d}\log^{-2}(r_i)\le  O(c_1 2^{-2i})$. This is summable, so~\eqref{eq:gammai} can be satisfied by choosing $c_1$ sufficiently small. Also, the left-hand size of~\eqref{eq:Ri} is at least $R_i^d\gamma_i= C_2^d\cdot  c_1\cdot  \log^{k+d+1}(r_i)$. As the exponent at the logarithm matches that in the right-hand side of~\eqref{eq:Ri}, this constraint can be satisfied by taking $C_2$ sufficiently large.
	
%Let $r_1,r_2,\ldots$ be any increasing sequence of positive integers and $\gamma_1,\gamma_2,\ldots$ be a decreasing sequence of real numbers between $0$ and $1$. Let $Q_i:=[0,\gamma_i)\times [0,1)^{k-1}$. 
We start by applying a simple union bound argument to estimate the probability that $Q_i$ fails to be $r_i$-discrete for some $i\ge1$. For $1\leq j\leq d$, let  $\vvec{x}_j=(x_{j,1},\ldots,x_{j,d})$ with each $x_{j,i}\in [0,1)$. Of course, only the first coordinates $x_{j,1}$ matter for the lemma.

Clearly, $Q_i$ fails to be $r_i$-discrete if and only if there exists a non-zero vector $\vec{n}\in\{-r_i,\ldots,0,\ldots,r_i\}^d$ such that the sum $\sum_{j=1}^dn_j x_{j,1}$ viewed in $\mathbbm{R}$ (i.e.,\ not modulo $1$) is at distance less than $\gamma_i$ from an integer. Fix a non-zero vector $\vec{n}=(n_1,\ldots,n_d)$. By symmetry, assume that $n_d\neq0$ and, in fact, $n_d>0$. Suppose that $\vvec{x}_1,\ldots,\vvec{x}_{d-1}\in\I T^k$ have already been sampled. 
Then, since $0\leq n_dx_{j,1}<n_d$, there are at most $n_d+1$ different possible integers that the sum $\sum_{j=1}^dn_j x_{j,1}$ can be within $\gamma_i$ of and, for each of them, the probability of this event is at most $2\gamma_i/n_d$ and thus the probability of at least one happening is at most $2\gamma_i(n_d+1)/n_d\le 4\gamma_i$.
By the Union Bound over $\vec{n}$, the probability that $Q_i$ is not $r_i$-discrete is at most $4(2r_i+1)^d\gamma_i$.
%\[\sum_{\vec{n}\in\{-r_i,\ldots,0,\ldots,r_i\}^d}2\gamma_i\left(\frac{n_d+1}{n_d}\right)\le  \leq 4\left[(2r_i+1)^d-1\right]\gamma_i.\]

Thus the probability that the set $Q_i$ is not $r_i$-discrete for some $i\ge1$ is at most
$\sum_{i=1}^\infty 4(2r_i+1)^d\gamma_i$ which is at most $1/3$ by~\eqref{eq:gammai}. 
%Since this sum is convergent for our choice of $\gamma_i$, we get by choosing $c_1>0$ sufficiently small that the probability that $Q_i$ is not $r_i$-discrete for some $i\ge1$ is less than, say,~$1/3$. 

Now, for the other assertion, take an arbitrary element $\vvec{u}\in \mathbbm{T}^k$. By the definition of $C_1$, with probability at least $1/2$ it holds for every integer $i$ that the discrepancy of $N_{R_i}[\vvec{u}]$ with respect to any strip is at most $C_1\log^{k+d+1}(2R_i)$. In particular, for the strip $Q_i$, we have
\[|N_{R_i}[\vvec{u}]\cap Q_i| \ge (2R_i+1)^d \gamma_i - C_1\log^{k+d+1}(2R_i),\]
which is strictly positive by~\eqref{eq:Ri}.
%Given that $\gamma_i=c_1r_i^{-d}\log^{-2}(r_i)$, if $R_i=C_2 r_i\log^{(k+d+3)/d}(r_i)$ for large $C_2$, then this expression is greater than $1$ for all $i\ge1$. 
From the point of view of $Q_i$, this states that its translates by integer vectors of $\ell_\infty$-norm at most $R_i$ under the action $a:\I Z^d\actson \I T^k$ cover the whole torus, as required.

Thus the probability that a uniform $(\vvec{x}_1,\ldots,\vvec{x}_d)\in(\I T^k)^d$ satisfies the lemma is at least $1/2-1/3>0$, as desired.
\end{proof}

Let us remark that the existence of $C_1$ in the proof of Lemma~\ref{lem:stripLem} can be argued without referring to the theory of analytic sets. Informally speaking, to compute the smallest possible $C_1(\vvec{x}_1,\ldots,\vvec{x}_d)$ for given $\vvec{x}_1,\ldots,\vvec{x}_d\in\I T^k$, it is enough to consider only those boxes that have at least one integer combination of $\vvec{x}_1,\ldots,\vvec{x}_d$ on each side, plus $2^{2d}$ choices whether to include that side or not (realisable by making the box infinitesimally larger or smaller in that direction). There are countably many choices here and thus the best possible $C_1:(\I T^k)^d\to [0,\infty]$ is in fact a Borel function, as the supremum of countably many Borel functions.

\begin{lemma}\label{lm:Xi} If $\vvec{x}_1,\ldots,\vvec{x}_d$ satisfy the conclusion of Lemma~\ref{lem:stripLem},
then, as $i\to\infty$, there is a maximally $r_i$-discrete set $X_i$ which is an $r_i^{d+1}\log^{O(1)}(r_i)$-local function of the strip~$Q_i$ that was defined in~\eqref{eq:Qi}.\end{lemma}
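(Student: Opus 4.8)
The plan is to invoke Lemma~\ref{outline:lem:simpleDiscrete} with the right collection of translates of $Q_i$. First I would use the second half of the conclusion of Lemma~\ref{lem:stripLem}, namely $N_{R_i}[Q_i]=\mathbbm{T}^k$, to produce a finite cover of $\mathbbm{T}^k$ by translates of $Q_i$ each of which is $r_i$-discrete. Concretely, since every vertex of $G_d$ lies within graph-distance $R_i$ of some point of $Q_i$, the family $\{Q_i+\vec{n}\cdot_a\vvec{u}_0 : \vec{n}\in\mathbbm{Z}^d,\ \|\vec{n}\|_\infty\le R_i\}$ (equivalently, $\{Q_i+\sum_{j=1}^d n_j\vvec{x}_j : \|\vec{n}\|_\infty\le R_i\}$) covers $\mathbbm{T}^k$: indeed, if $\dist_{G_d}(\vvec{v},\vvec{w})\le R_i$ for some $\vvec{w}\in Q_i$, then $\vvec{w}=\vvec{v}+\sum_j n_j\vvec{x}_j$ for some $\vec{n}$ with $\|\vec{n}\|_\infty\le R_i$, so $\vvec{v}\in Q_i-\sum_j n_j\vvec{x}_j$, a translate of $Q_i$ of the required form. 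The number of such translates is $m:=(2R_i+1)^d = r_i^{d}\log^{O(1)}(r_i)$.

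Next, each of these $m$ sets is a translate of $Q_i$, hence (being a union of at most two strips) it is $r_i$-discrete because $Q_i$ is $r_i$-discrete by the first half of Lemma~\ref{lem:stripLem} and discreteness is translation-invariant. So the hypotheses of Lemma~\ref{outline:lem:simpleDiscrete} are met with $r:=r_i$ and $m$ as above: it yields a maximally $r_i$-discrete set $X_i$ which is an $r_i(m-1)$-local function of these $m$ translates. Since each translate is itself a $0$-local function of $Q_i$ up to the fixed shift vector $\sum_j n_j\vvec{x}_j$ with $\|\vec{n}\|_\infty\le R_i$, composing the two dependencies shows that $X_i$ is an $\big(r_i(m-1)+R_i\big)$-local function of $Q_i$. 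Plugging in $m-1<(2R_i+1)^d$ and $R_i=C_2r_i\log^{(k+d+3)/d}(r_i)$ gives a locality radius of order $r_i\cdot r_i^{d}\log^{O(1)}(r_i)=r_i^{d+1}\log^{O(1)}(r_i)$, as claimed. (One should also note that $X_i$, being a local function of the strip $Q_i$, is itself a finite union of disjoint strips by Observation~\ref{obs:LocalRule} applied to the $a$-invariant algebra of finite unions of strips, or simply because translating and Boolean-combining strips produces strips; this matches the way $X_i$ is used elsewhere.)

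I do not anticipate a serious obstacle here: the argument is essentially an application of the ``maximal discrete set from a cover'' machinery of Lemma~\ref{outline:lem:simpleDiscrete}, and the only thing that needs care is bookkeeping the composition of the two local dependencies so that the final radius comes out as $r_i^{d+1}\log^{O(1)}(r_i)$ rather than something larger. The mildly delicate point is simply to check that $m=(2R_i+1)^d$ translates genuinely suffice to cover $\mathbbm{T}^k$ — i.e.\ that the graph-metric statement $N_{R_i}[Q_i]=\mathbbm{T}^k$ translates into a covering of $\mathbbm{T}^k$ by the $(2R_i+1)^d$ sets $Q_i-\sum_j n_j\vvec{x}_j$ — which is exactly the computation sketched above, and uses that the neighbourhood $N_{R_i}[\vvec u]$ is contained in $N_{2R_i}^+[\cdot]$, i.e.\ is described by shift vectors of $\ell_\infty$-norm at most $R_i$. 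Everything else is routine.
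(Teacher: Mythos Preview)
Your proposal is correct and follows essentially the same approach as the paper: cover $\mathbbm{T}^k$ by the $(2R_i+1)^d$ translates of $Q_i$ via Lemma~\ref{lem:stripLem}, apply Lemma~\ref{outline:lem:simpleDiscrete} to this cover, and compose the two local dependencies to get the stated radius. Your write-up even supplies a bit more detail than the paper on why the translates cover and on the final bookkeeping, but the strategy is identical.
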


\begin{proof} Let $c_1$ and $C_2$ satisfy Lemma~\ref{lem:stripLem} and let $R_i$ be defined as in the lemma. Thus the set $\mathcal{Q}$ of translates of $Q_i$ by vectors of the form $\sum_{i=1}^dn_i\vvec{x}_i$ with $\vec{n}\in\mathbb{Z}^d$ and $\|\vec{n}\|_\infty\leq R_i$ covers $\I T^k$.  Lemma~\ref{outline:lem:simpleDiscrete}, when applied to this covering, yields a maximally $r_i$-discrete set $X_i$ which is an $r_i(2R_i+1)^d$-local function of the sets in $\mathcal{Q}$. Every such translate is an $R_i$-local function of $Q_i$ itself. Thus, since $R_i=r_i\log^{O(1)}(r_i)$, we get that $X_i$ is an $r_i^{d+1}\log^{O(1)}(r_i)$-local function of $Q_i$.
\end{proof}

Recall that our final flow $f=f_{J\Hat{K}\Hat{L}}$ was defined at the beginning of Section~\ref{sec:pieces}. Since our forthcoming argument will only use the values of $f$ on edges intersecting $\bigcup_{i=1}^\infty I_i\subseteq \bigcup_{i=1}^\infty J_i$, the reader may equivalently use $f_J$ instead of $f$ here.

\begin{lemma}
\label{lm:booleanABQ}
For any $\vec{\gamma}\in \{-1,0,1\}^d$, $\ell\in \range(f)$ and $i\geq1$, the set $Z_{\vec{\gamma},\ell}^f\cap I_i$ can be written as a Boolean combination of at most $r_i^{d\,\max\{d/\varepsilon,\,d+1\}+o(1)}$ translates of  $A$, $B$ and $Q_1,\ldots,Q_i$.
\end{lemma} 
\begin{proof}
By \eqref{eq:miDefAgain} it holds that $2^{m_i}=\Theta(r_i^{d/\varepsilon})$. 
By Conclusion~\eqref{eq:fLocalJ} of Lemma~\ref{lem:intFlow} (applied to $f$), each set $Z_{\vec{\gamma},\ell}^f\cap I_i$ is a $O(r_i^{d/\varepsilon})$-local function of $A$, $B$ and $J_1,\dots,J_i$.
By Lemmas~\ref{lem:Ii} and~\ref{lm:JiLocality}%
%(and the bounds in \eqref{eq:rr'} and \eqref{eq:r'q})
, the sets $I_i$ and $J_i$ are both $O(r_i)$-local functions of $X_1,\ldots,X_i$ which in turn, by Lemma~\ref{lm:Xi}, are $r_i^{d+1+o(1)}$-local functions of $Q_1,\ldots,Q_i$. 
By adding the locality radii, we see that $Z_{\vec{\gamma},\ell}^f\cap I_i$
is a local function of  $A$, $B$ and $Q_1,Q_2,\ldots,Q_i$ of radius at most $r_i^{\max\{d/\varepsilon,\,d+1\}+o(1)}$, implying the lemma.\end{proof}

We are now in position to prove the following result.

\begin{lemma}
\label{lem:bdyDim}
For every $\vec{\gamma}\in\{-1,0,1\}^d$ and $\ell\in \range(f)$, the boundary of $Z_{\vec{\gamma},\ell}^f$ has upper Minkowski dimension at most $k-\zeta$.
\end{lemma}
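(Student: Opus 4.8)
The plan is to fix $\delta>0$ small, choose an appropriate index $i=i(\delta)$, and bound the number $N_\delta$ of boxes of the regular grid of side-length $\delta$ in $\mathbbm{T}^k$ that meet the boundary $\partial Z_{\vec{\gamma},\ell}^f$. The key dichotomy is the following: a grid box $Q$ either (i) has non-empty intersection with the leftover set $\mathbbm{T}^k\setminus\bigcup_{j\le i}I_j$, or (ii) lies entirely inside $\bigcup_{j\le i}I_j$, in which case by Observation~\ref{obs:booleanABQ} the restriction of $Z_{\vec{\gamma},\ell}^f$ to the component(s) of $G_d$ meeting $Q$ is a Boolean combination of $(2^{m_i}+2r_i+2q_{i-1}')^d$ translates of $A$, $B$ and $r_i^{d(d+1)}\log^{O(1)}(r_i)$ translates of the strips $Q_1,\dots,Q_i$. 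For boxes of type (i), I would use the trivial volume bound: by Lemma~\ref{lem:Jmeas} (and the definitions relating $I_i$ to the $J_i$ and the measure estimate $\lambda(\mathbbm{T}^k\setminus I_i)=O(r'_{i-1}/r_i)$), the measure of $\mathbbm{T}^k\setminus\bigcup_{j\le i}I_j$ is $O(r'_{i-1}/r_i)$, so the number of such boxes is $O(r'_{i-1}/(r_i\delta^k))$.

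For boxes of type (ii), I would bound the contribution of each constituent of the Boolean combination separately, since the boundary of a Boolean combination is contained in the union of the boundaries of the constituents (after accounting for the local radius, i.e.\ the boundary of $Z_{\vec{\gamma},\ell}^f$ restricted to a good box is covered by boundaries of the translated sets passing within graph-distance $R_i:=2^{m_i}+2r_i+2q_{i-1}'$ of the box, which in Euclidean terms means within $\ell_\infty$-distance $O(R_i)$). For each translate of $A$ or $B$, the number of $\delta$-boxes meeting its boundary is $N_\delta(\partial A)\le \delta^{-(\boxdim(\partial A)+o(1))}$ (and similarly for $B$), and there are at most $(2R_i+1)^d$ relevant translates of each. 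For each translate of a strip $Q_j$, its boundary consists of two hyperplanes parallel to a coordinate hyperplane, so only $O(\delta^{-(k-1)})$ grid boxes meet it; there are at most $r_i^{d(d+1)}\log^{O(1)}(r_i)$ relevant translates. Summing, the number of type-(ii) boxes is
\[
O\!\left( R_i^{\,d}\,\delta^{-(k-\epsilon+o(1))} \;+\; r_i^{\,d(d+1)}\log^{O(1)}(r_i)\,\delta^{-(k-1)}\right),
\]
using $\max\{\boxdim(\partial A),\boxdim(\partial B)\}\le k-\epsilon$ from \eqref{eq:epsilonGap}.

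The crux is then to choose $i=i(\delta)$ to balance the three error terms: $r'_{i-1}/(r_i\delta^k)$ from the leftover volume, $R_i^d\,\delta^{-(k-\epsilon)}$ from the translates of $A,B$, and $r_i^{d(d+1)}\log^{O(1)}(r_i)\,\delta^{-(k-1)}$ from the strips. Using the explicit choices $r_i=100^{2^{i+1}-2}$, $r_i'=100 r_i$, $r_i=(r'_{i-1})^2$ from \eqref{eq:ri'specific}--\eqref{eq:rispecific}, and recalling $m_i$ is (essentially) logarithmic in $r_i$ so $R_i=r_i^{1+o(1)}$ and $q_{i-1}'=r_i^{o(1)}$, each term is a power of $\delta^{-1}$ times a power of $r_i$ (up to logs). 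Optimizing the choice of $i$ as a power of $\delta^{-1}$ (so that $r_i\approx\delta^{-c}$ for a suitable $c>0$), one checks that all three terms become $\delta^{-(k-\zeta'+o(1))}$ for some $\zeta'>0$; carrying the arithmetic through and taking the worst case over the two competing constraints (leftover volume versus number of strips, and volume versus box-dimension of $A,B$) gives exactly the bound $\zeta\le\min\{1/(2d+1)^2,\ \varepsilon\epsilon/(4d^2+\varepsilon)\}$ from \eqref{eq:zeta}. Hence $\log N_\delta(\partial Z^f_{\vec{\gamma},\ell})/\log(\delta^{-1})\le k-\zeta+o(1)$, which is the claim.

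The main obstacle I anticipate is the bookkeeping in the optimization step: one must track how $m_i$, $q_{i-1}'$, $r_i$ and $R_i$ scale (all are $r_i^{1+o(1)}$ or $r_i^{o(1)}$, but the exponents $d(d+1)$ on the strip count and $d$ on $R_i^d$ matter), and then solve for the optimal $i$ so that raising $r_i$ to these powers against the savings $\delta^{\epsilon}$ (from box-dimension of $A,B$) and $\delta^{1}$ (from strips) and $r_i/r_{i-1}'=r_i^{1/2+o(1)}$ (from leftover volume) yields the stated $\zeta$. A secondary technical point is being careful that the local-radius $R_i$ only enlarges the relevant box neighbourhood by a bounded multiplicative factor in each coordinate (so that a box of type (ii) truly sees only $O(R_i^d)$ translates of each set), and that passing from $I_i$ to $Z_{\vec{\gamma},\ell}^f$ via Conclusion~\eqref{eq:fLocalJ} of Lemma~\ref{lem:intFlow} is legitimate on the good boxes — this is where using $I_i\subseteq J_i$ and Lemma~\ref{lm:f=fJ} keeps the argument insensitive to $\Hat K_i,\Hat L_i$.
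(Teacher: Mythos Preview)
Your overall strategy matches the paper's, but there is a concrete error in the scaling. You write that ``$m_i$ is (essentially) logarithmic in $r_i$ so $R_i=r_i^{1+o(1)}$''. While $m_i=\Theta(\log r_i)$ is correct, $R_i$ contains the term $2^{m_i}$, not $m_i$, and \eqref{eq:miDefAgain} forces $2^{m_i}=\Theta(r_i^{d/\varepsilon})$; since $d/\varepsilon>1$ this term dominates, so in fact $R_i=\Theta(r_i^{d/\varepsilon})$ and the number of relevant translates of $A,B$ is $O(r_i^{d^2/\varepsilon})$, not $O(r_i^{d})$. With your claimed scaling the optimisation cannot produce the second term in~\eqref{eq:zeta}; the value $\varepsilon\epsilon/(4d^2+\varepsilon)$ comes precisely from the exponent $d^2/\varepsilon$.

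There is also a gap in the type-(i) count: the number of $\delta$-boxes \emph{meeting} a set $S$ is not bounded by $\lambda(S)/\delta^k$, since such a box need not lie in $S$. The paper's dichotomy is slightly different: it first removes all boxes meeting $\partial(Z\cap I_i)$ for some $Z\in\mathcal Z$ (these are handled by the Boolean-combination count, and since $\bigcup_Z Z=\mathbbm T^k$ one has $\partial I_i\subseteq\bigcup_Z\partial(Z\cap I_i)$), and then observes that any remaining box meeting $\mathbbm T^k\setminus I_i$ must lie in its \emph{interior}, where the volume bound is legitimate. Finally, you cannot freely optimise $r_i\approx\delta^{-c}$: the sequence is doubly exponential ($r_{i+1}=(100r_i)^2$), so one brackets $r_i\le\delta^{-4\zeta}<r_{i+1}$ and uses the upper side for the Boolean-combination count and the lower side (via $r_{i+1}^{-1/4}<\delta^{\zeta}$) for the leftover volume; the factor $4$ in $4\zeta$ is exactly what absorbs this squaring.
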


\begin{proof}
By the definition of $\epsilon$ in \eqref{eq:epsilonGap}, the upper Minkowski dimension of each of $\partial A$ and $\partial B$ is less than $k-\epsilon$. Thus we can choose $\delta_0>0$ to be sufficiently small so that, for all $0 < \delta < \delta_0$,
\begin{equation}\label{eq:ABboundary}\max\big\{\lambda\left(\left\{\vvec{x}: \dist_\infty(\vvec{x},\partial A)\le \delta\right\}\right),\,
\lambda\left(\left\{\vvec{x}: \dist_\infty(\vvec{x},\partial B)\le \delta\right\}\right)\big\}< \delta^{\epsilon},\end{equation}
 where $\dist_\infty$ denotes the $\ell_\infty$-distance on the torus. Moreover, we also assume that $\delta_0$ is small enough so that
\begin{equation}
\label{eq:delta0small}
r_1<\delta_0^{-4\zeta}.
\end{equation} 
 %Note that, for every $i$, the measure of $\{\vvec{x}\mid \dist_\infty(x,\partial Q_i)\le \delta\}$ is at most $4\delta$ which we can assume to be  at most $\delta^{\epsilon}$.
 
Let $0<\delta<\delta_0$ be arbitrary and let $i$ be the unique index so that
\begin{equation}\label{eq:riChoice}
r_i\leq \delta^{-4\zeta} < r_{i+1}.
\end{equation}
Note that such an index $i$ is guaranteed to exist by \eqref{eq:delta0small}.
Let $\mathcal{P}$ be a collection of boxes in $\mathbbm{T}^k$ of side-length $\delta$ such that each point of $\mathbbm{T}^k$ is contained in at least one and at most $2^k$ sets in $\mathcal{P}$. (For example, let $\mathcal{P}$ be the product of such 1-dimensional coverings.) Define
\[\mathcal{Z}:=\left\{Z_{\vec{\gamma},\ell}^f: \vec{\gamma}\in\{-1,0,1\}^d\text{ and }\ell\in \range(f)\right\}.\]
Since $f$ is integer-valued, the set $\mathcal{Z}$ is finite, having at most $3^d(2\|f\|_\infty+1)<\infty$ elements. 

Since we can choose $\delta_0$ arbitrarily small, let us view $\delta$ as tending to 0 (and $i\to\infty$) using the asymptotic notation accordingly. Thus, for example, $|\mathcal{Z}|=O(1)$.

Our goal is to show that at most $\left(1/\delta\right)^{k-\zeta+o(1)}$  boxes in $\mathcal{P}$ can intersect $\bigcup_{Z\in\mathcal{Z}}\partial Z$. This will be enough for bounding the upper Minkowski dimension of this set, since each box in $\mathcal{P}$ can intersect at most $2^k$ boxes of a regular $\delta$-grid.

Let $\mathcal{B}$ be the collection of boxes in $\mathcal{P}$ which intersect the topological boundary of $Z\cap I_i$ for some $Z\in \mathcal{Z}$. Recall that $i$ was chosen to satisfy~\eqref{eq:riChoice} while the set $I_i$ was defined in Section~\ref{sec:ToastConstructions}. Let $\mathcal{I}$ be the collection of sets in $\mathcal{P}$ which are not in $\mathcal{B}$ and intersect $\mathbbm{T}^k\setminus I_i$. By construction, $\mathcal{B}\cup \mathcal{I}$ contains every set in $\mathcal{P}$ which intersects the topological boundary of a set in~$\mathcal{Z}$. So, it suffices to bound $|\mathcal{B}|$ and $|\mathcal{I}|$. 

By Lemma~\ref{lm:booleanABQ}, any set of the form $Z\cap I_i$ for $Z\in\mathcal{Z}$ can be written as a Boolean combination of at most 
$r_i^{d\,\max\{d/\varepsilon,d+1\}+o(1)}$ 
 translates of $A$, $B$ and $Q_1,\ldots,Q_i$.  The union of all boxes in $\mathcal{B}$ lies within $\dist_\infty$-distance at most $\delta$ from the boundary of at least one of these translates.  Thus, 
using the volume bound, the estimate in \eqref{eq:ABboundary} and that, trivially, the measure of $\{\vvec{x}\mid \dist_\infty(\vvec{x},\partial Q_i)\le \delta\}$ is at most $4\delta$, we get that
\begin{eqnarray*}
\delta^k|\mathcal{B}|&\leq& 2^k\lambda\left(\bigcup_{Z\in\mathcal{Z}}\left\{\vvec{x}: \dist_\infty(\vvec{x},\partial (Z\cap I_i))
\leq \delta\right\}\right)\\
%\le |\mathcal{Z}|\Theta(r_i^{d/\varepsilon})
&\leq& 2^k\,|\mathcal{Z}|\,\left( r_i^{d\,\max\{d/\varepsilon,\, d+1\}+o(1)}(\delta^\epsilon + i\cdot 4\delta)\right).%\ =\ O(r_i^{d^2/\varepsilon}\delta^\epsilon).
\end{eqnarray*}
  By the definitions of $\zeta$ and $i$  (that is, by \eqref{eq:zeta} and~\eqref{eq:riChoice}) and since $i=O(\log(\log(r_i)))$, we have that $i\delta=o(\delta^\epsilon)$ and
%$\delta^{-4\zeta d^2/\varepsilon + \epsilon-o(1)}$. By , this quantity is at most
\[
\delta^k|\mathcal{B}|\le  (\delta^{-4\zeta})^{d\,\max\{d/\varepsilon,\, d+1\}+o(1)}\cdot \delta^ {\epsilon-o(1)}
%\le \delta^{-4\left(\frac{\varepsilon\epsilon}{4d^2+\varepsilon}\right) \frac{d^2}{\varepsilon} + \epsilon-o(1)}+\delta^{\zeta-o(1)}
% +\delta^{-4\left(\frac{1}{(2d+1)^2}\right) d(d+1) +1 - o(1)} 
%= \delta^{\frac{\varepsilon\epsilon}{4d^2+\varepsilon}-o(1)}
%+\delta^{\frac{1}{(2d+1)^2} - o(1)}
\le \delta^{\zeta-o(1)}.\]
So, indeed, $|\mathcal{B}|\leq(1/\delta)^{k-\zeta+o(1)}$. 

Since none of the boxes in $\mathcal{I}$ are contained in $\mathcal{B}$, and $\bigcup_{Z\in\mathcal{Z}}Z = \mathbbm{T}^k$, all sets in $\mathcal{I}$ must all be contained in the interior of $\mathbbm{T}^k\setminus I_i$. Thus, by Lemma~\ref{lem:Jmeas} combined with \eqref{eq:ri'specific} and \eqref{eq:rispecific}, we have that
\[
\delta^k|\mathcal{I}|\le 2^k \lambda(\mathbbm{T}^k\setminus I_i) = O(r_{i-1}'/r_i)=O\left(r_{i+1}^{-1/4}\right) =O\left((1/\delta)^{-\zeta}\right).
\]
This completes the proof of Lemma~\ref{lem:bdyDim}.
% and of Theorems~\ref{th:circleSquare} and~\ref{th:main}.
\end{proof}

\begin{proof}[Proof of Theorem~\ref{th:main}\ref{it:a}] 
%Assume that~\eqref{eq:ri'specific} and~\eqref{eq:rispecific} hold and thus Proposition~\ref{prop:bdyDim} applies. 	
Recall  that the final equidecomposition is obtained  by applying the local function of Lemma~\ref{outline:lem:matching}
to $f$ (i.e.,\ to the sets $Z_{\vec{\gamma},\ell}^f$). Clearly, the family of subsets of $\I T^k$ whose boundary has upper Minkowski dimension at most $k-\zeta$ is a translation-invariant algebra. Since this algebra contains each set $Z_{\vec{\gamma},\ell}^f$ by Lemma~\ref{lem:bdyDim}, it also contains all pieces of the equidecomposition by Observation~\ref{obs:LocalRule}.\end{proof}

\subsection{Borel Complexity}
%\subsection{Proof of Theorem~\ref{th:main}\ref{it:b}}
\label{sec:BorelComplexity}

This part of the proof heavily relies on the properties of various toast sequences that we constructed  earlier. 
First,  
let us briefly recall some key related definitions and properties.  The toast sequence $(J_i,\Hat{K}_i,\Hat{L}_i)_{i=1}^\infty$, as defined before Lemma~\ref{lm:FinalToast}, is obtained by taking the toast sequence $(J_i,\Tilde{K}_i^{\vec{p}},\Tilde{L}_i^{\vec{p}})_{i=1}^\infty$ inside each ($a$-invariant) set $T_{\vec{p}}$, and then taking the union over all~$\vec{p}$. In turn, $(J_i,\Tilde{K}_i^{\vec{p}},\Tilde{L}_i^{\vec{p}})_{i=1}^\infty$ (whose properties are summarised in Lemma~\ref{lm:Tilde})
is obtained from $(J_i,{K}_i^{\vec{p}},{L}_i^{\vec{p}})_{i=1}^\infty$ by removing those components from sets ${K}_i^{\vec{p}}$ and ${L}_i^{\vec{p}}$ that ``conflict with later sets''. Finally, the sets ${K}_i^{\vec{p}}$ and ${L}_i^{\vec{p}}$ are defined by~\eqref{eq:Kip} and~\eqref{eq:Lip} (which are the same as the definitions in~\eqref{eq:Ki} and~\eqref{eq:Li}, except we take the ``$\vec{p}$-shifted" sets $Y_i$).

Recall that we have to show that all pieces of the constructed equidecomposition belong to $\bB(\bSigma(\bB (\bSigma_1^0\cup \C T_A\cup \C T_B)))$, that is, each piece can be obtained from open sets and translations of the sets $A$ and $B$ by taking Boolean combinations, then countable unions and then Boolean combinations. (In fact, our proof gives a slightly stronger version where open sets can be replaced by strips.)

For $\vec{p}\in\{0,\ldots,5\}^d$, let the flow $f_{J\Tilde{K}^{\vec{p}}\Tilde{L}^{\vec{p}}}$ be obtained by rounding the sequence of flows $(f_{m_i},f_{m_i},f_{m_i'})_{i=1}^\infty$ on the toast sequence $(J_i,\Tilde{K}_i^{\vec{p}},\Tilde{L}_i^{\vec{p}})_{i=1}^\infty$.
%Let us focus first on the Borel complexity of such sets. 
Since 
%the definition of $f$ is ``local'' (by the covering claim of Lemma~\ref{lm:FinalToast} we never have to use the Axiom of Choice to round infinite remaining components) and, 
 the toast sequences $(J_i,\Hat{K}_i,\Hat{L}_i)_{i=1}^\infty$ and $(J_i,\Tilde{K}_i^{\vec{p}},\Tilde{L}_i^{\vec{p}})_{i=1}^\infty$ coincide inside $T_{\vec{p}}$, we have for all $\vec{n}$ and $\ell$ that
\begin{equation}\label{eq:Zunionp}Z_{\vec{n},\ell}^f = \bigcup_{\vec{p}\in\{0,\ldots,5\}^d}\left(T_{\vec{p}}\cap Z^{f_{J\Tilde{K}^{\vec{p}}\Tilde{L}^{\vec{p}}}}_{\vec{n},\ell}\right).\end{equation}
So, it suffices to show that  $T_{\vec{p}}$ and $Z^{f_{J\Tilde{K}^{\vec{p}}\Tilde{L}^{\vec{p}}}}_{\vec{n},\ell}$ for $\vec{p}\in \{0,\ldots,5\}^d$ are all in the set family
$\bB(\bSigma(\bB (\bSigma_1^0\cup \C T_A\cup \C T_B)))$. We start with the former.

\begin{lemma}
\label{lem:TpHierarchy}
Each of the sets $T_{\vec{p}}$ constructed in the proof of Lemma~\ref{lem:Licover} can be written as a Boolean combination of $G_\delta$ sets.
\end{lemma}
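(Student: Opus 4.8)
The plan is to unwind the definition of $T_{\vec{p}}$ given in the proof of Lemma~\ref{lem:Licover} and track the Borel complexity at each layer. Recall that $T_{\vec{p}}$ is obtained from the sets $T_{\vec{p}}'$ via the Boolean operation in~\eqref{eq:Tp}, namely $T_{\vec{p}}=T_{\vec{p}}'\setminus\bigcup_{\vec{q}\lex\vec{p}}T_{\vec{q}}'$; since this is a finite Boolean combination (there are only finitely many $\vec{q}\in\{0,\ldots,5\}^d$), it suffices to show that each $T_{\vec{p}}'$ is a Boolean combination of $G_\delta$ sets, as this class is closed under finite Boolean combinations. So the real task is to analyse $T_{\vec{p}}'=\{\vvec{v}: R^*(\vvec{v})\cap P_{\vec{p}}\neq\emptyset\}$, where $P_{\vec{p}}:=\prod_{s=1}^d[\tfrac{p_s-3}{3},\tfrac{p_s-2}{3}]$ is a fixed closed box in $\I R^d$ and $R^*(\vvec{v})$ is the set of accumulation points in $[-1,1]^d$ of the sequence $(R_i(\vvec{v}))_{i=1}^\infty$ with $R_i(\vvec{v})=\{\vec{n}/r_i': \vec{n}\cdot_a\vvec{v}\in Y_i\}$.

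First I would rewrite the membership condition ``$R^*(\vvec{v})$ meets $P_{\vec{p}}$'' in purely topological/quantifier form. Since $P_{\vec{p}}$ is compact, $R^*(\vvec{v})\cap P_{\vec{p}}\neq\emptyset$ if and only if for every $\epsilon>0$ and every $N$ there exists $i\ge N$ such that $R_i(\vvec{v})$ contains a point within $\ell_\infty$-distance $\epsilon$ of $P_{\vec{p}}$; equivalently, using a countable basis, for every rational $\epsilon>0$ and every $N$ there exists $i\ge N$ and an integer vector $\vec{n}$ with $\vec{n}\cdot_a\vvec{v}\in Y_i$ and $\vec{n}/r_i'$ within $\epsilon$ of $P_{\vec{p}}$. (One should double-check the ``only if'' direction: if $R^*(\vvec{v})$ meets $P_{\vec{p}}$ at a point $x$, then approximating $x$ gives such $\vec{n}$; conversely, the existence of such $\vec{n}$ for all $\epsilon,N$ produces, via compactness of $P_{\vec{p}}$ together with the already-established fact that each $R_i(\vvec{v})$ meets $[-1,1]^d$, an accumulation point inside $P_{\vec{p}}$.) For fixed $\epsilon$ (rational) and fixed $i$, the set of $\vvec{v}$ for which there is an admissible $\vec{n}$ is a \emph{finite} union — ranging over the finitely many integer vectors $\vec{n}$ with $\vec{n}/r_i'$ within $\epsilon$ of $P_{\vec{p}}$ — of translates of $Y_i$. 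Each $Y_i$ is a finite union of half-open strips $[a,b)\times[0,1)^{k-1}$, hence both $F_\sigma$ and $G_\delta$ (a half-open interval is a countable increasing union of closed intervals and a countable decreasing intersection of open intervals); translates of such sets are again finite unions of strips, so this finite union is $F_\sigma\cap G_\delta$, in particular closed-and-open-approximable, and certainly $G_\delta$. Call this set $U_{\epsilon,i}$.

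Now assemble the quantifiers: $T_{\vec{p}}'=\bigcap_{\epsilon}\bigcap_{N}\bigcup_{i\ge N}U_{\epsilon,i}$. For fixed $\epsilon$ and $N$, $\bigcup_{i\ge N}U_{\epsilon,i}$ is a countable union of $G_\delta$ sets; this is a $\bSigma_3^0$ set in general, which is \emph{not} obviously a Boolean combination of $G_\delta$ sets, so a naive bound gives too much. The fix — and this is the step I expect to be the main obstacle — is to exploit the fact that $R^*$ is constant on each component of $G_d$ (as shown in the proof of Lemma~\ref{lem:Licover}), so $T_{\vec{p}}'$ is a union of whole $G_d$-components, together with the specific structure of the $Y_i$ and the rapid growth of $r_i'$, to collapse the middle $\bigcap_N$. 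Concretely, one should observe that whether $\vvec{v}\in T_{\vec{p}}'$ depends only on the ``tail behaviour'' and that, because the $Y_i$ are maximally $r_i'$-discrete finite unions of strips and $r_i'\to\infty$ very fast, the condition ``for all $N$ there is $i\ge N$ with $\ldots$'' can be replaced by ``there are infinitely many $i$ with $\ldots$'' which, when intersected over the countably many $\epsilon$, still only costs us a $\bPi_2^0\cap$(countable union) structure. The cleanest route is probably: for each rational $\epsilon$, the set $V_\epsilon:=\{\vvec{v}:$ infinitely many $i$ have an admissible $\vec{n}\}=\bigcap_N\bigcup_{i\ge N}U_{\epsilon,i}$ is a $G_\delta$ of $F_\sigma$'s, but using that each $U_{\epsilon,i}$ is itself $G_\delta$ and that a countable intersection of countable unions of $G_\delta$ sets is $\bPi_3^0$... this still is not a Boolean combination of $G_\delta$. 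I would therefore look for an argument that the sequence of events is ``eventually monotone'' or can be reindexed so that $\bigcup_{i\ge N}U_{\epsilon,i}$ stabilizes modulo the component structure — or, more likely, simply accept a $G_\delta$ bound for $V_\epsilon$ by a direct argument (the limsup set of a sequence of sets each of which is simultaneously $F_\sigma$ and $G_\delta$, where additionally the underlying strips have shrinking width, is $G_\delta$) and then write $T_{\vec{p}}'=\bigcap_\epsilon V_\epsilon$ as a countable intersection of $G_\delta$ sets, i.e.\ itself $G_\delta$, whence $T_{\vec{p}}$ is a Boolean combination of $G_\delta$ sets as claimed. The delicate point that needs real care is justifying the $G_\delta$-ness of each $V_\epsilon$ (a limsup of sets is a priori only $\bPi_3^0$); here one must use that $U_{\epsilon,i}$ for a \emph{fixed} $\epsilon$ is a finite union of strips whose total number and position are controlled, so that $\bigcup_{i\ge N}U_{\epsilon,i}$ is actually an open set up to a null set — or better, genuinely a countable union of strips, hence $F_\sigma$, and moreover $\bigcap_N$ of these, being a decreasing intersection, lands in $\bPi_2^0$, and one checks $\bPi_2^0=G_\delta$, giving $V_\epsilon\in G_\delta$ directly. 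I would write the argument in this order: (1) reduce $T_{\vec{p}}$ to $T_{\vec{p}}'$ via closure under finite Boolean combinations; (2) give the quantifier characterization of $T_{\vec{p}}'$; (3) identify $U_{\epsilon,i}$ as a finite union of strips; (4) show $\bigcup_{i\ge N}U_{\epsilon,i}$ is $F_\sigma$ and the decreasing intersection over $N$ is $G_\delta$; (5) intersect over rational $\epsilon$ to conclude $T_{\vec{p}}'$ is $G_\delta$; (6) conclude.
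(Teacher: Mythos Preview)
Your outline has a genuine gap at step~(4). You claim that since each $\bigcup_{i\ge N}U_{\epsilon,i}$ is $F_\sigma$, the decreasing intersection $V_\epsilon=\bigcap_N\bigcup_{i\ge N}U_{\epsilon,i}$ ``lands in $\bPi_2^0$''. This is false: a countable intersection of $\bSigma_2^0$ sets is only $\bPi_3^0$ in general, and being a decreasing intersection does not help. The half-open strips making up $U_{\epsilon,i}$ are indeed each in $\bSigma_2^0\cap\bPi_2^0$, but the countable union $\bigcup_{i\ge N}U_{\epsilon,i}$ is merely $\bSigma_2^0$, not open, so your intersection over $N$ does not drop to $G_\delta$. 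Your earlier parenthetical suggestion (``the limsup of sets that are simultaneously $F_\sigma$ and $G_\delta$, with shrinking width, is $G_\delta$'') is not a theorem and the shrinking-width heuristic does not rescue it.

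The missing idea is the assumption~\eqref{eq:disjointBoundaries}, which you never invoke. That assumption says each component of $G_d$ meets $\partial Y_i$ for at most one~$i$. Consequently, if one redefines $R_i(\vvec{v})$ using the \emph{interior} of $Y_i$ in place of $Y_i$, then for each fixed $\vvec{v}$ at most one term of the sequence $(R_i(\vvec{v}))_{i\ge1}$ changes, and hence the accumulation set $R^*(\vvec{v})$ is unaffected. With this replacement, each set $U_{\epsilon,i}$ becomes a finite union of translates of an \emph{open} set, hence genuinely open; then $\bigcup_{i\ge N}U_{\epsilon,i}$ is open, $V_\epsilon$ is $G_\delta$, and $T_{\vec{p}}'=\bigcap_\epsilon V_\epsilon$ is $G_\delta$. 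Everything else in your plan (the reduction via~\eqref{eq:Tp}, the quantifier rewriting of the accumulation condition, the identification of $U_{\epsilon,i}$ as a finite union of translates) is correct and matches the paper's argument.
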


\begin{proof}
Here we are going to use the assumption in \eqref{eq:disjointBoundaries} which states that each component of $G_d$ intersects the topological boundary of at most one set~$Y_i$.
Thus, if the sets $R_i(\vvec{v})$ for $i\ge1$ in the proof of Lemma~\ref{lem:Licover} are re-defined in terms of the interior of $Y_i$ instead of $Y_i$ itself, then, for each $\vvec{v}\in \mathbbm{T}^k$, the sets $R_1(\vvec{v}),R_2(\vvec{v}),\ldots$ remain unchanged, except for possibly one index $i\ge1$. In particular, the set $R^*(\vvec{v})$ of their accumulation points is unchanged. So, assuming that $R_i(\vvec{v})$ is defined in terms of the interior of $Y_i$, we have that for any $b,i\ge1$ and $\vec{p}\in\{0,\ldots,5\}^d$ the set
\[\left\{\vvec{v}\in\mathbbm{T}^k: R_i(\vvec{v})\cap \prod_{s=1}^d\left(\frac{p_s-3}{3}-\frac{1}{b},\frac{p_s-2}{3}+\frac{1}{b}\right)\neq\emptyset\right\}\]
is open as well. So, for each $\vec{p}$, 
\begin{eqnarray*}
 T_{\vec{p}}'&=&\left\{\vvec{v}\in\mathbbm{T}^k:R^*(\vvec{v})\cap \prod_{s=1}^d\left[\frac{p_s-3}{3},\frac{p_s-2}{3}\right]\neq\emptyset\right\}\\
&=&\bigcap_{b=1}^\infty\bigcap_{j=1}^\infty\bigcup_{i=j}^\infty\left\{\vvec{v}\in\mathbbm{T}^k: R_i(\vvec{v})\cap \prod_{s=1}^d\left(\frac{p_s-3}{3}-\frac{1}{b},\frac{p_s-2}{3}+\frac{1}{b}\right)\neq\emptyset\right\}
 \end{eqnarray*}
is a $G_\delta$ set. Recall that, by definition (namely by~\eqref{eq:Tp}), $T_{\vec{p}}$ consists of those  $\vvec{v}$ for which $\vec{p}$ is the lexicographically smallest vector with $\vvec{v}\in T_{\vec{p}}'$.  Thus  $T_{\vec{p}}$ is  Boolean combination of these sets.
\end{proof}

Next, let us consider the sets of the form $Z^{f_{J\Tilde{K}^{\vec{p}}\Tilde{L}^{\vec{p}}}}_{\vec{n},\ell}$.

\begin{lemma}
\label{lem:Zpcomplexity}
There exists a  collection $\mathcal{W}$ of subsets of $\mathbbm{T}^k$ such that
\begin{enumerate}
\stepcounter{equation}
\item\label{eq:Wgood} every set in $\mathcal{W}$ is a countable union of Boolean combinations of strips and translates of $A$ and $B$;
\stepcounter{equation}
\item\label{eq:ZW} for every $\vec{n}\in\{-1,0,1\}^d$, $\ell\in \range(f)$ and $\vec{p}\in\{0,\ldots,5\}^d$, the set $Z_{\vec{n},\ell}^{f_{J\Tilde{K}^{\vec{p}}\Tilde{L}^{\vec{p}}}}$ can be written as a Boolean combination of some sets in $\mathcal{W}$.
\end{enumerate}
\end{lemma}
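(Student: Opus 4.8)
The plan is to decompose the set $Z_{\vec\gamma,\ell}^{f_{J\Tilde K^{\vec p}\Tilde L^{\vec p}}}$ according to which ``layer'' of the toast sequence $(J_i,\Tilde K_i^{\vec p},\Tilde L_i^{\vec p})_{i=1}^\infty$ determines the value of $f_{J\Tilde K^{\vec p}\Tilde L^{\vec p}}$ on the relevant edge. Concretely, for a point $\vvec v$ the value $f_{J\Tilde K^{\vec p}\Tilde L^{\vec p}}(\vvec v,\vec\gamma\cdot_a\vvec v)$ is fixed once $\vvec v$ enters one of the sets $J_i$, $\Tilde K_i^{\vec p}$, $\Tilde L_i^{\vec p}$, or else it is fixed during the final completion step inside a component of $G'=(\I T^k, E(G_d)\setminus\bigcup_i(\partial_E J_i\cup\partial_E\Tilde K_i^{\vec p}\cup\partial_E\Tilde L_i^{\vec p}))$. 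Since $(J_i,\Hat K_i,\Hat L_i)_{i=1}^\infty$ covers all of $\I T^k$ by Lemma~\ref{lm:FinalToast} (and inside $T_{\vec p}$ this toast sequence agrees with $(J_i,\Tilde K_i^{\vec p},\Tilde L_i^{\vec p})_{i=1}^\infty$), every edge in fact already gets its value from some $J_i$, $\Tilde K_i^{\vec p}$ or $\Tilde L_i^{\vec p}$, so the $G'$-completion step does nothing new inside $T_{\vec p}$; this is the key structural simplification and I will record it first.

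So the members of $\mathcal W$ I would take are, for each $\vec\gamma$, $\ell$ and each layer type, the sets
\[
\mathcal{J}_{\vec\gamma,\ell}:=\bigcup_{i=1}^\infty\bigl(Z_{\vec\gamma,\ell}^{f_{J\Tilde K^{\vec p}\Tilde L^{\vec p}}}\cap J_i\bigr),\qquad
\mathcal{K}_{\vec\gamma,\ell}^{\vec p}:=\bigcup_{i=1}^\infty\bigl(Z_{\vec\gamma,\ell}^{f_{J\Tilde K^{\vec p}\Tilde L^{\vec p}}}\cap \Tilde K_i^{\vec p}\bigr),\qquad
\mathcal{L}_{\vec\gamma,\ell}^{\vec p}:=\bigcup_{i=1}^\infty\bigl(Z_{\vec\gamma,\ell}^{f_{J\Tilde K^{\vec p}\Tilde L^{\vec p}}}\cap \Tilde L_i^{\vec p}\bigr),
\]
together with the sets $\bigcup_{i=1}^\infty J_i$, $\bigcup_{i=1}^\infty \Tilde K_i^{\vec p}$ and $\bigcup_{i=1}^\infty\Tilde L_i^{\vec p}$ (the last three are needed to reconstruct, by Boolean combinations, which layer type a given point belongs to, so that $Z^{f_{J\Tilde K^{\vec p}\Tilde L^{\vec p}}}_{\vec\gamma,\ell}$ is recovered as the disjoint union of the first set, the part of the second set not in $\bigcup_i J_i$, and the part of the third not in $\bigcup_i(J_i\cup\Tilde K_i^{\vec p})$). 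Conclusion~\eqref{eq:ZW} is then immediate from this book-keeping. For Conclusion~\eqref{eq:Wgood}, I need each individual term $Z_{\vec\gamma,\ell}^{f_{J\Tilde K^{\vec p}\Tilde L^{\vec p}}}\cap J_i$ (and the analogous terms with $\Tilde K_i^{\vec p}$, $\Tilde L_i^{\vec p}$, and the bare $J_i,\Tilde K_i^{\vec p},\Tilde L_i^{\vec p}$) to be a Boolean combination of finitely many strips and translates of $A,B$; then the countable union over $i$ lands in $\mathcal W$ as required.

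The local-function statements in Lemma~\ref{lem:intFlow}, namely Conclusions~\eqref{eq:fLocalJ}, \eqref{eq:fLocalK}, \eqref{eq:fLocalL}, do exactly this job: each says the restriction of $f'=f_{J\Tilde K^{\vec p}\Tilde L^{\vec p}}$ to the edges meeting $\bigcup_{j\le i} J_j$ (resp. $T_{\vec p}\cap\bigcup_{j\le i}K_j'$, resp. $T_{\vec p}\cap\bigcup_{j\le i}L_j'$) is an $R$-local function — for an explicit $R$ depending on $m_i,m_i',r_i,r_i',q_{i-1}',q_i$ — of $A$, $B$ and the sets $J_1,\dots,J_i$ (and $K_\bullet',L_\bullet'$). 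Intersecting with $T_{\vec p}$ is harmless here because the desired identity~\eqref{eq:Zunionp} already intersects with $T_{\vec p}$; and $\Tilde K_i^{\vec p},\Tilde L_i^{\vec p}$ only delete whole components from $K_i^{\vec p},L_i^{\vec p}$, so membership in them is still locally decidable from the same data by Lemma~\ref{lm:Tilde}\eqref{it:TildeComponents}. Now $J_i,K_i^{\vec p},L_i^{\vec p}$ are themselves finite unions of disjoint strips by Lemma~\ref{lem:JKLstrips} (the shifted versions inherit this), and by Lemma~\ref{lm:diam} the sets $\Tilde K_i^{\vec p},\Tilde L_i^{\vec p}$ — while not strips — are local functions of finitely many of the strip sets $J_\bullet,Y_\bullet^{\vec p}$, hence finite unions of strips as well, so membership in any of them, and being an $R$-local function of strips and of $A,B$, gives a Boolean combination of translates of those strips and of $A,B$ by Observation~\ref{obs:LocalRule} (using that finite unions of strips form an $a$-invariant algebra). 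That settles~\eqref{eq:Wgood}.

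I expect the main obstacle to be purely organisational rather than mathematical: one must be careful that the ``layer'' a point or edge belongs to is itself a Boolean combination of the sets put into $\mathcal W$ — that is, one needs $\bigcup_i J_i$, $\bigcup_i\Tilde K_i^{\vec p}$, $\bigcup_i\Tilde L_i^{\vec p}$ in $\mathcal W$ and one must verify these are indeed countable unions of finite unions of strips (true by Lemma~\ref{lem:JKLstrips} and Lemma~\ref{lm:diam}) — and that the completion step contributes nothing new inside $T_{\vec p}$, which is where the covering property from Lemma~\ref{lm:FinalToast} together with Lemma~\ref{lm:Tilde}\eqref{it:Tilde2} and \eqref{it:Tilde3} is used. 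A secondary subtlety is that the edge $\{\vvec v,\vec\gamma\cdot_a\vvec v\}$ has \emph{two} endpoints, possibly in different layers; the value of $f_{J\Tilde K^{\vec p}\Tilde L^{\vec p}}$ on it is assigned when the earlier-arriving endpoint is processed, so in writing $Z^{f_{J\Tilde K^{\vec p}\Tilde L^{\vec p}}}_{\vec\gamma,\ell}\cap J_i$ etc. I should phrase the local-function claim in terms of the neighbourhood of the whole edge, exactly as Lemma~\ref{lem:intFlow} does (its radii already include the ``$+1$'' padding for this reason). Once these points are nailed down, assembling $\mathcal W$ and verifying~\eqref{eq:Wgood}–\eqref{eq:ZW} is routine.
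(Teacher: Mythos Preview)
There is a genuine gap. Your argument hinges on the claim that ``$\Tilde K_i^{\vec p},\Tilde L_i^{\vec p}$ --- while not strips --- are local functions of finitely many of the strip sets $J_\bullet,Y_\bullet^{\vec p}$, hence finite unions of strips as well.'' This is false. By \eqref{eq:TildeKj} and \eqref{eq:TildeLj} we have $\Tilde K_i^{\vec p}=K_i^{\vec p}\setminus\bigcup_{j>i}J_j$ and $\Tilde L_i^{\vec p}=L_i^{\vec p}\setminus\bigcup_{j>i}K_j^{\vec p}$; each tilded set depends on \emph{infinitely many} future sets, so it cannot be a local function of finitely many of them. The paper explicitly remarks, right after Lemma~\ref{lem:JKLstrips}, that $\Tilde K_i$ and $\Tilde L_i$ need not be finite unions of strips. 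Consequently your terms $Z_{\vec\gamma,\ell}^{f_{J\Tilde K^{\vec p}\Tilde L^{\vec p}}}\cap\Tilde K_i^{\vec p}$ and $Z_{\vec\gamma,\ell}^{f_{J\Tilde K^{\vec p}\Tilde L^{\vec p}}}\cap\Tilde L_i^{\vec p}$ are not known to be Boolean combinations of finitely many strips and translates of $A,B$, and \eqref{eq:Wgood} is unproven. (Lemma~\ref{lm:diam}, which you cite, only bounds component diameters; it says nothing about finite dependence.)

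The paper circumvents this by introducing two auxiliary flows $f_{JK^{\vec p}}$ and $f_{K^{\vec p}L^{\vec p}}$, obtained by rounding on the \emph{untilded} toast sequences $(J_i,K_i^{\vec p})_{i\ge1}$ and $(K_i^{\vec p},L_i^{\vec p})_{i\ge1}$. Since $K_i^{\vec p},L_i^{\vec p}$ \emph{are} finite unions of strips, the level sets of these auxiliary flows restricted to $\bigcup_{j\le i}K_j^{\vec p}$ or $\bigcup_{j\le i}(K_j^{\vec p}\cup L_j^{\vec p})$ are genuine finite Boolean combinations of strips and translates of $A,B$. The passage back to $f_{J\Tilde K^{\vec p}\Tilde L^{\vec p}}$ uses an ``overwriting'' identity: the value of $f_{JK^{\vec p}}$ on an edge meeting $K_j^{\vec p}$ is retained by the final flow unless some later $J_i$ intersects that edge and assigns a different value (and similarly for the $L$-layer). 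This yields a Boolean formula involving only the countable unions built from the untilded data, which is exactly the missing idea your approach lacks.
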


\begin{proof}
Recall that the flow $f_J$ is obtained by rounding $(f_{m_i})_{i=1}^\infty$ on $(J_i)_{i=1}^\infty$. For the proof, we need to define two further flows:
 \begin{itemize}
 \item $f_{JK^{\vec{p}}}$ is the rounding of $(f_{m_i},f_{m_i})_{i=1}^\infty$ on the toast sequence $(J_i,K_i^{\vec{p}})_{i=1}^\infty$,\mbox{ and}
% \item $f_{J\Tilde{K}^{\vec{p}}}$ is the rounding of $(f_{m_1},f_{m_1},f_{m_2},f_{m_2},\ldots)$ on the toast $(J_1,\Tilde{K}_1^{\vec{p}},J_2,\Tilde{K}_2^{\vec{p}},\ldots)$;
 \item $f_{K^{\vec{p}}L^{\vec{p}}}$ is the rounding of $(f_{m_i},f_{m_i'})_{i=1}^\infty$ on the toast sequence $(K_i^{\vec{p}},L_i^{\vec{p}})_{i=1}^\infty$.
 \end{itemize}
 Recall that each of $(J_i,K_i^{\vec{p}})_{i=1}^\infty$ and $(K_i^{\vec{p}},L_i^{\vec{p}})_{i=1}^\infty$ is a toast sequence by Lemma~\ref{lem:JKLscaff}. Also,
 Lemma~\ref{lem:intFlow} 
 (with  the used toast sequence $(J_i',K_i',L_i')_{i=1}^\infty$ being respectively $(J_i,K_i^{\vec{p}},\emptyset)_{i=1}^\infty$ and $(\emptyset,K_i^{\vec{p}},L_i^{\vec{p}})_{i=1}^\infty$) applies to each of these two new flows.
 %(using the same diameter bounds as in~\eqref{eq:di}). 
 
For every $\vec{n}\in\{-1,0,1\}^d$, $\ell\in\range(f)$, $\vec{p}\in\{0,\ldots,5\}^d$ and $i\ge1$, define
\begin{eqnarray*}
 W^J_{\vec{n},\ell,i}&:=&\left\{\vvec{u}: f_J\left(\vvec{u},\vec{n}\cdot_a\vvec{u}\right)=\ell\mbox{ and } 
\{\vvec{u},\vec{n}\cdot_a\vvec{u}\}\cap \left(\bigcup_{j=1}^i J_j\right)\neq\emptyset \right\},\\
 W^{JK^{\vec{p}}}_{\vec{n},\ell,i}&:=&\left\{\vvec{u}: f_{JK^{\vec{p}}}\left(\vvec{u},\vec{n}\cdot_a\vvec{u}\right)=\ell \mbox{ and } 
 \{\vvec{u},\vec{n}\cdot_a\vvec{u}\}\cap \left(\bigcup_{j=1}^i K_j^{\vec{p}}\right)\neq\emptyset\right\},\\
 W^{K^{\vec{p}}L^{\vec{p}}}_{\vec{n},\ell,i}&:=&\left\{\vvec{u}: f_{K^{\vec{p}}L^{\vec{p}}}\left(\vvec{u},\vec{n}\cdot_a\vvec{u}\right)=\ell \mbox{ and } 
 \{\vvec{u},\vec{n}\cdot_a\vvec{u}\}\cap \left(\bigcup_{j=1}^i (K_j^{\vec{p}}\cup L_j^{\vec{p}})\right)\neq\emptyset\right\}.
\end{eqnarray*}
 In other words, these sets are obtained by restricting $f_J$, $f_{JK^{\vec{p}}}$ and $f_{K^{\vec{p}}L^{\vec{p}}}$ to all edges intersecting respectively $\bigcup_{j=1}^i J_j$, $\bigcup_{j=1}^i K_j^{\vec{p}}$ and $\bigcup_{j=1}^i (K_j^{\vec{p}}\cup L_j^{\vec{p}})$ and encoding
 the obtained partially defined flows by sequences of sets of vertices similarly as in~\eqref{eq:Zdef}. Recall that $J_j\subseteq K_j^{\vec{p}}$ for every $j$.
  By Lemma~\ref{lem:intFlow}, each of these newly defined sets is a local function of $A$, $B$ and finitely many of sets $J_i, K_i^{\vec{p}}, L_i^{\vec{p}}$ while, in turn, each of the sets $J_i, K_i^{\vec{p}}, L_i^{\vec{p}}$ is a finite union of strips by Lemma~\ref{lem:JKLstrips}. So each set defined above  is a Boolean combination of strips and translates of $A$ and $B$. 

 Now,  for all $\vec{n}$ and $\ell$, define
\[
 W^{J}_{\vec{n},\ell}:=\bigcup_{i=1}^\infty W^{J}_{\vec{n},\ell,i},\quad
W^{JK^{\vec{p}}}_{\vec{n},\ell}:=\bigcup_{i=1}^\infty W^{JK^{\vec{p}}}_{\vec{n},\ell,i}\quad\mbox{and}\quad W^{K^{\vec{p}}L^{\vec{p}}}_{\vec{n},\ell}:=\bigcup_{i=1}^\infty W^{K^{\vec{p}}L^{\vec{p}}}_{\vec{n},\ell,i}.\]
We define $\mathcal{W}$ to be the collection of all sets of the form 
$W^{J}_{\vec{n},\ell}$,
$W^{JK^{\vec{p}}}_{\vec{n},\ell}$ and 
$W^{K^{\vec{p}}L^{\vec{p}}}_{\vec{n},\ell}$ for any $\vec{n},\ell$ and $\vec{p}$. By above, the family $\mathcal{W}$  satisfies~\eqref{eq:Wgood}.

By construction (i.e.\ by Conclusion~\eqref{eq:fLocalOnlyJ} of 
Lemma~\ref{lem:intFlow}), the flows $f_{J\Tilde{K}^{\vec{p}}\Tilde{L}^{\vec{p}}}$ and $f_{J}$ coincide on every pair intersecting $\bigcup_{i=1}^\infty J_i$.
%(Recall that $f_J$ is the rounding of $(f_{m_i})_{i=1}^\infty$ on $(J_i)_{i=1}^\infty$.)
Also, if an edge $\vvec{u}\vvec{v}$ intersects  $K_j^{\vec{p}}$ then, by~\eqref{eq:TildeKj}, the value of $f_{J\Tilde{K}^{\vec{p}}\Tilde{L}^{\vec{p}}}$ on this edge equals the value of $f_{JK^{\vec{p}}}$ unless there exists $i>j$ such that $\vvec{u}\vvec{v}$ intersects $J_i$ when we use the value of $f_{J}$ (which may happen to coincide with the value of $f_{JK^{\vec{p}}}$ on $\vvec{u}\vvec{v}$). Furthermore, we can drop the restriction that $i>j$ here, since our rounding procedures for any $J_i$ and $J_{i'}$ produce the same value on each edge intersecting $J_i\cap J_{i'}$. 
%Thus we can use the values of $f_{JK^{\vec{p}}}$ (resp.\ $f_{J}$) on all pairs, disjoint from (resp. intersecting) the set $\bigcup_{i=1}^\infty J_i$.
Similarly, if an edge intersects $L_j^{\vec{p}}$, 
then the value of $f_{K^{\vec{p}}L^{\vec{p}}}$ on this edge is retained by $f_{J\Tilde{K}^{\vec{p}}\Tilde{L}^{\vec{p}}}$,
unless it is overwritten (to a different value or the same one) due to $J_i$ or $K_i^{\vec{p}}$ with $i>j$ intersecting this edge; again we can drop the restriction that $i>j$ here. 

Thus, informally speaking, our flow values are partitioned into the following three types: $J$-values, $K$-values not overwritten by $J$, and $L$-values not overwritten by~$J$ nor~$K$, that is, the order of precedence is $J,K,L$. 
Formally, we can express the above partition for every non-zero $\ell$ as
\begin{align*}
Z_{\vec{n},\ell}^{f_{J\Tilde{K}^{\vec{p}}\Tilde{L}^{\vec{p}}}} 
&= 
W^{J}_{\vec{n},\ell}\cup 
\left(W^{JK^{\vec{p}}}_{\vec{n},\ell} 
\setminus \bigcup_{s\in\range(f_J)} W^{J}_{\vec{n},s}\right)\\
&\cup
 \left(W^{K^{\vec{p}}L^{\vec{p}}}_{\vec{n},\ell} 
\setminus 
\left(\left( \bigcup_{s\in\range(f_J)} W^{J}_{\vec{n},s}\right)\cup
\left( \bigcup_{s\in\range(f_{JK^{\vec{p}}})} W^{JK^{\vec{p}}}_{\vec{n},s}\right)\right)\right).
\end{align*}
Note that each union is over a finite set (since the involved flows are integer-valued and uniformly bounded), thus satisfying~\eqref{eq:ZW}. The case $\ell=0$ is somewhat special since the union $\bigcup_{i=1}^\infty(J_i\cup K^{\vec{p}}_i\cup L^{\vec{p}}_i)$ need not cover the whole torus. However, this case also satisfies~\eqref{eq:ZW} since 
$Z_{\vec{n},0}^{f_{J\Tilde{K}^{\vec{p}}\Tilde{L}^{\vec{p}}}}$ is the complement of the
union of the sets $Z_{\vec{n},\ell}^{f_{J\Tilde{K}^{\vec{p}}\Tilde{L}^{\vec{p}}}}$ over all possible (finitely many) non-zero flow values~$\ell$.
This completes the proof of the lemma.
\end{proof}

\begin{proof}[Proof of Theorem~\ref{th:main}\ref{it:b}]
Since every strip is the difference of two open sets, Part~\ref{it:b} of Theorem~\ref{th:main} now follows by combining \eqref{eq:Zunionp} with Lemmas~\ref{lem:TpHierarchy} and~\ref{lem:Zpcomplexity}.
\end{proof}

%If $A$ and $B$ are Borel, then one can obtain a bound on the Borel complexity of the pieces of the equidecomposition by combining \eqref{eq:Zunionp} with Lemmas~\ref{lem:TpHierarchy} and~\ref{lem:Zpcomplexity}. 
%As a special case, if $A,B\in F_\sigma\cap G_\delta$, then each of the sets in the collection $\mathcal{W}$ from Lemma~\ref{lem:Zpcomplexity} is a $F_\sigma$ set. Therefore, in this special case, the pieces of the final equidecomposition can be written as a Boolean combination of $F_\sigma$ (or, equivalently, $G_\delta$) sets. 

\section*{Acknowledgements}

The authors are very grateful to the anonymous referee for the extremely careful reading of the manuscript and many very useful comments.

\medskip
\noindent For the purpose of open access, the authors have applied a Creative
Commons Attribution (CC-BY) licence to any Author Accepted Manuscript
version arising from this submission.

\appendix

\section{Laczkovich's Discrepancy Bound}
\label{app:discrep}

The purpose of this appendix is to sketch a proof of Lemma~\ref{lem:discrep}. This result is implicit in \cite{Laczkovich92b}*{Proof of Theorem~3} and its proof sketch is given in~\cite{GrabowskiMathePikhurko17}*{pp.~677--678}. Since the dependence of $d$ on $k$ and $\boxdim(\partial X)$ (which is crucial for our estimates in Section~\ref{sec:Dim}) is not explicitly calculated there, we present a slightly expanded proof sketch.

Let $X \subseteq\mathbbm{T}^k$ be a measurable set such that $k-1\le \boxdim(\partial X)<k$. The first step is to use the upper Minkowski dimension of the boundary to reduce the proof to bounding discrepancy of $N_r^+[\vvec{u}]$ relative to boxes in~$\mathbbm{T}^k$. This argument is due to Niederreiter and Wills~\cite{NiederreiterWills75}*{Kollorar~4}. Recall that $d$ is an integer such that $d>{k}/({k-\boxdim(\partial X)})$ and $\varepsilon$ is a real number satisfying $0 <\varepsilon<({d(k-\boxdim(\partial X))-k})/{k}$. Define
\[\alpha:=\frac{(1+\varepsilon)k}{d}.\]
By the definition of upper Minkowski dimension and the fact that $\alpha<k-\boxdim(\partial X)$, there exists $\delta_0\in (0,1)$ such that
\begin{equation}\label{eq:distDelta}\lambda\left(\left\{\vvec{x}: \dist_\infty(\vvec{x},\partial X)\leq \delta\right\}\right)\leq \delta^\alpha\end{equation}
for all $0<\delta<\delta_0$. Now, for $r\in \mathbbm{N}$, choose $\delta\in (0,\delta_0)$ small with respect to $r$, where the dependence is clarified below. For convenience, let us assume that $\delta^{-1}$ is an integer. 

Let $\mathcal{P}$ be the partition of $\mathbbm{T}^k$ into a grid of $\delta^{-k}$ boxes, each with side-length $\delta$. Let $\mathcal{B}$ be the elements of $\mathcal{P}$ which intersect $\partial X$. We have 
\[\delta^k|\mathcal{B}|\leq\lambda\left(\left\{\vvec{x}: \dist_\infty(\vvec{x},\partial X)\leq \delta\right\}\right)\leq  \delta^\alpha\]
by \eqref{eq:distDelta} and so $|\mathcal{B}|\leq \delta^{-k+\alpha}$. 

Let $\mathcal{I}$ be the elements of $\mathcal{P}$ contained in the interior of $X$. We then let $\mathcal{I}^*$ be the collection of boxes obtained by starting with $\mathcal{I}$ and iteratively merging two boxes if they have the same projection onto the first $k-1$ coordinates and their closures share a $(k-1)$-dimensional face. For any two distinct boxes in $\mathcal{I}^*$ which have the same projection onto the first $k-1$ coordinates, there must be at least one element in $\mathcal{B}$ ``between'' them which prevents them from merging. Conversely, each element of $\mathcal{B}$ prevents at most one potential merging. Therefore,
\[|\mathcal{I}^*|\leq \delta^{-k+1} + |\mathcal{B}|\leq \delta^{-k+1}+\delta^{-k+\alpha}< 2\delta^{-k+\alpha},\]
since $\alpha<1$ by our assumption that $\boxdim(\partial X)\ge k-1$. Thus, $\mathcal{I}^*\cup\mathcal{B}$ is a covering of $X$ with at most $3\delta^{-k+\alpha}$ boxes such that each of the boxes in $\mathcal{B}$ has measure at most $\delta^{k}$. 

Now, given any finite set $F\subseteq\mathbbm{T}^k$, by the triangle inequality applied to the partition $X=(\cup_{I\in \mathcal{I}^*}I)\cup (\cup_{I\in \mathcal{B}} (I\cap X))$, the discrepancy of $F$ relative to $X$ can be bounded as follows:
\begin{equation}\label{eq:Dsplit}D(F,X)\leq \sum_{I\in \mathcal{I}^*}D(F,I) + \sum_{I\in \mathcal{B}}D(F,I\cap X)\end{equation}

We apply Lemma~\ref{lem:intervalLog} to the first sum on the right side of \eqref{eq:Dsplit}, say with $t:=1$, and obtain that 
\[\sum_{I\in \mathcal{I}^*}D\left(N_r^+[\vvec{u}], I\right)\leq C\log^{k+d+t}(r)\,|\mathcal{I}^*| \leq 2C\log^{k+d+t}(r)\delta^{-k+\alpha}.\]
 The contribution of $I\in\mathcal B$ to the second sum is
 at most 
 \hide{$(r+1)^d\delta^k$  if $$(r+1)^d\mu(X\cap I)-|N_r^+[\vvec{u}]\cap I\cap X|\ge 0$$ and at most $|N_r^+[\vvec{u}]\cap I|\le (r+1)^d\delta^k+C\log^{k+d+t}(r)$ otherwise.
 Thus}
 $$
 \max\left\{\,|N_r^+[\vvec{u}]\cap (I\cap X)|,\,\, |N_r^+[\vvec{u}]|\, \mu(X\cap I)\,\right\}\le
 \max\left\{|N_r^+[\vvec{u}]\cap I|, (r+1)^d\mu(I)\right\},
 %\le (r+1)^d\delta^k + C\left(\log(r)\right)^{k+d+t}.
 $$
 and each term can be bounded from above by $(r+1)^d\delta^k + C\log^{k+d+t}(r)$. Thus, by $|\mathcal{B}|\leq \delta^{-k+\alpha}$, we have
\begin{eqnarray*} \sum_{I\in\mathcal{B}} D\left(N_r^+[\vvec{u}], I\right)
%&\leq& \left((r+1)^d\delta^k + C\left(\log(r)\right)^{k+d+t}\right)|\mathcal{B}|\\
\leq \left((r+1)^d\delta^k + C\log^{k+d+t}(r)\right)\delta^{-k+\alpha}.
\end{eqnarray*}
So, if we set $\delta:=(r+1)^{-d/k}$ for $r\to\infty$, then we get
\[D\left(N_r^+[\vvec{u}],X\right)\leq (r+1)^{d-\alpha d/k+o(1)} = (r+1)^{d-1-\varepsilon+o(1)}.
\]
The extra $o(1)$ term in the exponent can clearly be taken care of by choosing $\varepsilon$ sufficiently close to $\frac{d(k-\boxdim(\partial X))-k}{k}$ in the beginning. Thus, Lemma~\ref{lem:discrep} follows from Lemma~\ref{lem:intervalLog}.

%\bibliography{bibexport}
\bibliography{oleg,sets,misc,ramsey,enum,number,posets,sat,ex,matroid,design,random,graph,general,geometry,algorithm,Analysis,limits}

\end{document}